\documentclass[11pt,arial,letterpaper]{article}
\usepackage[letterpaper,margin=1in,footskip=0.25in]{geometry}

\usepackage{latexsym}
\usepackage{amsmath}
\usepackage{rotating} 
\usepackage{theorem}
\usepackage{epsfig,subfigure}
\usepackage{graphicx} 
\usepackage{upgreek}
\usepackage{txfonts}
\usepackage{pxfonts}
\usepackage{pstricks}
\usepackage{multirow} 
\usepackage{epsfig}  
\usepackage{rotating}
\usepackage{cite}
\usepackage{color, colortbl}
\usepackage[square,numbers]{natbib}

\usepackage[refpage]{nomencl}

\makenomenclature

\usepackage{makeidx}

\makeindex

\newtheorem{thm}{Theorem}[section]
\newtheorem{lem}[thm]{Lemma}
\newtheorem{cor}[thm]{Corollary}
\newtheorem{proposition}[thm]{Proposition}
\newtheorem{Def}[thm]{Definition}
\newtheorem{rmk}[thm]{Remark}

\newtheorem{ex}[thm]{Example}

\newenvironment{proof}[1][Proof]{\begin{trivlist}
\item[\hskip \labelsep {\bfseries #1}]}{\end{trivlist}}

\newcommand{\qed}{\hfill $\Box$ 
}

\newcommand{\END}{\hfill\mbox{\raggedright$\Diamond$}}

\newcommand{\etal}{{\em et al.}}

\newcommand{\bc}{\mbox{$\mathbb C$}}
\newcommand{\bn}{\mbox{$\mathbb N$}}

\newcommand{\noi}{\noindent}
\newcommand{\q}{\quad}

\def\la{\langle}
\def\ra{\rangle}

\def\lam{\lambda}

\def\cL{\mathcal{L}}

\def\cT{\mathcal{T}}
\def\cP{\mathcal{P}}

\usepackage{multirow}
\usepackage{arydshln}

\def\ind{\text{\rm ind\,}}

\def\Ind{\text{\rm Ind\,}}

\usepackage{xcolor}

\usepackage[normalem]{ulem}

\definecolor{Gray}{gray}{0.9}

\title{Reduced Lattices of Synchrony Subspaces and their Indices}

\author{
	Hiroko Kamei$^{1}$ and 	Haibo Ruan$^{2}$
	\\
	$^1$ {\small Division of Mathematics, University of Dundee}\\
	$^2$ {\small Institute of Mathematics, Technical University of Hamburg-Harburg}\\
	{\small E-mail: hkamei@dundee.ac.uk \quad haibo.ruan@tuhh.de}}

\date{} 

\begin{document} 
	
\maketitle

\begin{abstract}
For a regular coupled cell network, synchrony subspaces are the polydiagonal subspaces that are invariant under the network adjacency matrix. The complete lattice of synchrony subspaces of an $n$-cell regular network can be seen as an intersection of the partition lattice of $n$ elements and a lattice of invariant subspaces of the associated adjacency matrix. 
We assign integer tuples with synchrony subspaces, and use them for identifying equivalent synchrony subspaces to be merged. Based on this equivalence, the initial lattice of synchrony subspaces can be reduced to a lattice of synchrony subspaces which corresponds to a simple eigenvalue case discussed in our previous work. The result is a reduced lattice of synchrony subspaces, which affords a well-defined non-negative integer index that leads to bifurcation analysis in regular coupled cell networks.
\end{abstract}

\section*{}
\noindent
AMS classification numbers: 15A72, 06A06, 06B23, 37C25

\noindent
Keywords: coupled cell network, Jordan normal form, synchrony subspaces, lattice, index

\section{Introduction}
A coupled cell system is a finite collection of individual dynamical systems (or {\it cells}) that  are coupled together through mutual interactions. Coupled cell systems can be used to model a wide variety of phenomena in many scientific fields, ranging from physics, biology, chemistry, to engineering, social science and climatology (cf. Aguiar \etal~\cite{Aguiar-2016}, Golubitsky \etal~\cite{GS06} and references therein).

Topological configuration of a coupled cell system can be described by a directed graph, a {\it coupled cell network}, whose nodes correspond to the cells and whose edges represent the interactions. A coupled cell network is called {\it regular}, if it has a single edge type and consists of identical cells (having the same phase space and the same internal dynamics) which all have the same number of input edges. 

Dynamics and bifurcations of coupled cell systems have been much studied, for example Leite \etal~\cite{LG06}, Elmhirst \etal~\cite{EG06}, Aguiar \etal~\cite{ADGL09}, Golubitsky \etal~\cite{Golubitsky-2009}, Stewart \etal~\cite{SG11} and Soares \cite{Soares17} for regular networks; and Stewart \etal~\cite{SGP03}, Golubitsky \etal~\cite{GST05} and Gandhi \etal~\cite{GGPSW20} and Aguiar \etal~\cite{ADS2020} for more general coupled cell. 

One of the key properties of a regular network is the existence of synchrony subspaces, that are the polydiagonals defined in terms of equalities of cell coordinates and are left invariant under any coupled cell systems consistent with the network structure. Their existence depends solely on the network structure and not on given admissible vector field. Synchrony subspaces range from the full-synchrony subspace $\Delta$, formed by setting all cell coordinates equal in the total phase space, towards subspaces with less synchrony, up to the total phase space $P$, formed by setting all cell coordinates distinct. We say that the system undergoes a {\it local synchrony-breaking steady-state bifurcation}, if the synchronous equilibrium in $\Delta$ changes its stability and bifurcates to a steady state with less synchrony, as a bifurcation parameter crosses some critical value. If it bifurcates to a periodic state with less synchrony, we call it a {\it local synchrony-breaking Hopf bifurcation}. 

Synchrony subspaces are polydiagonal subspaces that are left invariant under the network adjacency matrix $A$. Thus a lattice of synchrony subspaces inherits properties as a lattice of invariant subspaces of a linear map represented by $A$ as studied by Brickman \etal~\cite{BF67} and Longstaff~\cite{Longstaff-1984}. It is impossible to draw a Hasse diagram of a lattice of invariant subspaces in general, however, we can picture a lattice of synchrony subspaces using the property of flow-invariant polydiagonals which are in one-to-one correspondence to balanced colorings of cells in the network that come from certain partitions of cells. All possible partitions of $n$ cells can be represented by the partition lattice of $n$ elements where partitions are partially ordered. As a result, the partition lattice of $n$ elements gives the maximum possible lattice of synchrony subspaces of an $n$-cell regular network. This enables us to visualise a finite lattice of synchrony subspaces as a combination of an infinite lattice of invariant subspaces with a finite partition lattice. Construction and properties of a lattice of synchrony subspaces have been studied \cite{Stewart-2007,Kamei-2009-part1,Kamei-2013,Aguiar-2014,Moreira15,NSS2020,ADS2020,Soares17}. When the adjacency matrix of a given regular network has non-simple eigenvalues, the size of a lattice of synchrony subspaces increases due to higher dimension eigenspaces. It is our interest to simplify the representations of such large lattices by identifying the key building blocks.

The concept of {\it lattice index} was introduced in \cite{Kamei-2009-part1}  for lattices of synchrony subspaces for regular networks whose adjacency matrix  $A$ has only simple eigenvalues, and later used for synchrony-breaking bifurcation analysis in \cite{Kamei-2009-part2}. An important feature of such lattice index is that  they are non-negative and their sum equals the total number of cells in the underlying network, which enabled enumeration of all possible lattice structures for a given network size and, furthermore, positive lattice indices were used to predict the existence of synchrony-breaking bifurcating branches.

Coupled cell systems are known to be capable of enforcing multiple eigenvalues of adjacency matrices in a generic manner due to the underlying network structure, which determines, even at linear level, the kind of generic transitions from a synchronous equilibrium that can occur as the parameter is varied. While the bifurcation analysis for simple critical eigenvalues is straightforward,  multiple eigenvalues can lead to complicated bifurcating behaviour of the system such as multiple bifurcations and secondary bifurcations. Our aim of this paper is to extend the existing concept of lattice index for lattices of synchrony subspaces for regular networks whose adjacency matrices may have not only simple eigenvalues, which can be used for bifurcation analysis in coupled cell systems.

The structure of the paper is as follows. In Section\ref{sec:LM}, we construct a lattice $\mathcal{L}_{M}$, which is uniquely determined by the Jordan normal form of the adjacency matrix of a given regular network. It will be shown that any lattice of synchrony subspaces with simple eigenvalues or any reduced lattice of synchrony subspaces with non-simple eigenvalues is a closed subset of $\mathcal{L}_{M}$ {(Lemma \ref{lem:closed_PA})}. In Section \ref{sec:PA}, we construct a poset (partially order set) $\mathcal{P}_{A}$ from the lattice of synchrony subspaces $V_{\mathcal{G}}^{P}$, where some nodes of $\mathcal{P}_{A}$ can have identical tuple representations. We construct a reduced lattice $\mathcal{P}_{A}/{=}$ using that equality as an equivalence relation. We aim to construct the same structure as $\mathcal{P}_{A}/{=}$ starting from an alternative representation of the lattice of synchrony subspaces, the poset $\mathcal{E}_{A}$. In Section \ref{sec:QA}, we construct the poset $\mathcal{E}_{A}$, where $s\in \mathcal{E}_{A}$ represents a set of distinct eigenvalues of a quotient network and the connectivity of $\mathcal{E}_{A}$ is the same as the lattice of synchrony subspaces $V_{\mathcal{G}}^{P}$. It will be proved that if $\mathcal{P}_{A}$ and $V_{\mathcal{G}}^{P}$ have the same covering relation (Definition \ref{def:covering_relation}), then $\mathcal{P}_{A}/{=}\, \subseteq \,\mathcal{E}_{A}/{\sim}$, which implies that reduced lattices {$\mathcal{P}_{A}/=$} can be identified using quotient posets $\mathcal{E}_{A}/{\sim}$ (Lemma \ref{lem:balanced_connectivity_matrix_QA}). Moreover, it will be shown that under the same covering relation assumption, the reduced lattice $\mathcal{E}_{A}/{\sim}$ indeed corresponds to a lattice structure for regular networks whose coupling matrices have only simple eigenvalues (Theorem \ref{thm:reduced_EA}). In Section \ref{sec:examples}, we present examples of regular networks to  illustrate our lattice reduction algorithm, taking into account the covering relation condition.

\section{Preliminaries}
In this section, we review basic concepts of coupled cell networks and their lattices of synchrony subspaces. For details, we refer to {\cite{Stewart-2007,GS06}}.

\subsection{Regular Networks}
In  what follows, we consider an $n$-cell regular network $\mathcal{G}$ with total phase space $P$ and the associated $n\times n$ adjacency matrix $A$.

\begin{Def}
\rm
	A {\it coupled cell network} consists of a finite nonempty set $\mathcal C$ of {\it nodes} or {\it cells} and a finite nonempty set $\mathcal E= \{ (c,d):\ c,d \in \mathcal C\}$ of {\it edges} or {\it arrows} and two equivalence relations: $\sim_{C}$ on $\mathcal C$ and $\sim_{E}$ on $\mathcal E$ such that the {\it consistency condition} is satisfied: if $(c_1,d_1) \sim_{E} (c_2,d_2)$, then $c_1 \sim_{C} c_2$ and $d_1 \sim_{C} d_2$.
	We write $\mathcal G = (\mathcal C,\mathcal E,\sim_C, \sim_E)$. 
\END
\end{Def} 
A coupled cell network can be represented by a directed graph, where the cells are placed at vertices, edges are depicted by directed arrows and the equivalence relations are indicated by different types of vertices or edges in the graph.

\begin{Def} 
\rm  
\label{defregnet}
A coupled cell network is called  {\it regular}, if it has only  one cell-equivalent class and one edge-equivalent class and moreover, every cell receives the same number of input edges. The number, which is the cardinality of the input set for every cell, is called the {\it valency} of the regular network.
\END
\end{Def}

The coupling structure of a regular network is given by an {\it adjacency matrix} $A=(a_{ij})$, where $a_{ij}$ is the number of input edges of the $i$-th cell from the $j$-th cell. The sum of the $i$-th row of $A$ is then equal to the number of total input edges received by the $i$-th cell.

\subsection{Lattice Theory} 
We summarise basic results from lattice theory in the following. See~\cite{Davey} for more details.

A \textit{lattice} is a partially ordered set $X$ such that any two elements $x,y\in X$ have a unique \textit{greatest lower bound} or \textit{meet}, and a unique \textit{least upper bound} or \textit{join}, denoted respectively by
\[x\wedge y\quad\quad x\vee y.\]

A complete lattice has a \textit{top} (maximal) element, denoted $\top$, and \textit{bottom} (minimal) element, denoted $\bot$. These are respectively the unique minimal and maximal elements of $X$.  

Let $L$ be a lattice and $\emptyset\neq M\subseteq L$. Then $M$ is a \textit{sublattice}  of $L$ if
\[a\wedge b\in M \quad\textrm{and}\quad a\vee b\in M\quad \forall a,b\in M.\]

The following defines structure-preserving maps between ordered sets.
\begin{Def}
\rm
Let $P$ and $Q$ be ordered sets. A map $\varphi:P\rightarrow Q$ is said to be
\begin{itemize}
\item[(i)]
\textit{order-preserving} (or, alternatively, \textit{monotone}) if $x\leq y$ in $P$ implies $\varphi(x)\leq\varphi(y)$ in $Q$;

\item[(ii)]
an \textit{order-embedding} if $x\leq y$ in $P$ if and only if $\varphi(x)\leq\varphi(y)$ in $Q$;

\item[(iii)]
an \textit{order-isomorphism} if it is an order-embedding mapping $P$ onto $Q$.
\end{itemize}
When there exists an order-isomorphism from $P$ to $Q$, we say that $P$ and $Q$ are \textit{order-isomorphic} and write $P\cong Q$.
\END
\end{Def}

\subsection{Lattice of Synchrony Subspaces}
The following is a summary of results from \cite{Stewart-2007}.

Suppose that $\mathcal{G}$ is a coupled cell network with cells $\mathcal{C}$ and a choice of total phase space $P$. Let $M_{\mathcal{G}}$ be the complete lattice of all equivalence relations on $\mathcal{C}$. Associated with each equivalence relation $\bowtie\in M_{\mathcal{G}}$ is a subspace $\Delta_{\bowtie}\subseteq P$, called the {\it polydiagonal} corresponding to $\bowtie$. It is defined by
\[\Delta_{\bowtie} = \{x\in P: c,d\in\mathcal{C}\quad\textrm{and}\quad c\bowtie d\Rightarrow x_{c}=x_{d}\}. \]
It consists of all $x$ whose components are equal for $\bowtie$-equivalent coordinates.

Define $W_{\mathcal{G}}^{P}$ to be the set of all polydiagonals for this choice of $P$ and $\mathcal{G}$. There is a natural bijection map
\[\delta: M_{\mathcal{G}}\rightarrow W_{\mathcal{G}}^{P}\quad \delta(\bowtie)=\Delta_{\bowtie},\]
where $M_{\mathcal{G}}$ and $W_{\mathcal{G}}^{P}$ are both complete lattices.

\begin{lem}
\label{lem:reverse-order}
The map $\delta$ is a lattice anti-isomorphism, that is, an isomorphism that reverses order, hence interchanges meet and join.
\end{lem}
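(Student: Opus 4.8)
The plan is to show that $\delta: M_{\mathcal{G}} \to W_{\mathcal{G}}^{P}$ is a bijection (already asserted) and that it reverses the order, which together with the standard fact that an order-reversing bijection between lattices automatically interchanges meet and join, yields the claim. So the real content is to pin down the order on $M_{\mathcal{G}}$, the order on $W_{\mathcal{G}}^{P}$, and verify the equivalence $\bowtie_1 \preceq \bowtie_2 \iff \Delta_{\bowtie_1} \supseteq \Delta_{\bowtie_2}$.

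First I would fix conventions: on $M_{\mathcal{G}}$, the lattice of equivalence relations on $\mathcal{C}$, the partial order is refinement, say $\bowtie_1 \preceq \bowtie_2$ means $\bowtie_1$ refines $\bowtie_2$ (every $\bowtie_1$-class is contained in a $\bowtie_2$-class, i.e. $c \bowtie_1 d \Rightarrow c \bowtie_2 d$); on $W_{\mathcal{G}}^{P}$ the order is inclusion of subspaces. Then the key step is the chain of equivalences
\[
\bowtie_1 \preceq \bowtie_2
\iff \bigl(\forall c,d:\ c\bowtie_1 d \Rightarrow c\bowtie_2 d\bigr)
\iff \bigl(\forall x:\ x\in\Delta_{\bowtie_2}\Rightarrow x\in\Delta_{\bowtie_1}\bigr)
\iff \Delta_{\bowtie_2}\subseteq\Delta_{\bowtie_1}.
\]
The forward direction of the middle implication is immediate from the definition of $\Delta_{\bowtie}$. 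For the reverse direction one argues contrapositively: if $c \bowtie_1 d$ but $c \not\bowtie_2 d$, then the point $x$ that is constant on each $\bowtie_2$-class but takes distinct values on distinct $\bowtie_2$-classes lies in $\Delta_{\bowtie_2}$ yet has $x_c \neq x_d$, so $x \notin \Delta_{\bowtie_1}$; this uses that the cell phase space has at least two points (or positive dimension), which holds for any genuine total phase space. This establishes that $\delta$ is an order-embedding for the reversed order, and since it is onto $W_{\mathcal{G}}^{P}$ by construction, it is an order-anti-isomorphism.

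Finally I would invoke the elementary lattice-theoretic fact: an order-isomorphism between posets that happens to reverse order (equivalently, an isomorphism $P \to Q^{\mathrm{op}}$) carries greatest lower bounds to least upper bounds and vice versa, because meet and join are characterised purely in order-theoretic terms (as the unique $\preceq$-greatest lower bound / $\preceq$-least upper bound). Hence $\delta(\bowtie_1 \wedge \bowtie_2) = \Delta_{\bowtie_1} \vee \Delta_{\bowtie_2}$ and $\delta(\bowtie_1 \vee \bowtie_2) = \Delta_{\bowtie_1} \wedge \Delta_{\bowtie_2}$, so $\delta$ is a lattice anti-isomorphism.

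I do not expect a serious obstacle here; the only mild subtlety is making sure the order conventions on $M_{\mathcal{G}}$ and on subspaces are stated so that $\delta$ genuinely reverses (rather than preserves) order, and noting the harmless nondegeneracy assumption on the cell phase space needed for the contrapositive argument. The rest is a routine unwinding of definitions plus the standard fact that meet/join are order-dual notions.
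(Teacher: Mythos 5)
Your proof is correct. Note that the paper itself gives no proof of this lemma: it appears in the preliminaries as a quoted result from \cite{Stewart-2007}, so your argument simply supplies the standard justification --- establish the two-way equivalence $\bowtie_1\prec\bowtie_2\Leftrightarrow\Delta_{\bowtie_2}\subseteq\Delta_{\bowtie_1}$ and then use that an order-reversing bijection whose inverse is also order-reversing must swap meets and joins, since these are purely order-theoretic notions. The only cosmetic remark is that in the contrapositive step you do not need a point taking distinct values on all $\bowtie_2$-classes; it suffices to take one value on the $\bowtie_2$-class of $c$ and a different value on all remaining cells, so only two distinct points in the cell phase space are required (which also makes the nondegeneracy hypothesis you flag as mild as possible, and is the same hypothesis already implicit in the claim that $\delta$ is a bijection).
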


An equivalence relation $\bowtie$ on $\mathcal{C}$ can be interpreted as a coloring of $\mathcal{C}$ in which $\bowtie$-equivalent cells receive the same color. For regular networks, a coloring is \textit{balanced} if any pair of cells with color $r$ have the same number of inputs from cells of color $b$ for each $b$. If $\bowtie$ is balanced, we call $\Delta_{\bowtie}$ a \textit{balanced polydiagonal} (or \textit{synchrony subspace}).

Let $\Lambda_{\mathcal{G}}$ be the set of all balanced equivalence relations for $\mathcal{G}$, and denote the set of all synchrony subspaces for $\mathcal{G}$ by $V_{\mathcal{G}}^{P}$. Then
\[\Lambda_{\mathcal{G}}\subseteq M_{\mathcal{G}}\quad V_{\mathcal{G}}^{P}\subseteq W_{\mathcal{G}}^{P}.\]

Let $\bowtie_{1},\bowtie_{2}\in\Lambda_{\mathcal{G}}$. We say that $\bowtie_{1}$ refines $\bowtie_{2}$, denoted by $\bowtie_{1}\prec\bowtie_{2}$, if and only if
\[c\bowtie_{1}d\Rightarrow c\bowtie_{2}d.\]
That is, the partition of $\mathcal{C}$ defined by $\bowtie_{1}$ is finer than that defined by $\bowtie_{2}$ in the sense that for any $c\in\mathcal{C}$
\[[c]_{1}\subseteq [c]_{2}\]
where $[c]_{j}$ is the $\bowtie_{j}$-equivalence class of $c$ for $j=1,2$. Observe that $\prec$ is a partial ordering on $\Lambda_{\mathcal{G}}$. As in Lemma \ref{lem:reverse-order}, forming polydiagonals reverses order:
\[\bowtie_{1}\prec\bowtie_{2}\Leftrightarrow \Delta_{\bowtie_{1}}\supseteq\Delta_{\bowtie_{2}}.\]

In \cite{Stewart-2007}, it has been shown that $\Lambda_{\mathcal{G}}$ is not a sublattice of $M_{\mathcal{G}}$, however, these two lattices share the same join operation.  

{\it Admissible} maps for a given coupled cell network $\mathcal{G}$ are defined as maps that are compatible with the network structure, which is characterised by the corresponding adjacency matrix. We say that a subspace $V$ of the total phase space $P$ is \textit{admissibly invariant} if $f(V)\subseteq V$ for every admissible map $f$ on $P$. A crucial property of a balanced equivalence relation is given in the following Theorem.

\begin{thm}(see [\cite{Stewart-2007}, Theorem 5.6] and [\cite{GST05}, Theorem 4.3])
Let $\bowtie$ be an equivalence relation on a coupled cell network. Then $\Delta_{\bowtie}$ is admissibly invariant if and only if $\bowtie$ is balanced.
\end{thm}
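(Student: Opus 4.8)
Although this theorem is quoted from \cite{Stewart-2007,GST05}, here is how I would establish it in the regular-network setting used throughout the paper. The plan rests on the normal form of admissible maps for a regular network $\mathcal{G}$ of valency $k$: every admissible $f$ on $P=P_0^{\,n}$ is determined by a single function $g(u;v_1,\dots,v_k)$, smooth and symmetric in the arguments $v_1,\dots,v_k$, via $(f(x))_c = g\big(x_c;\,x_{i_1(c)},\dots,x_{i_k(c)}\big)$, where $i_1(c),\dots,i_k(c)$ lists, with multiplicity, the cells having an edge into $c$. I would cite this from \cite{Stewart-2007,GS06} rather than reprove it, and then treat the two implications separately.

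For ``$\bowtie$ balanced $\Rightarrow$ $\Delta_{\bowtie}$ admissibly invariant'' I would fix an admissible $f$, a point $x\in\Delta_{\bowtie}$, and a pair $c\bowtie d$, and check $(f(x))_c=(f(x))_d$. Since $x$ is constant on $\bowtie$-classes, all cells of a given colour $b$ carry one common value $\xi_b$; and since $\bowtie$ is balanced, $c$ and $d$ receive colour-$b$ inputs with the same multiplicity $m_b$ for every $b$. Hence the multiset $\{x_{i_1(c)},\dots,x_{i_k(c)}\}$ equals $\biguplus_b \xi_b^{(m_b)}$, which is also the input-value multiset of $d$. As $x_c=x_d$ and $g$ is symmetric in its last $k$ slots, the two components agree; since $c\bowtie d$ was arbitrary, $f(x)\in\Delta_{\bowtie}$.

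For the converse I would argue contrapositively. Assume $\bowtie$ is not balanced, and fix a witness: cells $c\bowtie d$ and a colour $b$ such that $c$ and $d$ receive $p\neq q$ inputs, respectively, from colour-$b$ cells. Taking $P_0=\br$ for concreteness (for general $P_0$ a routine modification using a linear functional works), label the colours by \emph{distinct} reals and let $x\in\Delta_{\bowtie}$ be the point realising those labels, so that coordinates of $x$ coincide exactly on $\bowtie$-classes. By Lagrange interpolation pick a polynomial $\phi$ with value $1$ at the label of colour $b$ and $0$ at every other colour label, and set $g(u;v_1,\dots,v_k)=\sum_{j=1}^{k}\phi(v_j)$, which is smooth and symmetric, hence defines an admissible $f$. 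This $f$ reads off at $x$ the number of colour-$b$ inputs of a cell, so $(f(x))_c=p\neq q=(f(x))_d$ while $x\in\Delta_{\bowtie}$, contradicting admissible invariance.

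The main obstacle is really the preliminary step of pinning down the normal form of admissible maps for a regular network; once the single symmetric generating function $g$ is available, ``balanced $\Rightarrow$ invariant'' is bookkeeping with multisets, and the only point needing care in the converse is that the test point $x$ has no accidental coordinate coincidences and that $\phi$ separates colour $b$ from all other colours simultaneously, which is exactly why distinct colour labels are used. As an alternative to the interpolating $\phi$, I would note that $g(u;v_1,\dots,v_k)=\sum_j v_j^{\,r}$ works for a suitable $r\le k$, since power sums up to order $k$ separate multisets of size $k$ (Newton's identities).
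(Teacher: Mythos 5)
The paper does not actually prove this theorem: it is imported from \cite{Stewart-2007} (Theorem 5.6) and \cite{GST05} (Theorem 4.3), so there is no internal proof to compare against; what you have written is a correct, self-contained proof in the regular-network setting, and it follows essentially the same route as those sources. Your forward direction (constant values on colour classes, equal per-colour input multiplicities, symmetry of $g$ in the coupling slots) is exactly the standard bookkeeping, and your converse is sound: the interpolating polynomial $\phi$ yields a smooth, symmetric $g$, hence an admissible $f$ that counts colour-$b$ inputs, and at a test point of $\Delta_{\bowtie}$ with pairwise distinct colour labels it produces $(f(x))_c=p\neq q=(f(x))_d$ for a pair $c\bowtie d$, contradicting invariance; the power-sum alternative also works since the power sums of order up to $k$ determine a size-$k$ multiset of reals. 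The one caveat worth flagging: the theorem as quoted is stated for an arbitrary coupled cell network, where ``balanced'' is formulated via input isomorphisms and the proof in \cite{GST05} runs through the multiarrow formalism with several cell and edge types; your argument covers only the regular case (single cell and edge type, one symmetric generating function $g$). That restriction is harmless for this paper, which works exclusively with regular networks, but your proof replaces the citation only in that setting, not in the generality in which the theorem is stated.
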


Since the network structure is characterized by the corresponding adjacency matrix, we immediately obtain the following result.

\begin{proposition}(see [~\cite{Kamei-2013}, Proposition 3.5])
Let $\mathcal{G}$ be a coupled cell network associated with the admissible map $f$. Let $A$ be the adjacency matrix of $\mathcal{G}$. $\bowtie$ is balanced if and only if $A(\Delta_{\bowtie})\subseteq \Delta_{\bowtie}$ where $\Delta_{\bowtie}$ is a synchrony subspace associated with $\bowtie$.
\end{proposition}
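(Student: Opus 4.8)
The plan is to prove the two implications separately, in each case by unwinding the explicit action of $A$ on the polydiagonal $\Delta_{\bowtie}$; the statement is essentially the matrix reformulation, restricted to linear admissible maps, of the Theorem just quoted.

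For the forward direction, suppose $\bowtie$ is balanced. The quickest argument invokes that Theorem: a balanced $\bowtie$ makes $\Delta_{\bowtie}$ admissibly invariant, and since for a regular network the linear admissible maps on $P$ are built from $\alpha\,\mathrm{Id}+\beta A$ with scalars $\alpha,\beta$, admissible invariance of the subspace $\Delta_{\bowtie}$ forces $A(\Delta_{\bowtie})\subseteq\Delta_{\bowtie}$ (on a subspace, invariance under $\alpha\,\mathrm{Id}+\beta A$ for all $\alpha,\beta$ is equivalent to invariance under $A$). Alternatively, and more self-contained, one writes any $x\in\Delta_{\bowtie}$ as $x_j=\xi_{[j]}$, a common value on each $\bowtie$-class, so that $(Ax)_c=\sum_{b}\bigl(\sum_{j\in[b]}a_{cj}\bigr)\xi_b$; the balanced condition says each inner sum $\sum_{j\in[b]}a_{cj}$ depends only on the class of $c$, hence $(Ax)_c=(Ax)_d$ whenever $c\bowtie d$, i.e. $Ax\in\Delta_{\bowtie}$.

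For the converse, suppose $A(\Delta_{\bowtie})\subseteq\Delta_{\bowtie}$. Let $C_1,\dots,C_m$ be the $\bowtie$-classes, and for each $b$ let $u^{(b)}\in P$ be the vector whose cell coordinates equal a fixed nonzero $v$ on cells of $C_b$ and vanish elsewhere. Being constant on every $\bowtie$-class, $u^{(b)}\in\Delta_{\bowtie}$, so $Au^{(b)}\in\Delta_{\bowtie}$ by hypothesis. Since $(Au^{(b)})_c=\bigl(\sum_{j\in C_b}a_{cj}\bigr)v$, membership in $\Delta_{\bowtie}$ yields $\sum_{j\in C_b}a_{cj}=\sum_{j\in C_b}a_{dj}$ whenever $c\bowtie d$. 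As $b$ is arbitrary, any two $\bowtie$-equivalent cells receive the same number of input edges from each $\bowtie$-class, which is exactly the definition of $\bowtie$ being balanced.

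I do not expect a genuine obstacle. The only points needing care are: (i) verifying that the indicator vectors $u^{(b)}$ really lie in $\Delta_{\bowtie}$ and that the computation of $Au^{(b)}$ is legitimate coordinate-wise on the (possibly multidimensional) cell phase space; and (ii) in the forward direction, making precise the passage from ``invariant under every admissible map'' to ``invariant under $A$,'' if one chooses the route via the preceding Theorem rather than the direct computation. Either route closes the argument.
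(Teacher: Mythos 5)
Your proof is correct. Note, though, that the paper itself gives no written proof of this proposition: it is quoted from \cite{Kamei-2013} (Proposition 3.5 there) and presented as an immediate consequence of the preceding theorem ($\Delta_{\bowtie}$ is admissibly invariant iff $\bowtie$ is balanced), the implicit point being that the admissible maps are characterised through the adjacency matrix --- for a regular network the linear admissible maps are exactly $\alpha\,\mathrm{Id}+\beta A$, so admissible invariance pins down $A$-invariance and conversely. Your first route for the forward direction is precisely this implicit argument, and your direct computations are a genuinely self-contained alternative: writing $(Ax)_c=\sum_b\bigl(\sum_{j\in C_b}a_{cj}\bigr)\xi_b$ for the forward direction, and testing $A$-invariance on the class-indicator vectors $u^{(b)}$ for the converse, recovers the balanced condition without ever invoking admissible maps. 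What the elementary route buys is independence from the admissibility theorem (useful since the converse would otherwise need the full "invariant under all admissible maps" hypothesis, whereas you only assume invariance under the single linear map $A$); what the paper's route buys is brevity, since the heavy lifting is already done in the quoted theorem. Your two points of care are handled correctly: the $u^{(b)}$ are constant on every $\bowtie$-class and hence lie in $\Delta_{\bowtie}$, and the coordinate-wise computation is legitimate whether one takes one-dimensional internal phase spaces (as the paper effectively does, with $A:\bc^n\to\bc^n$) or lets $A$ act blockwise on a multidimensional cell phase space.
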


\subsection{Quotients}
\label{subsec:quotinets}
Any balanced coloring $\bowtie$ on a regular network $\mathcal{G}$ determines a quotient network $\mathcal{G}_{\bowtie}$. A quotient network $\mathcal{G}_{\bowtie}$ is a regular network with a valency $r$ if, and only if, the original network $\mathcal{G}$ is a regular network with a valency $r$.  The set of cells of $\mathcal{G}_{\bowtie}$ is formed by one cell of each color and the edges in the quotient network are the projection of edges in the original network. The dynamics of $\mathcal{G}_{\bowtie}$ corresponds to synchronous dynamics of $\mathcal{G}$; that is, dynamics restricted to $\Delta_{\bowtie}$. 

\subsection{Young Tableau}
Suppose $A$ consists of $m$ Jordan blocks with sizes $k_{1},\ldots, k_{m}$, where $m\leq n$. Young tableau is a way to represent the partitions of a positive integer $k$, i.e., different ways to write $k$ as a a sum of positive integers:
\[k=k_{1}+k_{2}+\cdots + k_{m}.\]
The Young tableau (diagram) of $(k_{1}, k_{2},\ldots,k_{m})$ is a diagram of $m$ rows of square boxes with $k_{i}$ boxes on the $i$-th row for $i=1,2,\ldots,m$. For example Young tableau of $(4,2,2,1)$ is shown in Figure \ref{fig:Young_Tableau}.

\begin{figure}[h!]
\begin{center}
\includegraphics[scale=0.3]{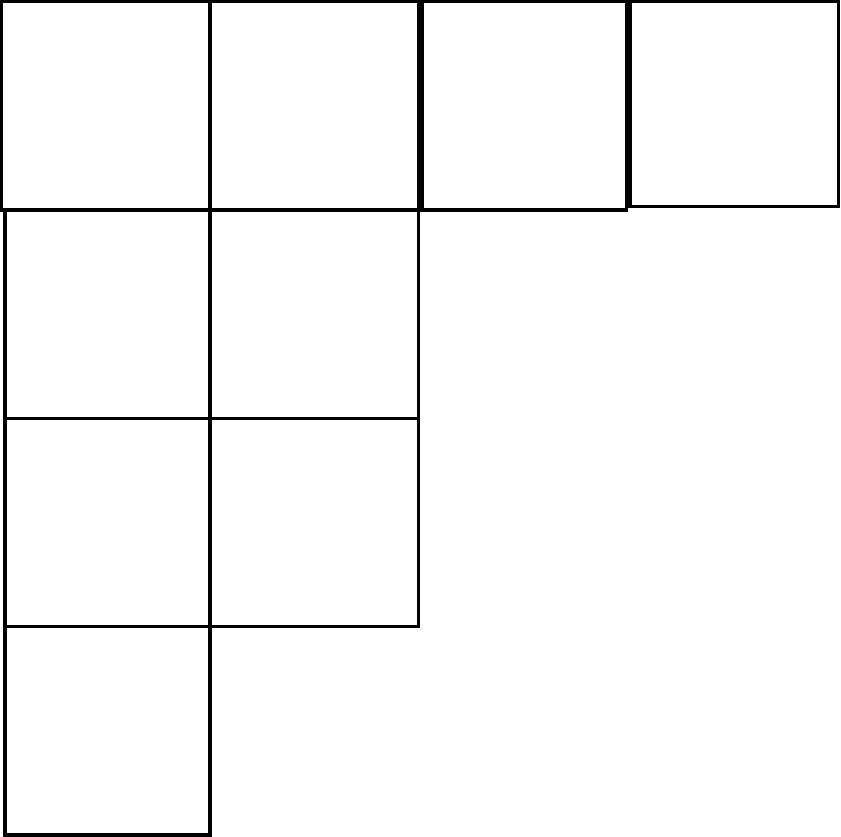}
\end{center}
\caption{Young tableau of $(4,2,2,1)$.}
\label{fig:Young_Tableau}
\end{figure}

\section{Lattice of Integer Tuples $\mathcal{L}_{M}$}
\label{sec:LM}
We construct a lattice $\mathcal{L}_{M}$, which is uniquely determined by positive integers $k_{1},\ldots, k_{m}$. We associate a lattice $\mathcal{L}_{M}$ with a lattice of synchrony subspaces of an $n$-cell regular network, where the Jordan normal {form} of the adjacency matrix $A$ has $m$ blocks with sizes $k_{1},\ldots, k_{m}$ with $m\leq n$. 

Let $L$ be a lattice of synchrony subspaces of a regular network with simple eigenvalues. We show that $L\subseteq\mathcal{L}_{M}$ with $r\wedge s\in L$ for all $r,s\in L$, termed \textit{closed subset} of $\mathcal{L}_{M}$. In Subsection\ref{subsec:PA_reduction}, we construct a reduced structure of lattice of synchrony subspaces of a regular network with non-simple eigenvalues. Lemma \ref{lem:closed_PA} shows that such a reduced structure for non-simple eigenvalue case is also represented as a closed subset of $\mathcal{L}_{M}$, which leads to the conclusion that a lattice of synchrony subspaces of a regular network with non-simple eigenvalue can be reduced into a lattice structure of simple eigenvalue case.

\subsection{Construction of $\mathcal{L}_{M}$}
\label{subsec:LM}

\begin{Def}
\rm
\label{def:LM}  
Let $\bn$ be the set of positive integers and $m\in \bn$. Consider $m$ positive integers $k_1,\dots, k_m\in \bn$ and define 
\begin{equation*}
\cL_M(k_1,\dots, k_m):=\{(r_1,\dots, r_m): 0\le r_i\le k_i,\,\forall\, i\}
\end{equation*}
together with a partial order
\begin{equation}\label{eq:LM_po}
(r_1,\dots, r_m)\le (s_1,\dots, s_m) \q \Longleftrightarrow\q r_i\le s_i\q\forall\, i. 
\end{equation}
The meet ``$\wedge$'' and the join ``$\vee$'' operations are given by taking minimum and maximum, respectively. That is,
\begin{align}
&(r_1,\dots, r_m)\wedge (s_1,\dots, s_m)=(\min(r_1,s_1),\dots, \min(r_m,s_m))\label{eq:tuple_wedge} \\ 
&(r_1,\dots, r_m)\vee (s_1,\dots, s_m)=(\max(r_1,s_1),\dots, \max(r_m,s_m)). \label{eq:tuple_vee}
\end{align}
The set $\cL_M(k_1,\dots, k_m)$ equipped with (\ref{eq:tuple_wedge})--(\ref{eq:tuple_vee}) is called the {\it lattice of $m$-tuples} of non-negative integers bounded by $k:=(k_1,\dots, k_m)$. 
\END
\end{Def}
 
\subsection{Index on $\mathcal{L}_{M}$} 
For two elements $r,s\in \cL_M(k_1,\dots, k_m)$ such that $r\le s$ and $r\ne s$, we write $r<s$ and call $s$ to be a {\it follower} of $r$, or equivalently, we call $r$ to be a {\it leader} of $s$. If there are no elements $t$ being different from $r,s$ such that $r<t<s$, then we say $s$ is an {\it immediate follower} of $r$ and $r$ is an {\it immediate leader} of $s$. For $r=(r_1,\dots, r_m)\in \cL_M(k_1,\dots, k_m)$, denote by 
$$|r|:=\sum_{i=1}^m r_i.$$

\begin{Def}
\rm 
\label{def:ind} 
An index ``$\ind$'' can be defined on $\cL_M(k_1,\dots, k_m)$ using ``$|\cdot|$''.  For $s\in \cL_M(k_1,\dots, k_m)$ and its immediate leaders $r_1,\dots, r_n$, define 
\[\ind s=|s|-|r_1\vee \cdots \vee r_n|, \q \forall\, s\le k \]
\END
\end{Def}


\begin{lem}
\label{lem:ind}
\begin{itemize}
\item[(i)] $\ind s=\begin{cases}1,\q\text{if $s$ has one non-zero component}\\ 0,\q\text{otherwise}\end{cases}$;
\item[(ii)] $\underset{s\le a}{\sum} \ind s=|a|$, $\forall\, a\le k$.
\end{itemize}
\end{lem}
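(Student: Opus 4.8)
# Proof Proposal

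The plan is to prove both parts directly from the definitions, using the product structure of $\cL_M(k_1,\dots,k_m)$. The key observation is that for a fixed element $s=(s_1,\dots,s_m)$, its immediate leaders are obtained by decreasing exactly one nonzero coordinate by $1$: for each index $j$ with $s_j>0$, the tuple $r^{(j)}$ that agrees with $s$ in every coordinate except the $j$-th, where $r^{(j)}_j = s_j-1$, is an immediate leader of $s$; conversely, any immediate leader of $s$ arises this way (if a candidate leader $r<s$ differs from $s$ in two coordinates, one can interpolate a strictly intermediate tuple, contradicting immediacy; if it differs in one coordinate by more than $1$, one can likewise interpolate). So the number of immediate leaders equals the number of nonzero components of $s$, and the bottom element $\bot=(0,\dots,0)$ has no immediate leaders at all.

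For part (i): if $s=\bot$, the index sum in Definition \ref{def:ind} is empty, so by the usual convention the join over an empty set is $\bot$ and $\ind s = |s| - 0 = 0$; this falls under the "otherwise" case. If $s\ne\bot$ has nonzero components exactly at indices $j$ in some nonempty set $J$, then the immediate leaders are the $r^{(j)}$ for $j\in J$. Taking the join (coordinatewise maximum) of all these: in coordinate $i$, the value is $\max_{j\in J} r^{(j)}_i$, which equals $s_i$ whenever $|J|\ge 2$ (since for any $i$ there is some $j\in J$ with $j\ne i$, giving $r^{(j)}_i=s_i$), and when $|J|=1$, say $J=\{j_0\}$, the join is just $r^{(j_0)}$ itself, which has $i$-th coordinate $s_{j_0}-1$ in position $j_0$ and agrees with $s$ elsewhere. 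Hence $|r^{(1)}\vee\cdots| = |s|$ when $s$ has at least two nonzero components, giving $\ind s=0$; and $|r^{(j_0)}| = |s|-1$ when $s$ has exactly one nonzero component, giving $\ind s = 1$. This establishes (i).

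For part (ii): by (i), $\sum_{s\le a}\ind s$ counts the number of elements $s\le a$ having exactly one nonzero component. An element with exactly one nonzero component is of the form $(0,\dots,0,t,0,\dots,0)$ with $t$ in position $i$ and $1\le t\le k_i$; such an element satisfies $s\le a=(a_1,\dots,a_m)$ if and only if $t\le a_i$. Thus the count of such elements with nonzero entry in position $i$ is exactly $a_i$, and summing over $i=1,\dots,m$ gives $\sum_{i=1}^m a_i = |a|$, as claimed.

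I do not anticipate a serious obstacle here; the only point requiring care is the combinatorial characterization of immediate leaders as single-coordinate decrements, and the treatment of the empty-join convention for the bottom element. Everything else is a direct computation with coordinatewise maxima. If one wishes to avoid relying on an empty-join convention, one can simply state Definition \ref{def:ind} as applying to $s>\bot$ and note $\ind\bot=0$ separately; part (ii) is unaffected since $\ind\bot=0$ either way.
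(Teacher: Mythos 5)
Your proof is correct and follows essentially the same route as the paper: identify the immediate leaders of $s$ as single-coordinate decrements, compute the join to get (i), and then count the elements with exactly one non-zero component below $a$ to get (ii). You add slightly more care than the paper (justifying the characterization of immediate leaders and handling the empty join at the bottom element), but the substance is the same.
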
 

\begin{proof} (i) Let $s=(s_1,\dots, s_m)\in \cL_M(k_1,\dots, k_m)$. If $s$ has one non-zero component, say $s_i$, then $s$ has a unique immediate leader $(s_1,\dots, s_i-1,\dots, s_m)$. Thus, $\ind s=|s|-|(s_1,\dots, s_i-1,\dots, s_m)|=1$. If $s$ has no non-zero components, then  $s=(0,\dots, 0)$, so $\ind s =0$. Otherwise, assume without loss of generality that $s$ has two non-zero components $s_i$ and $s_j$. Then, $s$ has two immediate leaders:
\[r:=(s_1,\dots,s_i-1,\dots, s_m),\q  t:=(s_1,\dots,s_j-1,\dots, s_m).\]
It follows that $r\vee t=s$ and $\ind s=|s|-|r\vee t|=0$.

\noi (ii)  Let $a:=(a_1,\dots, a_m)$. There are precisely $|a|$ elements having one non-zero components such that $s\le a$. They are $(1,0,\dots, 0),$ $(2,0,\dots, 0)$, $\dots$, $(a_1,0,\dots, 0)$, $\dots$, $(0,0,\dots, 1)$, $(0,0,\dots, 2)$, $\dots$, $(0,0,\dots, a_m)$. The statement (ii) follows from (i).
\qed
\end{proof}

\subsection{Closed Subset of $\mathcal{L}_{M}$}
\label{subsec:closed_subset}
\begin{Def}
\rm 
\label{def:ind_L} 
Let $L=\cL_M(k_1,\dots, k_m)\setminus \{r_1,\dots, r_n\}$ be a subset. For $s\in L$, denote by $F_s$ the set of $r\in\{r_1,\dots, r_n\}$ where $s$ is an immediate follower of $r$. Define an {\it index on $L$} by
 \begin{equation}
 \label{eq:ind_L} 
 \Ind s= \ind s+\underset{r\in F_s}{\sum} \ind r,  \q \forall\, s\in L.
 \end{equation}
\END
\end{Def}

\begin{Def}
\rm 
\label{def:closed}
A subset $L\subset \cL_M(k_1,\dots, k_m)$ is called {\it closed}, if $(k_1,\dots, k_m)\in L$ and $r\wedge s\in L$ whenever $r,s\in L$. 
\END
\end{Def}

\begin{lem}
\label{lem:rem_cl}
Let $L_1,L_2\subset \cL_M(k_1,\dots, k_m)$ be two closed subsets such that $L_2=L_1\setminus \{r_1,\dots, r_n\}$ for some $r_1,\dots, r_n\in \cL_M(k_1,\dots, k_m)$. Then, every $r_i$ has a unique immediate follower in $L_1$ that is also contained in $L_2$. 
\end{lem}
\begin{proof} Let $r\in\{r_1,\dots,r_n\}$. Since $L_2$ is closed, we have $(k_1,\dots, k_m)\in L_2$, thus $r$ has at least a follower that is contained in $L_2$ such as $(k_1,\dots, k_m)$. Assume that $s,t\in L_2$ are two distinct immediate followers of $r$ that are contained in $L_2$. Then, $r< s$, $r<t$ and $r\le s\wedge t< s$.  Since $s$ is assumed to be an immediate follower of $r$, we have $r=s\wedge t$. This contradicts the fact that $L_2$ is closed, since $s,t\in L_2$ but $r\not\in L_2$.
\qed
\end{proof}

\begin{lem}
\label{lem:indL} 
Let $L\subset \cL_M(k_1,\dots, k_m)$ be a closed subset and ``$\Ind$'' be the index on $L$. Denote by $k=(k_1,\dots, k_m)$. Then,
\begin{itemize}
\item[(i)] $\Ind s\ge 0$;
\item[(ii)] ${\underset{s\in L,s\le a}{\sum}} \Ind s=|a|$, $\forall\, a\le k$.
\item[(iii)] $\underset{s\in L}{\sum} \Ind s=|k|$.
\end{itemize}
\end{lem}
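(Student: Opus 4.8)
The plan is to reduce everything to the already-established Lemma \ref{lem:ind} by a "peeling" induction on the number of tuples removed from the full lattice $\cL_M(k_1,\dots,k_m)$. Concretely, I would argue that any closed subset $L$ can be obtained from $\cL_M(k_1,\dots,k_m)$ by removing tuples one at a time, $r_1,\dots,r_N$, in an order compatible with the partial order (remove maximal elements among those to be removed first — note $(k_1,\dots,k_m)$ is never removed since $L$ is closed). The key structural input is Lemma \ref{lem:rem_cl}: if we pass from a closed subset $L_1$ to a closed subset $L_2=L_1\setminus\{r\}$, then $r$ has a \emph{unique} immediate follower $s^\ast$ in $L_1$ that still lies in $L_2$. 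This uniqueness is exactly what makes the index redistribution formula \eqref{eq:ind_L} unambiguous: when $r$ is deleted, its index $\ind r$ is transferred to $s^\ast$, and to no one else.

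First I would prove (iii) as the clean global statement, then derive (ii), then (i). For (iii): set $\Sigma(L):=\sum_{s\in L}\Ind s$. For $L=\cL_M(k_1,\dots,k_m)$ itself, $F_s=\emptyset$ for every $s$, so $\Ind s=\ind s$ and $\Sigma=\sum_s \ind s=|k|$ by Lemma \ref{lem:ind}(ii) applied with $a=k$. Now I claim $\Sigma$ is invariant under a single closed deletion $L_1\mapsto L_2=L_1\setminus\{r\}$. Using the definition $\Ind_{L_i}s=\ind s+\sum_{r'\in F_s^{(i)}}\ind r'$, the only $s$ whose index value changes is the unique immediate follower $s^\ast$ of $r$ in $L_2$ guaranteed by Lemma \ref{lem:rem_cl}: for that $s^\ast$ we gain the term $\ind r$, while the total loses the term $\Ind_{L_1}r=\ind r$ (since before deletion no $F$-set referenced $r$ except possibly... here one must check that $r$ itself did not carry transferred indices, i.e. that deletions are done top-down so that $F_r^{(1)}=\emptyset$ at the moment $r$ is removed — this bookkeeping is the one genuinely delicate point). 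Hence $\Sigma(L_2)=\Sigma(L_1)$, and by induction $\Sigma(L)=|k|$ for every closed $L$.

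For (ii), I would replay the same invariance argument but restricted to the down-set $\{s\le a\}$: deleting $r$ only affects the partial sum over $\{s\in L: s\le a\}$ if $r\le a$, and then the lost term $\ind r$ is compensated by the gained term at $s^\ast$, where one checks $s^\ast\le a$ as well — this follows because $s^\ast$ is an immediate follower of $r$, and if $r\le a$ but every immediate follower of $r$ within the ambient lattice fails to be $\le a$, then... actually the cleaner route is: among immediate followers of $r$ in $\cL_M$, at least one is $\le a$ precisely when $r<a$, and when $r=a$ the term $\ind r$ is simply the top contribution and one handles it directly. So (ii) follows by the same telescoping, with base case Lemma \ref{lem:ind}(ii). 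Statement (iii) is then the special case $a=k$ of (ii), so in the writeup I would actually prove (ii) first and deduce (iii). Finally (i), nonnegativity, is immediate from the definition \eqref{eq:ind_L}: $\Ind s=\ind s+\sum_{r\in F_s}\ind r$ is a sum of values of $\ind$, each of which is $0$ or $1$ by Lemma \ref{lem:ind}(i), hence $\Ind s\ge 0$.

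The main obstacle I anticipate is not any single inequality but the \emph{order-of-deletion bookkeeping}: making precise that an arbitrary closed subset arises via a chain of closed subsets each obtained by removing one tuple, and that along such a chain the "transferred index" attached to a surviving node $s$ is exactly $\sum_{r\in F_s}\ind r$ with the $F_s$ computed relative to the \emph{original} lattice — i.e. that the peeling process is confluent and the final index values do not depend on the deletion order. I would handle this by always deleting a tuple that is maximal among those still slated for removal; this guarantees that when $r$ is removed it has never received a transfer (nothing above it among the removed set remains to transfer to it), so the invariance step above is valid verbatim, and an easy induction identifies the accumulated transfers with the $F_s$-sum in \eqref{eq:ind_L}.
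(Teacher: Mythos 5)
Your proposal is correct in substance, but it is organized differently from the paper's proof, which is a one-shot regrouping rather than a peeling induction. The paper applies Lemma \ref{lem:rem_cl} just once, to the pair $L_1=\cL_M(k_1,\dots,k_m)$ and $L_2=L$: each removed $r_i$ acquires a unique immediate follower $s_i$ in $L$, so the sets $F_s$ partition $\{r_1,\dots,r_n\}$, and the double sum $\sum_{s\le a}\bigl(\ind s+\sum_{r\in F_s}\ind r\bigr)$ regroups directly into $\sum_{s\le a}\ind s=|a|$ via Lemma \ref{lem:ind}(ii); no deletion order, no chain of intermediate subsets, and (i) is read off from the definition exactly as you do. Your route instead deletes one tuple at a time and proves invariance of the total index, which forces you to verify two extra facts: that each intermediate set is closed, and that the accumulated transfers coincide with the $F_s$-sums of Definition \ref{def:ind_L}. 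Both do hold with your maximal-first order --- if $r$ is maximal among the elements still to be removed, every element of the current set strictly above $r$ already lies in $L$ (the meet argument of Lemma \ref{lem:rem_cl} gives closedness of the intermediate set), so $r$ has received no transfer when it is deleted and its transfer lands on its unique immediate follower in $L$ --- so your ``delicate bookkeeping'' is genuinely fine, just longer than the paper's argument; what it buys is an explicit, local description of how the index is redistributed, at the cost of the confluence discussion the paper avoids. One shared caveat: your hesitation about whether the receiving node $s^{\ast}$ satisfies $s^{\ast}\le a$ in part (ii) points at a real restriction --- for $a\notin L$ the identity in (ii) can fail (e.g.\ $\cL_M(2)$ with $L=\{0,2\}$ and $a=1$), and it is rescued precisely when $a\in L$, since then the unique immediate follower in $L$ of any removed $r<a$ is again $\le a$; the paper's regrouping step tacitly uses the same restriction, and $a\in L$ is the only case used later.
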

\begin{proof} By Lemma \ref{lem:ind} (i) and (\ref{eq:ind_L}), we have the non-negativity of $\Ind s$ for all $s\in L$. To show (ii), notice that $\cL_M(k_1,\dots, k_m)$ is closed. Suppose that $L=\cL_M(k_1,\dots, k_m)\setminus \{r_1,\dots, r_n\}$. By Lemma \ref{lem:rem_cl}, every $r_i$ has a unique immediate follower $s_i$ in $L$. Let 
\[S:=\{s_i: i=1,\dots, n\},\]
which is also the set of all elements of $L$ that have immediate leaders in $\{r_1,\dots, r_n\}$. Thus, $\Ind s\ne \ind s$ if and only if $s\in S$. Also, note that
\[\{r_1,\dots, r_m\}=\underset{s\in S}{\sqcup} \big( F_s\cap \{r_1,\dots, r_m\}\big)=\underset{s\in S}{\sqcup} F_s\]
is a partition of the set $\{r_1,\dots, r_m\}$. Thus, we have
\begin{align*}
&\underset{s\in S,s\le a}{\sum} \Ind s=\underset{s\in S,s\le a}{\sum} \big(\ind s + \underset{r\in F_s}{\sum} \ind r\big)\\
&=\underset{s\in S,s\le a}{\sum} \ind s +\underset{r \in \{r_1,\dots, r_n\},r\le a}{\sum} \ind r =\underset{s\in L,s\le a}{\sum} \ind s.
\end{align*}
It follows that $\underset{s\in L,s\le a}{\sum} \Ind s=\underset{s\le a}{\sum} \ind s=|a|$ by Lemma \ref{lem:ind} (ii).

The statement (iii) follows from (ii) as a special case.
\qed
\end{proof}

We show that the index defined by (\ref{eq:ind_L}) can be alternatively  computed using immediate leaders. 
\begin{lem}
\label{rmk:ind_P_def2}
Let $L$ be a closed subset of $\mathcal{L}_{M}(k_{1},\ldots,k_{m})$. Then, the index defined by (\ref{eq:ind_L}) satisfies
\begin{equation}\label{eq:ind_L2}
\textrm{Ind}\ s=|s|-|r_{1}\vee \cdots \vee r_n|, \q \forall\, s\le k,
\end{equation}	
where $k:=(k_{1},\ldots,k_{m})$ and $r_{1},\ldots,r_{n}$ are immediate leaders of $s$.
\end{lem}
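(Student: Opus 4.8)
The plan is to deduce (\ref{eq:ind_L2}) from the summation identity in Lemma \ref{lem:indL}(ii) together with the inclusion--exclusion principle, exploiting that $\mathcal{L}_{M}$ is a product of chains, so that meets and joins are computed coordinatewise and the weight $|\cdot|$ is additive over coordinates. Fix $s\in L$ and let $p_{1},\dots,p_{\ell}$ be the immediate leaders of $s$ \emph{within} $L$ (what the statement calls $r_{1},\dots,r_{n}$); put $q:=p_{1}\vee\cdots\vee p_{\ell}$ and, for $\emptyset\ne I\subseteq\{1,\dots,\ell\}$, $w_{I}:=\bigwedge_{i\in I}p_{i}$, with the convention $q=(0,\dots,0)$ when $\ell=0$. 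Two observations are immediate: since $L$ is closed, $w_{I}\in L$ for every such $I$; and since $\{x\in L:x<s\}$ is finite with maximal elements exactly $p_{1},\dots,p_{\ell}$, while meets in $\mathcal{L}_{M}$ are coordinatewise minima,
\[\{x\in L:x<s\}=\bigcup_{i=1}^{\ell}\{x\in L:x\le p_{i}\},\qquad \bigcap_{i\in I}\{x\in L:x\le p_{i}\}=\{x\in L:x\le w_{I}\}.\]

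First I would apply Lemma \ref{lem:indL}(ii) with $a=s$ and peel off the summand $x=s$, obtaining $\Ind s=|s|-\sum_{x\in L,\,x<s}\Ind x$. Next, I would expand $\sum_{x\in L,\,x<s}\Ind x$ by inclusion--exclusion along the cover displayed above; each intersection term is $\sum_{x\in L,\,x\le w_{I}}\Ind x$, which equals $|w_{I}|$ by Lemma \ref{lem:indL}(ii) applied with $a=w_{I}\le k$. Thus $\sum_{x\in L,\,x<s}\Ind x=\sum_{\emptyset\ne I\subseteq\{1,\dots,\ell\}}(-1)^{|I|+1}|w_{I}|$.

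It then remains to check that this alternating sum equals $|q|$. Since $|y|=\sum_{c=1}^{m}y_{c}$ and $w_{I}$ has $c$-th coordinate $\min_{i\in I}(p_{i})_{c}$, the sum splits as $\sum_{c=1}^{m}\sum_{\emptyset\ne I}(-1)^{|I|+1}\min_{i\in I}(p_{i})_{c}$, whose inner sum is $\max_{1\le i\le\ell}(p_{i})_{c}=q_{c}$ by the elementary identity $\max_{i}v_{i}=\sum_{\emptyset\ne I}(-1)^{|I|+1}\min_{i\in I}v_{i}$ for non-negative integers $v_{i}$ (itself immediate from $v_{i}=\sum_{t\ge1}\mathbf{1}[v_{i}\ge t]$ and inclusion--exclusion on the events $\{v_{i}\ge t\}$). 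Summing over $c$ yields $|q|$, hence $\Ind s=|s|-|q|=|s|-|p_{1}\vee\cdots\vee p_{\ell}|$, which is (\ref{eq:ind_L2}). The case $\ell=0$ (i.e.\ $s$ minimal in $L$) is consistent, since then Lemma \ref{lem:indL}(ii) with $a=s$ already forces $\Ind s=|s|$ and our convention gives $|q|=0$. I expect the only point needing care to be the inclusion--exclusion bookkeeping together with this $\ell=0$ boundary case; the max/min identity and the two preliminary observations are routine.
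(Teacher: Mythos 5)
Your proof is correct, but it takes a genuinely different route from the paper. The paper argues by reduction to the removal of a single element: it first treats $L=\mathcal{L}_{M}(k_{1},\ldots,k_{m})\setminus\{r\}$, compares the two formulas for $\Ind s$ directly using the modularity identity $|s_{1}|+|s_{2}|=|s_{1}\vee s_{2}|+|s_{1}\wedge s_{2}|$ applied to $s_{1}=r$ and the join of the remaining immediate leaders, and then handles a general closed subset by ``successively repeating'' this argument. You instead deduce the formula globally from the summation identity of Lemma \ref{lem:indL}(ii): writing $\Ind s=|s|-\sum_{x\in L,\,x<s}\Ind x$, covering $\{x\in L:x<s\}$ by the down-sets of the immediate leaders $p_{1},\dots,p_{\ell}$, using closedness so that every meet $w_{I}=\bigwedge_{i\in I}p_{i}$ lies in $L$ (so each intersection term is again evaluated by Lemma \ref{lem:indL}(ii) as $|w_{I}|$), and finishing with the coordinatewise max--min inclusion--exclusion identity to identify the alternating sum with $|p_{1}\vee\cdots\vee p_{\ell}|$. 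Your route buys a closed-form, non-iterative argument that avoids bookkeeping of how immediate-leader sets change under successive removals (the least rigorous step of the paper's proof) and it settles the case of an element with no leaders in $L$ cleanly; it does, however, lean entirely on Lemma \ref{lem:indL}(ii), and it is worth noting that you apply that lemma only at $a=s$ and $a=w_{I}$, which belong to $L$ --- exactly the regime where that identity is safe --- whereas the paper's argument is self-contained at this point and uses only the modular law on tuples.
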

\begin{proof}
Assume first that $L=\mathcal{L}_{M}(k_{1},\ldots,k_{m})\setminus\{r\}$. If $s$ is not an immediate follower of $r$, then both (\ref{eq:ind_L}) and (\ref{eq:ind_L2}) lead to  $\Ind s=\ind s$, thus coincide. Otherwise, if $s$ is an immediate follower of $r$, then assume that $r, r_2,\dots, r_j$ are all the distinct immediate leaders of $s$. It follows that $(r\wedge r_2),\dots, (r\wedge r_j)$ are necessarily immediate leaders of $r$ in $\mathcal{L}_{M}(k_{1},\ldots,k_{m})$. There can be additional immediate leaders of $r$, say $t_1,\dots, t_l$, in $\mathcal{L}_{M}(k_{1},\ldots,k_{m})$. Then, in the subset $L$, $s$ has $t_1,\dots, t_l$ and $r_2,\dots, t_j$ as immediate leaders.
	By (\ref{eq:ind_L}), we have
	\begin{equation}\label{eq:ind_L_pf}
	\Ind s=\ind s+\ind r=|s|-|r\vee r_2\vee \dots \vee r_j|+|r|-|t_1\vee\dots \vee t_l\vee(r\wedge r_2)\vee\dots \vee(r\wedge r_j)|
	\end{equation}
	By (\ref{eq:ind_L2}), we have
	\begin{equation}\label{eq:ind_L_pf2}
	\Ind s=|s|-|t_1\vee \dots\vee t_l\vee r_2\vee\dots \vee r_j|.
	\end{equation}
	Notice that $r\vee t_i=r$ and $t_i=r\wedge t_i$ for $i=1,\dots, l$ and generally it holds for $m$-tuples $s_1,s_2$ that
	\begin{equation}\label{eq:prop_wedge}
	|s_1|+|s_2|=|s_1\vee s_2|+|s_1\wedge s_2|.
	\end{equation}
	By letting $s_1=r$, $s_2=t_1\vee\dots\vee t_l\vee r_2\vee\dots\vee r_j$, we obtain that (\ref{eq:ind_L_pf}) coincides with (\ref{eq:ind_L_pf2}). For general subset $L$, one can successively repeat the above argument.	
\qed
\end{proof}

\section{Poset of Integer Tuples $\mathcal{P}_{A}$}
\label{sec:PA}
We describe how an invariant subspace of a given linear transformation can be represented by tuple representation and how tuple representations can be assigned to lattice nodes of $V_{\mathcal{G}}^{P}$ using the eigen-structure of the adjacency matrix $A$ (Proposition \ref{prop:map_T} and Corollary \ref{cor:map_T_unique}).  This leads to a construction of an order-preserving poset $\mathcal{P}_{A}$ to a lattice of synchrony subspaces $V_{\mathcal{G}}^{P}$. 

If we assign tuple representations to synchrony subspaces of regular networks with simple eigenvalues, then all tuple representations are distinct. Such networks have all lattice nodes with non-negative index and the sum of indices equals to the size of the network, which enabled us to enumerate all possible lattice structures for a given network size and, furthermore, positive lattice indices were used to predict the existence of synchrony-breaking bifurcating branches {\cite{Kamei-2009-part1, Kamei-2009-part2}. 
	
However, the situation for regular networks with non-simple eigenvalues is different, since some nodes of $\mathcal{P}_{A}$ might have identical tuple representations when the geometric multiplicity of a repeated eigenvalue is more than one. Thus, we generalise the concept of the index in Definition \ref{def:ind_P},  Lemma \ref{lem:closed_PA} and  Lemma \ref{lem:ind_PA}. We then construct a reduced lattice $\mathcal{P}_{A}/{=}$, where nodes with the same tuple representation are merged into one lattice node while preserving the connectivity associated with the partial order among tuple representation. 

{\subsection{Tuple Representation of Invariant Subspaces}
Let $A$ be a linear transformation acting on a finite-dimensional vector space $P$. We consider an $m$-tuple {of} non-negative {integer} representation of an invariant subspace $M$ of $P$ such that $AM\subset M$ as discussed in \cite{Fillmore-1977,Longstaff-1984}. Let $k_{1}\geq k_{2}\geq\cdots\geq k_{m}$ be the size of the Jordan blocks of $A$. We associate an {$m$-tuple} of non-negative integers {$(r_{1}, \ldots,r_{m})$} with $0\leq r_{i}\leq k_{i}$ to the invariant subspace $M$ if $M$ can be spanned by the first $r_{i}$ (generalised) eigenvectors from each Jordan block with size $k_{i}$ for $i=1,2,\ldots,m$.

\begin{ex}
\rm
Consider the following $9\times 9$ Jordan normal form $A$:
\begin{displaymath}
A=
\left(\begin{tabular}{ccccccccc}
\cline{1-4}
  \multicolumn{1}{|c}{$\lambda_{1}$} & 1 & 0 & \multicolumn{1}{c|}{0} & 0 & 0 & 0& 0 & 0 \\
  \multicolumn{1}{|c}{0} & $\lambda_{1}$ & 1 & \multicolumn{1}{c|}{0} & 0 & 0 & 0 & 0& 0 \\
  \multicolumn{1}{|c}{0} & 0 & $\lambda_{1}$ & \multicolumn{1}{c|}{1} & 0 & 0 & 0 & 0 & 0 \\
  \multicolumn{1}{|c}{0} & 0 & 0 & \multicolumn{1}{c|}{$\lambda_{1}$} & 0 & 0 & 0 & 0 & 0 \\
 \cline{1-6}
 0 & 0 & 0 & 0 &  \multicolumn{1}{|c}{$\lambda_{2}$} & \multicolumn{1}{c|}{1} & 0 & 0 & 0 \\
 0 & 0 & 0 & 0 &  \multicolumn{1}{|c}{0} & \multicolumn{1}{c|}{$\lambda_{2}$} & 0 & 0 & 0  \\
\cline{5-8}
0 & 0 & 0 & 0 & 0 & 0 & \multicolumn{1}{|c}{$\lambda_{3}$} & \multicolumn{1}{c|}{1} & 0 \\
0 & 0 & 0 & 0 & 0 & 0 & \multicolumn{1}{|c}{0} & \multicolumn{1}{c|}{$\lambda_{3}$} & 0  \\
\cline{7-9}
0 & 0 & 0 & 0 & 0 & 0 & 0 & 0 & \multicolumn{1}{|c|}{$\lambda_{4}$} \\
\cline{9-9}
\end{tabular}\right)
=
\left(\begin{tabular}{cccc}
\cline{1-1}
  \multicolumn{1}{|c|}{$J_{1}$} & 0 & 0 &  0\\
  \cline{1-2}
 0 & \multicolumn{1}{|c|}{$J_{2}$}  & 0 & 0 \\
 \cline{2-3}
 0 & 0 & \multicolumn{1}{|c|}{$J_{3}$} & 0  \\
 \cline{3-4}
 0 & 0 & 0 & \multicolumn{1}{|c|}{$J_{4}$} \\
 \cline{4-4}
\end{tabular}\right),
\end{displaymath}
where some eigenvalues can be repeated, i.e., $\lambda_{i}=\lambda_{j}$ for some {$i\neq j$}. This Jordan normal form has one Jordan block $J_{1}$ of size $4$, two Jordan blocks $J_{2}$ and $J_{3}$ of size $2$, and one Jordan block $J_{4}$ of size $1$.

Let {$e_{1},e_{2},\ldots,e_{9}$} be linearly independent (generalised) eigenvectors of $A$ with the following Jordan chain structure:
\begin{center}
\begin{tabular}{cccc}
 \multicolumn{1}{r}{$J_1:$} & \multicolumn{1}{l}{$0\leftarrow e_1\leftarrow e_2 \leftarrow e_{3} \leftarrow e_{4}$}\\
 \multicolumn{1}{r}{$J_2:$} & \multicolumn{1}{l}{$0\leftarrow e_5 \leftarrow e_{6}$} \\
 \multicolumn{1}{r}{$J_3:$} & \multicolumn{1}{l}{$0\leftarrow e_7 \leftarrow e_{8}$}\\
 \multicolumn{1}{r}{$J_4:$} & \multicolumn{1}{l}{$0\leftarrow e_{9}$}\\
\end{tabular}
\end{center}
Note that $e_{1}, e_{5}, e_{7}$ and $e_{9}$ are eigenvectors and the rest of $e_{i}$ are generalised eigenvectors.

We use Young tableau to represent an invariant subspace of $A$. In the Young tableau, each row represents one Jordan block, and the number of boxes represent the dimension. We list each block according to its size in a descending order. Suppose $M=\la e_{1},e_{2},e_{3},e_{5},e_{6},e_{9}\ra$. Then we can represent $M$ with the $4$-tuples $(3,2,0,1)$, which can be represented by the Young tableau as shown in Figure \ref{fig:Young_Tableau_Filled}. Black boxes in the Young tableau illustrate the (generalised) eigenvectors from each Jordan block, which are spanning $M$.
\begin{figure}[h!]
\begin{center}
\includegraphics[scale=0.3]{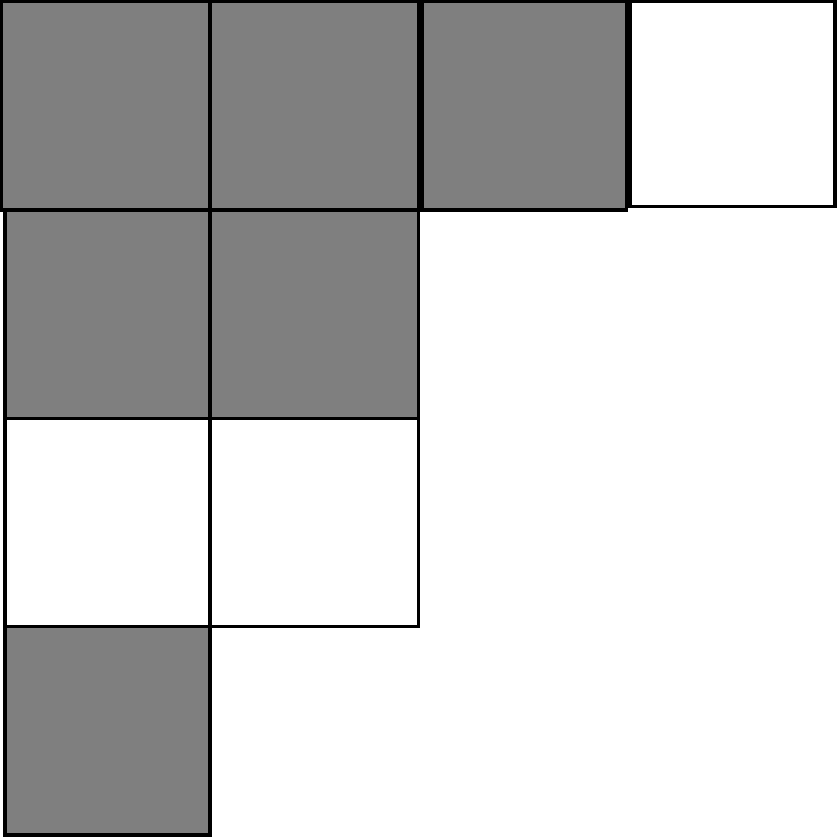}
\end{center}
\caption{Young tableau of the $4$-tuples $(3,2,0,1)$ which represents an invariant subspace $M=\la e_{1},e_{2},e_{3},e_{5},e_{6},e_{9}\ra$.}
\label{fig:Young_Tableau_Filled}
\end{figure}
\END	
\end{ex}

\subsection{Construction of $\mathcal{P}_{A}(k_{1},\ldots,k_{m})$}
\begin{Def}
\rm
\label{def:P_A}  
Let $A:\bc^n\to\bc^n$ be the adjacency matrix of a regular network $\mathcal{G}$. Let $J_{1},\ldots, J_{m}$ be the Jordan blocks of $A$ with  sizes $k_{1},\ldots, k_{m}$, respectively. Define {a multiset}
\begin{equation*}
\mathcal{P}_{A}(k_1,\dots, k_m):=\{(r_1,\dots, r_m): 0\le r_i\le k_i,\,\forall\, i\}
\end{equation*}
together with {a strict partial order
\begin{equation}
\label{eq:partial_order_PA}
(r_1,\dots, r_m) < (s_1,\dots, s_m) \q \Longleftrightarrow\q r_i \leq s_i\q\forall\, i \quad\textrm{and}\quad \exists\, i\textrm{ such that }r_{i}\neq s_{i}.
\end{equation}
}
The meet ``$\wedge$'' and the join ``$\vee$'' operations are given by (\ref{eq:tuple_wedge})--(\ref{eq:tuple_vee}).
The set $\mathcal{P}_{A}(k_1,\dots, k_m)$ equipped with (\ref{eq:tuple_wedge})--(\ref{eq:tuple_vee}) is called the {\it poset of $m$-tuples} of non-negative integers. 
\END
\end{Def}

\begin{proposition}
\label{prop:map_T} 
Let $A:\bc^n\to\bc^n$ be an adjacency matrix of a regular network $\mathcal{G}$ with $m$ Jordan blocks. There exists {a one-to-one} order-preserving map $\cT$ between the lattice of synchrony subspaces $V_{\mathcal{G}}^{P}$ and $\cP_A(k_{1},\ldots,k_{m})$ such that
\begin{align}
\cT:\q V_{\mathcal{G}}^{P}\q &\to \q\cP_A(k_1,\dots, k_m)\notag\\
S\q &\mapsto \q (r_1,\dots, r_m),\label{eq:map_dis}
\end{align}
where $k_i$ is the size of the $i$-th Jordan block of $A$ for $i=1,\dots, m$.
\end{proposition}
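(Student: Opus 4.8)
The plan is to construct the map $\cT$ explicitly from the eigen-structure of $A$, then verify it is well-defined, one-to-one, and order-preserving. First I would fix a Jordan basis $e_1,\dots,e_n$ of $\bc^n$ adapted to the Jordan blocks $J_1,\dots,J_m$ of sizes $k_1\ge\cdots\ge k_m$, with each chain written $0\leftarrow e_{(i,1)}\leftarrow e_{(i,2)}\leftarrow\cdots\leftarrow e_{(i,k_i)}$ as in the worked example. Given a synchrony subspace $S\in V_{\mathcal{G}}^{P}$, we know from the Proposition preceding the excerpt (Kamei--Stewart) that $A(S)\subseteq S$, so $S$ is an $A$-invariant subspace, and the key structural fact — due to Fillmore--Longstaff, which I would cite — is that every $A$-invariant subspace of $\bc^n$ is "staircase" with respect to the Jordan chains: for each block $i$ there is a well-defined integer $r_i(S)$, $0\le r_i(S)\le k_i$, such that $S$ contains a vector whose component in block $i$ has "height" exactly $r_i(S)$ but none of height $>r_i(S)$, and moreover $S$ is spanned (after a suitable change of Jordan basis within repeated-eigenvalue blocks, if geometric multiplicity exceeds one) by the bottom $r_i(S)$ generalized eigenvectors of each chain. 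Define $\cT(S):=(r_1(S),\dots,r_m(S))\in\cP_A(k_1,\dots,k_m)$. This is well-defined because the tuple of heights is a basis-independent invariant of the invariant subspace: $r_i(S)=\dim\big(S\cap (\ker(A-\lambda_i I)^{k_i}\text{-part})\big)$ measured against the filtration by powers of $(A-\lambda_i I)$ restricted to block $i$; I would phrase this via the flags $\{0\}\subset \ker(A-\lambda I)\cap P_i\subset\ker(A-\lambda I)^2\cap P_i\subset\cdots$ so that no choice of Jordan vectors is involved.

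Next I would check order-preservation. If $S\subseteq S'$ are synchrony subspaces, then every height achieved in block $i$ by a vector of $S$ is also achieved by a vector of $S'$, so $r_i(S)\le r_i(S')$ for all $i$, i.e. $\cT(S)\le\cT(S')$ (using either the non-strict order (\ref{eq:LM_po}) or its strict version (\ref{eq:partial_order_PA}); when $S\subsetneq S'$ the dimensions differ so $\sum r_i(S)<\sum r_i(S')$ and at least one coordinate strictly increases). Conversely, injectivity: if $\cT(S)=\cT(S')$ we at least get $\dim S=\sum_i r_i(S)=\sum_i r_i(S')=\dim S'$; in the simple-eigenvalue case (all $k_i$ distinct eigenvalues, and in particular whenever no eigenvalue is repeated) the invariant subspace is uniquely determined by its height tuple — this is exactly the Fillmore--Longstaff classification — so $S=S'$ and $\cT$ is genuinely injective. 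I should be careful here: the paper only claims $\cT$ is "one-to-one" as a map into the multiset $\cP_A$, and in the non-simple case two distinct synchrony subspaces can carry the same tuple (that is the whole point of the later reduction $\mathcal{P}_A/{=}$). So the cleanest reading — and the one I would adopt — is that $\cT$ is a well-defined, order-preserving map and that the "one-to-one" refers to it being single-valued (each $S$ gets exactly one tuple), with the later quotient handling genuine non-injectivity; alternatively, if the intended claim is literal injectivity, then it holds precisely because $V_{\mathcal{G}}^{P}$ consists of polydiagonals and one shows no two distinct polydiagonals have the same height data. I would state which interpretation is used and prove accordingly.

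The main obstacle is the well-definedness step when a repeated eigenvalue has geometric multiplicity greater than one — say blocks $J_2,J_3$ both belong to $\lambda$, as in the running example. Then the Jordan basis is not canonical: any invertible change of coordinates on the $\lambda$-eigenspace reshuffles $e_5,e_7$ (and correspondingly the chains), so a priori the integers $r_2(S),r_3(S)$ could depend on the choice. The resolution is that the \emph{multiset} $\{r_2(S),r_3(S)\}$ (more precisely, the partition recording how $S$ meets the tower of kernels of powers of $(A-\lambda I)$ within the $\lambda$-primary component) is invariant — this is the Fillmore--Longstaff description of invariant subspaces of a single Jordan structure — and after ordering $k_2\ge k_3$ and making a compatible convention (e.g. requiring the tuple to be the lexicographically largest achievable, or fixing a "staircase-normal" basis), the assignment becomes a genuine function. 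This is why $\cP_A$ is introduced as a \emph{multiset} rather than a set, and why the partial order (\ref{eq:partial_order_PA}) is only a strict preorder that may fail antisymmetry on the image. I would devote the bulk of the argument to making this canonicalization precise, then the order-preservation and single-valuedness follow routinely, and Corollary \ref{cor:map_T_unique} (referenced but not shown in the excerpt) presumably pins down the remaining choice.
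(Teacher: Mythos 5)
Your construction is essentially the paper's own proof: the paper likewise decomposes $\bc^n$ into generalized eigenspaces, represents a synchrony subspace (which is $A$-invariant) by the number of initial Jordan-chain vectors spanning $S\cap E_{\lambda_i}$ for each block, explicitly notes the non-uniqueness of this assignment when blocks of equal size occur at the same eigenvalue, and obtains order preservation from $S_1\cap E_{\lambda_i}\subseteq S_2\cap E_{\lambda_i}$ together with $\dim S_1<\dim S_2$ for strictness. Your extra discussion of canonicalization and of the reading of ``one-to-one'' (single-valuedness into the multiset $\cP_A$ rather than injectivity of tuples) matches what the paper defers to Remark \ref{rmk:oder_preserv} and Corollary \ref{cor:map_T_unique}, so the argument is the same in substance.
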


\begin{proof}
Let $S_1\subset S_2$ be two synchrony subspaces of dimension $d_{1}$ and $d_{2}$, respectively. We want to show that there exists a map $\mathcal{T}$ such that $\mathcal{T}(S_{1})<\mathcal{T}(S_{2})$. We denote $\mathcal{T}(S_{1})=(r_{1}^{1},\ldots,r_{m}^{1})$ and $\mathcal{T}(S_{2})=(r_{1}^{2},\ldots,r_{m}^{2})$.

Let $A:\bc^n\to\bc^n$ be {the} adjacency matrix of a regular network $\mathcal{G}$ with distinct eigenvalues $\lambda_{1},\ldots,\lambda_{k}$. We decompose
\[\bc^{n}=E_{\lambda_{1}}\oplus\cdots\oplus E_{\lambda_{k}}.\]

Consider 
\[S_2\cap E_{\lambda_{i}}=\bigoplus_{j=1}^{l_{i}}J_{j}^{i},\]
where each $J_{j}^{i}$ is the span of the corresponding Jordan chain, and $l_{i}$ is the number of Jordan blocks corresponding to an eigenvalue $\lambda_{i}$ of $A$. Denote by $k_{1}\geq\cdots\geq k_{l_{i}}$ the size of Jordan blocks in decreasing order. Define a map $\mathcal{T}$ which assigns an non-negative integer $r_{j}^{2}$ to each Jordan block $J_{j}^{i}$ in the following way:
\[\mathcal{T}(S_{2}\cap E_{\lambda_{i}})=(r_{1}^{2},\ldots,r_{l_{i}}^{2}) \quad\textrm{which satisfies}\quad 0\leq r_{j}^{2}\leq k_{j} \quad\forall j=1,\ldots, l_{i}.\] 

This means that the space $S_2\cap E_{\lambda_{i}}$ can be spanned by the first $r_{1}^{2}$ vectors of the Jordan chain of the block $J_{1}^{i}$, to the first $r_{l_{i}}^{2}$ vectors of the Jordan chain of the block $J_{l_{i}}^{i}$.
Note that if $k_{c}=k_{d}$ for some $c,d$, then the assignment of non-negative integers are not unique.

Since $(S_1\cap E_{\lambda_{i}})\subset (S_2\cap E_{\lambda_{i}})$, the map $\mathcal{T}$ assigns a tuple of integers as follows:
\begin{equation}
\label{eq:order}
\mathcal{T}(S_{1}\cap E_{\lambda_{i}})=(r_{1}^{1},\ldots,r_{l_{i}}^{1}),\quad\textrm{which satisfies}\quad (r_{1}^{1},\ldots,r_{l_{i}}^{1})\leq(r_{1}^{2},\ldots,r_{l_{i}}^{2}).
\end{equation}
 
We have
\begin{eqnarray*}
\mathcal{T}(S_{1}) &=& \mathcal{T}((S_{1}\cap E_{\lambda_{1}})\oplus\cdots \oplus (S_{1}\cap E_{\lambda_{k}})) \\
&=& (r_{1}^{1},\ldots,r_{m}^{1}), \\
\mathcal{T}(S_{2}) &=& \mathcal{T}((S_{2}\cap E_{\lambda_{1}})\oplus\cdots\oplus (S_{2}\cap E_{\lambda_{k}})) \\
&=& (r_{1}^{2},\ldots,r_{m}^{2}).
\end{eqnarray*}
Since the relationship (\ref{eq:order}) is true for all eigenvalues, $(r_{1}^{1},\ldots,r_{m}^{1})\leq (r_{1}^{2},\ldots,r_{m}^{2})$. Therefore $\mathcal{T}(S_{1})\leq \mathcal{T}(S_{2})$. Since $S_{1}\neq S_{2}$, there are some $j$ such that $r_{j}^{1}<r_{j}^{2}$. Thus, there exists an order-preserving map such that if $S_{1}\subset S_{2}$ then $\mathcal{T}(S_{1})<\mathcal{T}(S_{2})$. 
\qed
\end{proof}

\begin{rmk}
\label{rmk:oder_preserv}
\rm 

\

\begin{enumerate}
\item[(i)]
A map constructed in Proposition \ref{prop:map_T} may not give the unique $\mathcal{P}_{A}$ in general. However, when Jordan blocks have distinct sizes for any given eigenvalue, the map $\mathcal{T}$ gives the unique $\mathcal{P}_{A}$ as shown in Corollary \ref{cor:map_T_unique}.

\item[(ii)]
There might be $r_{1},r_{2}\in\mathcal{P}_{A}$ with $r_{1}=r_{2}$. As a result $\cP_A(k_{1},k_{2},\ldots,k_{m})$ is a poset, but not a lattice as meet and join are not uniquely defined.

\item[(iii)]
Note that $(r_1,r_2,\dots, r_m) \mapsto S$ is generally not order-preserving. More specifically, $\cT(S_1)\le \cT(S_2)$ does not necessarily imply $S_1\subseteq S_2$. 

\item[(iv)]
There might be additional edges in $\mathcal{P}_{A}$ as a result of partial order among tuple representation, which are not in the lattice of synchrony subspaces $V_{\mathcal{G}}^{P}$. See Remark \ref{rmk:oder_preserv} (iii). These additional edges in $\mathcal{P}_{A}$ plays an important role when we apply Definition \ref{def:covering_relation}.
 
\item[(v)] In the following,  we always reserve the last {element} in the $m$-tuple for {the eigenspace $\la(1,\ldots,1)\ra$ corresponding to the valency of a regular network.}
\end{enumerate}
\END
\end{rmk}

\begin{cor}
\label{cor:map_T_unique} 
Let $A:\bc^n\to\bc^n$ be an adjacency matrix of a regular network $\mathcal{G}$ with distinct eigenvalues $\lambda_{1},\ldots,\lambda_{k}$. Let $m$ be the number of Jordan blocks of $A$. If the Jordan blocks are of distinct size for any given $i=1,\ldots,k$, then the order-preserving map $\mathcal{T}$ defined in (\ref{eq:map_dis}) assigns tuple representations to $V_{\mathcal{G}}^{P}$ uniquely. 
\end{cor}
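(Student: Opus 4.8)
The plan is to show that, under the assumption that for each eigenvalue $\lambda_i$ the Jordan blocks of $A$ restricted to $E_{\lambda_i}$ all have distinct sizes, the integer $r_j$ assigned by $\mathcal{T}$ to a synchrony subspace $S$ is forced, with no freedom of choice, so that the construction in Proposition \ref{prop:map_T} collapses to a single well-defined map. The key observation is that the only place where non-uniqueness entered the proof of Proposition \ref{prop:map_T} was the phrase ``if $k_c=k_d$ for some $c,d$, then the assignment of non-negative integers are not unique'': the hypothesis of the corollary is precisely the negation of this, eigenvalue by eigenvalue. So the bulk of the work is to verify that when all Jordan block sizes within a fixed $E_{\lambda_i}$ are distinct, the tuple $\mathcal{T}(S\cap E_{\lambda_i})$ depends only on the subspace $S\cap E_{\lambda_i}$ and not on any auxiliary choices.

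First I would fix an eigenvalue $\lambda=\lambda_i$, write $E_\lambda=\bigoplus_{j=1}^{l} J_j$ with $k_1>k_2>\cdots>k_l$ strictly decreasing, and consider an $A$-invariant subspace $M=S\cap E_\lambda$ of $E_\lambda$. I would then recall (from the Brickman--Fillmore / Longstaff theory cited in the excerpt, or prove directly) that such an $M$ is \emph{hyperinvariant} in the relevant sense and admits a description as the span of the first $r_j$ vectors of each Jordan chain. The uniqueness of the $r_j$ I would extract from a dimension/filtration argument: for each $t\ge 0$ the subspace $\ker(A-\lambda I)^t \cap M$ has dimension $\sum_{j=1}^{l}\min(r_j,t)$, and since the $k_j$ are distinct these partial sums, taken over $t=1,2,\dots$, determine the multiset $\{r_1,\dots,r_l\}$; moreover, because each block size is realized by exactly one block, the individual $r_j$ are determined (the block of size $k_j$ is the unique one to which a given ``coordinate'' of the difference sequence can be attributed). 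This is the step I expect to be the main obstacle — making precise that ``reading off'' the individual $r_j$ from the filtration dimensions $\dim(\ker(A-\lambda I)^t\cap M)$ is unambiguous, which is exactly where distinctness of the $k_j$ is used and would fail otherwise.

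Having established uniqueness of $\mathcal{T}(S\cap E_{\lambda_i})$ for each $i$, I would assemble the global tuple. Since $\bc^n=\bigoplus_i E_{\lambda_i}$ and $S=\bigoplus_i (S\cap E_{\lambda_i})$ for every $A$-invariant $S$ (as $A$ is block-diagonal over the eigenspaces and $S$ is $A$-invariant, hence a direct sum of its intersections with the generalized eigenspaces), the concatenation of the uniquely determined per-eigenvalue tuples yields a uniquely determined $m$-tuple $\mathcal{T}(S)=(r_1,\dots,r_m)$. Order-preservation was already proved in Proposition \ref{prop:map_T} and is unaffected. Thus the map $\mathcal{T}:V_{\mathcal{G}}^{P}\to\mathcal{P}_A(k_1,\dots,k_m)$ is single-valued, which is the assertion. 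I would close by remarking that in this case $\mathcal{P}_A$ inherits a genuine lattice structure (meet and join given by coordinatewise $\min$ and $\max$ as in Definition \ref{def:P_A}), consistent with Remark \ref{rmk:oder_preserv}(ii), although strictly the statement only requires uniqueness of the assignment.
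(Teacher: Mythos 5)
Your overall strategy (restrict to each generalized eigenspace, show the per-eigenvalue tuple depends only on the subspace $M=S\cap E_{\lambda_i}$, then concatenate over eigenvalues) matches the spirit of the paper's short argument, but the step you yourself flag as the main obstacle is carried out with an invariant that is too coarse, and this is a genuine gap. If $M$ is spanned by the first $r_j$ vectors of each Jordan chain, then indeed $\dim\bigl(\ker(A-\lambda I)^t\cap M\bigr)=\sum_{j}\min(r_j,t)$, and this sequence determines the \emph{multiset} $\{r_1,\dots,r_l\}$ (its difference sequence is the conjugate partition); but it does not attribute the parts to blocks, and distinctness of the $k_j$ does not rescue this particular invariant. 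Concretely, with block sizes $k_1=4>k_2=2$ the admissible tuples $(1,2)$ and $(2,1)$ produce identical sequences $2,3,3,\dots$, so your parenthetical claim that ``the block of size $k_j$ is the unique one to which a given coordinate of the difference sequence can be attributed'' does not follow from the data you compute. To separate such cases one must also use the image filtration, e.g. $\dim\bigl(M\cap \mathrm{im}(A-\lambda I)^s\bigr)=\sum_j \min\bigl(r_j,\max(k_j-s,0)\bigr)$, which does involve the individual $k_j$ (in the example, $s=2$ gives $1$ versus $2$ and hence distinguishes the two representations); only with such an argument does distinctness of block sizes force the assignment block-by-block. The paper's own proof is much more economical: it simply observes that the only source of ambiguity identified in the proof of Proposition \ref{prop:map_T} is the case $k_c=k_d$, so when all block sizes for a given eigenvalue are distinct the integer assigned to each block is forced, and the rest is as in that proposition.

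Two smaller inaccuracies: $M=S\cap E_{\lambda_i}$ is merely $A$-invariant, not hyperinvariant, and not every invariant subspace of a nilpotent block structure is spanned by initial segments of Jordan chains (it need not be ``marked''), so you should not appeal to a general structure theorem for the existence of the representation; that existence is exactly what the construction in Proposition \ref{prop:map_T} supplies, and the corollary only concerns uniqueness of the resulting tuple. Your final assembly step, $S=\bigoplus_i (S\cap E_{\lambda_i})$ and concatenation, is fine and agrees with the paper.
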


\begin{proof}
Denote by $k_{1}>\cdots>k_{l_{i}}$ the distinct size of Jordan blocks for the eigenvalue $\lambda_{i}$. Then a map $\mathcal{T}$ assigns an non-negative integer $r_{j}^{2}$ to each Jordan block $J_{j}^{i}$ with one-to-one and onto manner since $k_{1},\ldots,k_{l}$ are all distinct. The rest of the proof is analogous to that of Proposition \ref{prop:map_T}. When the Jordan blocks are of {distinct} size for each eigenvalue,  the order-preserving map $\mathcal{T}$ assigns tuple representations uniquely with $V_{\mathcal{G}}^{P}$.
\qed
\end{proof}

\begin{ex}[Distinct Sized Jordan Blocks]
\rm
\label{ex:4_cell_network343}
Consider a $4$-cell regular network {depicted} in Figure \ref{fig:4_cell_network343} with a repeated eigenvalue with algebraic multiplicity $3$ and geometric multiplicity $2$. 
\begin{figure}[htb!]
\begin{center}
\begin{tabular}{ccll}
Network $\mathcal{G}$ & Adjacency matrix $A$ & Eigenvalues & Eigenvectors\\
\hline
\multirow{4}{*}{\includegraphics[scale=0.25]{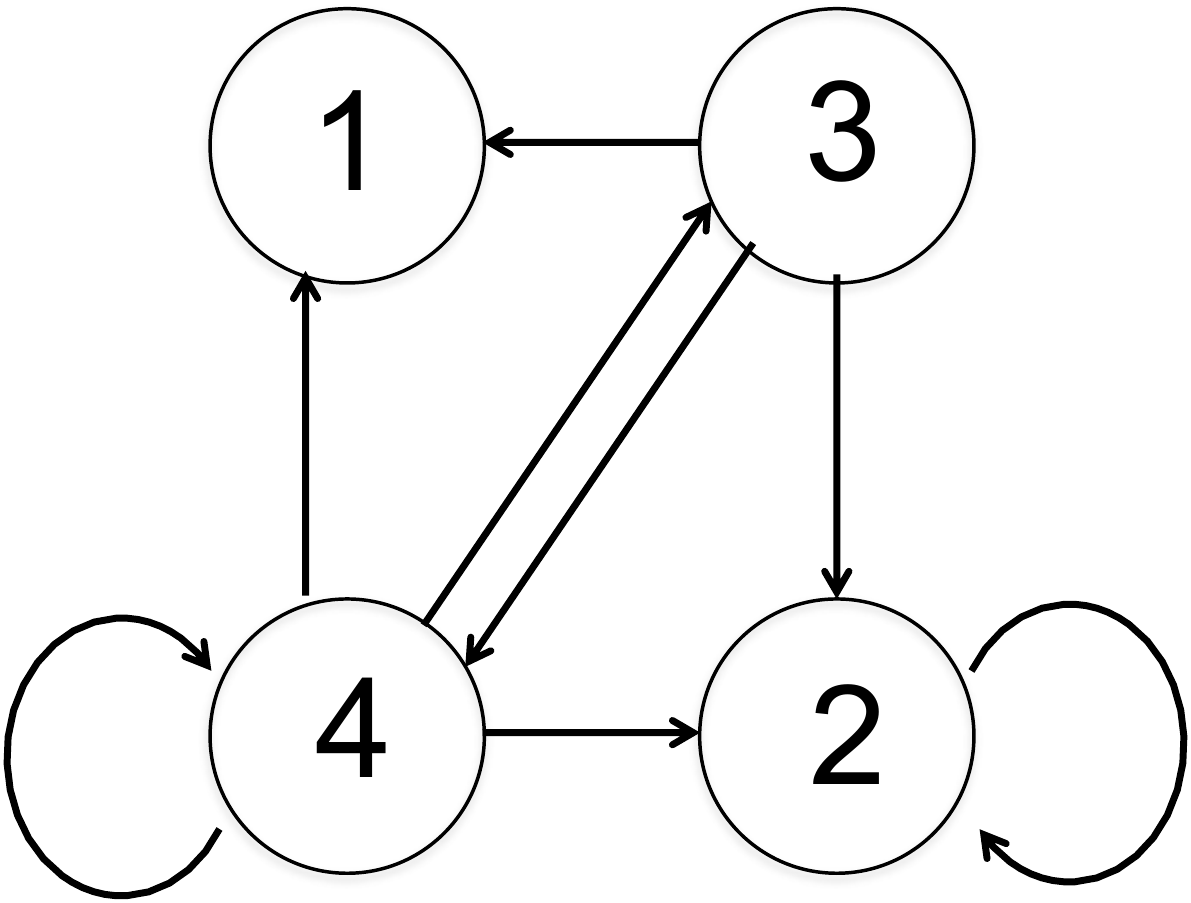}} &
\multirow{4}{*}{$\left(\begin{array}{cccc}
0 & 0 & 1 & 1\\
0 & 1 & 0 & 1 \\
0 & 1 & 0 & 1 \\
0 & 0 & 1 & 1  
\end{array}\right)$} &
$\lambda_{1}=0$ & $e_1=(1/2,-1/2,-1/2,1/2)$\\
& & $\lambda_{2}=0$ & $e_2=(-1/4,-1/4,3/4,-1/4)^{*}$\\
& & $\lambda_{3}=0$ & $e_3=(-1,0,0,0)$\\
& & $\lambda_{4}=2$  & $e_4=(1,1,1,1)$ \\
& &
\end{tabular}
\end{center}
\caption{$4$-cell regular network $\mathcal{G}$ with corresponding adjacency matrix $A$ and its eigenvalues. The repeated eigenvalue $\lambda_{1}=\lambda_{2}=\lambda_{3}=0$ has algebraic multiplicity $3$ and geometric multiplicity $2$. Note that ${e_{2}=}(-1/4,-1/4,3/4,-1/4)^{*}$ is a generalised eigenvector with head eigenvector ${e_{1}=}(1/2,-1/2,-1/2,1/2)$.}
\label{fig:4_cell_network343}
\end{figure}

The matrix $A$ has the following Jordan normal form and Jordan chains: 

\[\left(\begin{tabular}{ccc} 
\cline{1-1}
 \multicolumn{1}{|c|}{$J_1$}&& \multicolumn{1}{c}{}\\\cline{1-2}
\multicolumn{1}{c}{} & \multicolumn{1}{|c|}{$J_2$} &\\\cline{2-3}
&& \multicolumn{1}{|c|}{$J_3$} \\
\cline{3-3}
\end{tabular}\right)
=
\left(\begin{tabular}{cccc} 
\cline{1-2}
 \multicolumn{1}{|c}{$\lam_1$}& \multicolumn{1}{c|}{1} & 0 & 0\\
 \multicolumn{1}{|c}{0} &\multicolumn{1}{c|}{$\lam_2$}& 0 & 0\\ 
\cline{1-3}
0 & 0 & \multicolumn{1}{|c|}{$\lam_3$} & 0 \\
\cline{3-4}
0 & 0 & 0 & \multicolumn{1}{|c|}{$\lam_4$}  \\
\cline{4-4}
\end{tabular}\right),
\q\q\q
\begin{tabular}{cc}
 \multicolumn{1}{r}{$J_1:$}& \multicolumn{1}{l}{$0 \leftarrow e_1\leftarrow e_2 $}\\
 \multicolumn{1}{r}{$J_2:$}& \multicolumn{1}{l}{$0 \leftarrow e_3$}\\
 \multicolumn{1}{r}{$J_3:$}& \multicolumn{1}{l}{$0 \leftarrow e_4$}\\
\end{tabular}.
\]

 We show how to determine $\mathcal{P}_{A}(2,1,1)$ by representing each synchrony subspace with $3$-tuples $(r_{1},r_{2}, r_{3})$ which satisfies $0\leq r_{1}\leq 2$, $0\leq r_{2}\leq 1$ and $0\leq r_{3}\leq 1$. $r_{1}$ is associated with the $2\times 2$ Jordan block for eigenvalue $\lambda_{1}=\lambda_{2}=0$,  $r_{2}$ is associated with the $1\times 1$ Jordan block for eigenvalue $\lambda_{3}=0$, and $r_{3}$ is associated with the $1\times 1$ Jordan block for eigenvalue $\lambda_{4}=2$.

\begin{description}
\item[Step $1$:]
Figure \ref{fig:network343_quotient_jordan_young_tableau} shows the Jordan normal forms of quotient networks, and {the associated Young tableau representations.} The shaded boxes in {Young} tableau represent the cases where the choices are trivial and unique from the Jordan normal forms, {and the question marks are used for the cases we need to determine for filling.}
\begin{figure}[h!]
	\begin{center}
		\begin{tabular}{ccc}
			(a) & (b) & (c) \\
			& & \\
			\includegraphics[scale=0.35]{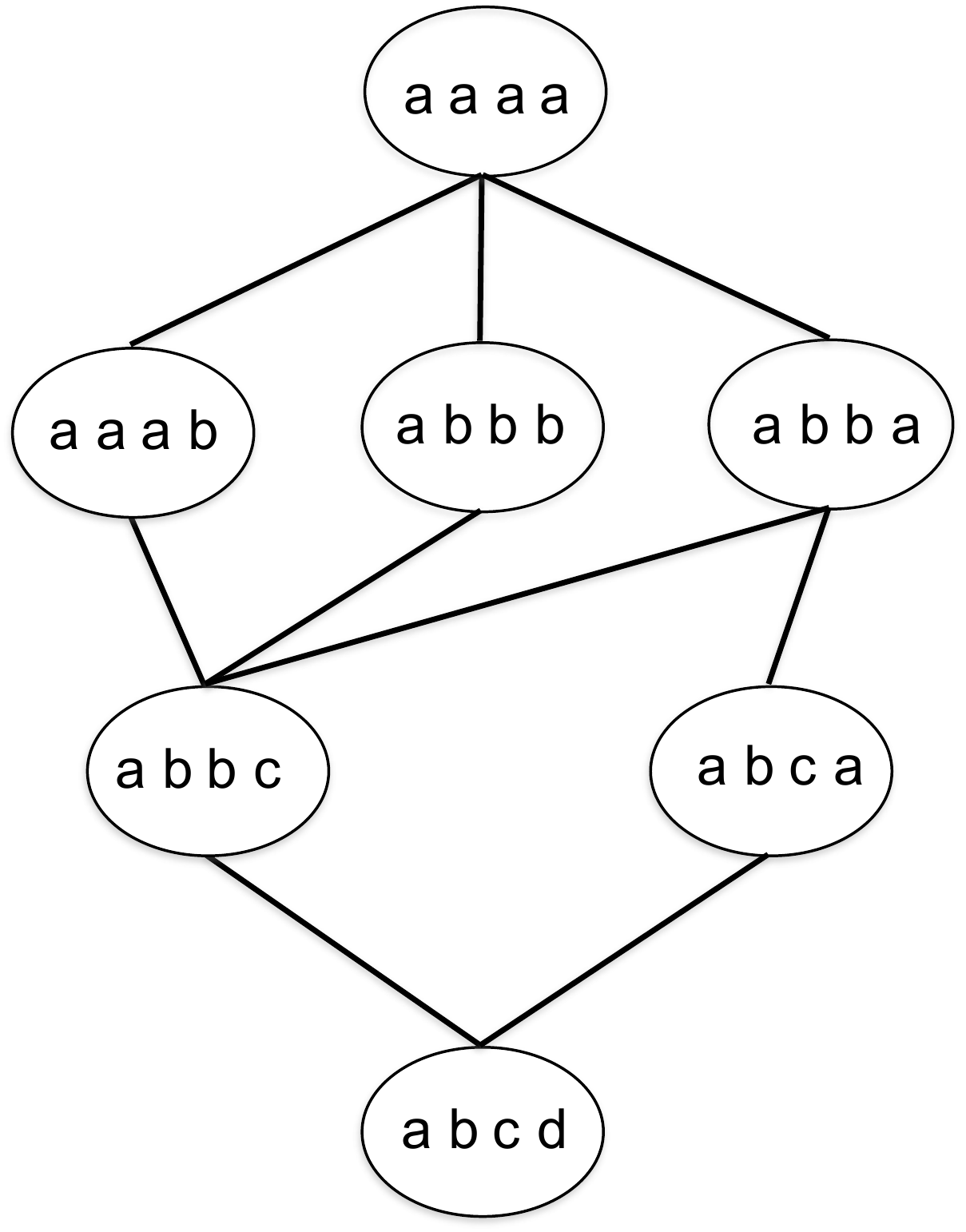} 
			&
			\includegraphics[scale=0.34]{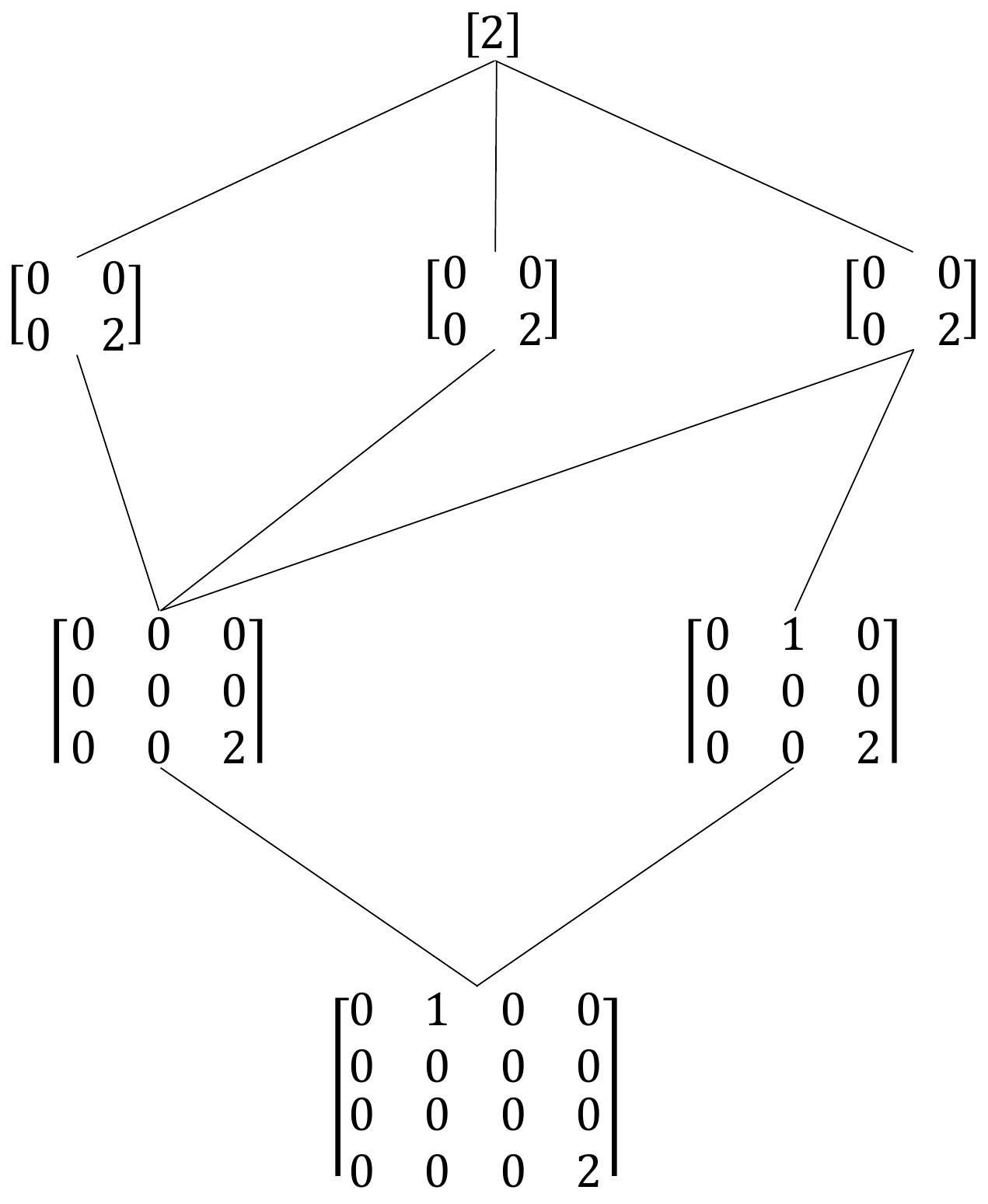} 
			&
			\includegraphics[scale=0.32]{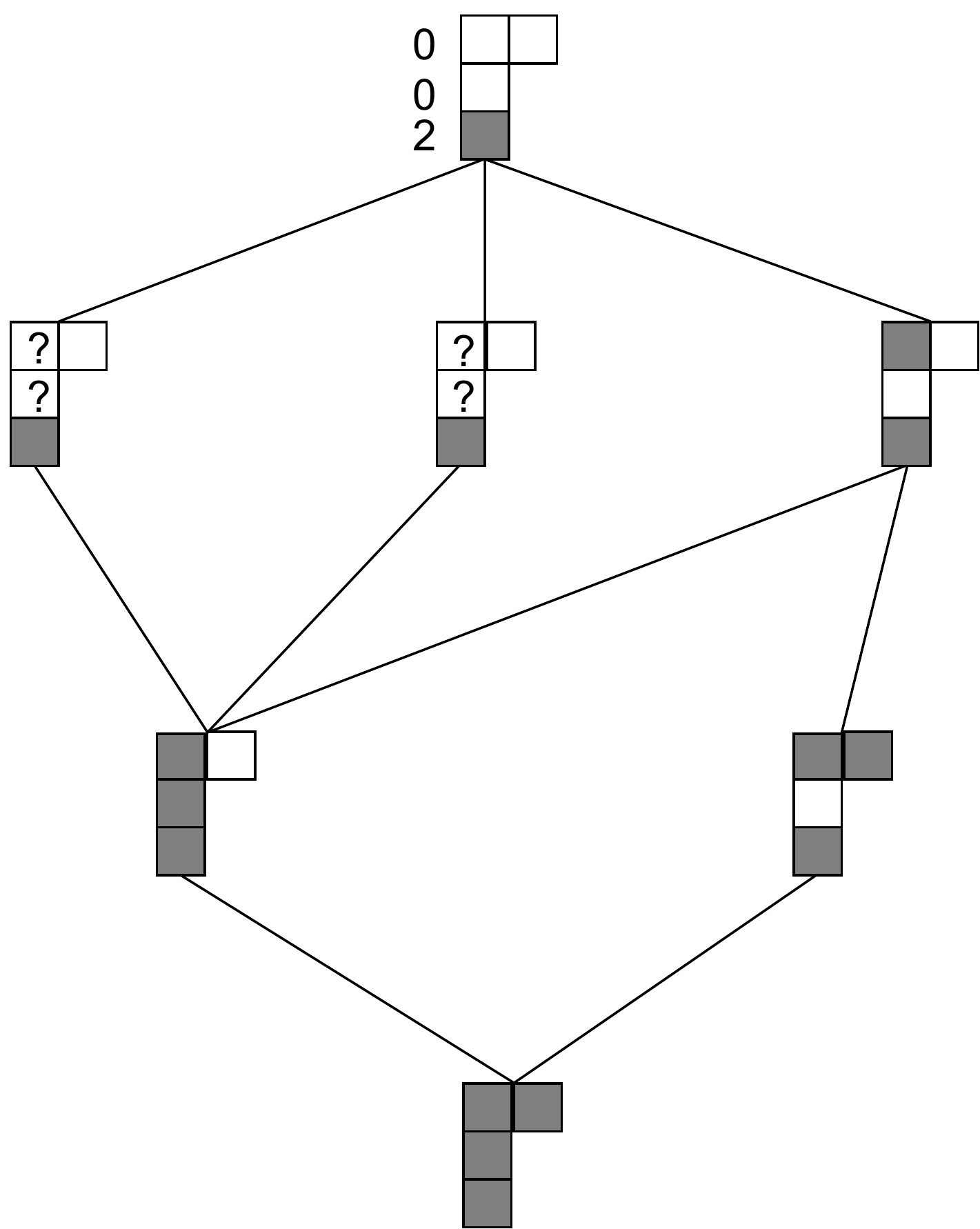}
		\end{tabular}
	\end{center}
	\caption{(a) The lattice of synchrony subspaces $V_{\mathcal{G}}^{P}$. For $(x_{1},x_{2},x_{3},x_{4})\in V_{\mathcal{G}}^{P}$, cell coordinate equality is given by the same symbol, e.g., $(x_{1},x_{2},x_{3},x_{4})=(a,b,b,a)$ means $x_{1}=x_{4}$ and $x_{2}=x_{3}$. (b) Jordan normal forms of quotient networks. (c) Young tableau representations of Jordan normal forms of quotient networks embedded in the total phase space $P$. The top $2$ boxes represents $2\times 2$ Jordan block of the eigenvalue $0$, the second top box represents $1\times 1$ Jordan block of the eigenvalue $0$, and the last box represents $1\times 1$ Jordan block of the eigenvalue $2$.}
	\label{fig:network343_quotient_jordan_young_tableau}
\end{figure}
\item[Step $2$:]
Consider possibilities to fill the remaining boxes. There are in total $4$ different possibilities to fill the remaining boxes based on the information given in the Jordan normal forms of quotient networks. 

Consider possible tuple representations of the two leaders $s_1,s_2$ of  $(1,1,1)$ (left {Young} tableau at rank $3$). For the most left node at rank $2$, {there are} two choices, either $(0,1,1)$ or $(1,0,1)$. If we choose $(0,1,1)$, then it can't be ordered with $(2,0,1)$. This also matches the connectivity of $V_{\mathcal{G}}^{P}$. If we choose $(1,0,1)$, then this is now ordered with $(2,0,1)$ as $(1,0,1)<(2,0,1)$. Similarly, the middle node at rank $2$ has two choices, either $(0,1,0)$ or $(1,0,1)$. If we choose $(1,0,1)$, then this incurs the red edge connecting with $(2,0,1)$. These $4$ possible $\mathcal{P}_{A}(2,1,1)$ can be classified into three distinct types as shown in Figure \ref{fig:network343_PA}.
\begin{figure}
	\begin{center}
		\begin{tabular}{c|cc|c}
			Type $1$ & \multicolumn{2}{c|}{Type $2$} & Type $3$ \\
			\hline
			& & & \\
			\includegraphics[scale=0.24]{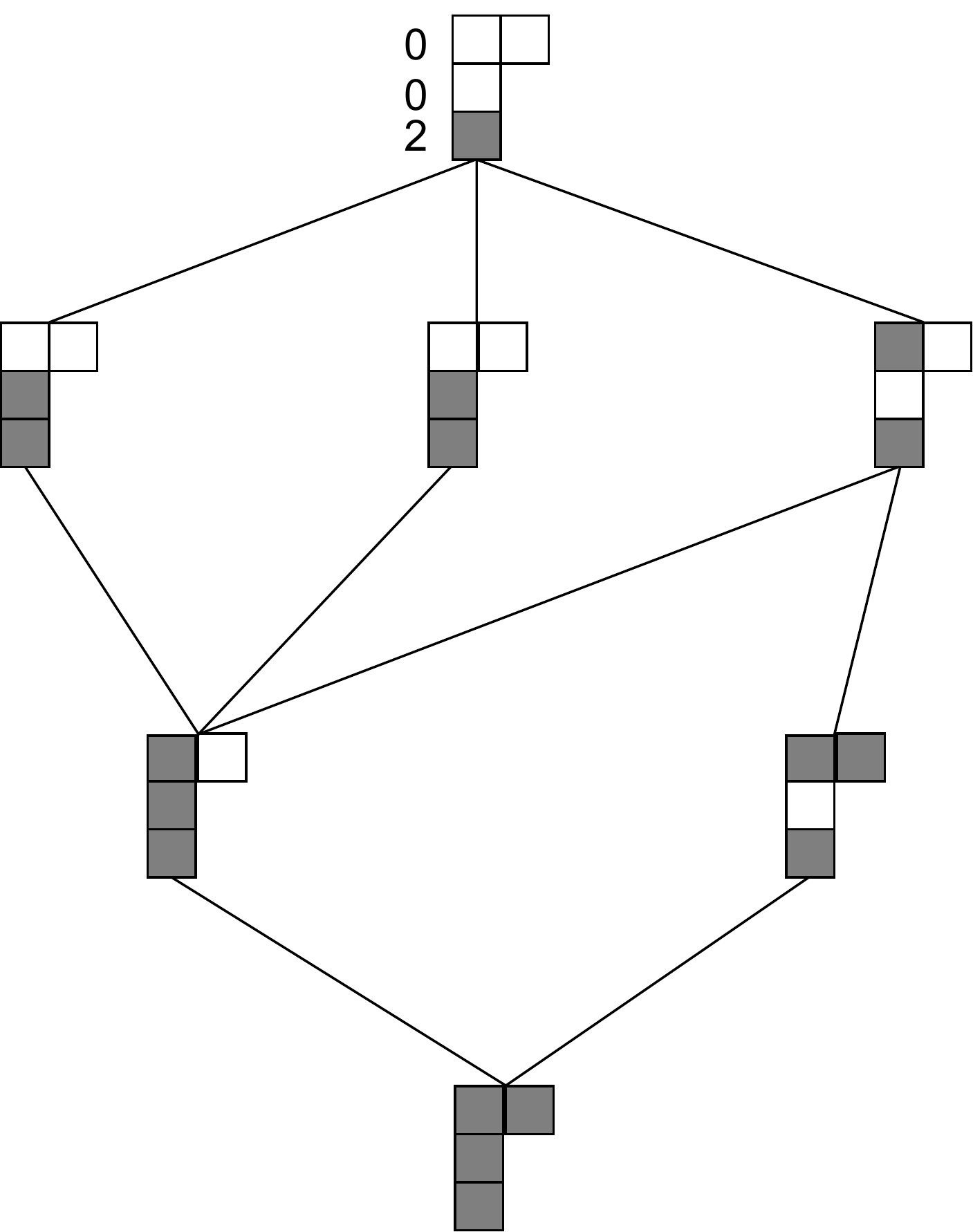} 
			& 
			\includegraphics[scale=0.24]{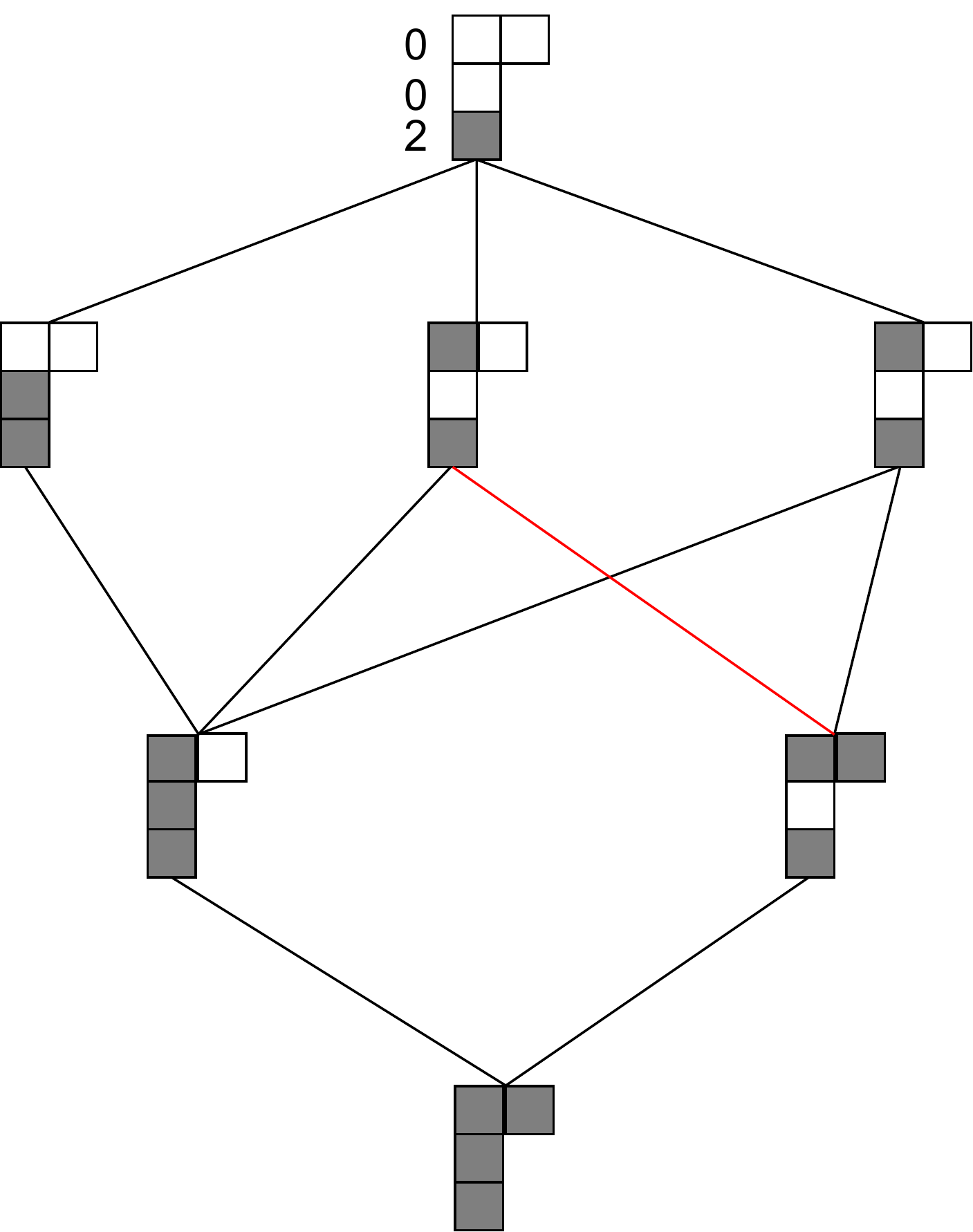} 
			& 
			\includegraphics[scale=0.24]{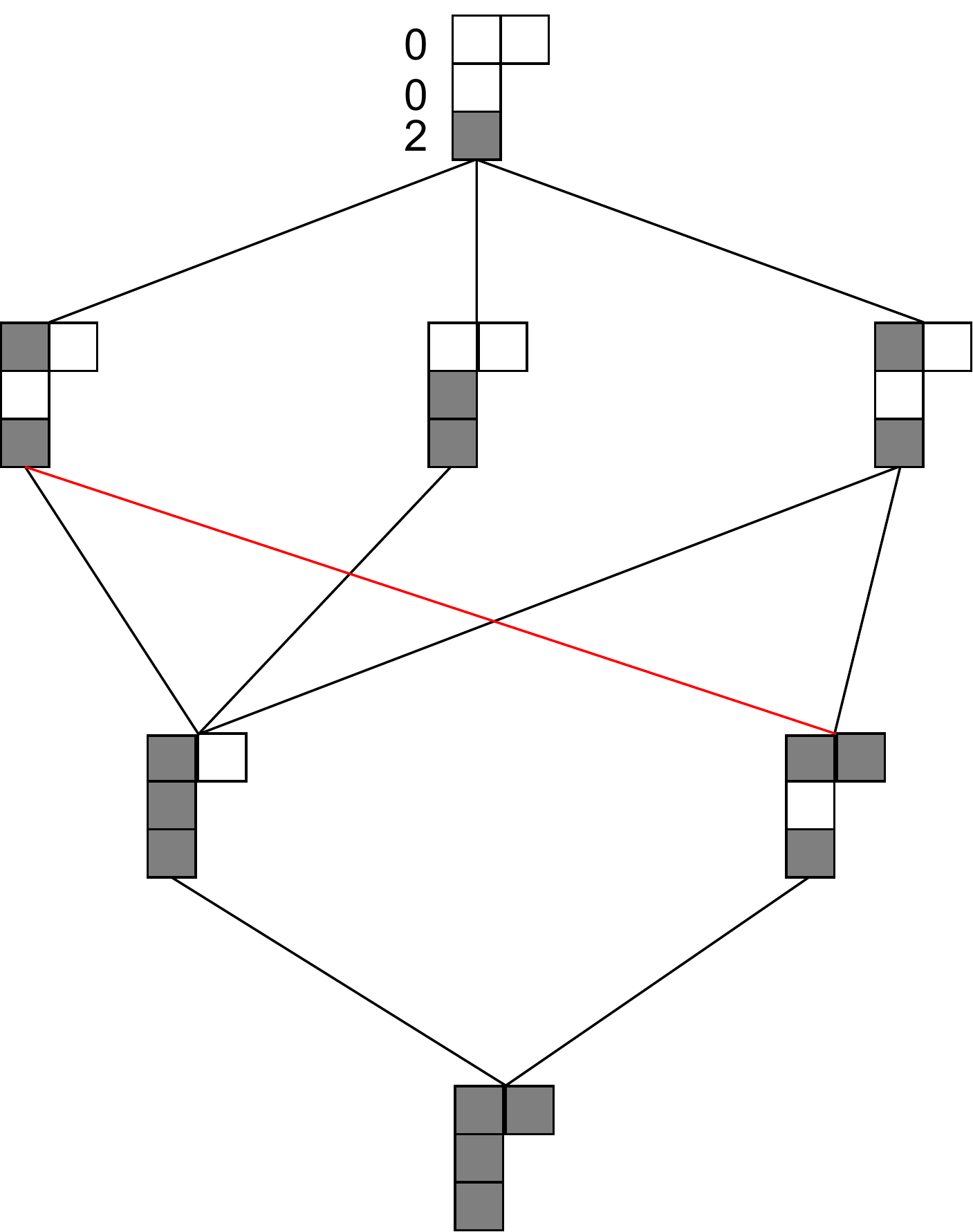} 
			&
			\includegraphics[scale=0.24]{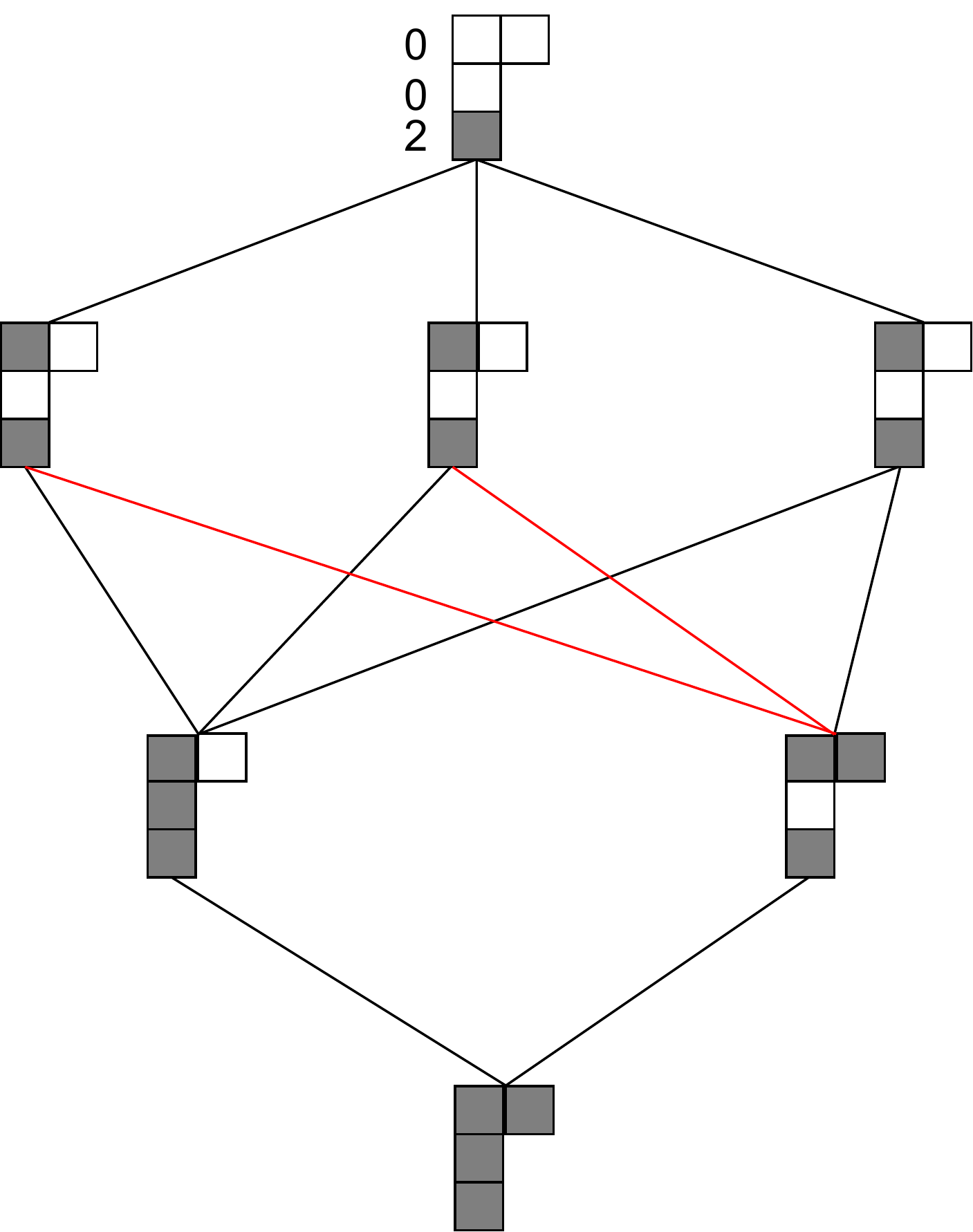}
		\end{tabular}
	\end{center}
	\caption{Type $1$ should represent the uniquely determined $\mathcal{P}_{A}(2,1,1)$. Note that Type $3$ with two additional red edges will give a positive index $\textrm{Ind}_{\mathcal{P}}(1,1,1)=1$, which will be defined in Definition \ref{def:ind_P}. On the other hand, other types give $\textrm{Ind}_{\mathcal{P}}(1,1,1)=0$.}
\label{fig:network343_PA}
\end{figure}
\item[Step $3$:] {To identify the correct type, we compute for individual nodes $s_1,s_2$ using the tuple representation map (\ref{eq:map_dis}) in Proposition \ref{prop:map_T}. More precisely, the nodes  $s_1,s_2$ correspond to the two synchrony subspaces $S_1=(a\ a\ a\ b)$,  $S_2=(a\ b\ b\ b)$ of the network. Using the eigenvectors $e_1,e_2,e_3, e_4$, we have $S_1=\la e_1+e_3,e_4\ra$ and $S_2=\la e_3,e_4\ra$. Note that both $e_3$ and $e_1+e_3$ generate a Jordan chain of length $1$ for the eigenvalue $0$. Thus,  both $s_1$ and $s_2$ have a tuple representation of $(0,1,1)$. It follows that  the Type $1$ gives the correct reduction.} 
\end{description}
\END
\end{ex}

\subsection{Index on $\mathcal{P}_{A}$}
\label{subsec:index_PA}

As we defined index on $\mathcal{L}_{M}$ in Remark \ref{rmk:ind_P_def2}, we define an index {on} $\mathcal{P}_{A}$.

\begin{Def}
\label{def:ind_P}
\rm
Let $r_1,\dots, r_n$ be the immediate leaders of $s\in\mathcal{P}_{A}(k_{1},\ldots,k_{m})$. Let $k:=(k_1,\dots, k_m)$. For all $s\le k$, define an index $\Ind_{\mathcal{P}}(s)$ as follows:
\[\Ind_{\mathcal{P}}(s)=|s|-|r_1\vee \cdots \vee r_n|.\]
\END
\end{Def}

\subsection{Quotient $\mathcal{P}_{A}/{=}$}
\label{subsec:PA_reduction}
We merge elements of the poset $\mathcal{P}_{A}$ by putting an equivalence relation on the poset and then ordering the equivalence classes as follows.

\begin{Def}
\label{def:po_PA}
\rm
Let $=$ be an equivalence relation on $\mathcal{P}_{A}$. Define the quotient $\mathcal{P}_{A}/{=}$ to be the set of equivalence classes with the partial order $\leq$ given by $X\leq Y$ in $\mathcal{P}_{A}/{=}$ if and only if $x\leq y$ in $\mathcal{P}_{A}$ for all $x\in X$ and all $y\in Y$. This means that all nodes with the same tuple representation on $\mathcal{P}_{A}$ are merged into one node.
\END
\end{Def}

\begin{proposition}
\label{prop:sign_preserving_PA}
Let $s\in\mathcal{P}_{A}$ and {$s^{*}\in\mathcal{P}_{A}/=$} where both have the same tuple representation $(r_{1},\ldots,r_{m})$. Then $\Ind_{\mathcal{P}}(s)=\Ind_{\mathcal{P}}(s^{*})$.
\end{proposition}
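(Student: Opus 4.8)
The plan is to show that the index $\Ind_{\mathcal{P}}$ from Definition \ref{def:ind_P} depends only on the tuple representation $(r_1,\dots,r_m)$ together with the combinatorics of the partial order $<$ on tuples, and therefore takes the same value on $s\in\mathcal{P}_A$ and on its image $s^*\in\mathcal{P}_A/{=}$. First I would unwind what $\Ind_{\mathcal{P}}(s)=|s|-|r_1\vee\cdots\vee r_n|$ actually uses: the quantity $|s|$ depends only on the tuple of $s$, and the join $r_1\vee\cdots\vee r_n$ is computed coordinatewise by maximum (via (\ref{eq:tuple_wedge})--(\ref{eq:tuple_vee})). So the only thing that could differ between $s$ and $s^*$ is the \emph{set of immediate leaders}. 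Hence the whole proposition reduces to the claim: the set of tuples occurring as immediate leaders of $s$ in $\mathcal{P}_A$ coincides, up to the join we take, with the set of tuples occurring as immediate leaders of $s^*$ in $\mathcal{P}_A/{=}$.

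The key step is therefore a careful comparison of the covering relations before and after the quotient. In $\mathcal{P}_A$, by the strict partial order (\ref{eq:partial_order_PA}), the immediate leaders of $s=(r_1,\dots,r_m)$ are exactly the tuples obtained by decreasing one positive coordinate $r_i$ by one, i.e.\ $(r_1,\dots,r_i-1,\dots,r_m)$ for each $i$ with $r_i>0$ — because (\ref{eq:partial_order_PA}) is literally the componentwise order on tuples, and immediate predecessors in that order are the unit-decrements. (Here I should be a little careful: $\mathcal{P}_A$ is a multiset of tuples, and Definition \ref{def:ind_P} phrases immediate leaders in terms of the order $<$, which only sees tuples; so the immediate leaders of $s$, as tuples, are precisely those unit-decrement tuples that actually appear in $\mathcal{P}_A$, and since $\mathcal{P}_A(k_1,\dots,k_m)$ as a \emph{set of tuples} contains all tuples bounded by $k$, every unit decrement of a positive coordinate appears.) Consequently the set of immediate-leader tuples of $s$ is determined purely by the tuple $(r_1,\dots,r_m)$. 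The same computation applies verbatim in $\mathcal{P}_A/{=}$: the partial order on equivalence classes in Definition \ref{def:po_PA} is again inherited from the componentwise order on the common tuple labels, so $s^*$ has as immediate leaders exactly the classes carrying the unit-decrement tuples, and the join of those classes carries the tuple $r_1\vee\cdots\vee r_n$ — the same coordinatewise maximum. Therefore $|r_1\vee\cdots\vee r_n|$ is the same in both settings, and $\Ind_{\mathcal{P}}(s)=|s|-|r_1\vee\cdots\vee r_n|=|s^*|-|r_1^*\vee\cdots\vee r_n^*|=\Ind_{\mathcal{P}}(s^*)$.

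The main obstacle I anticipate is the bookkeeping around $\mathcal{P}_A$ being a multiset rather than a genuine poset: one has to be precise that "immediate leader" in Definition \ref{def:ind_P} is being read at the level of tuples (so that the formula even makes sense, since $\vee$ is only defined on tuples), and that the quotient $\mathcal{P}_A/{=}$ by Definition \ref{def:po_PA} simply collapses repeated tuples without changing the underlying label set or the order relation on labels. Once that is pinned down, nothing in the index formula is sensitive to multiplicities, and the equality is immediate. A secondary point worth a sentence is to confirm that $s\le k$ (the hypothesis under which $\Ind_{\mathcal{P}}$ is defined) is a condition on the tuple alone, hence holds for $s$ iff it holds for $s^*$, so the two indices are defined on matching domains. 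I would phrase the final write-up as: compare immediate-leader tuple-sets (identical by (\ref{eq:partial_order_PA}) and Definition \ref{def:po_PA}), observe $|\cdot|$ and $\vee$ are tuple-intrinsic, conclude equality.
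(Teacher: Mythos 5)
There is a genuine gap, and it comes from a misreading of what $\mathcal{P}_{A}$ is. You assume that ``$\mathcal{P}_A(k_1,\dots,k_m)$ as a set of tuples contains all tuples bounded by $k$,'' and from this you conclude that the immediate leaders of $s=(r_1,\dots,r_m)$ are exactly the unit decrements $(r_1,\dots,r_i-1,\dots,r_m)$, in $\mathcal{P}_A$ and in $\mathcal{P}_A/{=}$ alike. But throughout the paper $\mathcal{P}_A$ is the multiset image of $V_{\mathcal{G}}^{P}$ under the map $\mathcal{T}$ of Proposition \ref{prop:map_T}: it has one node per synchrony subspace, labelled by its tuple, and tuples not realized by any synchrony subspace simply do not occur (see Remark \ref{rmk:oder_preserv}(ii), Definition \ref{def:connectivity_matrix} with ``$s$ = number of nodes in $\mathcal{P}_A$,'' and Example \ref{ex:4_cell_network343}, where $\mathcal{P}_A(2,1,1)$ has six nodes, not twelve). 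Consequently the immediate leaders entering Definition \ref{def:ind_P} need not be unit decrements at all: in the Type~1 poset of Example \ref{ex:4_cell_network343} the node $(2,0,1)$ has sole immediate leader $(0,0,1)$, since $(1,0,1)$ is not a node, and $\Ind_{\mathcal{P}}(2,0,1)=3-1=2$, whereas your unit-decrement computation would return $0$. Under your reading the quotient $\mathcal{P}_A/{=}$ would be all of $\mathcal{L}_M$ and the proposition (indeed the whole reduction) would be vacuous, so what you have argued is not the intended statement.

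Your closing intuition --- that the index formula is insensitive to multiplicities --- is the right one, and it is exactly the paper's argument; the missing step is the correct identification of the two leader sets. In $\mathcal{P}_A$ the order is determined purely by tuple comparison, so two nodes with equal tuples are incomparable and neither blocks the other; hence the immediate leaders of $s$ are all nodes carrying the maximal tuples strictly below $s$, possibly with repetitions, and after collapsing equal tuples the immediate leaders of $s^{*}$ in $\mathcal{P}_A/{=}$ are precisely the \emph{distinct} tuples among the immediate leaders of $s$. Since the join is the componentwise maximum, $\max(r_{1i},\dots,r_{ki})$ is unchanged when duplicates are removed, so $|s|-|r_1\vee\cdots\vee r_n|$ is the same before and after the quotient. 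That comparison of covering relations across the quotient --- not the unit-decrement description valid only in $\mathcal{L}_M$ --- is what the proof needs.
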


\begin{proof}
Let $\{r_{1},\ldots,r_{m}\} = \{(r_{11},\ldots,r_{1m}),\ldots,(r_{k1},\ldots,r_{km})\}$ be the set of immediate leaders of $s=(s_{1},\ldots,s_{m})\in\mathcal{P}_{A}$. Note that
\begin{eqnarray*}
\Ind_{\mathcal{P}}(s) &=& |s|-|r_1\vee \cdots \vee r_m| \\
&=& |s_{1}-\max(r_{11},\ldots,r_{k1}),\ldots,s_{m}-\max(r_{1m},\ldots,r_{km})|.
\end{eqnarray*}

Let $\{(r_{11},\ldots,r_{1m}),\ldots,(r_{d1},\ldots,r_{dm})\}$, where $d\leq k$, be the set of all distinct elements of $L$. Then the set of immediate leaders of $s^{*}$ is indeed $\{(r_{11},\ldots,r_{1m}),\ldots,(r_{d1},\ldots,r_{dm})\}$. Since 
\begin{eqnarray*}
\textrm{max}(r_{11},\ldots,r_{k1}) &=& \textrm{max}(r_{11},\ldots,r_{d1}), \\
&\vdots& \\
\textrm{max}(r_{1m},\ldots,r_{km}) &=& \textrm{max}(r_{1m},\ldots,r_{dm}),
\end{eqnarray*}
we have $\Ind_{\mathcal{P}}(s)=\Ind_{\mathcal{P}}(s^{*})$ for $s$ and $s^{*}$ which share the same tuple representation.
\qed
\end{proof}

\begin{lem}
\label{lem:closed_PA} 
$\mathcal{P}_{A}/{=}$ is a closed subset of  $\mathcal{L}_{M}$. 
\end{lem}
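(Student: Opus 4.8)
The goal is to show $\mathcal{P}_{A}/{=}$, viewed as a subset of $\mathcal{L}_{M}(k_{1},\ldots,k_{m})$, satisfies the two requirements of Definition \ref{def:closed}: it contains the top element $(k_{1},\ldots,k_{m})$, and it is closed under the meet operation $\wedge$.

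First I would establish that $(k_{1},\ldots,k_{m})\in\mathcal{P}_{A}/{=}$. The total phase space $P$ itself is a synchrony subspace (the one corresponding to the trivial coloring in which all cells receive distinct colors), and $P$ is spanned by \emph{all} generalized eigenvectors from every Jordan block; hence $\mathcal{T}(P)=(k_{1},\ldots,k_{m})$ by the construction in Proposition \ref{prop:map_T}. Thus the top tuple is realized by a lattice node of $V_{\mathcal{G}}^{P}$ and survives in the quotient.

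The main work is closure under $\wedge$. Let $X,Y\in\mathcal{P}_{A}/{=}$ with tuple representations $\cT(S)=(r_{1},\ldots,r_{m})$ and $\cT(T)=(s_{1},\ldots,s_{m})$ for synchrony subspaces $S,T\in V_{\mathcal{G}}^{P}$; I must exhibit a synchrony subspace whose tuple is $(\min(r_{1},s_{1}),\ldots,\min(r_{m},s_{m}))$. The natural candidate is $S\cap T$. Since $V_{\mathcal{G}}^{P}$ is a complete lattice, $S\cap T\in V_{\mathcal{G}}^{P}$ (intersection is the meet in the synchrony lattice), so it contributes a node to $\mathcal{P}_{A}$ and hence to $\mathcal{P}_{A}/{=}$. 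It remains to compute $\cT(S\cap T)$. Working eigenvalue by eigenvalue using the decomposition $\bc^{n}=E_{\lambda_{1}}\oplus\cdots\oplus E_{\lambda_{k}}$ from the proof of Proposition \ref{prop:map_T}, one has $(S\cap T)\cap E_{\lambda_{i}}=(S\cap E_{\lambda_{i}})\cap(T\cap E_{\lambda_{i}})$, so it suffices to analyze a single Jordan block $J$ of size $k_{j}$ and show that if an invariant subspace is spanned (within that block) by the first $r_{j}$ chain vectors and another by the first $s_{j}$ chain vectors, then their intersection is spanned by the first $\min(r_{j},s_{j})$ chain vectors. This is the elementary fact that the $A$-invariant subspaces contained in a single Jordan block form a chain $0\subset\langle e_{1}\rangle\subset\langle e_{1},e_{2}\rangle\subset\cdots$, so any two of them are nested and their intersection is the smaller one. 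Concatenating across blocks and eigenvalues gives $\cT(S\cap T)=(r_{1},\ldots,r_{m})\wedge(s_{1},\ldots,s_{m})$ as required.

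The one genuine subtlety — the place I expect to have to be careful — is the non-uniqueness flagged in Remark \ref{rmk:oder_preserv}(i)--(ii): when two Jordan blocks for the same eigenvalue have equal size, the map $\cT$ is only defined up to a choice, so a priori $X$ and $Y$ carry tuples only relative to some fixed such choice. I would handle this by fixing one consistent assignment $\cT$ once and for all (as Proposition \ref{prop:map_T} does), and noting that the block-by-block intersection argument above is insensitive to which size-$k_j$ block is labeled which, because the intersection of $S\cap E_{\lambda_i}$ with a given block depends only on that block. Hence the computed tuple $\cT(S\cap T)$ is consistent with the same fixed assignment used for $X$ and $Y$, and lies in $\mathcal{P}_{A}/{=}$. (Here one uses that $S\cap T$ really is the meet in $V_{\mathcal{G}}^{P}$, which is legitimate since, by Lemma \ref{lem:reverse-order} and the remarks on $\Lambda_{\mathcal{G}}$, synchrony subspaces are closed under intersection.) Once these points are in place, both clauses of Definition \ref{def:closed} hold and $\mathcal{P}_{A}/{=}$ is a closed subset of $\mathcal{L}_{M}$.
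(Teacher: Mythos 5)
There is a genuine gap, and it sits at the heart of your meet-closure argument: the reduction to ``a single Jordan block'' tacitly assumes that $S\cap E_{\lambda_i}$ and $T\cap E_{\lambda_i}$ are initial segments of the \emph{same} system of Jordan chains. That is not available when an eigenvalue has geometric multiplicity greater than one: the chains exhibiting a given synchrony subspace as a sum of initial segments are chosen \emph{adapted to that subspace}, and the adapted choices for $S$ and for $T$ generally differ. The paper's own Example \ref{ex:4_cell_network343} shows the failure concretely: inside $E_{0}$, both $\la e_3\ra$ and $\la e_1+e_3\ra$ are ``the first vector of a length-$1$ chain'' (each receives the contribution $(0,1)$ for the eigenvalue $0$), yet they are different lines, not nested, and their intersection is $\{0\}$, not the span of the first $\min(1,1)$ chain vectors. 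Consequently your asserted identity $\cT(S\cap T)=\cT(S)\wedge\cT(T)$ is false in general: for $S_1=\la e_1+e_3,e_4\ra$ and $S_2=\la e_3,e_4\ra$ one has $\cT(S_1)=\cT(S_2)=(0,1,1)$ while $\cT(S_1\cap S_2)=\cT(\la e_4\ra)=(0,0,1)$. Your closing remark about non-uniqueness only addresses permuting equal-size blocks; the ambiguity that breaks the argument is in the choice of the chain vectors themselves (e.g.\ $e_3$ versus $e_1+e_3$), which occurs even when all block sizes for an eigenvalue are distinct. Since the deduction ``$r\wedge s\in\mathcal{P}_A/{=}$'' rests entirely on this identity, the proposal as written does not establish closure under $\wedge$.

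For comparison, the paper's proof uses the same ingredients you do --- the top tuple comes from the total phase space, and the candidate for the meet comes from $R\cap S\in V_{\mathcal{G}}^{P}$ --- but it stops at the order-preservation of $\cT$ (Proposition \ref{prop:map_T}), which only places $\cT(R\cap S)$ below both $r$ and $s$; it never claims the exact per-block computation you attempt. So your proposal tries to prove a strictly stronger statement than the paper needs to invoke, and that stronger statement is refuted by the paper's own example; to repair your argument you would have to show, by some other means, that the componentwise minimum $r\wedge s$ is itself realized as the tuple of some synchrony subspace, rather than merely bounded below by $\cT(R\cap S)$.
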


\begin{proof}
By construction, $\mathcal{P}_{A}/{=}$ is not a multiset. Moreover, it is by definition a subset of $\mathcal{L}_{A}(k_{1},\ldots,k_{m})$. Now we want to show that $r\wedge s\in\mathcal{P}_{A}/{=}$ for any $r,s\in\mathcal{P}_{A}/{=}$ with $r\ne s$. 

Let $p:\mathcal P_A\to \mathcal P_A/{=}$ be the projection map that maps equal tuples into one element in $\mathcal P_A/{=}$.
Since both maps $p$ and $\mathcal{T}$ are surjective, there exists at least one $R\in V_{\mathcal{G}}^{P}$ with $p(\mathcal{T}(R))=r$. Similarly, there exists at least one $S\in V_{\mathcal{G}}^{P}$ with $p(\mathcal{T}(S))=s$. Note that $p(e)=e$ for all elements $e\in \mathcal P_A$, thus we have $\mathcal{T}(R)=r$ and $\mathcal{T}(S)=s$. Since $R \cap S \in V_{\mathcal{G}}^{P}$ whenever $R, S\in V_{\mathcal{G}}^{P}$   and $\mathcal{T}$ is an order-preserving map, there exists $r\wedge s\in\mathcal{P}_{A}$ (hence $r\wedge s\in \mathcal P_A/{=}$) such that $r \wedge  s\leq  r$ and $r\wedge s\leq  s$. Note that $r\wedge s\neq  r$ as well as $r\wedge s\neq s$ as $r$ and $s$ are distinct. The maximal element $(k_1,\dots, k_m)\in  \mathcal{P}_{A}/{=}$ since  $(k_1,\dots, k_m)\in  \mathcal{P}_{A}$. It follows that $\mathcal{P}_{A}/{=}$ is a closed subset of $\mathcal{L}_{M}$.    
\qed
\end{proof}

\begin{lem}
\label{lem:ind_PA} 
Let $\Ind_{\mathcal{P}}$ be the index on  $\mathcal{P}_{A}$ given by Definition \ref{def:ind_P}. Then, we have
\begin{itemize}
	\item[(i)] $\Ind_{\mathcal{P}} (s)\ge 0$ for all $s\in \mathcal{P}_{A}$;
	\item[(ii)] {${\underset{s\in \mathcal{P}_{A} ,s\le a}{\sum}} \Ind_{\mathcal{P}}(s)\ge |a|$, $\forall\, a\le k=(k_1,\dots, k_m)$;}
	\item[(iii)] {$\underset{s\in \mathcal{P}_{A}}{\sum} \Ind_{\mathcal{P}}(s)\ge  |k|$,}
\end{itemize}
where the equality ''$=$`` in (ii)-(iii) holds on {$\mathcal{P}_{A}/{=}$}.
\end{lem}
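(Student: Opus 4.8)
The plan is to derive the three statements directly from the corresponding statements for closed subsets of $\mathcal{L}_M$, namely Lemma \ref{lem:ind} and Lemma \ref{lem:indL}, using the projection $p:\mathcal{P}_A\to\mathcal{P}_A/{=}$ together with Lemma \ref{lem:closed_PA} and Proposition \ref{prop:sign_preserving_PA}. First I would address (i): for any $s\in\mathcal{P}_A$, the tuple $s$ also lives in $\mathcal{L}_M(k_1,\dots,k_m)$, and by Lemma \ref{rmk:ind_P_def2} the index $\mathrm{Ind}$ on the closed subset $\mathcal{P}_A/{=}$ agrees with the leader-based formula of Definition \ref{def:ind_P}; combining this with Proposition \ref{prop:sign_preserving_PA} (which says $\mathrm{Ind}_{\mathcal{P}}(s)=\mathrm{Ind}_{\mathcal{P}}(s^*)$ for the class $s^*=p(s)$) and Lemma \ref{lem:indL}(i) applied to the closed subset $\mathcal{P}_A/{=}\subseteq\mathcal{L}_M$, we get $\mathrm{Ind}_{\mathcal{P}}(s)=\mathrm{Ind}_{\mathcal{P}}(s^*)=\mathrm{Ind}\,s^*\ge 0$.

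For the equality clause on $\mathcal{P}_A/{=}$, I would observe that $\mathcal{P}_A/{=}$ is, by Lemma \ref{lem:closed_PA}, a closed subset of $\mathcal{L}_M$, so Lemma \ref{lem:indL}(ii)--(iii) apply verbatim: $\sum_{s^*\in\mathcal{P}_A/{=},\,s^*\le a}\mathrm{Ind}\,s^*=|a|$ for all $a\le k$, and in particular (taking $a=k$) the total is $|k|$. Since on $\mathcal{P}_A/{=}$ the index $\mathrm{Ind}$ coincides with $\mathrm{Ind}_{\mathcal{P}}$ (Lemma \ref{rmk:ind_P_def2}), this gives the ``$=$'' in (ii) and (iii) restricted to $\mathcal{P}_A/{=}$.

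The remaining content is the inequality ``$\ge$'' in (ii) and (iii) for the full multiset $\mathcal{P}_A$. The key point is that the projection $p$ is surjective and, by Proposition \ref{prop:sign_preserving_PA}, index-preserving on each fiber: if the tuple $(r_1,\dots,r_m)$ occurs with multiplicity $\mu\ge 1$ in $\mathcal{P}_A$, then its total contribution to $\sum_{s\in\mathcal{P}_A,\,s\le a}\mathrm{Ind}_{\mathcal{P}}(s)$ is $\mu\cdot\mathrm{Ind}_{\mathcal{P}}(s^*)$, whereas its contribution to the corresponding sum over $\mathcal{P}_A/{=}$ is $\mathrm{Ind}_{\mathcal{P}}(s^*)$. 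Because each $\mathrm{Ind}_{\mathcal{P}}(s^*)\ge 0$ by part (i) and $\mu\ge 1$, summing over all distinct tuples $\le a$ yields
\[
\sum_{s\in\mathcal{P}_A,\,s\le a}\mathrm{Ind}_{\mathcal{P}}(s)\;\ge\;\sum_{s^*\in\mathcal{P}_A/{=},\,s^*\le a}\mathrm{Ind}_{\mathcal{P}}(s^*)\;=\;|a|,
\]
which is (ii); taking $a=k$ gives (iii). One subtlety I should be careful about is that the immediate leaders of $s$ in the multiset $\mathcal{P}_A$ versus in $\mathcal{P}_A/{=}$ could a priori differ, but Proposition \ref{prop:sign_preserving_PA} already certifies that the value $\mathrm{Ind}_{\mathcal{P}}$ is unchanged under the identification, so the fiberwise bookkeeping above is legitimate. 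I expect the main (mild) obstacle to be stating the multiplicity argument cleanly enough that the passage from a sum over a multiset to a sum over its underlying set is transparent; everything else is an immediate appeal to the lemmas already proved.
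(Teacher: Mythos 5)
Your proof is correct and follows essentially the same route as the paper: transfer the index across the projection via Proposition \ref{prop:sign_preserving_PA}, apply Lemma \ref{lem:indL} to the closed subset $\mathcal{P}_{A}/{=}$ (Lemma \ref{lem:closed_PA}), and use nonnegativity together with multiplicity $\ge 1$ to pass from the quotient sum to the multiset sum. The only cosmetic difference is in (i), where the paper simply observes $s\ge r_1\vee\cdots\vee r_n$ so Definition \ref{def:ind_P} gives $\Ind_{\mathcal{P}}(s)\ge 0$ directly, whereas you route it through the quotient and Lemma \ref{rmk:ind_P_def2}; both are valid.
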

\begin{proof} 
Notice that $s\geq r_{1}\vee\cdots\vee r_{n}$. Thus, by Definition \ref{def:ind_P}, we have (i) holds. By Proposition \ref{prop:sign_preserving_PA}, we have
\[ \Ind_{\mathcal{P}}(s) = \Ind_{\mathcal{P}}(s^*).\]
Let $L_{s}$ be a set of immediate leaders of $s\in\mathcal{P}_{A}$, and let $L_{s^{*}}$ be a set of immediate leaders of $s^{*}\in\mathcal{P}_{A}/{=}$. Since $L_{s}\supseteq L_{s^{*}}$, we have
\[{\underset{s\in \mathcal{P}_{A} ,s\le a}{\sum}} \Ind_{\mathcal{P}}(s)\ge {\underset{s^*\in {\mathcal{P}_{A}/=} ,s^*\le a}{\sum}} \Ind_{\mathcal{P}}(s^*)= |a|,\]
where the last equality is given by Lemma \ref{lem:indL} (ii), since {$\mathcal{P}_{A}/{=}$} is closed by Lemma \ref{lem:closed_PA}. Thus, (ii) follows. The statement (iii) is a special case of (ii) by taking $a=k=(k_1,\dots, k_m)$.
\qed	 
\end{proof}

\begin{proposition}
\label{prop:index_Psim}
Let $k:=(k_{1},\ldots,k_{m})$, and let {$s^{\ast}\in\mathcal{P}_{A}/{=}$}. Then an index $\Ind_{\mathcal{P}}(s^{\ast})$ can be recursively assigned as follows:
\[\Ind_{\mathcal{P}}(s^{\ast})=|s^{\ast}|-{\underset{e<s^{\ast}}{\sum}}\Ind_{\mathcal{P}}(e),\quad\forall s^{\ast}\leq k.\]
\end{proposition}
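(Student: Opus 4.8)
The plan is to prove the recursive formula by downward induction on $|s^{\ast}|$, using the fact that $\mathcal{P}_{A}/{=}$ is a closed subset of $\mathcal{L}_{M}$ (Lemma \ref{lem:closed_PA}) and the summation identity of Lemma \ref{lem:indL} (ii). First I would fix $s^{\ast}\in\mathcal{P}_{A}/{=}$ and apply Lemma \ref{lem:indL} (ii) with $a=s^{\ast}$, noting that the index ``$\Ind$'' on the closed subset $\mathcal{P}_{A}/{=}$ of $\mathcal{L}_{M}$ coincides with $\Ind_{\mathcal{P}}$ on $\mathcal{P}_{A}/{=}$ (this identification is exactly what Lemma \ref{rmk:ind_P_def2} gives: on a closed subset the index defined via immediate followers agrees with the formula $|s|-|r_{1}\vee\cdots\vee r_{n}|$ using immediate leaders, which is Definition \ref{def:ind_P}). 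Lemma \ref{lem:indL} (ii) then yields
\[\underset{e\in\mathcal{P}_{A}/=,\, e\le s^{\ast}}{\sum}\Ind_{\mathcal{P}}(e)=|s^{\ast}|.\]

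Next I would split the left-hand sum into the term $e=s^{\ast}$ and the terms $e<s^{\ast}$, obtaining
\[\Ind_{\mathcal{P}}(s^{\ast})+\underset{e<s^{\ast}}{\sum}\Ind_{\mathcal{P}}(e)=|s^{\ast}|,\]
which rearranges immediately to the claimed identity $\Ind_{\mathcal{P}}(s^{\ast})=|s^{\ast}|-\sum_{e<s^{\ast}}\Ind_{\mathcal{P}}(e)$. The word ``recursively'' in the statement is then justified by observing that every $e$ appearing in the sum satisfies $|e|<|s^{\ast}|$, so the right-hand side only involves index values already computed at strictly smaller levels; the base case is $s^{\ast}=(0,\dots,0)$, where the sum is empty and $\Ind_{\mathcal{P}}(0,\dots,0)=0=|0|$, consistent with Lemma \ref{lem:ind} (i).

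The only subtle point — and the one I would be most careful about — is making sure the sum $\sum_{e<s^{\ast}}$ is interpreted over $e\in\mathcal{P}_{A}/{=}$ and not over $e\in\mathcal{P}_{A}$, since on the multiset $\mathcal{P}_{A}$ repeated tuples would be double-counted and Lemma \ref{lem:indL} would not apply. Because $\mathcal{P}_{A}/{=}$ is genuinely a set (no repeated tuples) and is closed in $\mathcal{L}_{M}$, Lemma \ref{lem:indL} (ii) applies verbatim with $a=s^{\ast}\le k$, and no further estimates are needed. Everything else is a one-line rearrangement, so there is no real obstacle beyond this bookkeeping; the substance of the result is entirely carried by Lemma \ref{lem:indL} (ii) together with the closedness established in Lemma \ref{lem:closed_PA}.
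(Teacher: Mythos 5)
Your proposal is correct and follows essentially the same route as the paper: the paper invokes the equality case of Lemma \ref{lem:ind_PA} (ii) on $\mathcal{P}_{A}/{=}$ (which itself rests on Lemma \ref{lem:indL} (ii) and the closedness from Lemma \ref{lem:closed_PA}) and then splits off the term $s=a$, exactly your rearrangement. You merely unwind that intermediate lemma by citing Lemma \ref{lem:indL} (ii) directly, and your care about summing over $\mathcal{P}_{A}/{=}$ rather than the multiset $\mathcal{P}_{A}$ matches the paper's treatment.
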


\begin{proof} 
By Lemma \ref{lem:ind_PA} (ii), we have
\begin{eqnarray*}
&& {\underset{s\in \mathcal{P}_{A}/{=} ,s\le a}{\sum}} \Ind_{\mathcal{P}}(s) = |a|\\
&\Leftrightarrow& \Ind_{\mathcal{P}}(a) + {\underset{s\in {\mathcal{P}_{A}/=} ,s < a}{\sum}} \Ind_{\mathcal{P}}(s) = |a|
\end{eqnarray*}
Let {$a=s^{*}\in\mathcal{P}_{A}/{=}$}. Then we obtain the required result
\[\Ind_{\mathcal{P}}(s^{\ast})=|s^{\ast}|-{\underset{e<s^{\ast}}{\sum}}\Ind_{\mathcal{P}}(e),\quad\forall s^{\ast}\leq k.\]
\qed
\end{proof}

\subsubsection{Matrix Representation of $\mathcal{P}_{A}$}
We consider matrix representations of $\mathcal{P}_{A}$ and $\mathcal{P}_{A}/{=}$, which leads to a theoretical justification of our computer algorithm for the lattice reduction. 
\begin{Def}
\label{def:connectivity_matrix}
\rm
Let $s$ be the number of nodes in $\mathcal{P}_{A}$. Define an $s\times s$ adjacency matrix $A_{P}=(a_{ij})=[A_{1} \ldots A_{s}]$, where  $A_{j}$ is the $j$-th column vector of $A_{P}$ for $j=1,\ldots, s$, of $\mathcal{P}_{A}$ with
\[a_{ij}=\begin{cases}1,\q\text{if $i=j$ or node $i$ is immediately connected to node $j$}, \\ 0,\q\text{otherwise.}\end{cases}\]

Define an $s\times s$ matrix $C=(c_{ij})=[C_{1}\ldots C_{s}]$, where  $C_{j}$ is the $j$-th column vector of $C$ for $j=1,\ldots, s$, as a matrix representation of $\mathcal{P}_{A}$ as follows. Let $(r_{1}^{(j)},\ldots, r_{m}^{(j)})$ be the tuple representation of the node $p_{j}\in \mathcal{P}_{A}$. Then $C_{j}=(r_{1}^{(j)},\ldots, r_{m}^{(j)})A_{j}$. More specifically, $i$-th element of a column vector $C_{j}$, $c_{ij}$, is given by
\[
c_{ij} = \begin{cases}
(r_{1}^{(j)},\ldots,r_{m}^{(j)}),\q\text{if $a_{ij}=1$,} \\
(0,\ldots,0), \q\textrm{if $a_{ij}=0$.}
\end{cases}
\]
\END
\end{Def}

\begin{rmk}
\rm
\label{rmk:parents_children_connections}
Note that non-zero tuple representations in the $j$-th column are all identical given by $(r_{1}^{(j)},\ldots,r_{m}^{(j)})$.
With this definition, the lower and the upper triangular matrices describe immediate followers and leaders connections, respectively. 
\END
\end{rmk}

\subsubsection{Construction of {$\mathcal{P}_{A}/{=}$}}
The realisation of a quotient poset can be done through a matrix manipulation on $C=(c_{ij})$ defined in the following.

\begin{Def}
\label{def:addition_operator_on_PA}
\rm
Let $A_{P}=[A_{1}\ldots A_{s}]$ be a $s\times s$ adjacency matrix of $\mathcal{P}_{A}$. Let $I=\{1,\ldots,p\}\subseteq\{1,\ldots,s\}$. Define the $i$-the element of a column vector $\vee_{j\in I}A_{j}=A_{1}\vee\cdots\vee A_{p}$ by
\[
a_{i1}\vee\cdots\vee a_{ip},
\]
where
\[a_{ij}\vee a_{ik}=\max(a_{ij},a_{ik}).\]
\END
\end{Def}

\begin{Def} 
\rm
Let $S=\{1,\ldots,s\}$ be a set of nodes on $\mathcal{P}_{A}$ and let $C$ be the corresponding matrix whose columns we denote by $C_{1},\ldots, C_{s}$. Let $\bowtie$ be an equivalence relation on $S$ with classes $I_{1},\ldots,I_{p}$. Denote by $\overline{C}$ the $s\times p$ matrix whose columns $\overline{C}_{1},\ldots \overline{C}_{p}$ are defined by
\[
\overline{C}_{j}=(r_{1}^{(j)},\ldots,r_{m}^{(j)})\vee_{i\in I_{j}}A_{i},
\]
where $(r_{1}^{(j)},\ldots,r_{m}^{(j)})$ is the {identical tuple representation of all nodes $j\in I_{j}$}, and $A_{i}$ is the $i$-th column of the adjacency matrix $A_{P}$. We say that the matrix $C$ is {\it $\bowtie$-balanced} if for each $j=1,\ldots, p$, the rows for $i\in I_{j}$ of $\overline{C}$ are identical.
\END
\end{Def}

\begin{proposition}
Let $C=(c_{ij})$ be the matrix representation of  $\mathcal{P}_{A}$. Let $\bowtie$ be the equality among tuple representation on $C$ as defined in Definition \ref{def:po_PA}. Then the matrix $C$ is $\bowtie$-balanced. 
\end{proposition}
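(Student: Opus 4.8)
The plan is to unwind the definition of \emph{$\bowtie$-balanced} and show that the required row-equality for $\overline{C}$ follows directly from the structural observation in Remark \ref{rmk:parents_children_connections}: all non-zero entries in a fixed column of $C$ are the common tuple of that column. So fix an equivalence class $I_{j}$ of the equality relation $\bowtie$; by Definition \ref{def:po_PA}, every node $i\in I_{j}$ carries the same tuple representation, which we denote $(r_{1}^{(j)},\ldots,r_{m}^{(j)})$. I would first compute the column $\overline{C}_{j}=(r_{1}^{(j)},\ldots,r_{m}^{(j)})\,\vee_{i\in I_{j}}A_{i}$ entrywise. Its $\ell$-th entry is $(r_{1}^{(j)},\ldots,r_{m}^{(j)})$ if $a_{\ell i}=1$ for some $i\in I_{j}$ (i.e.\ node $\ell$ is node $i$ itself or immediately connected to some $i\in I_{j}$), and $(0,\ldots,0)$ otherwise. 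Thus $\overline{C}_{j}$ only records, in the "binary" sense, whether a node $\ell$ is adjacent (in $\mathcal{P}_{A}$, in the closed-up sense of $A_{P}$) to the class $I_{j}$, scaled by the fixed tuple $(r_{1}^{(j)},\ldots,r_{m}^{(j)})$.

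The key step is then: for two nodes $i,i'\in I_{j}$ (same class, hence same tuple), their rows in $\overline{C}$ coincide. Row $i$ of $\overline{C}$ has $\ell$-th entry equal to $(r_{1}^{(\ell)},\ldots,r_{m}^{(\ell)})$ precisely when $a_{i i''}=1$ for some $i''\in I_{\ell}$, and $(0,\ldots,0)$ otherwise — that is, when some representative of class $I_{\ell}$ is adjacent to node $i$ in $A_{P}$. So I must show: node $i$ is adjacent in $A_{P}$ to some node of $I_{\ell}$ if and only if node $i'$ is. This is where the \emph{covering relation} hypothesis implicit in the quotient construction enters: in $\mathcal{P}_{A}/{=}$, nodes $i,i'$ with the same tuple are merged, and the partial order on the quotient (Definition \ref{def:po_PA}) is defined so that $I_{j}\le I_{\ell}$ iff $x\le y$ for \emph{all} $x\in I_{j}$, $y\in I_{\ell}$. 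I would argue that if $i$ has an immediate leader/follower $i''$ with tuple $(r_{1}^{(\ell)},\ldots,r_{m}^{(\ell)})$, then since $i'$ has the identical tuple as $i$ and the partial order \eqref{eq:partial_order_PA} on $\mathcal{P}_{A}$ is determined purely by the tuples, $i'$ stands in exactly the same order relation to the tuple $(r_{1}^{(\ell)},\ldots,r_{m}^{(\ell)})$; combined with the fact that $A_{P}$ records \emph{immediate} connections and that the tuple $(r_{1}^{(\ell)},\ldots,r_{m}^{(\ell)})$ is shared by every member of $I_{\ell}$, one of the representatives of $I_{\ell}$ is immediately connected to $i'$. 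Hence the $\ell$-th entries of rows $i$ and $i'$ of $\overline{C}$ agree, for every $\ell$, giving the desired row-equality.

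The main obstacle is the word "immediate" in the definition of $A_{P}$: order relations among tuples are symmetric under the tuple-equality merging, but immediate-cover relations need not be preserved when passing from $i$ to $i'$ — a priori, $i$ might be an \emph{immediate} leader of $i''$ while $i'$ is only a non-immediate leader of $i''$, even though both have the same tuple. The resolution is that the $\vee$ in the definition of $\overline{C}_{j}$ aggregates over the whole class $I_{\ell}$, so it is enough that \emph{some} member of $I_{\ell}$ is immediately connected to $i$ iff \emph{some} member is immediately connected to $i'$; this weaker statement does follow from the tuple-only nature of \eqref{eq:partial_order_PA} together with the surjectivity of $\mathcal{T}$ (Proposition \ref{prop:map_T}) onto $\mathcal{P}_{A}$, used as in the proof of Lemma \ref{lem:closed_PA} to realise the relevant tuples by genuine synchrony subspaces whose intersections lie in $V_{\mathcal{G}}^{P}$. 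I would finish by remarking that this is precisely the matrix-level restatement of the fact (Lemma \ref{lem:closed_PA}) that $\mathcal{P}_{A}/{=}$ is a closed subset of $\mathcal{L}_{M}$, so no new content is needed beyond a careful bookkeeping of which entries of $\overline{C}$ are non-zero.
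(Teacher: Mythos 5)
Your proposal is correct and takes essentially the same route as the paper: for two nodes in the same tuple-equality class, the order on $\mathcal{P}_A$ is determined purely by the tuples, so they have identical (immediate) leader/follower sets, hence their rows of $\overline{C}$ agree in every merged column outside their own class, and the column of their own class agrees because the join $\vee_{i\in I_j}A_i$ together with the diagonal entries $a_{ii}=1$ produces the common tuple in both rows. The only deviation is your worry that immediacy might not transfer between same-tuple nodes; this is vacuous precisely because the cover relation in $\mathcal{P}_A$ depends only on the multiset of tuples (the fact the paper asserts outright), so the closing appeal to surjectivity of $\mathcal{T}$ and to Lemma \ref{lem:closed_PA} is unnecessary machinery rather than a needed repair.
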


\begin{proof}
Let $p_{k},p_{l}\in\mathcal{P}_{A}$ with $p_{k}\bowtie p_{l}$ of the corresponding equivalence class $I$. Let $C_{k}=[c_{k1},\ldots, c_{ks}]$ and $C_{l}=[c_{l1},\ldots, c_{ls}]$ be the $k$th and $l$th rows of the $s\times s$ matrix representation $C$, respectively. 

Since $p_{k}$ and $p_{l}$ have the same tuple representation, they have exactly the same set of leaders and follows. Thus,
\[c_{kj}=c_{lj},\quad\forall j\in\{1,\ldots,n\}\setminus\{k,l\}.\] 
Note that the tuple representation equality only occurs when $|p_{k}|=|p_{l}|$, i.e. $p_{k}$ and $p_{l}$ are at the same rank, or equivalently, they not partially ordered. Consequently, 
\begin{eqnarray*}
a_{kk}\vee a_{kl} &=&\textrm{max}(1,0)=1, \\
a_{ll}\vee a_{lk} &=&\textrm{max}(1,0)=1.
\end{eqnarray*}

Since $c_{kk}=c_{ll}$, we obtain
\[\overline{c}_{kk}=\overline{c}_{lk}\]
by choosing $p_{k}$ as a representative element of the equivalence class $I$.

Therefore, we obtain
\[\overline{c}_{kj}=\overline{c}_{lj}\quad\forall j=1,\ldots,p.\]
Thus the $k$-th and $l$-th rows, where $k,l\in I$ of $\overline{C}$, are identical, and $C$ is $\bowtie$-balanced.
\qed
\end{proof}

\begin{proposition}
\label{prop:PA_equal}
Let $C$ be the $\bowtie$-balanced matrix associated with $\mathcal{P}_{A}$, where $\bowtie$ is the equality among tuple representation on $C$ as defined in Definition \ref{def:po_PA}. Let $I_{1},\ldots,I_{p}$ be the equivalence classes where all elements in an equivalence class has the same tuple representation. For each $j=1,\ldots,p$, choose any $j_{i}\in I_{j}$. Then the matrix of the quotient poset $\mathcal{P}_{A}/{=}$ is the $p\times p$ submatrix of $\overline{C}$ whose $j$th row is the row $j_{i}$ of $\overline{C}$. 
\end{proposition}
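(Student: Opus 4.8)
The plan is to show that the $p\times p$ submatrix described in the statement is exactly the matrix representation (in the sense of Definition~\ref{def:connectivity_matrix}) of the poset $\mathcal{P}_{A}/{=}$. First I would observe that, since $C$ is $\bowtie$-balanced, the definition of $\overline{C}$ does not depend on the choice of representatives $j_{i}\in I_{j}$: for fixed column index $j$, the rows $\overline{C}_{k}$ for $k\in I_{i}$ coincide, so collapsing each class $I_{i}$ to a single row $j_{i}$ is well-defined, and likewise collapsing each class of columns is consistent because the non-zero entries of a column of $C$ are the (identical) tuple representation of that node, which is constant on each class. This reduces the question to checking that the collapsed adjacency pattern is the covering relation of $\mathcal{P}_{A}/{=}$ and that the collapsed non-zero tuple labels are the tuple representations of the merged nodes.

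Next I would verify the adjacency pattern. By Remark~\ref{rmk:parents_children_connections} the strictly-lower-triangular part of $C$ encodes immediate-follower connections and the strictly-upper part encodes immediate-leader connections; the join $\vee_{i\in I_{j}}A_{i}$ in the definition of $\overline{C}_{j}$ takes, for each row $k$, the value $1$ precisely when node $k$ is immediately connected to \emph{some} node in the class $I_{j}$. By Definition~\ref{def:po_PA}, $X\leq Y$ in $\mathcal{P}_{A}/{=}$ iff $x\leq y$ in $\mathcal{P}_{A}$ for all $x\in X$, $y\in Y$; combined with Proposition~\ref{prop:sign_preserving_PA} (which guarantees that merging does not create new immediate leaders beyond the distinct ones already present) and the fact that members of one class are pairwise incomparable, one sees that there is an edge between two classes in $\mathcal{P}_{A}/{=}$ exactly when there is an edge in $\mathcal{P}_{A}$ between some pair of their members. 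Hence the $(j_{i},k)$-entries of $\overline{C}$, read off for representatives, carry the tuple representation of node $j_{i}$ when the classes $I_{i}$ and the node $k$'s class are immediately connected (or $i$ indexes $k$'s class), and $(0,\ldots,0)$ otherwise — which is precisely the matrix representation of $\mathcal{P}_{A}/{=}$ per Definition~\ref{def:connectivity_matrix}.

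Finally I would assemble these observations: restrict $\overline{C}$ to the rows $j_{1},\ldots,j_{p}$ to get a $p\times p$ matrix whose $(i,j)$-entry is the tuple representation of node $j_{i}$ if $I_{i}$ is immediately connected to $I_{j}$ or $i=j$, and zero otherwise; this matches Definition~\ref{def:connectivity_matrix} applied to $\mathcal{P}_{A}/{=}$, so the two matrices agree. The main obstacle I anticipate is the well-definedness/consistency argument in the first paragraph, namely checking carefully that $\bowtie$-balancedness forces both the chosen-representative rows and the column labels of $\overline{C}$ to be independent of the representatives, so that the resulting $p\times p$ matrix is genuinely canonical; the edge-set identification in the second paragraph is essentially bookkeeping once Proposition~\ref{prop:sign_preserving_PA} and the incomparability of same-class nodes are in hand.
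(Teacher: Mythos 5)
Your argument is correct and follows essentially the same route as the paper's own proof: the merged nodes sit on the diagonal of $\overline{C}$, $\bowtie$-balancedness forces members of a class to share the same sets of immediate leaders and followers (so the choice of representative rows is immaterial), and the column join collapses the edges so that the restricted $p\times p$ matrix is exactly the matrix representation of $\mathcal{P}_{A}/{=}$. One cosmetic point: your appeal to Proposition~\ref{prop:sign_preserving_PA} really uses a fact established inside its proof (the immediate leaders of $s^{*}$ are the distinct immediate leaders of $s$) rather than its statement, whereas the paper obtains the same conclusion directly from the definition of the quotient.
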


\begin{proof}
Let
\[q:\q \mathcal{P}_{A}\q \to \q \mathcal{P}_{A}/{=}.\]
This is an order-preserving map and it gives the quotient poset $\mathcal{P}_{A}/{=}$. Since all nodes with the identical tuple representation are merged into one node by $q$, there are $p$ nodes in $\mathcal{P}_{A}/{=}$. We show that each of $p$ node corresponds to $\overline{c}_{ii}$ in $\overline{C}$ and the partial order among $p$ nodes is represented by $\overline{C}$.

It immediately follows that the unique $p$ tuple representation corresponds to diagonal entries $\overline{c}_{ii}$ in $\overline{C}$, where $i=1,\ldots, p$. 
$C$ being $\bowtie$-balanced means that if $k\bowtie l$ then $k$ and $l$ in $\mathcal{P}_{A}$  have the same sets of leaders and followers, respectively. During the matrix manipulation process, the partial order among nodes is preserved. Therefore, the resulting $p\times p$ submatrix is a matrix representation of $\mathcal{P}_{A}/{=}$.
\qed
\end{proof}

\section{Poset of Eigenvalues $\mathcal{E}_{A}$}
\label{sec:QA}
In this section, we present a computer algorithm for the reduction procedure  based on an eigenvalue structure given by  $\mathcal{E}_{A}$. Assigning tuple representations to synchrony subspaces is computationally expensive as this requires detailed information of (generalised) eigenvectors which generate synchrony subspaces (see \cite{Aguiar-2014} for this approach). On the other hand, obtaining a set of eigenvalues of quotient networks, corresponding to synchrony subspace, is computationally less expensive.
	Starting from a lattice of synchrony subspaces $V_{\mathcal{G}}^{P}$, we construct a poset $\mathcal{E}_{A}$, where $s\in \mathcal{E}_{A}$ represents a set of distinct eigenvalues of a quotient network and the connectivity of $\mathcal{E}_{A}$ is the same as $V_{\mathcal{G}}^{P}$. Compared with  $\mathcal{P}_{A}$ whose node tells us how many (generalised) eigenvectors come from each Jordan chain associated with an eigenvalue, each node in $\mathcal{E}_{A}$ tells us how many eigenvalues with algebraic multiplicities a given node has. In other words, $\mathcal{E}_{A}$ doesn't distinguish distinct Jordan chains associated with the same eigenvalue. We discuss the relation between the two and show how to combine them to reduce the lattice in an optimal way.

\subsection{Construction of $\mathcal{E}_{A}$}
We construct a partially ordered set $\mathcal{E}_{A}$ whose nodes are given by a tuple representation which counts the number of each distinct eigenvalues from the adjacency matrix of the corresponding quotient network and whose connectivity is the same as given by the lattice $V_{\mathcal{G}}^{P}$. The partial order on $\mathcal{E}_{A}$ is entirely inherited from $V_{\mathcal{G}}^{P}$, which cannot be determined from sets of eigenvalues alone.

\begin{Def}
\rm
Let $\mathcal{G}$ be an $n$-cell regular network with $n\times n$ adjacency matrix $A$ with the distinct eigenvalues $\lambda_{1},\ldots, \lambda_{k}$. Let $\mathcal{G}_{\bowtie}$ be a $p$-cell quotient network of $\mathcal{G}$ restricted to a $p$-dimensional synchrony subspace $\Delta_{\bowtie}\in V_{\mathcal{G}}^{P}$ associated with a $p\times p$ adjacency matrix $A_{\bowtie}$, where $p<n$. Define a map $\mathcal{V}$ which uniquely assigns the set of eigenvalues of $A_{\bowtie}$ for a given synchrony subspace $\Delta_{\bowtie}$ as
\begin{align}
\mathcal{V}:\q V_{\mathcal{G}}^{P} \q &\to \q \mathcal{E}_{A}\notag\\
\Delta_{\bowtie} \q &\mapsto \q (t_{1},\dots,t_{k}),\label{eq:map_eigenvalue}
\end{align}
where $\displaystyle t_{i}$ counts the number of eigenvalue $\lambda_{i}$ for $i=1,\ldots,k$. We define the covering relations on $\mathcal{E}_{A}$ to be directly inherited from $V_{\mathcal{G}}^{P}$.
\END
\end{Def}

\begin{Def}
\rm
Let $(r_{1},\ldots,r_{m})$ be a $m$-tuple representation on the poset $\mathcal{P}_{A}(k_{1},\ldots,k_{m})$ of a regular network with  adjacency matrix $A$, which has  distinct eigenvalues $\lambda_{1},\ldots,\lambda_{k}$, where $k\leq m$. Let $\lambda_{i}$ be the corresponding eigenvalue of $A$ to $r_{i}$ for $i=1,\ldots,m$, given by the following map:
\[\mathcal{W}:\q (r_{1},\ldots,r_{m})\q \mapsto \q (\lambda_{1},\ldots,\lambda_{m}).\]
Note that $\lambda_{i}$ can be equal to $\lambda_{j}$ for some $i\neq j$. 
\END
\end{Def}

\begin{lem}
\label{lem:EA}
Let $A:\bc^n\to\bc^n$ be an adjacency matrix of a regular network $\mathcal{G}$ with distinct eigenvalues $\lambda_{1},\ldots,\lambda_{k}$. Let $\mathcal{V}(\Delta_{\bowtie})=(t_{1},\ldots,t_{k})\in\mathcal{E}_{A}$ and let $\mathcal{T}(\Delta_{\bowtie})=(r_{1},\ldots,r_{m})\in\mathcal{P}_{A}$ for an arbitrary $\Delta_{\bowtie}\in V_{\mathcal{G}}^{P}$, where $k\leq m\leq n$. Then for any representation $\mathcal{P}_{A}(k_{1},\ldots,k_{m})$ associated with $V_{\mathcal{G}}^{P}$, we have
\[t_{i}=\sum_{\mathcal{W}(r_{j})=\lambda_{i}}r_{j},\quad\textrm{for all}\quad i=1,\ldots,k.\]
\end{lem}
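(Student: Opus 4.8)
The plan is to trace through the two tuple-assignment constructions and observe that they count the same generalized eigenvectors, grouped differently. First I would fix an arbitrary $\Delta_{\bowtie}\in V_{\mathcal{G}}^{P}$ and recall from the proof of Proposition \ref{prop:map_T} that $\mathcal{T}$ is built eigenvalue-by-eigenvalue: for each distinct eigenvalue $\lambda_i$ we decompose $\Delta_{\bowtie}\cap E_{\lambda_i}=\bigoplus_{j=1}^{l_i}J_j^i$ into the $l_i$ Jordan blocks attached to $\lambda_i$, and $\mathcal{T}$ records, for each such block, the number $r_j$ of leading vectors of that Jordan chain lying in $\Delta_{\bowtie}$. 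Hence $\dim(\Delta_{\bowtie}\cap E_{\lambda_i})=\sum_{j:\,\mathcal{W}(r_j)=\lambda_i} r_j$, since the spanning vectors from distinct Jordan chains for the same eigenvalue are linearly independent and together span $\Delta_{\bowtie}\cap E_{\lambda_i}$.

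Next I would identify $\dim(\Delta_{\bowtie}\cap E_{\lambda_i})$ with $t_i$. Because $\bowtie$ is balanced, $\Delta_{\bowtie}$ is $A$-invariant, and the restriction $A|_{\Delta_{\bowtie}}$ is conjugate to the adjacency matrix $A_{\bowtie}$ of the quotient network $\mathcal{G}_{\bowtie}$ (this is the standard quotient construction recalled in Subsection \ref{subsec:quotinets}). Consequently the eigenvalues of $A_{\bowtie}$, counted with algebraic multiplicity, are exactly the eigenvalues of $A|_{\Delta_{\bowtie}}$, and the algebraic multiplicity of $\lambda_i$ in $A_{\bowtie}$ equals $\dim(\Delta_{\bowtie}\cap E_{\lambda_i})$, where $E_{\lambda_i}$ denotes the generalized eigenspace of $A$. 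But $t_i$ is by definition of $\mathcal{V}$ precisely that algebraic multiplicity. Combining the two displays gives $t_i=\dim(\Delta_{\bowtie}\cap E_{\lambda_i})=\sum_{\mathcal{W}(r_j)=\lambda_i}r_j$ for each $i=1,\dots,k$, which is the claim; the independence of the right-hand side from the particular representative $\mathcal{P}_A(k_1,\dots,k_m)$ is then automatic, since any admissible labeling of equal-sized Jordan blocks for $\lambda_i$ only permutes the summands.

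The main obstacle I anticipate is the book-keeping around the non-uniqueness of $\mathcal{T}$ flagged in Remark \ref{rmk:oder_preserv}(i): when two Jordan blocks for $\lambda_i$ have the same size, $\mathcal{T}$ may assign the pair $(r_j,r_{j'})$ to the blocks in either order, so one must be careful that the statement is phrased (as it is) as a sum over all $j$ with $\mathcal{W}(r_j)=\lambda_i$, which is manifestly symmetric in such $j,j'$. A second, more technical point to handle cleanly is verifying that the vectors selected by $\mathcal{T}$ from the various Jordan chains attached to $\lambda_i$ really do span all of $\Delta_{\bowtie}\cap E_{\lambda_i}$ rather than a proper subspace; this follows from the construction in Proposition \ref{prop:map_T}, where $\Delta_{\bowtie}\cap E_{\lambda_i}$ is \emph{defined} as the direct sum of those spans, but it is worth stating explicitly so that the dimension count is unambiguous. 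Everything else is a direct dimension computation.
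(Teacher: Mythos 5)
Your proposal is correct and follows essentially the same route as the paper: decompose $\Delta_{\bowtie}\cap E_{\lambda_i}$ into the spans of leading Jordan-chain vectors so that its dimension is $\sum_{\mathcal{W}(r_j)=\lambda_i}r_j$, then identify that dimension with the algebraic multiplicity $t_i$ of $\lambda_i$ in the quotient adjacency matrix $A_{\bowtie}$ (the paper does this by writing $A$ in block upper-triangular form with $A_{\bowtie}$ in the top-left block, which is the same fact you invoke as the conjugacy of $A|_{\Delta_{\bowtie}}$ with $A_{\bowtie}$). Your remarks on the non-uniqueness of $\mathcal{T}$ for equal-sized blocks being immaterial to the sum also match the paper's treatment.
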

\begin{proof}
We decompose
\[\bc^{n}=E_{\lambda_{1}}\oplus\cdots\oplus E_{\lambda_{k}}.\]

Consider 
\[\Delta_{\bowtie}\cap E_{\lambda_{i}}=\bigoplus_{j=1}^{l_{i}}J_{j}^{i},\]
where each $J_{j}^{i}$ is the span of the corresponding Jordan chain, and $l_{i}$ is the number of Jordan blocks corresponding to an eigenvalue $\lambda_{i}$ of $A$. Denote by $k_{1}\geq\cdots\geq k_{l_{i}}$ the size of Jordan blocks in decreasing order. A map $\mathcal{T}$ assigns an non-negative integer $r_{j}$ to each Jordan block $J_{j}^{i}$ in the following way:
\[\mathcal{T}(\Delta_{\bowtie}\cap E_{\lambda_{i}})=(r_{1},\ldots,r_{l_{i}}) \quad\textrm{which satisfies}\quad 0\leq r_{j}\leq k_{j} \quad\forall j=1,\ldots, l_{i}.\] 

This means that the space $\Delta_{\bowtie}\cap E_{\lambda_{i}}$ can be spanned by the first $r_{1}$ vectors of the Jordan chain of the block $J_{1}^{i}$, to the first $r_{l_{i}}$ vectors of the Jordan chain of the block $J_{l_{i}}^{i}$.
Note that if $k_{c}=k_{d}$ for some $c,d$, then the assignment of non-negative integers are not unique. However, this non-uniqueness becomes immaterial when we consider the number of basis vectors as follows.

Thus $\Delta_{\bowtie}\cap E_{\lambda_{i}}$ is spanned by $\displaystyle t_{i}=\sum_{j=1}^{l_{i}}r_{j}$ vectors corresponding to the eigenvalue $\lambda_{i}$. Let $\{\mathbf{v}_{1},\ldots,\mathbf{v}_{q}\}$ be the $q$ basis vectors of a $q$-dimensional synchrony subspace $\Delta_{\bowtie}$, where $\displaystyle q=\sum_{i=1}^{k}t_{i}$.

Hence the matrix $A$ with respect to the basis $\{\mathbf{v}_{1},\ldots,\mathbf{v}_{q}\}$ with its complement can be written as the following block structure:
\[\left[\begin{array}{cc} A_{\bowtie} & R \\ 0_{(n-q)\times q} & B\end{array}\right]\]
where $R$ is a $q\times (n-q)$ matrix, $B$ is a $(n-q)\times(n-q)$ matrix, and $A_{\bowtie}$ has the $t_{i}$ of eigenvalue $\lambda_{i}$ for $i=1,\ldots,k$.

Therefore, for $\Delta_{\bowtie}\in V_{\mathcal{G}}^{P}$ with $\mathcal{T}(\Delta_{\bowtie})=(r_{1},\ldots,r_{m})\in\mathcal{P}_{A}(k_{1},\ldots,k_{m})$, the corresponding node in $\mathcal{E}_{A}$ has the form $\mathcal{V}(\Delta_{\bowtie})=(t_{1},\ldots,t_{k})$ for any representation $\mathcal{P}_{A}$ associated with $V_{\mathcal{G}}^{P}$.
\qed
\end{proof}

\begin{Def}
\rm
\label{def:covering_relation}
Let $A$ be the adjacency matrix of a given regular network $\mathcal{G}$ with  total phase space $P$.  Let $\mathcal{T}(S)=s$ where $S \in V_{\mathcal{G}}^{P}$ and $s\in\mathcal{P}_{A}$. We say that $\mathcal{P}_{A}$ and $V_{\mathcal{G}}^{P}$ have {\it the same covering relation}, if for any $s\in \mathcal{P}_{A}$, the set $\{r_1,\dots, r_m\}$ of immediate followers (resp. leaders) $r_i=\mathcal{T}(R_i)$ of $s$ coming from a synchrony subspace $R_i\supset S$ (resp. $R_i\subset S$) in $V_{\mathcal{G}}^{P}$ for $i=1,\dots, m$, has the following property: for any immediate follower (resp. leader) $r$ of $s$ in $\mathcal{P}_{A}$, there exists a $r_i$ such that $r_i$ and $r$ have the same tuple representation.
\END
\end{Def}

\begin{rmk}
\rm
In Example \ref{ex:4_cell_network343}, we identified that Type $1$ $\mathcal{P}_{A}$ is the expected structure. Note that only Type $1$ $\mathcal{P}_{A}$ satisfies the covering relation defined in Definition \ref{def:covering_relation}. {For example, consider the left $\mathcal{P}_{A}$ of Type $2$ in Example \ref{ex:4_cell_network343}. Let $s=(1,0,1)\in\mathcal{P}_{A}$. Then the set of immediate followers $r_{i}=\mathcal{T}(R_{i})$ of $s$ coming from $R_{i}\supset S$ in $V_{\mathcal{G}}^{P}$ is given by $\{(1,1,1)\}$. However, the immediate follower $(2,0,1)\in\mathcal{P}_{A}$ of $s$ is different from $(1,1,1)$.}
\END
\end{rmk}

\subsection{Index on $\mathcal{E}_{A}$}
\label{sec:index_QA}

\begin{Def}
\rm
\label{def:ind_EA}
We define an index $\Ind_{\mathcal{E}}(s)$ recursively as follows:
\[\Ind_{\mathcal{E}}(s)=|s|-\sum_{e<s, \Ind_{\mathcal{E}}(e)\geq 0}\Ind_{\mathcal{E}}(e)\]
\END
\end{Def}

\begin{rmk}
\rm
Note that $\Ind_{\mathcal{E}}(s)<0$ for some $s\in\mathcal{E}_{A}$, which is a property the indices on $\mathcal{L}_{M}$, $\mathcal{P}_{A}$ don't satisfy.
\END
\end{rmk}

\begin{proposition}
\label{prop:positivity_Ind_2}
Suppose that $\mathcal{P}_{A}$ and $V_{\mathcal{G}}^{P}$ have the same covering relation defined in Definition \ref{def:covering_relation} for a regular network associated with the adjacency matrix $A$. Then, 
\begin{itemize}
\item[(i)] $\Ind_{\mathcal{P}}(s)\ge\Ind_{\mathcal{E}}(s)$;
\item[(ii)] $\Ind_{\mathcal{P}}(s)>\Ind_{\mathcal{E}}(s)$ for some $S\in V_{\mathcal{G}}^{P}$ if and only if there exists $S_1,S_2 \subsetneq\, S\in V_{\mathcal{G}}^{P}$ of the same rank such that $s_1=s_2$.
\end{itemize}
\end{proposition}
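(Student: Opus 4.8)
The plan is to prove both parts together by induction on the rank (dimension) of the synchrony subspace $S$, comparing the two indices through their recursive descriptions. The starting point is Lemma \ref{lem:EA}: if $\mathcal{T}(S)=(r_1,\dots,r_m)$ and $\mathcal{V}(S)=(t_1,\dots,t_k)$ then $|\mathcal{T}(S)|=\sum_j r_j=\sum_i t_i=|\mathcal{V}(S)|=\dim S$, so the leading term in both index recursions is the common quantity $\dim S$. Writing $s=\mathcal{T}(S)$ and invoking Proposition \ref{prop:index_Psim} for the reduced lattice together with Definition \ref{def:ind_EA} for $\mathcal{E}_A$, the two $\dim S$ terms cancel and the difference becomes
\[
\Ind_{\mathcal{P}}(s)-\Ind_{\mathcal{E}}(s)
=\sum_{E\subsetneq S,\ \Ind_{\mathcal{E}}(E)\ge0}\Ind_{\mathcal{E}}(E)
-\sum_{\tau< s,\ \tau\in\mathcal{P}_{A}/{=}}\Ind_{\mathcal{P}}(\tau)
=:Q-P .
\]
Thus everything reduces to the single inequality $Q\ge P$, and the strictness in (ii) corresponds exactly to $Q>P$. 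Note the direction: because $\mathcal{E}_A$ keeps the nodes of $V_{\mathcal{G}}^P$ separate and subtracts their non-negative indices one at a time, while $\mathcal{P}_A/{=}$ first merges equal tuples and charges each only once, $Q$ should dominate $P$, giving $\Ind_{\mathcal{P}}\ge\Ind_{\mathcal{E}}$ and never the reverse.

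First I would fix the order-matching. Under the hypothesis that $\mathcal{P}_A$ and $V_{\mathcal{G}}^P$ have the same covering relation (Definition \ref{def:covering_relation}), the covers of the tuple order coincide with the covers of $V_{\mathcal{G}}^P$ transported by $\mathcal{T}$; passing to transitive closures identifies $\{\tau< s:\tau\in\mathcal{P}_A/{=}\}$ with the set of distinct tuples realised by the subspaces $E\subsetneq S$. This lets me organise $Q$ into tuple-fibres $\mathcal{T}^{-1}(\tau)\cap\{E\subsetneq S\}$, each of which the merged sum $P$ charges exactly once as $\Ind_{\mathcal{P}}(\tau)$ (recall $\Ind_{\mathcal{P}}$ depends only on the tuple by Proposition \ref{prop:sign_preserving_PA}, and is non-negative by Lemma \ref{lem:closed_PA} with Lemma \ref{lem:indL}). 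The engine of the induction is the identity read off from the $\mathcal{E}_A$-recursion, $\sum_{E\le S,\ \Ind_{\mathcal{E}}(E)\ge0}\Ind_{\mathcal{E}}(E)=\dim S+\max(0,-\Ind_{\mathcal{E}}(S))\ge\dim S$, whereas the corresponding reduced cumulative sum equals $\dim S$ exactly (Lemma \ref{lem:indL}(ii) on the closed set $\mathcal{P}_A/{=}$).

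In the base case, and whenever no two subspaces below $S$ share a tuple at the same rank, every fibre is a singleton; the inductive hypothesis then forces $\Ind_{\mathcal{E}}(E)=\Ind_{\mathcal{P}}(\mathcal{T}(E))\ge0$ on each such $E$, so $Q=P$ and (i) holds with equality. When a collapse is present, i.e.\ distinct $S_1,S_2\subsetneq S$ of the same rank with $s_1=s_2$, the fibre over that tuple carries at least two non-negative contributions that $\mathcal{E}_A$ counts separately but $\mathcal{P}_A/{=}$ merges into one, producing a surplus and $Q>P$. To see this surplus is genuine I would use that the \emph{lowest} collapse necessarily occurs at a tuple $\tau$ with $\Ind_{\mathcal{P}}(\tau)>0$: if $\Ind_{\mathcal{P}}(\tau)=0$ then $\tau$ is the join of its immediate leaders, and with no collapse beneath $\tau$ the eigenspace-by-eigenspace data pin down a unique invariant subspace, so $S_1=S_2$, a contradiction. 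This secures the ``only if'' and ``if'' of (ii) simultaneously.

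The main obstacle is that the inductive hypothesis bounds each individual $\Ind_{\mathcal{E}}(E)$ only \emph{from above} by $\Ind_{\mathcal{P}}(\mathcal{T}(E))$, which is the wrong direction for lower-bounding the sum $Q$; moreover a surplus created by a collapse at one rank can be partly masked by the deficit of a node whose $\mathcal{E}_A$-index has been driven negative, and hence dropped from $Q$, by a collapse further down. So a term-by-term comparison cannot close, and a genuinely global accounting is required: I would track the cumulative slack $\sum_{E\le S,\ \Ind_{\mathcal{E}}(E)\ge0}\Ind_{\mathcal{E}}(E)-\dim S=\max(0,-\Ind_{\mathcal{E}}(S))$ along chains and show that each same-rank tuple coincidence contributes a non-negative increment to $Q-P$ that is never cancelled, the negative-index filter in Definition \ref{def:ind_EA} being precisely what prevents an over-count from reversing the inequality. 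Assembling these increments yields $Q\ge P$ with equality exactly in the collapse-free case, which is the content of (i) and (ii).
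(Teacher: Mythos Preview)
Your overall strategy---compare the two recursive formulas and argue by induction on rank---is exactly the paper's approach. The setup $\Ind_{\mathcal{P}}(s)-\Ind_{\mathcal{E}}(s)=Q-P$ with $Q=\sum_{E\subsetneq S,\ \Ind_{\mathcal{E}}(E)\ge0}\Ind_{\mathcal{E}}(E)$ and $P=\sum_{\tau<s}\Ind_{\mathcal{P}}(\tau)$ matches the paper's comparison of its equations (\ref{eq:ind_PS_1}) and (\ref{eq:ind_PS_2}). You are also right to flag that the naive induction hypothesis $\Ind_{\mathcal{E}}(E)\le\Ind_{\mathcal{P}}(\mathcal{T}(E))$ points the wrong way for bounding $Q$ from below; the paper absorbs this by passing through the intermediate bound $\Ind_{\mathcal{E}}(s)\le |s|-\sum_{\text{distinct }e<s}\Ind_{\mathcal{E}}(e)$ (dropping repeats of non-negative terms, then restoring the negative ones), and then invoking the recursive definition of $\Ind_{\mathcal{P}}$.

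Where your proposal parts from the paper, and where there is a genuine gap, is in (ii). You claim that the \emph{lowest} collapse must occur at a tuple $\tau$ with $\Ind_{\mathcal{P}}(\tau)>0$, arguing that if $\Ind_{\mathcal{P}}(\tau)=0$ then ``the eigenspace-by-eigenspace data pin down a unique invariant subspace, so $S_1=S_2$''. This is not justified: when several Jordan blocks for the same eigenvalue have equal size, the tuple does not determine the invariant subspace, and two distinct synchrony subspaces can share a tuple $\tau$ with $\Ind_{\mathcal{P}}(\tau)=0$. The paper avoids this entirely. It isolates the equality case of (i) as a statement (S): $\Ind_{\mathcal{P}}(s)=\Ind_{\mathcal{E}}(s)$ iff no two leader subspaces of $s$ share a tuple with non-negative $\Ind_{\mathcal{E}}$ \emph{and} no leader has negative $\Ind_{\mathcal{E}}$. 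For the forward direction of (ii) it then argues by descent: if $\Ind_{\mathcal{P}}(s)>\Ind_{\mathcal{E}}(s)$ yet no two leaders of $s$ share a tuple, (S) forces some leader $e$ with $\Ind_{\mathcal{E}}(e)<0$; take the smallest such $e$, apply (S) at $e$ (where now no smaller negative exists), and obtain a repeated-tuple pair below $e\subset s$, a contradiction. The converse follows directly from (S). Your ``cumulative slack'' idea is in the right spirit, but as written it remains a plan rather than a proof; replacing it by the paper's (S)-and-descent argument closes the gap cleanly.
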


\begin{proof}
By Lemma \ref{lem:ind_PA} (iii), we have
\begin{equation}
\label{eq:ind_PS_1}
\Ind_{\mathcal P}(s)=|s|-\underset{\text{distinct $e$}}{\sum_{e<s}} \Ind_{\mathcal{P}}(e).
\end{equation}

On the other hand, by Definition \ref{def:ind_EA}, we have
\begin{equation}\label{eq:ind_PS_2}
\Ind_{\mathcal{E}}(s)=|s|-\sum_{e<s, \Ind_{\mathcal{E}}(e)\geq 0}\Ind_{\mathcal{E}}(e),
\end{equation}
where identical $e$'s will be summed repeatedly in case there are different synchrony subspaces that are associated with the same tuple $e$.

It follows that 
\begin{equation}\label{eq:ind_PS_3}
\Ind_{\mathcal{E}}(s)\le |s|-\underset{\text{distinct $e$}}{\sum_{e<s,\Ind_{\mathcal{E}}(e)\geq 0}} \Ind_{\mathcal{E}}(e)\le |s|-\underset{\text{distinct $e$}}{\sum_{e<s}} \Ind_{\mathcal{E}}(e).
\end{equation}
Therefore, if we define
	\[\Ind_{\mathcal{P}}(s):=|s|-\underset{\text{distinct $e$}}{\sum_{e<s}} \Ind_{\mathcal{P}}(e),\q s\in \mathcal{P}_{A},\]
recursively on 	$\mathcal{P}_{A}$ with the initial value $\Ind_{\mathcal{P}}(\top)=|\top|$ for the top element $\top\in\mathcal{P}_{A}$, then (\ref{eq:ind_PS_1}) and (\ref{eq:ind_PS_3}) imply 
\begin{equation}
\label{eq:ind_PS_4}
\Ind_{\mathcal{E}}(s)\le \Ind_{\mathcal{P}}(s).
\end{equation}  
Therefore, (i) holds.

Also, the equality ``$=$'' attains in (\ref{eq:ind_PS_4}) if and only if there are no leader synchrony subspaces of $s$ having  the same tuple $e$ for which $\Ind_{\mathcal E}(e)\ge 0$ and there is no leader $e$ such that $\Ind_{\mathcal{E}}(e)<0$. We refer this statement as (S).

To show (ii), consider $s$ such that $\Ind_{\mathcal{P}}(s)>\Ind_{\mathcal{E}}(s)$. Assume to the contrary that there are no two different leader synchrony subspaces of $s$ having the same tuple representation. Then, by (S), there must be a leader $e$ such that  $\Ind_{\mathcal{E}}(e)<0$. Without loss of generality, we can assume that $e$ is the smallest tuple between the top element and $s$ with negative $\Ind_{\mathcal{E}}$. Then, $\Ind_{\mathcal{E}}(e)<0\le \Ind_{\mathcal{P}}(e)$ and by (S) again, there are two different leader synchrony subspaces of $e$ having the same tuple representation, which is a contradiction. Thus, given $s$ such that $\Ind_{\mathcal{P}}(s)>\Ind_{\mathcal{E}}(s)$, we have shown that there must be two different leader synchrony subspaces of $s$ having the same tuple representation.

To show the other implication of (ii), consider $s$ such that there are two different leader synchrony subspaces $S_1,S_2$ of $S$ having the same tuple representation, say $e$. If  $\Ind_{\mathcal{E}}(e)\ge 0$, then by (S), we have  $\Ind_{\mathcal{P}}(s)>\Ind_{\mathcal{E}}(s)$. Otherwise, if  $\Ind_{\mathcal{E}}(e)<0$, we also have $\Ind_{\mathcal{P}}(s)\ge 0>\Ind_{\mathcal{E}}(s)$. Therefore, in both cases, we have $\Ind_{\mathcal{P}}(s)>\Ind_{\mathcal{E}}(s)$.
\qed
\end{proof}

\begin{cor}
\label{cor:positivity_Ind}
If there exists $S\in V_{\mathcal{G}}^{P}$ such that $\Ind_{\mathcal{E}}(s)< 0$ then there exists $S_1,S_2\in V_{\mathcal{G}}^{P}$ such that $s_1=s_2$.
\end{cor}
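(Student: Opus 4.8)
The plan is to derive this directly from Proposition \ref{prop:positivity_Ind_2} by contraposition. Suppose that no two synchrony subspaces of $V_{\mathcal{G}}^{P}$ share the same tuple representation; I want to conclude that $\Ind_{\mathcal{E}}(s)\ge 0$ for every $S\in V_{\mathcal{G}}^{P}$. The first observation is that when tuple representations are all distinct, the map $\mathcal{T}$ is injective, so $\mathcal{P}_{A}$ coincides with $\mathcal{P}_{A}/{=}$, and $\mathcal{P}_{A}$ is then itself a closed subset of $\mathcal{L}_{M}$ by Lemma \ref{lem:closed_PA}. Moreover, the covering relation between $\mathcal{P}_{A}$ and $V_{\mathcal{G}}^{P}$ must be the same: any immediate leader $r$ of $s$ in $\mathcal{P}_{A}$ would, in the absence of repeated tuples, have to be realised by a synchrony subspace, hence Definition \ref{def:covering_relation} is automatically satisfied. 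This is the step I expect to need the most care — spelling out precisely why distinctness of all tuples forces the covering-relation hypothesis of Proposition \ref{prop:positivity_Ind_2}, rather than merely the injectivity of $\mathcal{T}$.

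Once the covering-relation hypothesis is in place, Proposition \ref{prop:positivity_Ind_2}(i) gives $\Ind_{\mathcal{P}}(s)\ge\Ind_{\mathcal{E}}(s)$ for all $s$, and by Lemma \ref{lem:ind_PA}(i) we have $\Ind_{\mathcal{P}}(s)\ge 0$. But I actually need the reverse-style bound: that $\Ind_{\mathcal{E}}(s)$ itself is non-negative. For this I would invoke Proposition \ref{prop:positivity_Ind_2}(ii): if $\Ind_{\mathcal{P}}(s)>\Ind_{\mathcal{E}}(s)$ for some $S$, then there exist $S_1,S_2\subsetneq S$ of the same rank with $s_1=s_2$, contradicting our assumption that all tuples are distinct. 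Hence $\Ind_{\mathcal{P}}(s)=\Ind_{\mathcal{E}}(s)$ for every $S\in V_{\mathcal{G}}^{P}$, and combined with $\Ind_{\mathcal{P}}(s)\ge 0$ this yields $\Ind_{\mathcal{E}}(s)\ge 0$ for all $s$.

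The contrapositive of what we have just shown is exactly the statement of the corollary: if there is some $S\in V_{\mathcal{G}}^{P}$ with $\Ind_{\mathcal{E}}(s)<0$, then there must exist $S_1,S_2\in V_{\mathcal{G}}^{P}$ with $s_1=s_2$. I would also remark that $S_1$ and $S_2$ can in fact be taken strictly below the offending $S$ and of equal rank, which is visible from the proof of Proposition \ref{prop:positivity_Ind_2}(ii), though the corollary as stated asks only for their existence somewhere in $V_{\mathcal{G}}^{P}$. The only genuine obstacle is the bookkeeping at the start — confirming that the "all tuples distinct" hypothesis legitimately activates Proposition \ref{prop:positivity_Ind_2}, since that proposition is stated under the same-covering-relation assumption rather than under an injectivity-of-$\mathcal{T}$ assumption; everything after that is a two-line deduction.
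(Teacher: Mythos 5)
There is a genuine gap, and it is exactly at the step you yourself flagged: the claim that ``all tuple representations distinct'' activates the hypothesis of Proposition \ref{prop:positivity_Ind_2}. Your justification --- that any immediate leader $r$ of $s$ in $\mathcal{P}_{A}$ must be realised by a synchrony subspace --- misses the actual content of Definition \ref{def:covering_relation}. Every element of $\mathcal{P}_{A}$ is by construction of the form $\mathcal{T}(R)$ for some $R\in V_{\mathcal{G}}^{P}$, so realisability is automatic and proves nothing. What the same-covering-relation condition requires is that each immediate leader $r$ of $s$ in the \emph{tuple order} is matched by one coming from an actual inclusion $R\subset S$ in $V_{\mathcal{G}}^{P}$; and Remark \ref{rmk:oder_preserv}(iii) warns precisely that $\mathcal{T}(R)\le\mathcal{T}(S)$ does not imply $R\subseteq S$. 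Distinctness of the tuples does nothing to rule out a covering relation $r<s$ in $\mathcal{P}_{A}$ whose unique preimages satisfy $R\not\subset S$, in which case Definition \ref{def:covering_relation} fails and Proposition \ref{prop:positivity_Ind_2} is not available. Nothing in the paper establishes the implication ``injective $\mathcal{T}$ $\Rightarrow$ same covering relation'', and the corollary is deliberately stated \emph{without} the covering-relation hypothesis, which is a strong hint that it should not be routed through Proposition \ref{prop:positivity_Ind_2}. (Your second step --- using part (ii) of that proposition to force $\Ind_{\mathcal{P}}=\Ind_{\mathcal{E}}$ under distinctness --- is fine once the hypothesis is granted; the problem is solely in granting it.)

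The paper's own proof avoids this entirely and is worth comparing. Arguing by contradiction under the assumption that no two synchrony subspaces share a tuple, it uses only that $\mathcal{T}$ is order-preserving: the leaders of $s$ in $\mathcal{E}_{A}$ (whose order is inherited from inclusions in $V_{\mathcal{G}}^{P}$) form a subset of the leaders of $s$ in the tuple order of $\mathcal{P}_{A}$. Taking $s$ minimal with $\Ind_{\mathcal{E}}(s)<0$, so that $\Ind_{\mathcal{E}}(e)\ge 0$ for all $e<s$, the recursion of Definition \ref{def:ind_EA} subtracts over a smaller set of leaders than the corresponding recursion for $\Ind_{\mathcal{P}}$ on $\mathcal{P}_{A}/{=}$ (which here coincides with $\mathcal{P}_{A}$), giving $\Ind_{\mathcal{E}}(s)\ge\Ind_{\mathcal{P}}(s)\ge 0$ by Lemma \ref{lem:ind_PA}(i) --- a contradiction. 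If you want to keep the shape of your argument, you should replace the appeal to Proposition \ref{prop:positivity_Ind_2} by this direct containment-of-leader-sets argument; as written, the bridge to that proposition is unsupported and is the one step on which your proof stands or falls.
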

\begin{proof}
Assume to the contrary that there are no two different synchrony subspaces $S_1,S_2\in V_{\mathcal{G}}^{P}$ with the same tuples $s_1=s_2$. Then, $V_{\mathcal{G}}^{P}$ (and thus $\mathcal E_A$) can be viewed as a subset of $\mathcal P_A/=$. Without loss of generality, we can assume that $\Ind_{\mathcal{E}}(e)\ge 0$ for all $e<s$. Thus, by Definition \ref{def:ind_EA}, we have
		\begin{equation}\label{eq:cor_pf1}
		\Ind_{\mathcal{E}}(s)= |s|-\underset{\text{distinct $e$}}{\sum_{e<s {\text{\, in $\mathcal E_A$}}}} \Ind_{\mathcal{E}}(e)\ge |s|-\underset{\text{distinct $e$}}{\sum_{e<s {\text{\, in $\mathcal P_A$}}}} \Ind_{\mathcal{E}}(e),
		\end{equation}
where the right-hand side expression coincides with $\Ind_{\mathcal P}(s)$ (by induction). This leads to a contradiction to our assumption $\Ind_{\mathcal{E}}(s)< 0$, since $\Ind_{\mathcal{P}}(s)\ge 0$ by Lemma \ref{lem:ind_PA} (i). 
\qed
\end{proof}

\subsection{Quotient $\mathcal{E}_{A}/{\sim}$}
\label{subsec:Quotient_QA}
Now we define the matrix representations of $\mathcal{E}_{A}$ and $\mathcal{E}_{A}/{\sim}$, which are the mathematical object used in the computer algorithm.

\subsubsection{Matrix Representation of $\mathcal{E}_{A}$}
{We define the matrix representation of $\mathcal{E}_{A}$, which is analogous  to the matrix representation of $\mathcal{P}_{A}$ as defined in Definition \ref{def:connectivity_matrix}. Note that the connectivity of $\mathcal{E}_{A}$ is given by the lattice of synchrony subspaces $V_{\mathcal{G}}^{P}$, which is in general different from the connectivity of $\mathcal{P}_{A}$.}

\begin{Def}
\label{def:connectivity_matrix_E}
\rm
Let $s$ be the number of nodes in $\mathcal{E}_{A}$. Define a $s\times s$ adjacency matrix $A_{E}=(a_{ij})=[A_{1} \ldots A_{s}]$, where  $A_{j}$ is the $j$-th column vector of $A_{E}$ for $j=1,\ldots, s$, of $\mathcal{E}_{A}$ with
\[a_{ij}=\begin{cases}1,\q\text{if $i=j$ or node $i$ is immediately connected to node $j$}, \\ 0,\q\text{otherwise.}\end{cases}\]

Define a $s\times s$ matrix $E=(e_{ij})=[E_{1}\ldots E_{s}]$, where  $E_{j}$ is the $j$-th column vector of $E$ for $j=1,\ldots, s$, as a matrix representation of $\mathcal{E}_{A}$ as follows. Let $(t_{1}^{(j)},\ldots, t_{d}^{(j)})$ be the tuple representation of the node $q_{j}\in \mathcal{E}_{A}$, where $t_{k}$ for $k=1,\ldots, d$ is the number of eigenvalue $\lambda_{k}$ in the node $q_{i}\in \mathcal{E}_{A}$. Then $E_{j}=(t_{1}^{(j)},\ldots, t_{d}^{(j)})A_{j}$. More specifically, $i$-th element of a column vector $E_{j}$, $e_{ij}$, is given by
\[
e_{ij} = \begin{cases}
(t_{1}^{(j)},\ldots,t_{d}^{(j)}),\q\text{if $a_{ij}=1$,} \\
(0,\ldots,0), \q\textrm{if $a_{ij}=0$.}
\end{cases}
\]
\END
\end{Def}

\begin{rmk}
\label{rmk:additional_edges}
\rm 
Note that $A_{P}-A_{E}\neq\mathbf{0}$ in general since $\mathcal{P}_{A}$ has additional covering relation in general due to the partial order among their tuple representation. As a result, $A_{P}-A_{E}$ is a nonnegative matrix. 

\END
\end{rmk}

\subsubsection{Construction of $\mathcal{E}_{A}/{\sim}$}
We now consider an equivalence relation on $\mathcal{E}_{A}$ to obtain a quotient set $\mathcal{E}_{A}/{\sim}$. Our goal is to identify the structure of {$\mathcal{P}_{A}/{=}$} using the matrix representation of $\mathcal{E}_{A}$.

\begin{Def} 
\rm
Let $S=\{1,\ldots,s\}$ be a set of nodes on $\mathcal{E}_{A}$ and let $E$ be the corresponding matrix whose columns we denote by $E_{1},\ldots, E_{s}$. Let $\bowtie$ be an equivalence relation on $S$ with classes $I_{1},\ldots,I_{p}$. Denote by $\overline{E}$ the $s\times p$ matrix whose columns $\overline{E}_{1},\ldots \overline{E}_{p}$ are defined by
\[
\overline{E}_{j}=(t_{1}^{(j)},\ldots,t_{d}^{(j)})\vee_{i\in I_{j}}A_{i},
\]
where $(t_{1}^{(j)},\ldots,t_{d}^{(j)})$ is the {identical tuple representation of all nodes $j\in I_{j}$}, and $A_{i}$ is the $i$-th column of the adjacency matrix $A_{E}$.

We say that the matrix $E$ is $\bowtie$-balanced if for each $j=1,\ldots, p$, the rows for $i\in I_{j}$ of $\overline{E}$ are identical. {Let $\sim$ be the equivalence relation on $\mathcal{E}$ associated with $\bowtie$ on $E$}. By keeping only one representative row from each equivalence class, we define the $p\times p$ matrix which represents a quotient {$\mathcal{E}_{A}/\sim$}.
\END
\end{Def}

\begin{lem}
\label{lem:balanced_connectivity_matrix_QA}
Suppose $\mathcal{P}_{A}$ and $V_{\mathcal{G}}^{P}$ have the same covering relation {defined in Definition \ref{def:covering_relation}} for an $n$-cell regular network $\mathcal{G}$. Let $C$ be the matrix representation of $\mathcal{P}_{A}$, and let $E$ be the matrix representation of $\mathcal{E}_{A}$. Let $\bowtie$ be an equivalence relation on $\mathcal{P}_{A}$ which assigns the quotient poset $\mathcal{P}_{A}/{=}$. Then $E$ is $\bowtie$-balanced and $s\in\mathcal{E}_{A}/{\sim}$, where $\mathcal{E}_{A}/{\sim}$ corresponds to $\bowtie$-balanced $E$, satisfies the following conditions:
\begin{enumerate}
\item[(i)]
$\displaystyle \Ind_{\mathcal{E}_{A/\sim}}\geq 0$,
\item[(ii)]
$\displaystyle \sum\Ind_{\mathcal{E}_{A/\sim}}=n$.
\end{enumerate}  
\end{lem}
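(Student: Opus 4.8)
The plan is to derive both claimed properties of $\mathcal{E}_{A}/{\sim}$ by transporting what we already know about $\mathcal{P}_{A}/{=}$ across the covering-relation hypothesis. First I would establish that $E$ is $\bowtie$-balanced. Recall $\bowtie$ is the equivalence on $\mathcal{P}_{A}$ that collapses nodes sharing a tuple representation; by Lemma \ref{lem:EA}, two synchrony subspaces with the same $\mathcal{P}_{A}$-tuple also have the same $\mathcal{E}_{A}$-tuple, so $\bowtie$ descends to a well-defined equivalence on the node set of $\mathcal{E}_{A}$. The same-covering-relation hypothesis is exactly what guarantees that the immediate-leader and immediate-follower sets of a node $s$ in $\mathcal{P}_{A}$ (as tuples) coincide with those arising from $V_{\mathcal{G}}^{P}$, and since $\mathcal{E}_{A}$ inherits its connectivity directly from $V_{\mathcal{G}}^{P}$, nodes in one $\bowtie$-class have matching leader/follower structure in $\mathcal{E}_{A}$ as well. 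Feeding this into the definition of $\overline{E}$ — mirroring the argument already given for the $\mathcal{P}_{A}$ case (the proposition showing $C$ is $\bowtie$-balanced) — yields that the rows of $\overline{E}$ indexed within each class $I_{j}$ are identical, i.e. $E$ is $\bowtie$-balanced.

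Next I would identify the quotient $\mathcal{E}_{A}/{\sim}$ with $\mathcal{P}_{A}/{=}$ as posets. Since $E$ is $\bowtie$-balanced, keeping one representative row per class produces a $p\times p$ matrix, where $p$ is the number of distinct $\mathcal{P}_{A}$-tuples; this is the same $p$ governing $\mathcal{P}_{A}/{=}$. The key point is that the partial order recorded by the reduced matrix is the same in both constructions: in $\mathcal{P}_{A}/{=}$ the covering comes from the tuple partial order restricted to the collapsed nodes, while in $\mathcal{E}_{A}/{\sim}$ it comes from the $V_{\mathcal{G}}^{P}$-connectivity collapsed along the same classes — and under the same-covering-relation assumption these two agree on the surviving nodes (any extra edges in $\mathcal{P}_{A}$, per Remark \ref{rmk:oder_preserv}(iv) and Remark \ref{rmk:additional_edges}, connect nodes that the hypothesis forces to already be linked through $V_{\mathcal{G}}^{P}$-originating leaders/followers of matching tuple). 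Hence $\mathcal{E}_{A}/{\sim}$ and $\mathcal{P}_{A}/{=}$ are order-isomorphic, and via the tuple labels they carry the same index data: $\Ind_{\mathcal{E}_{A}/\sim}(s)=\Ind_{\mathcal{P}}(s^{*})$ for the corresponding class.

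With the identification in hand, both conclusions are immediate transfers. For (i), Lemma \ref{lem:closed_PA} says $\mathcal{P}_{A}/{=}$ is a closed subset of $\mathcal{L}_{M}$, so Lemma \ref{lem:indL}(i) (equivalently Lemma \ref{lem:ind_PA}(i) on the quotient) gives $\Ind \geq 0$ on $\mathcal{P}_{A}/{=}$, hence on $\mathcal{E}_{A}/{\sim}$. For (ii), Lemma \ref{lem:indL}(iii) gives $\sum \Ind s = |k| = k_{1}+\dots+k_{m}$; since the $k_{i}$ are the Jordan block sizes of the $n\times n$ adjacency matrix $A$, their sum is $n$, so $\sum \Ind_{\mathcal{E}_{A}/\sim} = n$. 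The main obstacle I anticipate is the bookkeeping in the second paragraph: carefully verifying that collapsing $\mathcal{E}_{A}$ (whose raw connectivity differs from $\mathcal{P}_{A}$ by the extra tuple-order edges) along $\bowtie$ really does reproduce the covering relation of $\mathcal{P}_{A}/{=}$ rather than a strictly coarser or finer one — this is precisely where the same-covering-relation hypothesis must be used in full force, and it is the step most prone to a subtle gap.
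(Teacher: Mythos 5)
Your proposal is correct and follows essentially the same route as the paper: you show $E$ is $\bowtie$-balanced by descending the tuple-equality equivalence from $\mathcal{P}_{A}$ to $\mathcal{E}_{A}$ via the covering-relation hypothesis (as in the paper's use of Lemma \ref{lem:EA} and the $\bowtie$-balancedness of $C$), identify the reduced structures ($\overline{A_{P}}=\overline{A_{E}}$ in the paper, your order-isomorphism of $\mathcal{E}_{A}/{\sim}$ with $\mathcal{P}_{A}/{=}$), and then transfer the closed-subset index properties from Lemmas \ref{lem:closed_PA} and \ref{lem:ind_PA}. The step you flag as delicate — that collapsing the $V_{\mathcal{G}}^{P}$-connectivity reproduces exactly the quotient covering relation of $\mathcal{P}_{A}/{=}$ — is treated at the same (rather terse) level of detail in the paper's own argument, so there is no gap relative to it.
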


\begin{proof}
Let {$p_{k}= p_{l}$} for $p_{k},p_{l}\in\mathcal{P}_{A}$. Let $C_{k}=[c_{k1},\ldots, c_{ks}]$ and $C_{l}=[c_{l1},\ldots, c_{ls}]$ be the $k$th and $l$th rows of the $s\times s$ matrix representation $C$, respectively. {Since $C$ is $\bowtie$-balanced, we have}
\[c_{kj}=c_{lj},\quad\forall j\in\{1,\ldots,s\}\setminus\{k,l\},\] 
with $c_{kk}=c_{ll}$. This gives $\overline{C}_{k}$ and $\overline{C}_{l}$ to be identical where $\overline{C}_{k}$ and $\overline{C}_{l}$ are $k$th and $l$th rows of the $s\times p$ matrix.
Note that we have
\[(t_{1},t_{2},\ldots,t_{d}) = (\sum_{i=1}^{j_{1}}r_{1i}, \sum_{i=1}^{j_{2}}r_{2i},\ldots, \sum_{i=1}^{j_{d}}r_{di}),\quad\textrm{where}\quad \sum_{i=1}^{d}j_{i}=m.\]
Thus
\[(r_{1}^{(k)},\ldots, r_{m}^{(k)})=(r_{1}^{(l)},\ldots, r_{m}^{(l)}) \Rightarrow (t_{1}^{(k)},\ldots, t_{d}^{(k)})=(t_{1}^{(l)},\ldots, t_{d}^{(l)}).\]

Let $\overline{E}_{k}$ and $\overline{E}_{l}$ be the $k$th and $l$th rows of the $s\times p$ matrix representation, respectively. Since $\mathcal{P}_{A}$ and $V_{\mathcal{G}^{P}}$ have the same covering relation, if $\bowtie$ is the equivalence relation on $\mathcal{P}_{A}$, so it is on $\mathcal{E}_{A}$. Thus, it immediately follows that if $\overline{C}_{k}$ and $\overline{C}_{l}$ are identical, then the corresponding rows $\overline{E}_{k}$ and $\overline{E}_{l}$ are also identical.

By Proposition \ref{prop:PA_equal}, notice that $\mathcal{P}_{A}/{=}$ corresponds to $\bowtie$-balanced $C$. Furthermore, $\mathcal{P}_{A}/{=}$ is a closed subset of $\mathcal{L}_{M}$. Since {$\mathcal{P}_{A}/{=}$} is a closed subset of $\mathcal{L}_{M}$, by Lemma \ref{lem:ind_PA}, $s\in\mathcal{P}_{A}/{=}$ satisfies the following conditions:
\begin{enumerate}
\item[(i)]
$\displaystyle \Ind_{\mathcal{P}_{A}/=}\geq 0$,
\item[(ii)]
$\displaystyle \sum\Ind_{\mathcal{P}_{A}/=}=|k|=n$,
\end{enumerate}
where $k:=(k_{1},\ldots,k_{m})$ and $k_{1},\ldots,k_{m}$ are Jordan block sizes of $A$.

Let $\overline{A_{P}}$ and $\overline{A_{E}}$ be the adjacency matrices of $\mathcal{P}_{A}/{=}$ and $\mathcal{E}_{A}/{\sim}$, respectively. By Proposition \ref{prop:index_Psim}, an index on $\mathcal{P}_{A}/{=}$ is uniquely determined by the structure $\overline{A_{P}}$. Since we have $\overline{A_{P}}=\overline{A_{E}}$, with Definition \ref{def:ind_EA}, the required conditions immediately follow. 
\qed
\end{proof}

\begin{thm}
\label{thm:reduced_EA}
There exists $\mathcal{E}_{A}/{\sim}$ such that $\Ind_{\mathcal{E}_{A}/\sim}(s)\geq 0$ for all $s$ and $\sum\Ind_{\mathcal{E}_{A}/\sim}(s)=n$ for an $n$-cell regular network if and only if $\mathcal{P}_{A}$ and $V_{\mathcal{G}}^{P}$ have the same covering relation {defined in Definition \ref{def:covering_relation}}.
\end{thm}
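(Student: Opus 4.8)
The plan is to prove the two implications separately, drawing on the machinery already assembled. For the direction ``$\Leftarrow$'', suppose $\mathcal{P}_{A}$ and $V_{\mathcal{G}}^{P}$ have the same covering relation. Then Lemma \ref{lem:balanced_connectivity_matrix_QA} applies directly: it produces an equivalence relation $\sim$ on $\mathcal{E}_{A}$ (the one associated with the $\bowtie$ that realises $\mathcal{P}_{A}/{=}$) for which $E$ is $\bowtie$-balanced, and it asserts precisely that $\Ind_{\mathcal{E}_{A}/\sim}(s)\ge 0$ for all $s$ and $\sum \Ind_{\mathcal{E}_{A}/\sim}(s)=n$. So this half is essentially a citation of Lemma \ref{lem:balanced_connectivity_matrix_QA}, together with the observation that the quotient it produces is a legitimate $\mathcal{E}_{A}/{\sim}$ in the sense of the statement; I would spell out that the adjacency matrix $\overline{A_{E}}$ of this quotient agrees with $\overline{A_{P}}$, so Definition \ref{def:ind_EA} computes the same non-negative index as $\Ind_{\mathcal{P}_{A}/=}$, whose sum is $|k|=n$ by Lemma \ref{lem:ind_PA}(iii).

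For the direction ``$\Rightarrow$'', I would argue by contraposition: assume $\mathcal{P}_{A}$ and $V_{\mathcal{G}}^{P}$ do \emph{not} have the same covering relation, and show that no quotient $\mathcal{E}_{A}/{\sim}$ can simultaneously have all indices non-negative and total $n$. The key link is Proposition \ref{prop:positivity_Ind_2} and Corollary \ref{cor:positivity_Ind}, but those are stated under the covering-relation hypothesis, so the heart of the matter is to understand what failure of the covering relation forces. When the covering relation fails, there is some $s\in\mathcal{P}_{A}$ with an immediate follower (or leader) $r$ in $\mathcal{P}_{A}$ whose tuple is realised by no synchrony-subspace neighbour of the corresponding $S$ in $V_{\mathcal{G}}^{P}$; equivalently, $\mathcal{P}_{A}$ carries a strictly richer covering structure than $V_{\mathcal{G}}^{P}$ (cf. Remark \ref{rmk:oder_preserv}(iv) and Remark \ref{rmk:additional_edges}, where $A_{P}-A_{E}$ is a nonzero nonnegative matrix). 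I would then show that any equivalence $\sim$ on $\mathcal{E}_{A}$ either (a) merges nodes that $V_{\mathcal{G}}^{P}$ does not ``see'' as mergeable — producing a quotient whose adjacency matrix no longer matches a closed subset of $\mathcal{L}_{M}$, so that Lemma \ref{lem:ind_PA}/Lemma \ref{lem:indL} no longer guarantees the sum-to-$n$ property — or (b) refrains from that merge, in which case the double-counting in Definition \ref{def:ind_EA} (identical tuples $e$ summed repeatedly) forces $\Ind_{\mathcal{E}}(e)<0$ for some $e$, exactly as in the proof of Proposition \ref{prop:positivity_Ind_2}(ii), and this negativity propagates so that no re-merging can repair both conditions at once. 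Making precise that ``closedness in $\mathcal{L}_{M}$ with index summing to $n$'' characterises exactly the quotients coming from the correct covering relation is the crux; I expect to lean on Lemma \ref{lem:closed_PA}, Lemma \ref{lem:indL}(iii), and the uniqueness of the recursively defined index in Proposition \ref{prop:index_Psim}.

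The main obstacle, I anticipate, is the bookkeeping in case (a): ruling out the possibility that some ``wrong'' merge on $\mathcal{E}_{A}$ accidentally yields a closed subset of $\mathcal{L}_{M}$ with the right index sum. To handle this I would use the fact that $\mathcal{P}_{A}/{=}$ is \emph{the} closed subset obtained from $V_{\mathcal{G}}^{P}$ via the tuple map $\mathcal{T}$ (Lemma \ref{lem:closed_PA}) and that the eigenvalue map $\mathcal{V}$ factors through $\mathcal{T}$ by the summation formula of Lemma \ref{lem:EA}; hence any $\mathcal{E}_{A}/{\sim}$ with the required index properties must pull back, via this factorisation, to a closed subset of $\mathcal{L}_{M}$ having the same connectivity as $\mathcal{P}_{A}/{=}$, and this can only happen when $\mathcal{P}_{A}$ and $V_{\mathcal{G}}^{P}$ already share the covering relation. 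The remaining steps — verifying $\bowtie$-balancedness transfers between $C$ and $E$, and that the recursive index on the quotient matches the $\mathcal{L}_{M}$-index — are routine given Lemma \ref{lem:balanced_connectivity_matrix_QA} and Proposition \ref{prop:index_Psim}, so I would present them briskly and concentrate the exposition on the contrapositive direction.
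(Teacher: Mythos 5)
Your ``$\Leftarrow$'' half is exactly the paper's: cite Lemma \ref{lem:balanced_connectivity_matrix_QA} (with the observation that $\overline{A_{P}}=\overline{A_{E}}$, so the quotient index is the non-negative one summing to $n$). The problem is the ``$\Rightarrow$'' half. You correctly choose contraposition, but you never supply the step that actually forces failure, and the tools you propose to lean on are unavailable there: Proposition \ref{prop:positivity_Ind_2} and Corollary \ref{cor:positivity_Ind} are proved \emph{under} the same-covering-relation hypothesis, which is precisely what is being negated, so invoking them in case (b) is circular. In case (a) you only conclude that Lemma \ref{lem:indL}/Lemma \ref{lem:ind_PA} ``no longer guarantees'' the sum-to-$n$ property; not being guaranteed is not the same as being violated, and you yourself flag the crux --- that closedness in $\mathcal{L}_{M}$ together with index sum $n$ characterises the quotients coming from the correct covering relation --- as something you expect to be able to prove rather than something you prove. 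As written, the reverse implication is a plan with an open centre, not a proof.

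The paper closes this gap with a short quantitative argument you are missing. If the covering relations differ, then by Definition \ref{def:covering_relation} there is a pair $r<s$ whose covering connection is present in $\mathcal{P}_{A}/{=}$ but absent from $\mathcal{E}_{A}/{\sim}$ (cf.\ Remark \ref{rmk:additional_edges}). Because the index on the quotient is computed recursively by subtracting the indices of the elements strictly below (Definition \ref{def:ind_EA}, Proposition \ref{prop:index_Psim}), deleting a connection shrinks the set being subtracted at $s$, so $\Ind_{\mathcal{E}_{A}/\sim}(s)>\Ind_{\mathcal{P}_{A}/=}(s)$, and hence, assuming all indices on $\mathcal{E}_{A}/{\sim}$ are non-negative,
\[
\sum \Ind_{\mathcal{E}_{A}/\sim}(s)\;>\;\sum \Ind_{\mathcal{P}_{A}/=}(s)\;\geq\; n,
\]
the last bound coming from Lemma \ref{lem:ind_PA}~(iii) applied to the closed subset $\mathcal{P}_{A}/{=}$ (Lemma \ref{lem:closed_PA}). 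Thus no quotient $\mathcal{E}_{A}/{\sim}$ can satisfy both conditions simultaneously, which is the contrapositive you need. Replacing your case analysis (and the appeals to Proposition \ref{prop:positivity_Ind_2}/Corollary \ref{cor:positivity_Ind}) with this monotonicity-of-the-recursive-index argument is what completes the proof.
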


\begin{proof}
%
%

Assume that  $\mathcal{P}_{A}$ and $V_{\mathcal{G}}^{P}$ have the same covering relation. Then, by Lemma \ref{lem:balanced_connectivity_matrix_QA}, $\Ind_{\mathcal{E}_{A}/\sim}(s)\geq 0$ for all $s$ and $\sum\Ind_{\mathcal{E}_{A}/\sim}(s)=n$ hold.

Assume to the contrary that $\mathcal{P}_{A}$ and $V_{\mathcal{G}}^{P}$ have different covering relation. Then, there exists a missing connection in $\mathcal{E}_{A}/{\sim}$ compared to $\mathcal{P}_{A}/{\sim}$, due to Definition \ref{def:covering_relation}. Let $r,s\in \mathcal{L}_{M}$ be such that $r<s$ in $\mathcal{P}_{A}/{\sim}$, but not in  $\mathcal{E}_{A}/{\sim}$. {Then, we have $\Ind_{\mathcal{E}_{A}/\sim}(s)>\Ind_{\mathcal{P}_{A}/\sim}(s)$}, which leads to 
\[\sum\Ind_{\mathcal{E}_{A}/\sim}(s)>\sum\Ind_{\mathcal{P}_{A}/\sim}(s){\geq} n,\]
where the last equality used Lemma \ref{lem:ind_PA} (iii) for $\mathcal{P}_{A}/{=}$.
\qed
\end{proof}

 \begin{rmk}
\rm
In our computer algorithm, we consider the tuple representation equality on $\mathcal{E}_{A}$ as an equivalence relation and consider all possible partitions in each equivalence class. Since the converse of Lemma \ref{lem:balanced_connectivity_matrix_QA} is not true in general, there might be more than one such equivalence relations on $\mathcal{E}_{A}$. 
\END
\end{rmk}

\section{Examples}
\label{sec:examples}

We show how all the possible reduced posets $\mathcal{E}_{A}/{\sim}$ can be obtained. The first example has a unique $\mathcal{E}_{A}/{\sim}$. The second example shows four possible $\mathcal{E}_{A}/{\sim}$ for the given regular network, from which a topologically unique reduction can be inferred. Finally we show {a counter example} where reduction is not possible due to the lack of the property defined in Definition \ref{def:covering_relation}.

\subsection{Unique $\mathcal{E}_{A}/{\sim}$}
\begin{ex}
\rm
\label{ex:4_cell_network18_reduction}
Consider the $4$-cell regular network {depicted} in Figure \ref{fig:4_cell_network18} with a repeated eigenvalue with algebraic multiplicity $2$ and geometric multiplicity $2$. 
\begin{figure}[htb!]
\begin{center}
\begin{tabular}{ccll}
Network $\mathcal{G}$ & Adjacency matrix $A$ & eigenvalues & eigenvectors\\
\hline
\multirow{4}{*}{\includegraphics[scale=0.25]{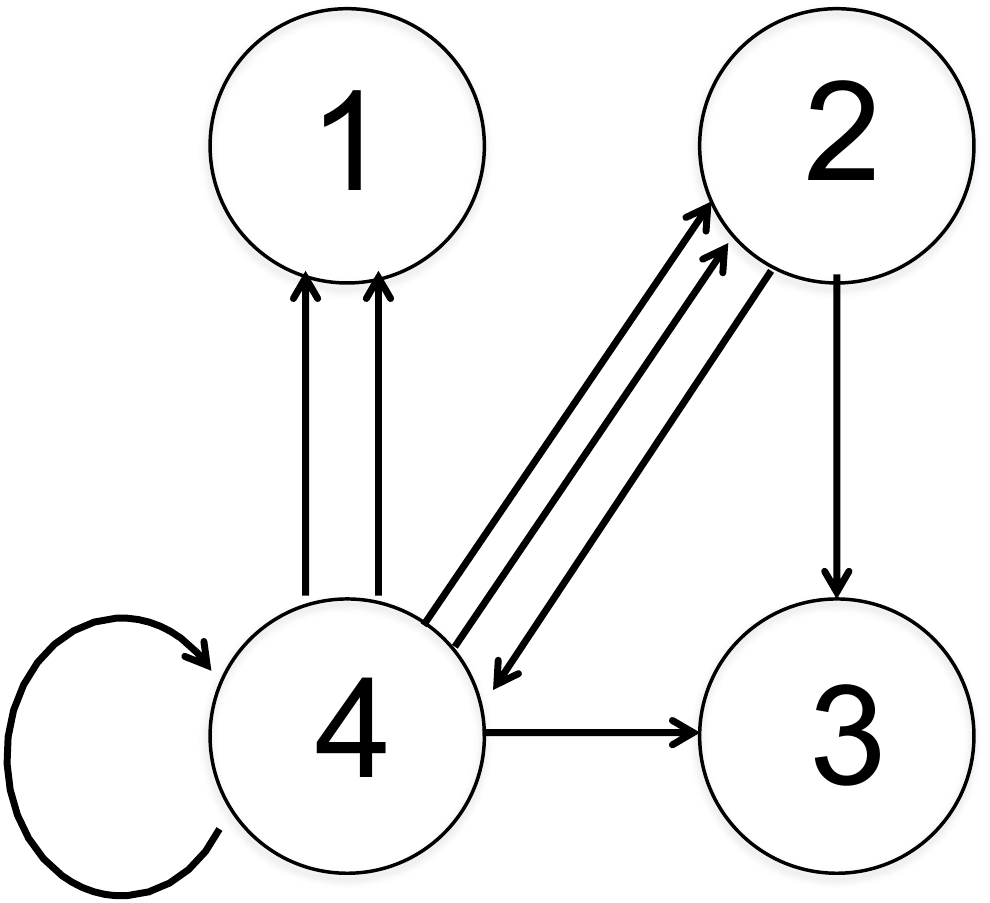}} 
&
\multirow{4}{*}{$\left(\begin{array}{cccc}
0 & 0 & 0 & 2\\
0 & 0 & 0 & 2 \\
0 & 1 & 0 & 1 \\
0 & 1 & 0 & 1  
\end{array}\right)$} &
$\lambda_{1}=0$ & $(1,0,0,0)$\\
& & $\lambda_{2}=0$ & $(0,0,1,0)$\\
& & $\lambda_{3}=-1$ & $(-2,-2,1,1)$\\
& & $\lambda_{4}=2$  & $(1,1,1,1)$ \\
& &
\end{tabular}
\end{center}
\caption{$4$-cell regular network $\mathcal{G}$ with corresponding adjacency matrix $A$ and its eigenvalues. The repeated eigenvalue $\lambda_{1}=\lambda_{2}=0$ has algebraic multiplicity $2$ and geometric multiplicity $2$.}
\label{fig:4_cell_network18}
\end{figure}

Using the information from the lattice of synchrony subspaces $V_{\mathcal{G}}^{P}$ in Figure \ref{fig:network18_EA_reduction}, we obtain the unique $\mathcal{E}_{A}$ shown in Figure \ref{fig:network18_EA} (a). 

\begin{figure}[h!]
\begin{center}
\begin{tabular}{cc}
(a) $\mathcal{E}_{A}$ & (b) Index on  $\mathcal{E}_{A}$\\
\\
\includegraphics[scale=0.3]{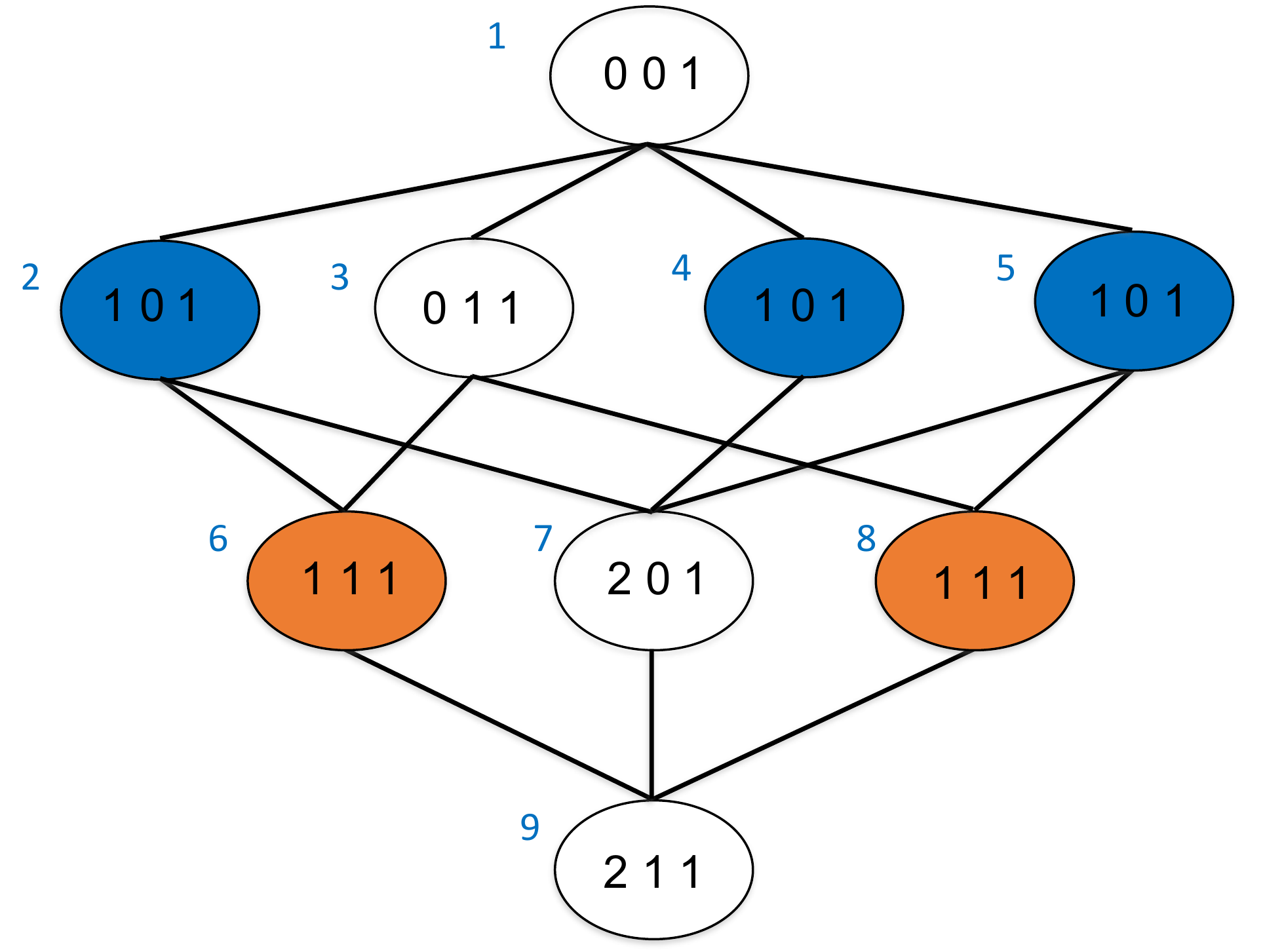}  &
\includegraphics[scale=0.3]{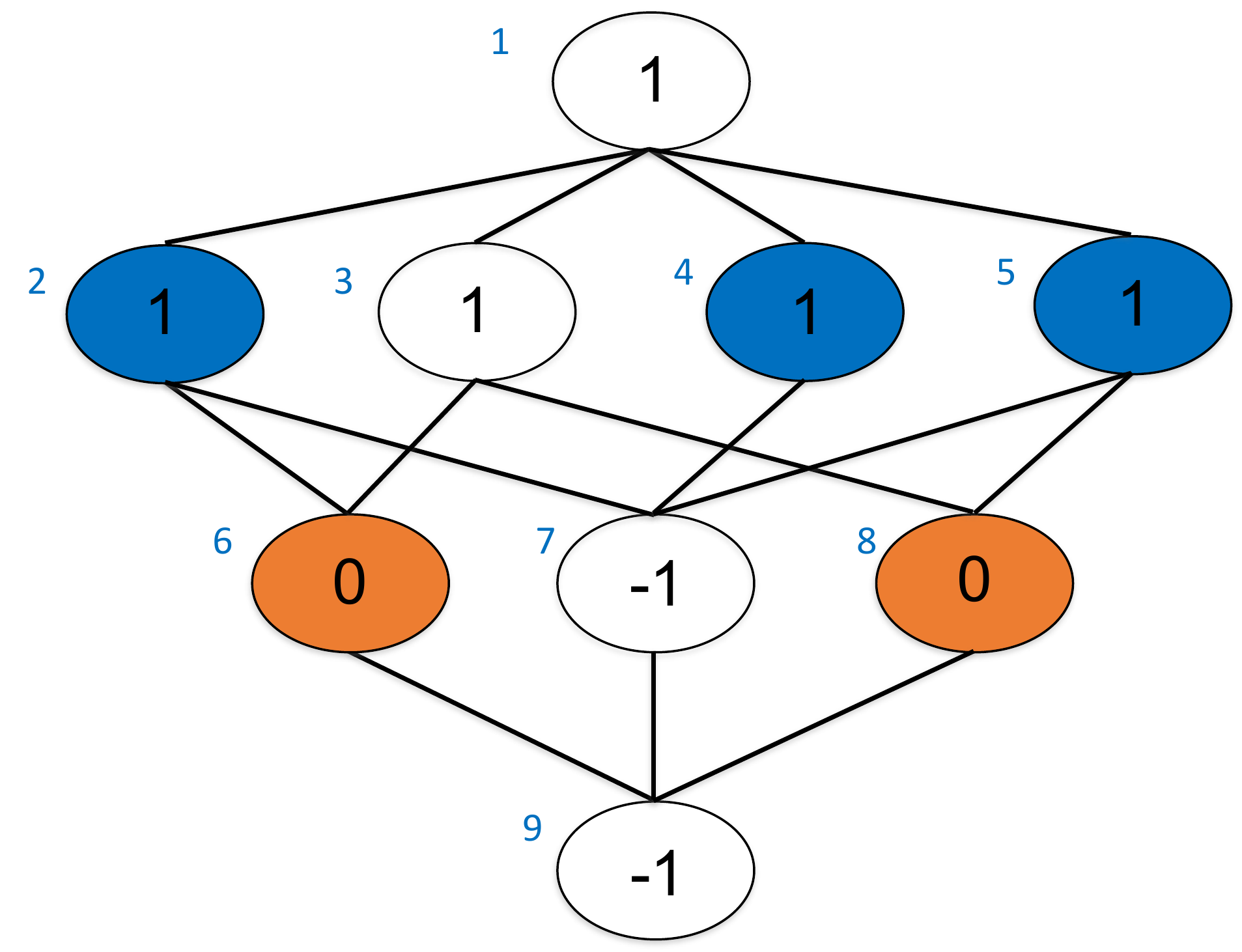}
\end{tabular}
\end{center}
\caption{(a) Let $(s_{1}, s_{2}, s_{3})\in \mathcal{E}_{A}$. Non-negative integers $s_{1}$, $s_{2}$ and $s_{3}$ represent the number of distinct eigenvalues $\lambda=0$, $\lambda=-1$ and $\lambda=2$ in the corresponding quotient network, respectively. Identical tuple representations are colored the same. (b) Using Definition \ref{def:ind_EA}, indices are assigned with each node in $\mathcal{E}_{A}$.}
\label{fig:network18_EA}
\end{figure}

Since $\Ind_{\mathcal{E}}(2,0,1)<0$ and $\Ind_{\mathcal{E}}(2,1,1)<0$ as in Figure \ref{fig:network18_EA} (b), this suggests a reduction (cf. Corollary \ref{cor:positivity_Ind}). Table \ref{tab:network18_equivalence_relations} shows $10$ possible equivalence relations on $\mathcal{E}_{A}$, where some of them are associated with the equivalence relations on $\mathcal{P}_{A}$.
\begin{table}[h!]
\begin{center}
\begin{tabular} {r|r|r}
Equivalence relations on $\mathcal{E}_{A}$ & Associated $\mathcal{P}_{A}$ & $\mathcal{E}_{A}/{\sim}$ \\
\hline
(1)(2)(3)(4)(5)(6)(7)(8)(9)&                                 & invalid\\
\hline
(1)(24)(3)(5)(6)(7)(8)(9) & Type 1-A, Type 1-D & invalid\\
(1)(25)(3)(4)(6)(7)(8)(9) &                                & invalid\\
(1)(2)(3)(45)(6)(7)(8)(9) & Type 1-B, Type 1-C  & invalid\\
\hline
(1)(245)(3)(6)(7)(8)(9)   &                                 & invalid\\
\hline
(1)(2)(3)(4)(5)(68)(7)(9) &                                & invalid \\
\hline
(1)(24)(3)(5)(68)(7)(9)   &                                & invalid\\
(1)(25)(3)(4)(68)(7)(9)   & Type 2-A, Type 2-B  & valid\\
(1)(2)(3)(45)(68)(7)(9)   &                                 & invalid\\
\hline
(1)(245)(3)(68)(7)(9)     & Type 3-A, Type 3-B  & invalid 
\end{tabular}
\end{center}
\caption{$10$ possible equivalence relations on $\mathcal{E}_{A}$. Some equivalence relations associate with $\mathcal{P}_{A}$ in Figure \ref{fig:network18_PA_reduction}. Note that Type 2-A and Type 2-B satisfy the covering relation defined in Definition \ref{def:covering_relation}, which gives valid $\mathcal{E}_{A}/{\sim}$ satisfying $\bowtie$-balanced $E$ as well as index properties.}
\label{tab:network18_equivalence_relations}
\end{table}

Our algorithm finds only the equivalence relation $\bowtie=(1)(25)(3)(4)(68)(7)(9)$ gives a valid $\mathcal{E}_{A}/{\sim}$ (i.e., which gives $\bowtie$-balanced {$E$} and indices {satisfy} the condition in Lemma \ref{lem:balanced_connectivity_matrix_QA}). Lattice reduction is summarised in Figure \ref{fig:network18_EA_reduction}. 

\begin{figure}[h!]
\small
\begin{center}
\begin{tabular}{cc|cc}
\multicolumn{2}{c|}{Before Reduction} & \multicolumn{2}{c}{After Reduction} \\
\hline
\hline
\multicolumn{2}{c|}{$V_{\mathcal{G}}^{P}$} &
\multicolumn{2}{c}{$V_{\mathcal{G}}^{P}/{\sim}$}
\\
& & & \\
\multicolumn{2}{c|}{\includegraphics[scale=0.3]{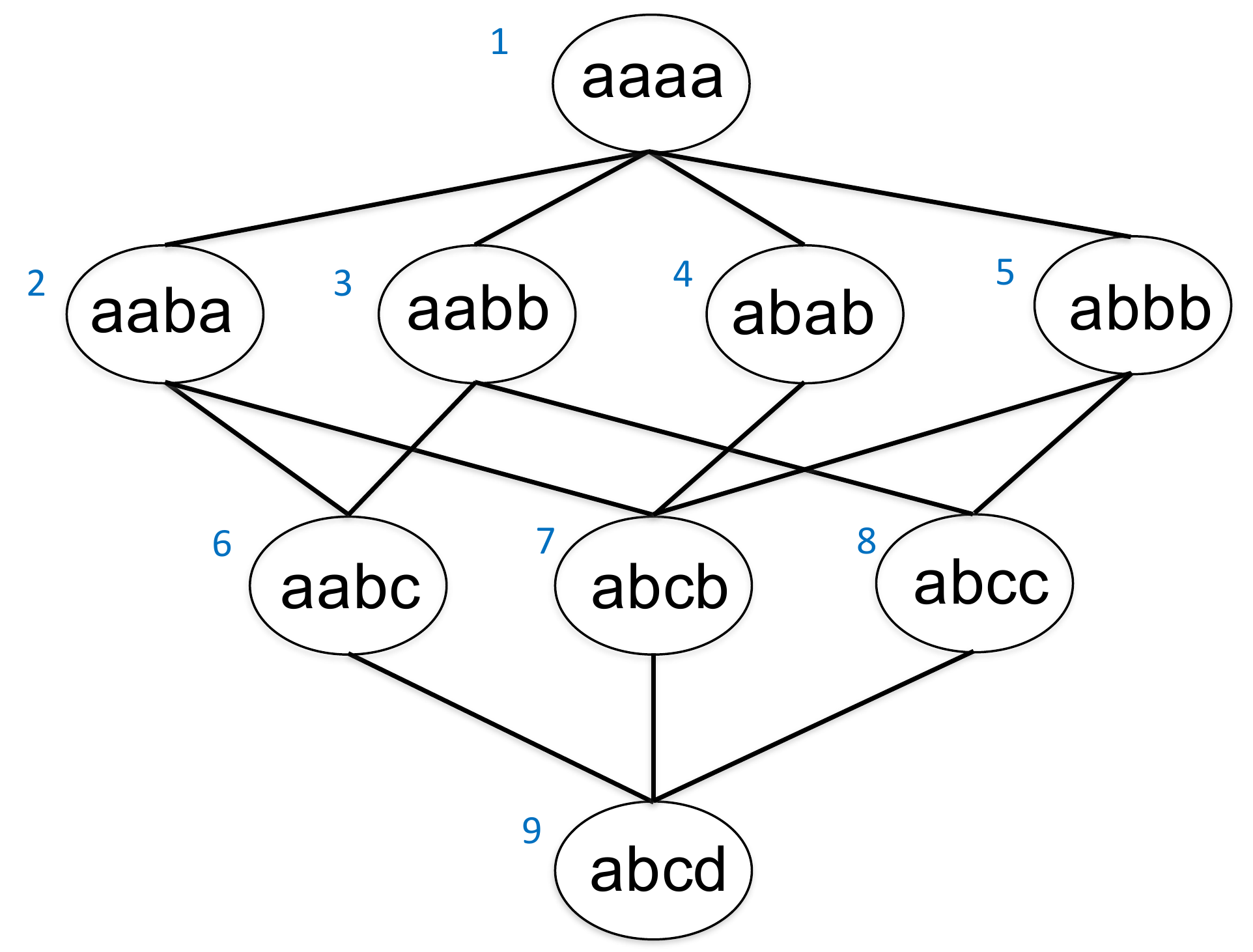}} &
\multicolumn{2}{c}{\includegraphics[scale=0.3]{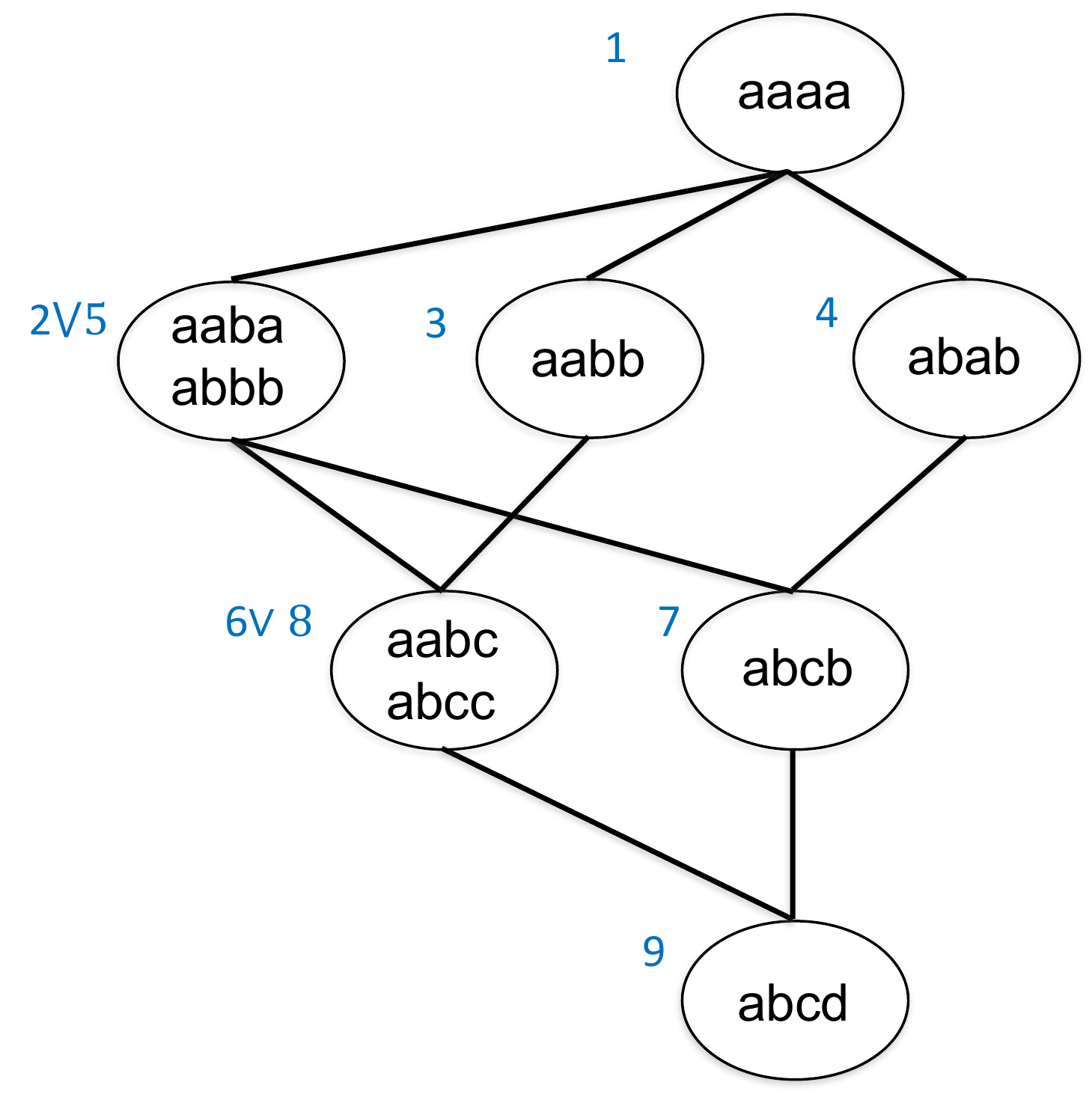}}
\\
& & & \\
& & & \\
\multicolumn{2}{c|}{$\mathcal{E}_{A}$} &
\multicolumn{2}{c}{$\mathcal{E}_{A}/{\sim}$} \\
& & & \\
\multicolumn{2}{c|}{\includegraphics[scale=0.3]{network18_EA.pdf}} & 
\multicolumn{2}{c}{\includegraphics[scale=0.3]{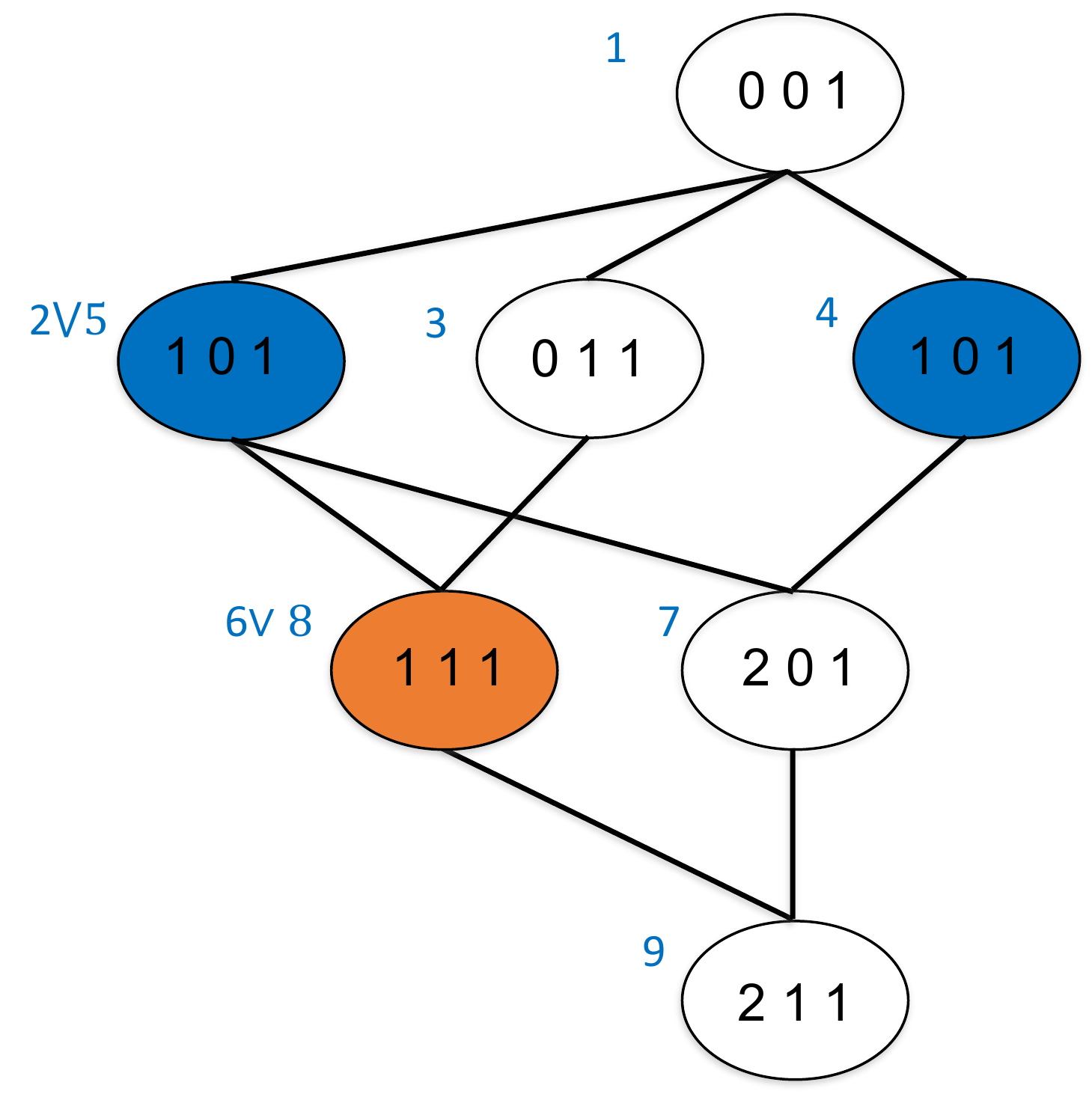}} \\
& & & \\
& & & \\
Type 2-A $\mathcal{P}_{A}$ &
Type 2-B $\mathcal{P}_{A}$ &
Type 2-A {$\mathcal{P}_{A}/{=}$} &
Type 2-B {$\mathcal{P}_{A}/{=}$} \\
& & & \\
\includegraphics[scale=0.27]{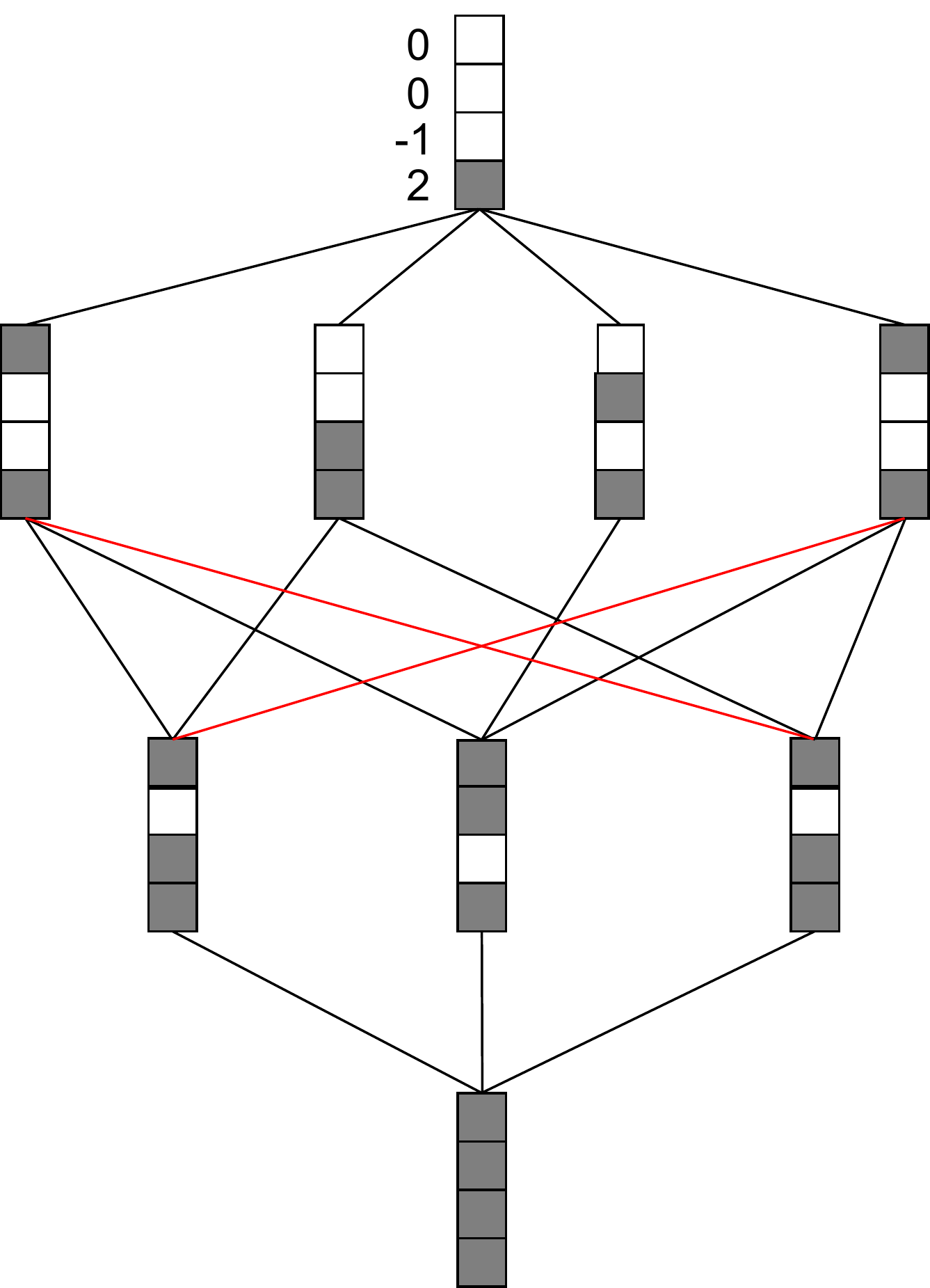} &
\includegraphics[scale=0.27]{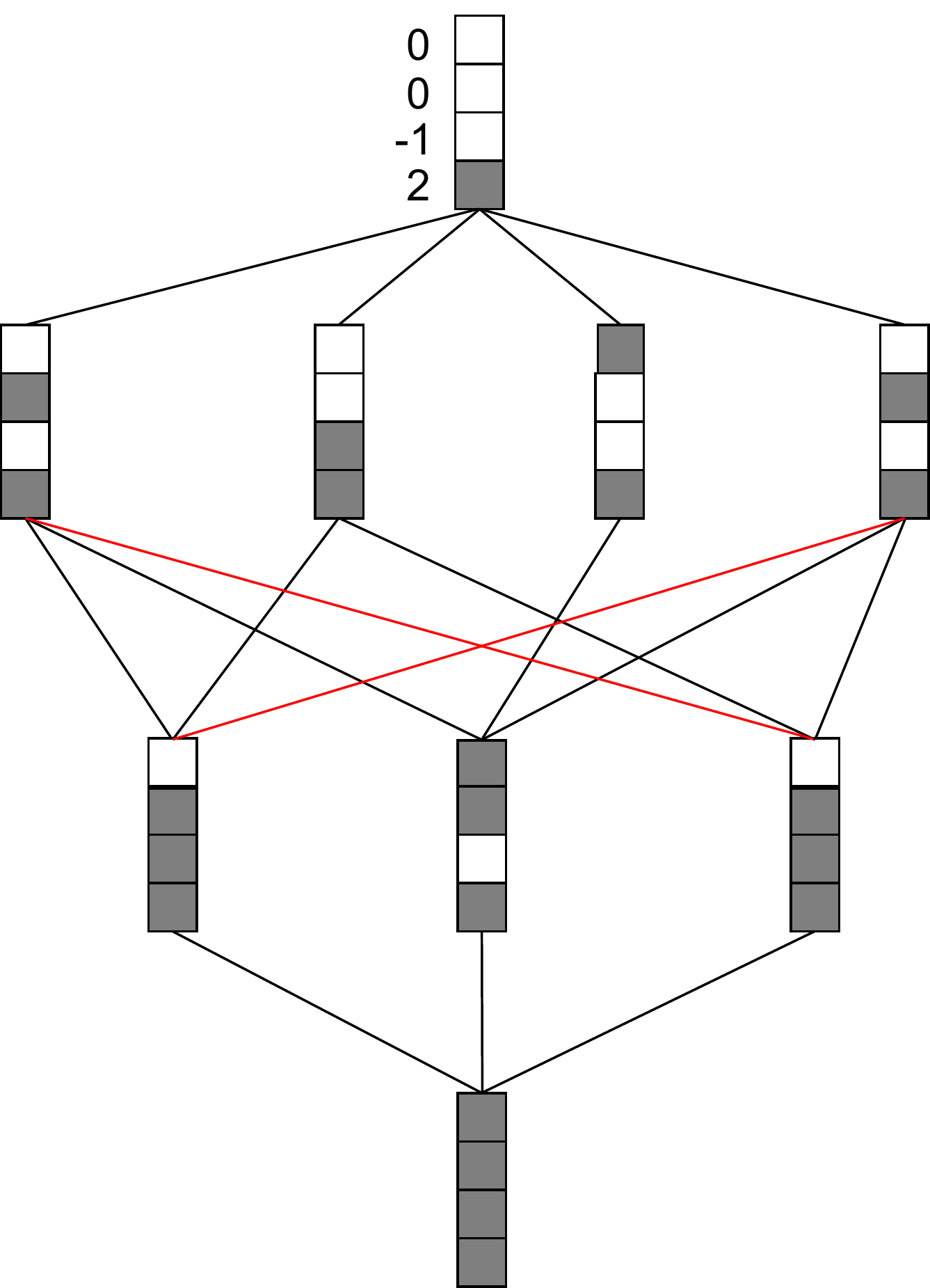}  &
\includegraphics[scale=0.27]{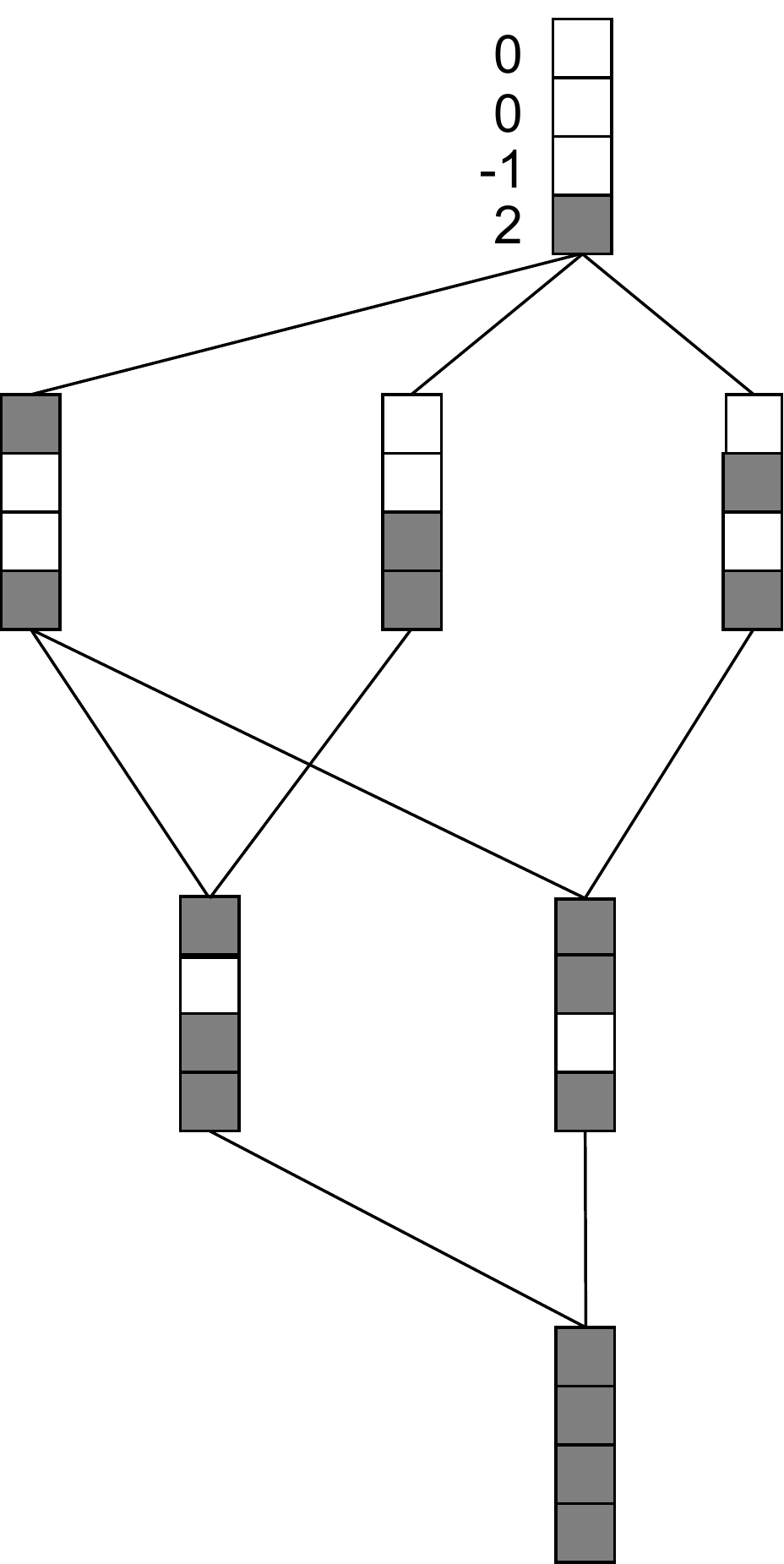} &
\includegraphics[scale=0.27]{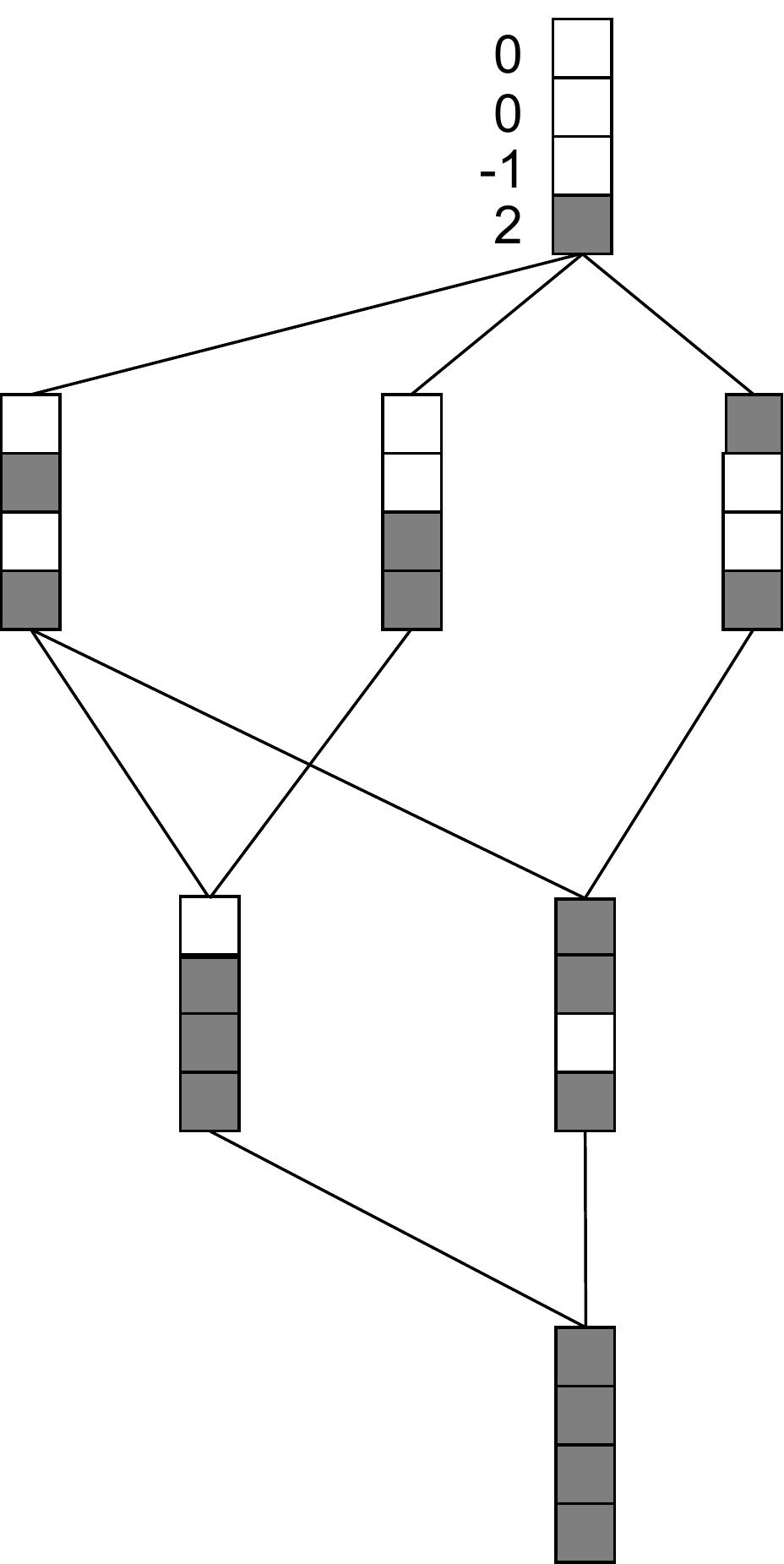} 
\end{tabular}
\end{center}
\caption{Before and after reduction on the lattice of synchrony subspaces $V_{\mathcal{G}}^{P}$, $\mathcal{E}_{A}$ and $\mathcal{P}_{A}$. For $(x_{1},x_{2},x_{3},x_{4})\in V_{\mathcal{G}}^{P}$, cell coordinate equality is given by the same symbol, e.g., $(x_{1},x_{2},x_{3},x_{4})=(a,a,b,b)$ means $x_{1}=x_{2}$ and $x_{3}=x_{4}$. For example in $V_{\mathcal{G}}^{P}/{\sim}$, the most left node at rank $2$ corresponds to two synchrony subspaces $(a,a,b,a)$ and $(a,b,b,b)$ in $V_{\mathcal{G}}^{P}$. Note that there are two possible $\mathcal{P}_{A}$ associated with the unique $V_{\mathcal{G}}^{P}$ as well as {$\mathcal{E}_{A}$}. This is due to the equal size of Jordan blocks associated with the eigenvalue $\lambda=0$.}
\label{fig:network18_EA_reduction}
\end{figure}

There are in total $8$ different possible $\mathcal{P}_{A}(1,1,1,1)$, which can be classified into three distinct types as shown in Figure \ref{fig:network18_PA_reduction}. Notice that only Type $2$ $\mathcal{P}_{A}$ satisfies the covering relation defined in Definition \ref{def:covering_relation}. Accordingly the equivalence relations $\bowtie$ associated with other types of $\mathcal{P}_{A}$ do not give $\bowtie$-balanced $E$.

\begin{sidewaysfigure}
\begin{center}
\begin{tabular}{cccc|cc}
\hline
\multicolumn{4}{l|}{Type 1 $\mathcal{P}_{A}$} & \multicolumn{2}{l}{Type 1 {$\mathcal{P}_{A}/{=}$}} \\
& & & & & \\
A & B & C & D & A=B=C=D & \\
\includegraphics[scale=0.22]{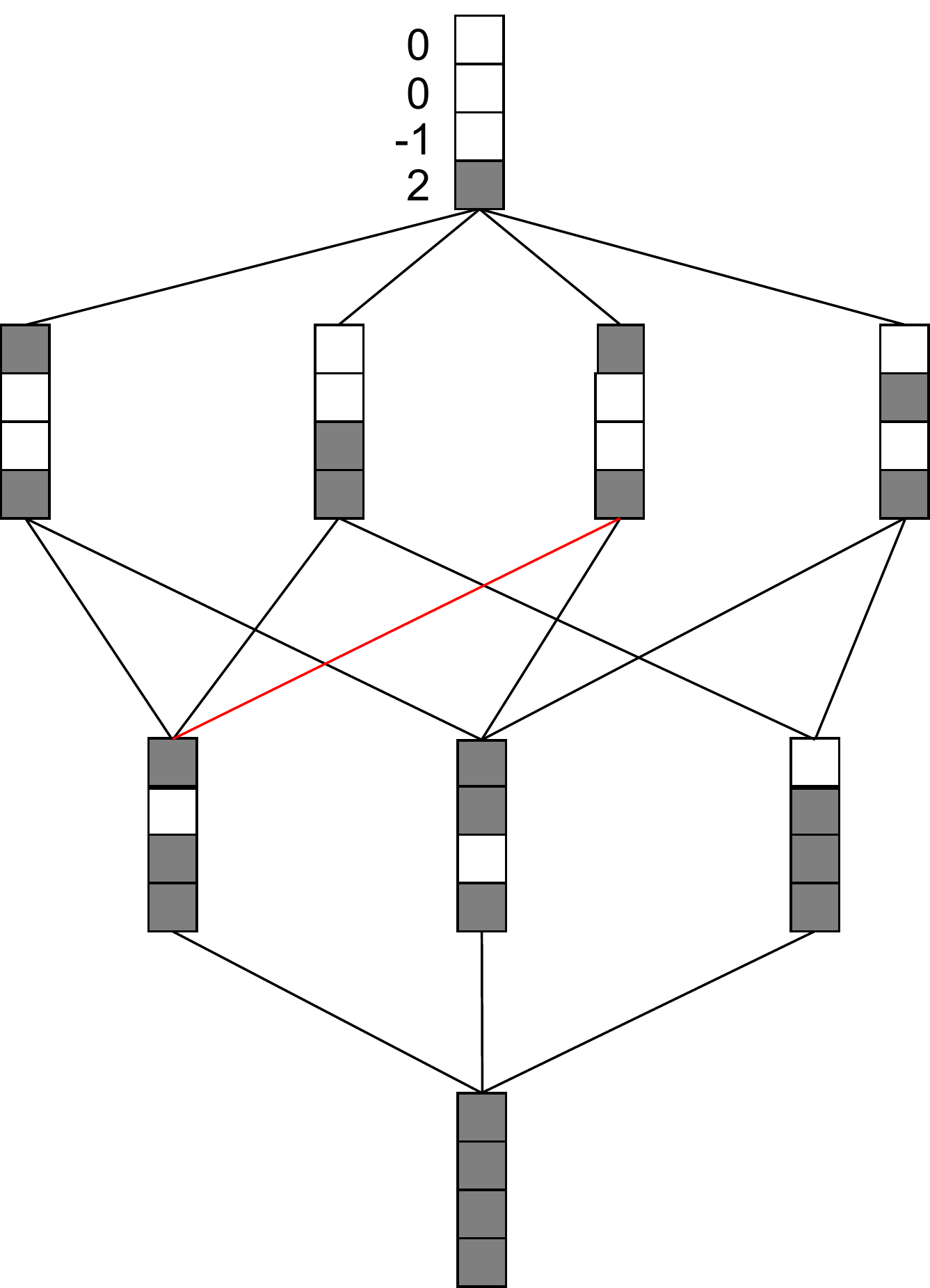} & 
\includegraphics[scale=0.22]{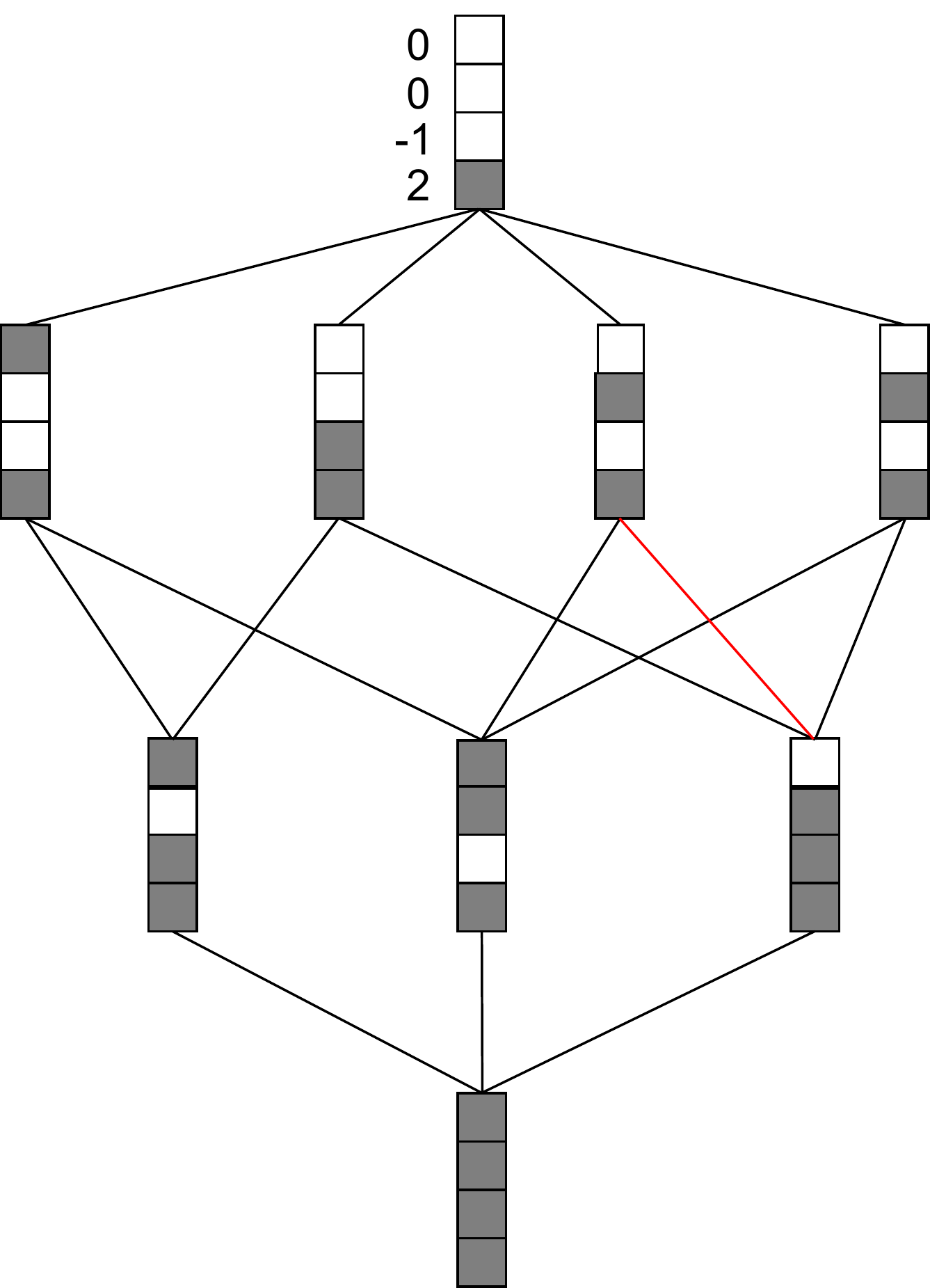} & 
\includegraphics[scale=0.22]{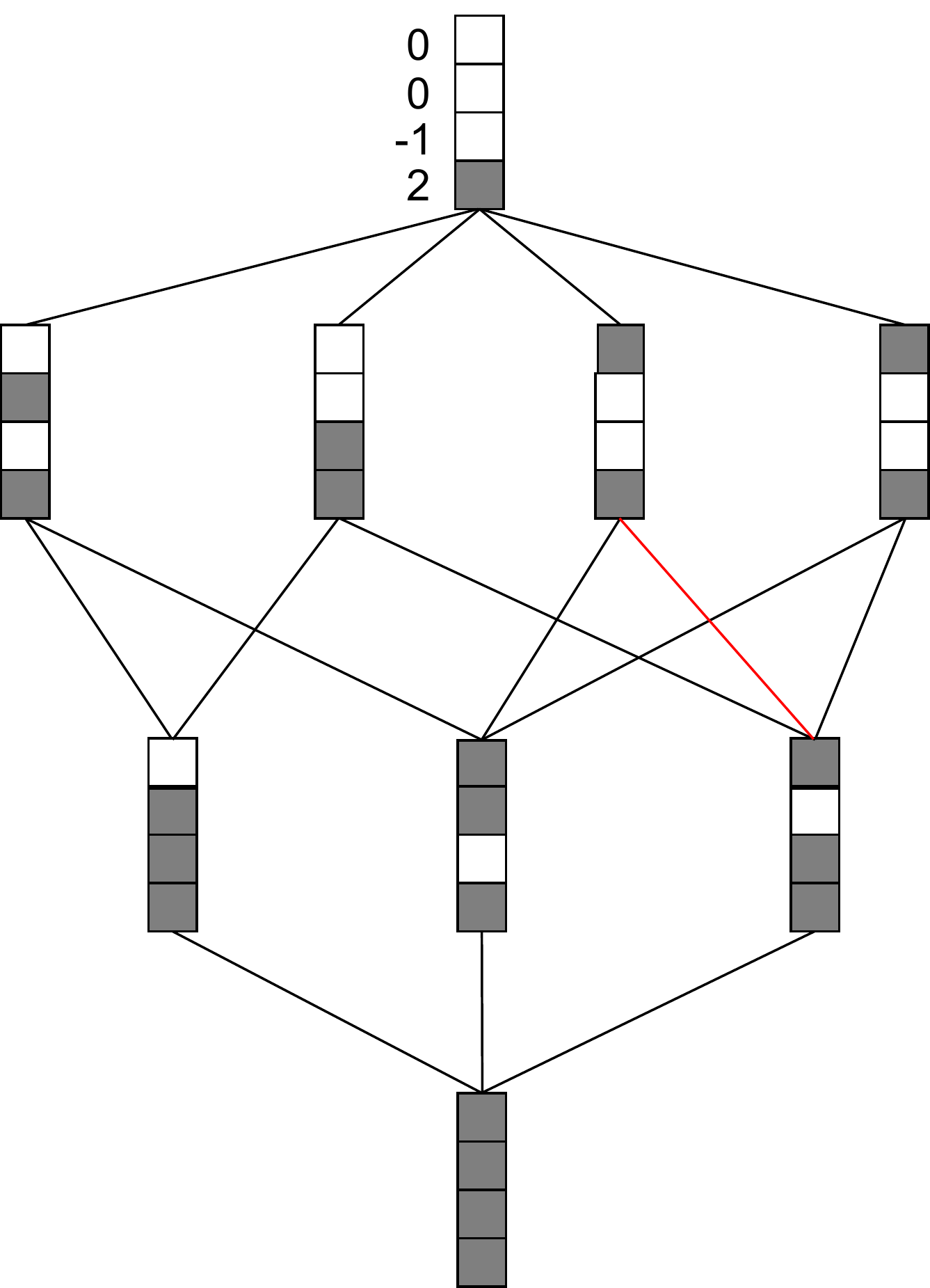} & 
\includegraphics[scale=0.22]{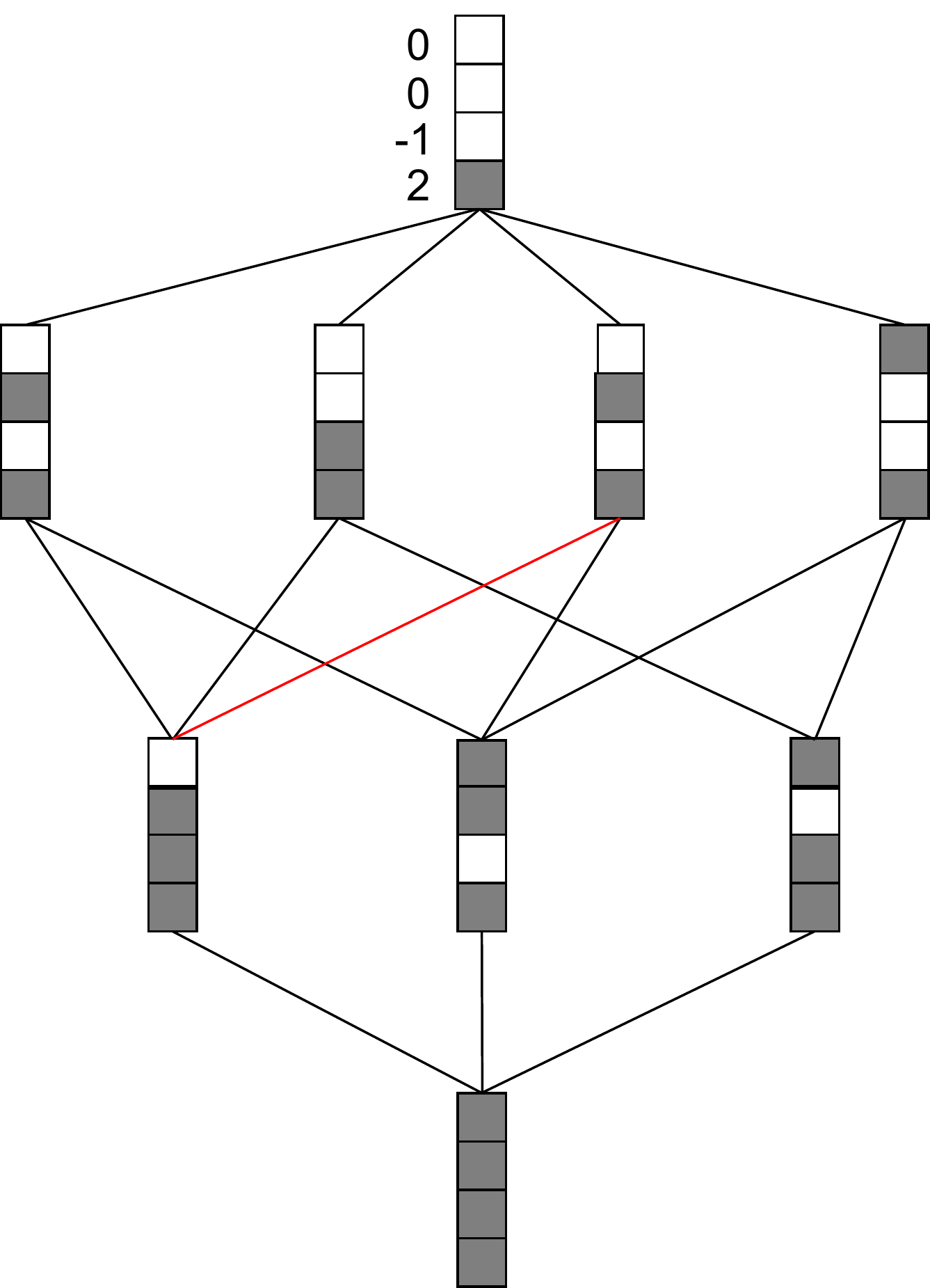}  &
\includegraphics[scale=0.22]{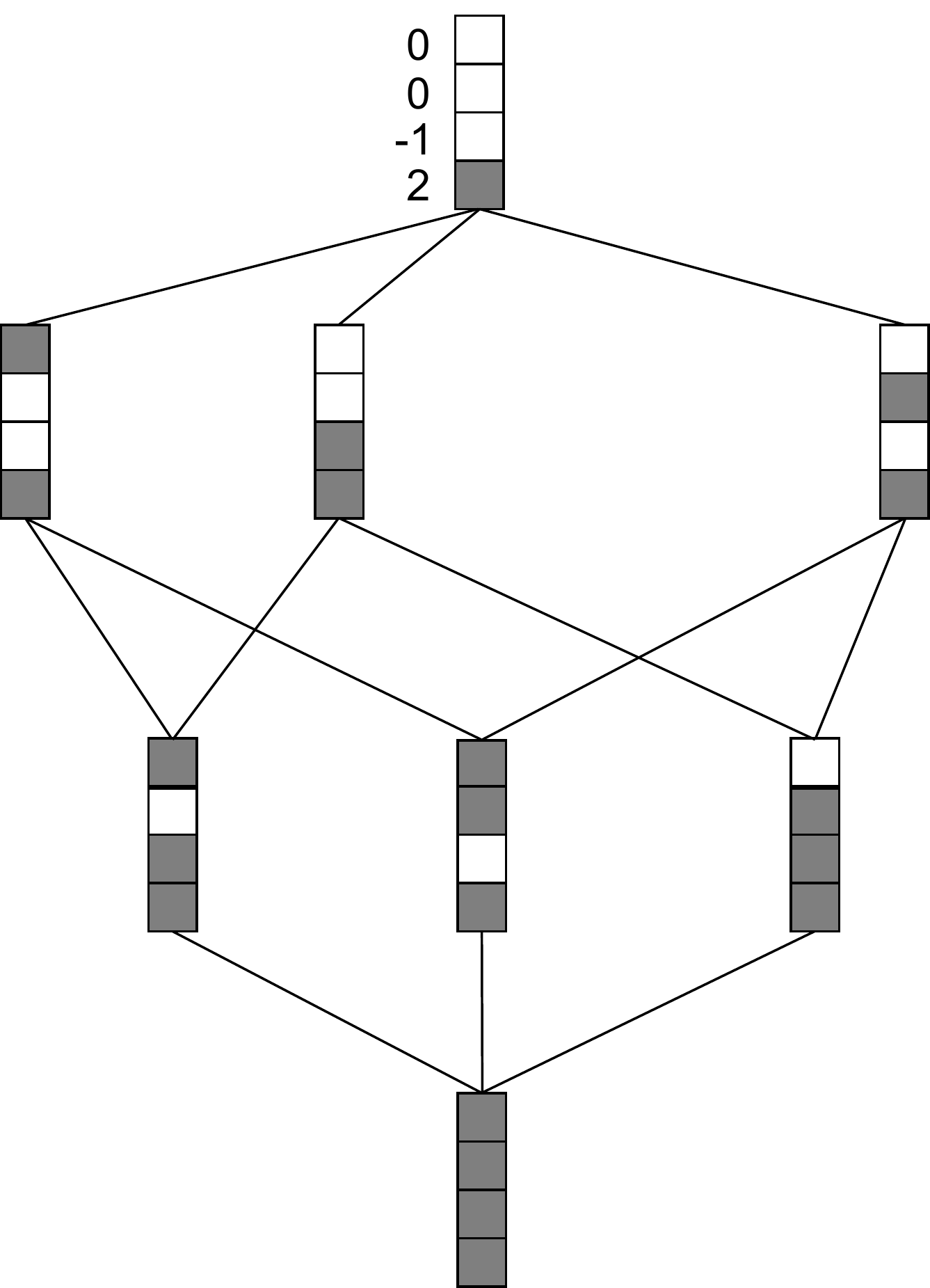}  &
\\
\hline
\hline
\multicolumn{4}{l|}{Type 2 $\mathcal{P}_{A}$} & \multicolumn{2}{l}{Type 2 {$\mathcal{P}_{A}/{=}$}} \\
& & & & & \\
A & B & & & A & B \\
\includegraphics[scale=0.22]{network18_PA_2_1} &
\includegraphics[scale=0.22]{network18_PA_2_2} &  
&  
&
\includegraphics[scale=0.22]{network18_PA_2_1_reduction} &
\includegraphics[scale=0.22]{network18_PA_2_2_reduction} 
\\
\hline
\hline
\multicolumn{4}{l|}{Type 3 $\mathcal{P}_{A}$} & \multicolumn{2}{l}{Type 3 {$\mathcal{P}_{A}/{=}$}} \\
& & & & & \\
A & B & & & A & B\\
\includegraphics[scale=0.22]{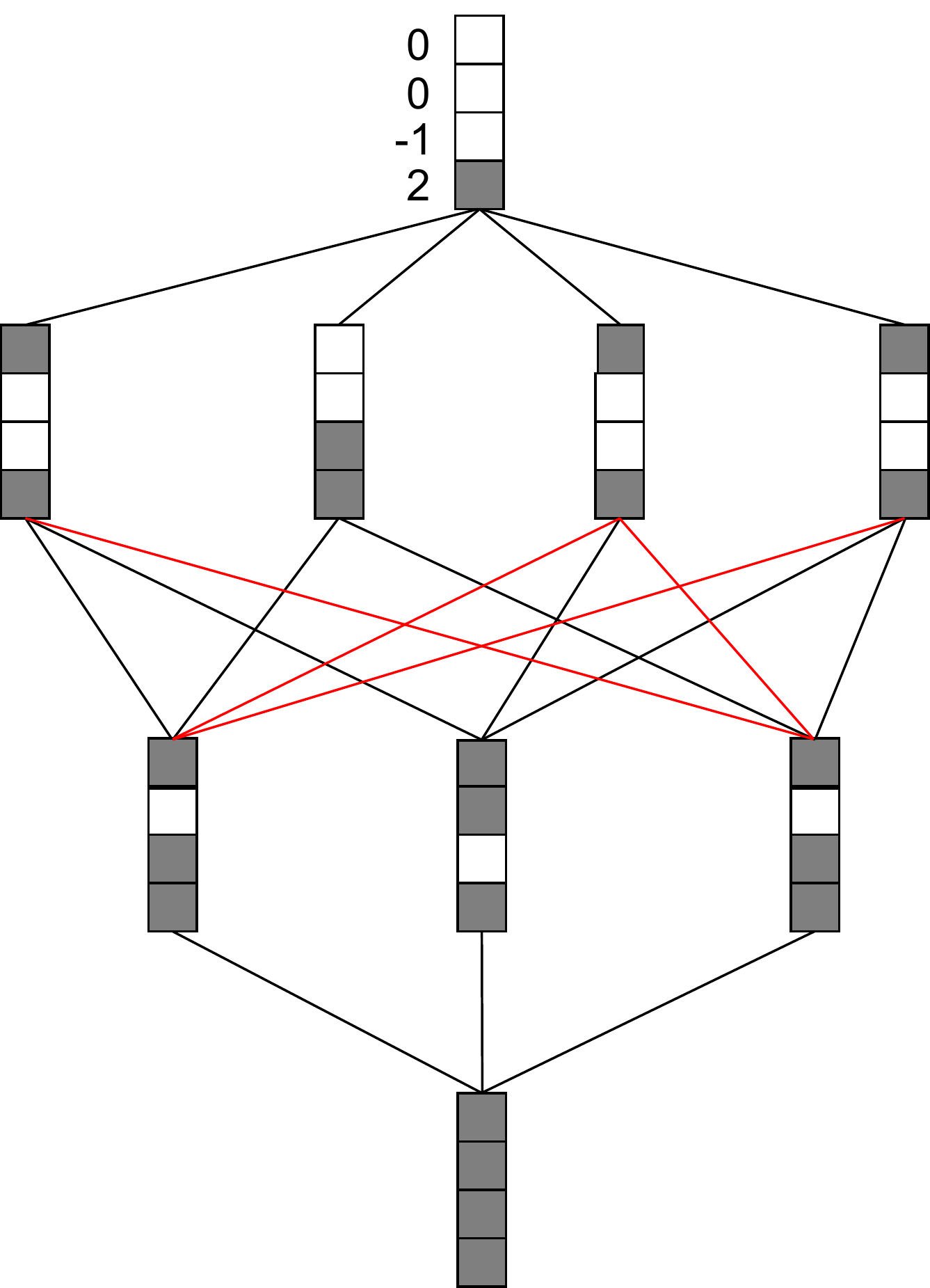} & 
\includegraphics[scale=0.22]{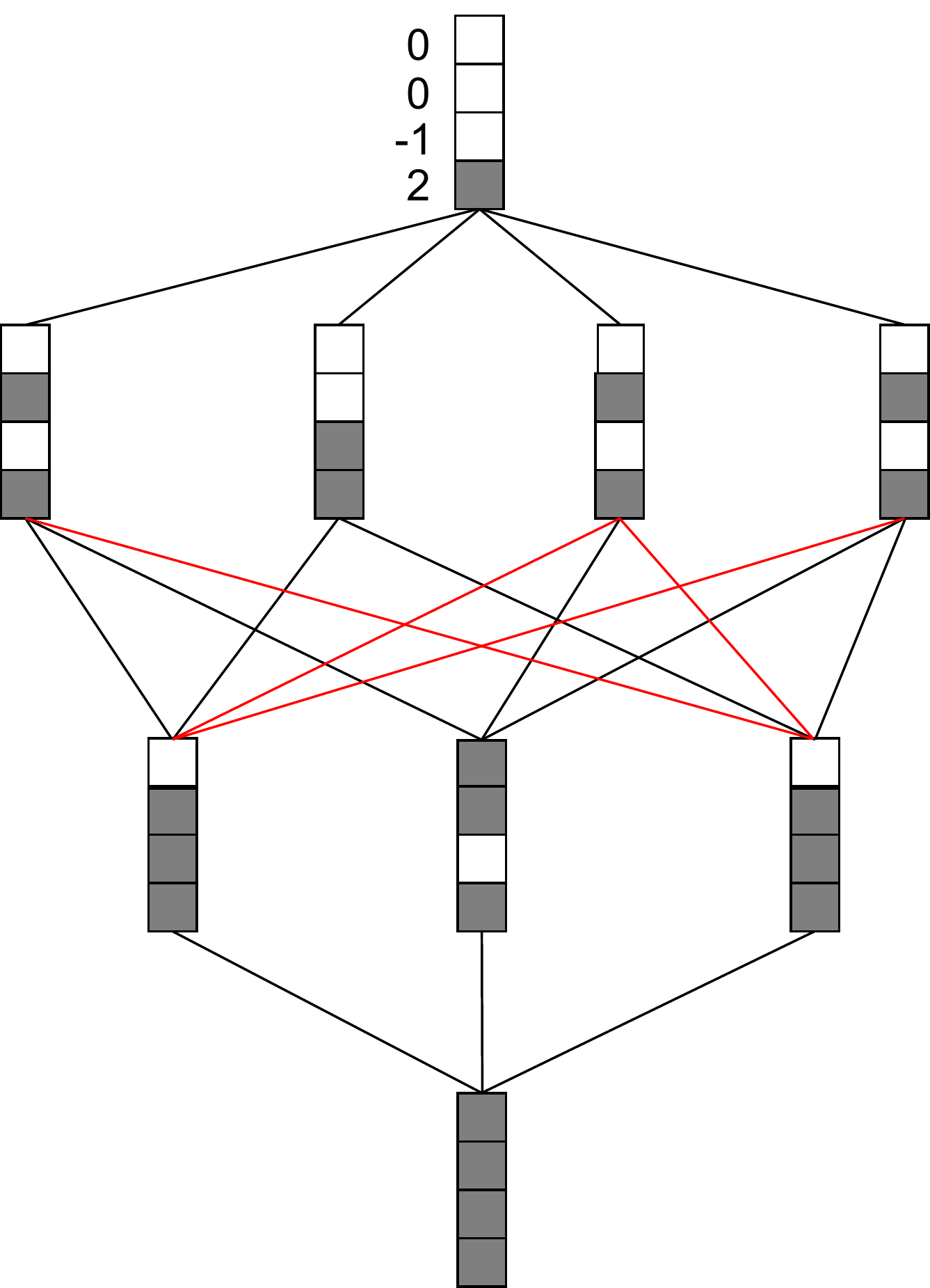} &  
&
&
\includegraphics[scale=0.22]{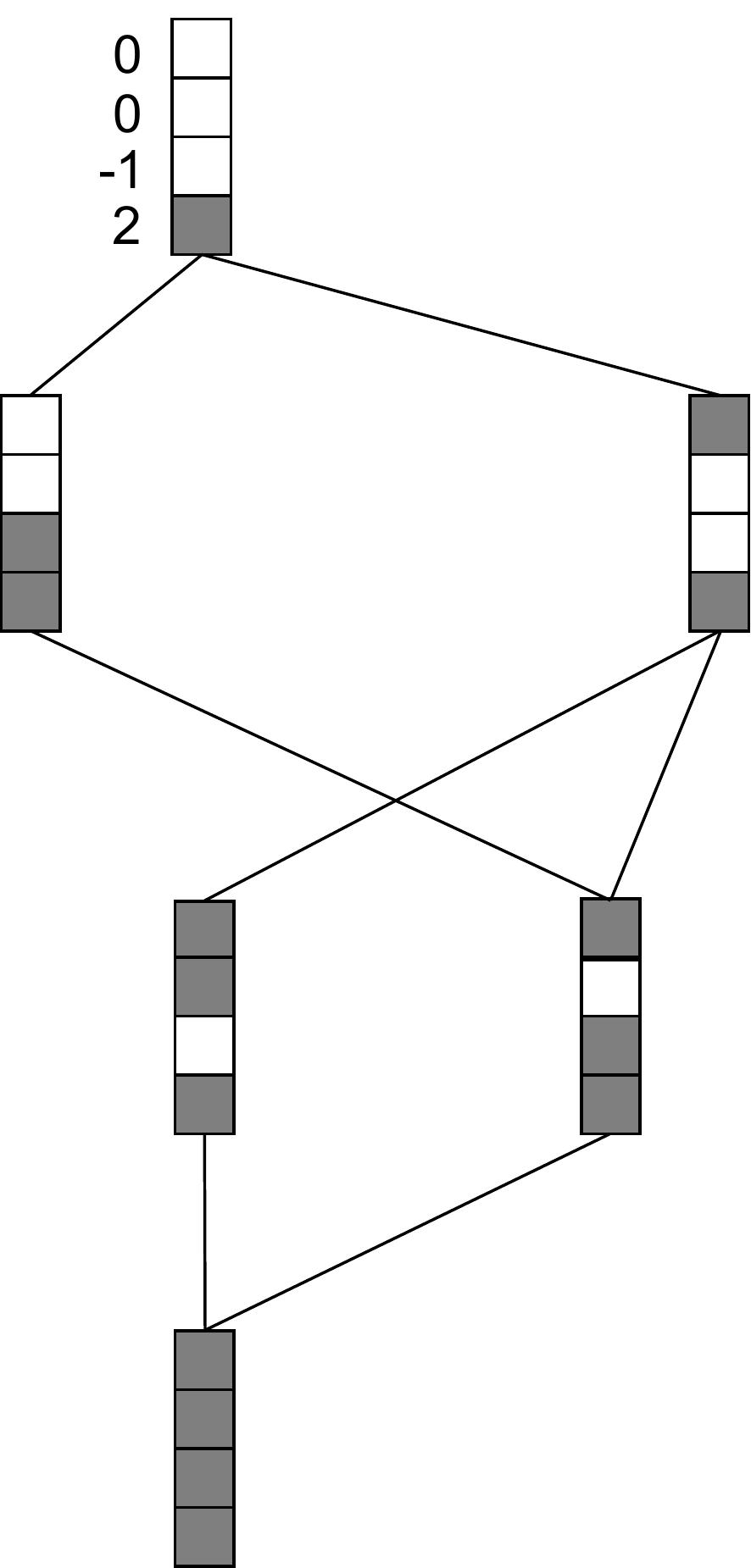} &
\includegraphics[scale=0.22]{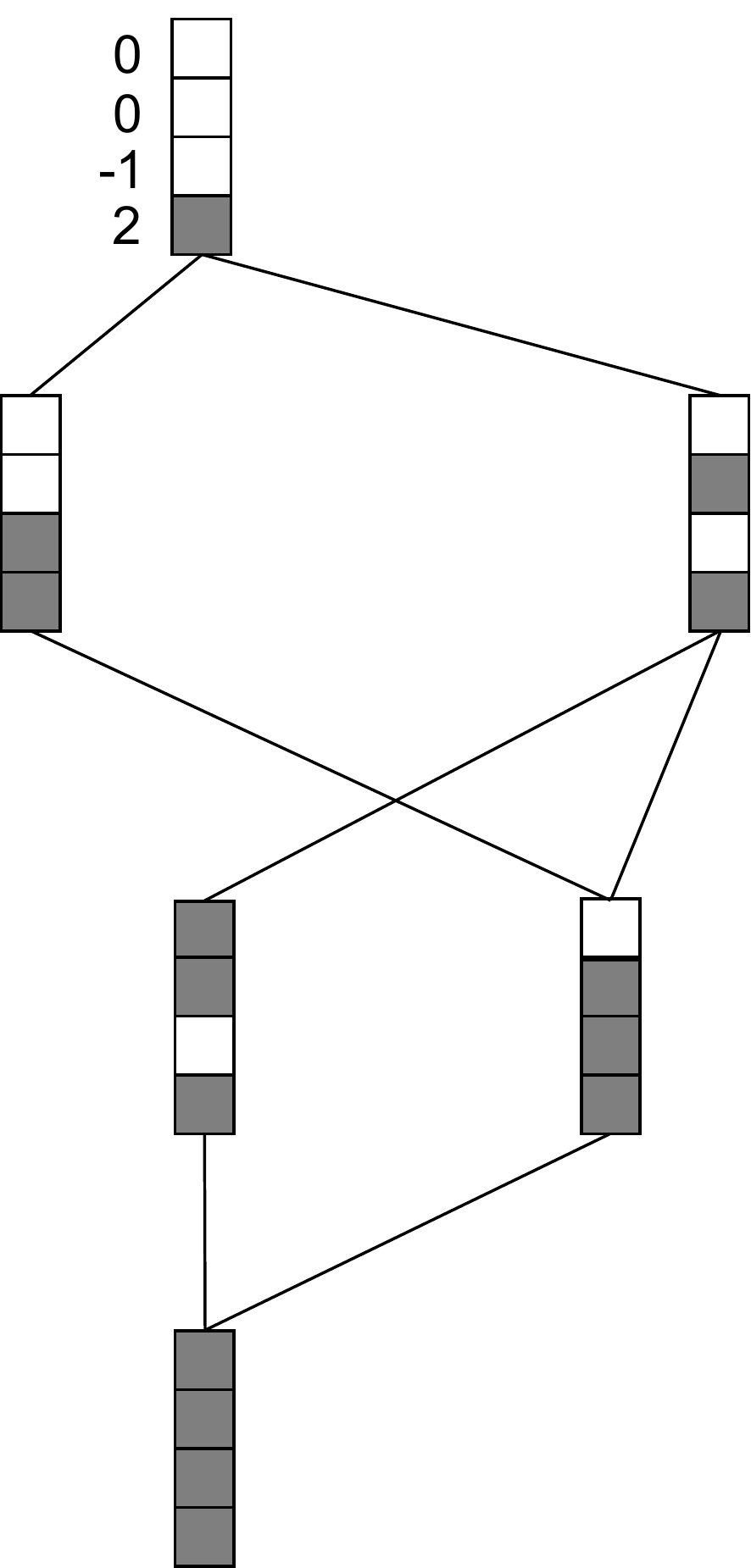} \\
\hline
\end{tabular}
\end{center}
\caption{Three distinct types of $\mathcal{P}_{A}$ and its associated {$\mathcal{P}_{A}/{=}$}. Only Type $2$ $\mathcal{P}_{A}$ satisfies the covering relation defined in Definition \ref{def:covering_relation}.}
\label{fig:network18_PA_reduction}
\end{sidewaysfigure}

\end{ex}

\clearpage

\subsection{Multiple $\mathcal{E}_{A}/{\sim}$}
\begin{ex}
\rm
\label{ex:4_cell_network2}
Let $A:\bc^4\to \bc^4$ be the adjacency matrix of the $4$-cell network shown in Figure \ref{fig:4_cell_network2}. 
\begin{figure}[h!]
\begin{center}
\begin{tabular}{ccll}
Network $\mathcal{G}$ & Adjacency matrix $A$ & eigenvalues & eigenvectors\\
\hline
\multirow{4}{*}{
	\includegraphics[scale=0.25]{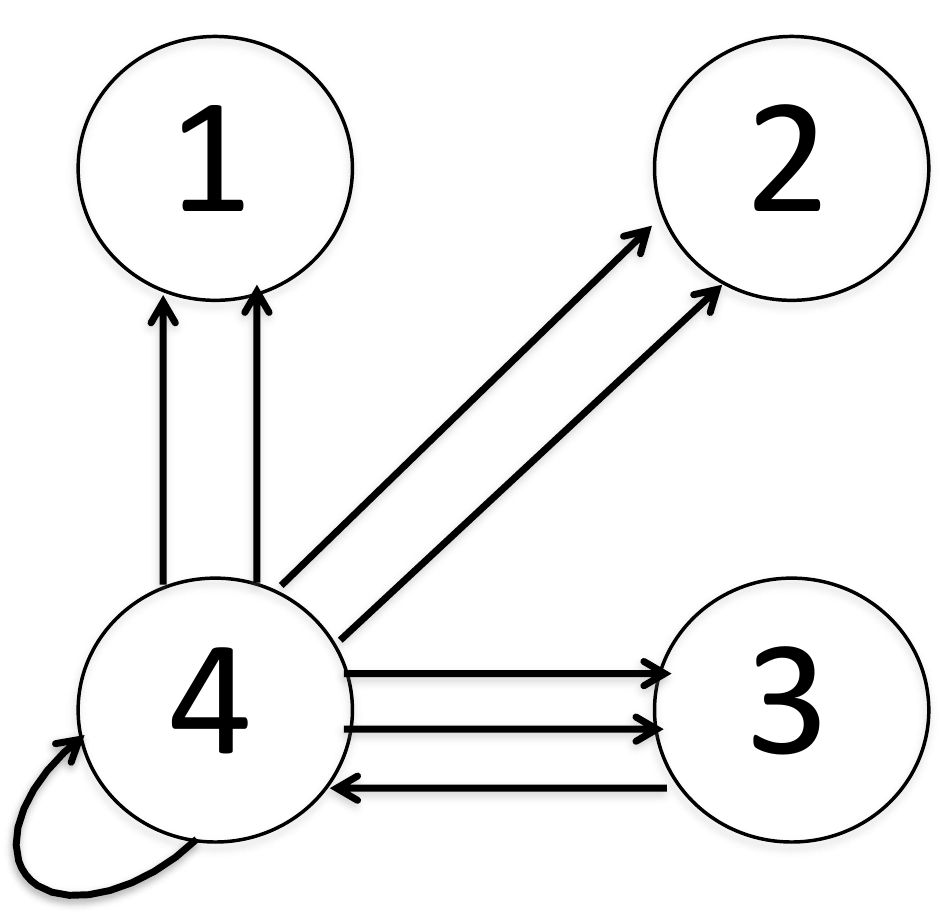}
} &
\multirow{4}{*}{$\left(\begin{array}{cccc}
0 & 0 & 0 & 2\\
0 & 0 & 0 & 2 \\
0 & 0 & 0 & 2 \\
0 & 0 & 1 & 1  
\end{array}\right)$} &
$\lambda_{1}=0$ & $(1,0,0,0)$\\
& & $\lambda_{2}=0$ & $(0,1,0,0)$\\
& & $\lambda_{3}=-1$ & $(-2,-2,-2,1)$\\
& & $\lambda_{4}=2$  & $(1,1,1,1)$ \\
& &
\end{tabular}
\end{center}
\caption{$4$-cell regular network $\mathcal{G}$ with corresponding adjacency matrix $A$ and its eigenvalues. The repeated eigenvalue $\lambda_{1}=\lambda_{2}=0$ has algebraic multiplicity $2$ and geometric multiplicity $2$.}
\label{fig:4_cell_network2}
\end{figure}

There are $20$ possible equivalence relations on $\mathcal{E}_{A}$ to check in order to find $\mathcal{E}_{A}/{\sim}$. Our algorithm finds $4$ equivalence relations on $\mathcal{E}_{A}$ give a candidate $\mathcal{E}_{A}/{\sim}$. $3$ out of $4$ equivalence relations give a topologically equivalent reduced lattice, where a representative reduction is shown in Figure \ref{fig:4_cell_network2_EA_reduction_Type1_1}. Each has two possible $\mathcal{P}_{A}$ giving 6 which we call Type $1$. The remaining equivalence relation is shown in Figure \ref{fig:4_cell_network2_EA_reduction_Type2}, giving two possible $\mathcal{P}_{A}$ which we call Type $2$. All $8$ of $\mathcal{P}_{A}$ satisfy the covering relation defined in Definition \ref{def:covering_relation}, however, we can identify the correct $\mathcal{P}_{A}$ to be Type 1 using the following result which we have proved, and will appear in a successive manuscript.

\begin{proposition}
\label{prop:max_simple_eigenvalue_lattice_n_cell}
Let $A:\bc^n\to \bc^n$ be an adjacency matrix of a regular network $\mathcal{G}$ whose valency is $v$ and $\sigma(A)=\{v,\lambda,\cdots,\lambda\}$, where $\lambda\ne v$ has a  geometric multiplicity $n-1$. Let $\mathcal{L}_{A}(1,\ldots,1):=\{r\in \mathcal{L}_{M}(1,\ldots,1)\,;\, r\ge (0,\dots,0, 1)\}$. Then, $\mathcal{P}_{A}/{=}$ is isomorphic to $\mathcal{L}_{A}$.
\end{proposition}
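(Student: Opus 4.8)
The plan is to show that the poset $\mathcal{P}_A/{=}$, for an adjacency matrix $A$ whose only eigenvalues are the valency $v$ (simple, with eigenvector $(1,\dots,1)$) and one further eigenvalue $\lambda\ne v$ of geometric multiplicity $n-1$, coincides with the sublattice $\mathcal{L}_A(1,\dots,1)=\{r\in\mathcal{L}_M(1,\dots,1)\,;\,r\ge(0,\dots,0,1)\}$ of the Boolean lattice on $n$ coordinates. First I would record the Jordan structure: since $\lambda$ has geometric multiplicity $n-1$ and algebraic multiplicity $n-1$, all Jordan blocks are $1\times1$, so $A$ has $m=n$ Jordan blocks, all of size $1$, and $\mathcal{L}_M(1,\dots,1)$ is the Boolean lattice $\{0,1\}^n$. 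By Remark \ref{rmk:oder_preserv}(v) the last coordinate is reserved for the eigenspace $\la(1,\dots,1)\ra$; since every synchrony subspace contains the full-synchrony diagonal $\Delta$, and $\Delta=\la(1,\dots,1)\ra$ is itself a synchrony subspace (the bottom of $V_\mathcal{G}^P$), every tuple in the image of $\mathcal{T}$ has last coordinate equal to $1$. Hence $\mathcal{P}_A/{=}\subseteq\mathcal{L}_A(1,\dots,1)$ as sets, and by Lemma \ref{lem:closed_PA} it is a closed subset containing the top $(1,\dots,1)$.

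Next I would argue the reverse inclusion, i.e. that \emph{every} tuple $r\ge(0,\dots,0,1)$ in $\{0,1\}^n$ is actually realised by some synchrony subspace. The key point is that, because all eigenvalues except $v$ are equal to $\lambda$, the eigenspace $E_\lambda$ has dimension $n-1$ and $A$ acts on it as the scalar $\lambda$; consequently \emph{any} subspace of $\bc^n$ of the form $\la(1,\dots,1)\ra\oplus U$ with $U\subseteq E_\lambda$ is $A$-invariant. I would then invoke the characterisation that a polydiagonal $\Delta_{\bowtie}$ is a synchrony subspace iff it is $A$-invariant (Proposition following Theorem 5.6 of \cite{Stewart-2007}), so the task reduces to showing that every polydiagonal $\Delta_{\bowtie}$ with $\bowtie$ a partition of the $n$ cells is $A$-invariant under this spectral hypothesis. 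This is where the argument must be done carefully: a polydiagonal of codimension $c$ (i.e.\ $\bowtie$ has $n-c$ classes) has dimension $n-c$, contains $(1,\dots,1)$, and one must check that its intersection with $E_\lambda$ is $(n-c-1)$-dimensional and hence that $\Delta_{\bowtie}=\la(1,\dots,1)\ra\oplus(\Delta_{\bowtie}\cap E_\lambda)$; since $E_\lambda$ is a hyperplane (it is the orthogonal complement, with respect to the standard inner product after accounting for the row-sum structure, of a suitable vector) and $\Delta_{\bowtie}\not\subseteq E_\lambda$ because $(1,\dots,1)\in\Delta_{\bowtie}\setminus E_\lambda$, the intersection indeed has the right dimension, giving $A$-invariance. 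Thus \emph{every} partition is balanced, $V_\mathcal{G}^P=W_\mathcal{G}^P$ is the full partition lattice on $n$ cells, and its cardinality is the Bell number $B_n$, matching $|\mathcal{L}_A(1,\dots,1)|=2^{n-1}$ only when\dots

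— wait, $2^{n-1}\ne B_n$ for $n\ge3$, so the map $\mathcal{T}$ is genuinely many-to-one and the reduction $\mathcal{P}_A/{=}$ collapses the $B_n$-element partition lattice onto the $2^{n-1}$-element Boolean lattice. I would therefore finish by exhibiting the order-isomorphism explicitly: send a partition $\bowtie$ (equivalently $\Delta_{\bowtie}$) to the tuple $r=\mathcal{T}(\Delta_{\bowtie})$, which by Lemma \ref{lem:EA} and the $1\times1$-block structure has $r_i=1$ precisely when the $i$-th (generalised) eigenvector lies in $\Delta_{\bowtie}\cap E_\lambda$; after choosing the standard basis $e_1-e_2,\dots,e_{n-1}-e_n$ of $E_\lambda$ adapted so that $\{r\,:\,r\ge(0,\dots,0,1)\}$ is hit bijectively, the map is surjective onto $\mathcal{L}_A(1,\dots,1)$, order-preserving by Proposition \ref{prop:map_T}, and reflects order because on the quotient distinct tuples correspond to distinct subspaces of $E_\lambda$ with containment governed coordinatewise. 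Hence $\mathcal{P}_A/{=}\cong\mathcal{L}_A(1,\dots,1)$.

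\textbf{Main obstacle.} The delicate step is the reverse inclusion: proving that \emph{every} coordinate tuple $r\ge(0,\dots,0,1)$ is attained, equivalently that every partition of the $n$ cells is balanced for such an $A$ and that the resulting synchrony subspaces realise all $2^{n-1}$ subspaces $\la(1,\dots,1)\ra\oplus U$, $U\subseteq E_\lambda$, \emph{after} the merge by equal tuples. One has to be careful that the merge is exactly the fibre collapse of $\mathcal{T}$ and that the induced order on the $2^{n-1}$ classes is the coordinatewise one — this needs the ``same covering relation'' framework or a direct check that $\mathcal{T}(\Delta_{\bowtie_1})\le\mathcal{T}(\Delta_{\bowtie_2})$ on the quotient forces $\la(1,\dots,1)\ra\oplus(\Delta_{\bowtie_1}\cap E_\lambda)\subseteq\la(1,\dots,1)\ra\oplus(\Delta_{\bowtie_2}\cap E_\lambda)$, i.e.\ that Remark \ref{rmk:oder_preserv}(iii)'s failure does not occur here because $E_\lambda$ carries the trivial $A$-action.
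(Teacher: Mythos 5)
You should know at the outset that the paper does not actually prove this proposition: it is stated in Example \ref{ex:4_cell_network2} with the remark that the proof ``will appear in a successive manuscript,'' so your attempt can only be assessed on its own merits. The first half of what you do is correct: the hypotheses force $A$ to be diagonalizable with $n$ Jordan blocks of size one, so $\mathcal{L}_{M}(1,\ldots,1)=\{0,1\}^{n}$; writing $\bc^{n}=\la\mathbf{1}\ra\oplus E_{\lambda}$ with $A$ acting as the scalar $\lambda$ on the hyperplane $E_{\lambda}$ and noting $\mathbf{1}\in\Delta_{\bowtie}\setminus E_{\lambda}$, one gets $\Delta_{\bowtie}=\la\mathbf{1}\ra\oplus(\Delta_{\bowtie}\cap E_{\lambda})$, hence every polydiagonal is $A$-invariant, every coloring is balanced, $V_{\mathcal{G}}^{P}$ is the full partition lattice, and every tuple in the image of $\mathcal{T}$ has last entry $1$, so $\mathcal{P}_{A}/{=}$ is a closed subset of $\mathcal{L}_{A}$ containing the top. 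You also correctly flag that the map must collapse $B_{n}$ partitions onto $2^{n-1}$ tuples.

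The genuine gap is the step you yourself identify as the obstacle: surjectivity onto $\mathcal{L}_{A}$ and the identification of the quotient order, which you assert rather than prove. Concretely: (i) $e_{1}-e_{2},\ldots,e_{n-1}-e_{n}$ is in general not a basis of $E_{\lambda}$, since $E_{\lambda}$ is the kernel of the left $v$-eigenvector functional and equals the sum-zero hyperplane only when the column sums of $A$ are constant (regular networks only have constant row sums; compare the eigenvectors $(1,0,0,0),(0,0,1,0)$ for $\lambda=0$ in Example \ref{ex:4_cell_network18_reduction}). (ii) More fundamentally, no fixed basis of $E_{\lambda}$ can be ``adapted'' so that the tuple records which basis vectors lie in a synchrony subspace: already for $n=3$ the three two-dimensional polydiagonals meet $E_{\lambda}$ in three distinct lines, which two basis vectors cannot all realize. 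Since all $\lambda$-blocks have equal size, the slot assignment in Proposition \ref{prop:map_T} is genuinely a choice (Remark \ref{rmk:oder_preserv}(i)), so the phrase ``adapted so that $\{r:r\ge(0,\ldots,0,1)\}$ is hit bijectively'' assumes exactly what has to be proved. What is missing is an explicit order-preserving assignment together with a surjectivity check; for instance, set $r_{i}=1$ for $i\in\{1,\ldots,n-1\}$ iff cell $i$ is the largest cell of its color class and that class does not contain cell $n$: then $|r|=\dim\Delta_{\bowtie}$, the assignment is strictly monotone under refinement, and every $r\ge(0,\ldots,0,1)$ is attained by the coloring whose classes are the singletons $\{i\}$ with $r_{i}=1$ together with one class containing all remaining cells (balanced, since here every coloring is balanced). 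Once surjectivity is in hand, order-reflection on the quotient is automatic from Definitions \ref{def:P_A} and \ref{def:po_PA} — the quotient is just the set of distinct tuples in coordinatewise order — so your closing argument that ``distinct tuples correspond to distinct subspaces of $E_{\lambda}$ with containment governed coordinatewise'' is both unnecessary and false as stated (cf. Remark \ref{rmk:oder_preserv}(iii)). Finally, since $\mathcal{P}_{A}$ is not unique, a complete proof should also say in which sense the constructed assignment is \emph{the} $\mathcal{P}_{A}$ of the proposition (e.g.\ that it satisfies the covering relation of Definition \ref{def:covering_relation}), because the proposition is used in Example \ref{ex:4_cell_network2} precisely to discriminate between candidate reductions; your sketch does not address this.
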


The structure of $\mathcal{P}_{A}$ for the node $(1,1,0,1)$ in Figure \ref{fig:4_cell_network2_EA_reduction_Type1_1} and Figure \ref{fig:4_cell_network2_EA_reduction_Type2} can be seen by ignoring the third box corresponding to the $1\times 1$ Jordan block associated with the eigenvalue $-1$ as shown in Figure \ref{fig:3cell_reduction_summary}. Type $1$ $\mathcal{P}_{A}$ satisfies $\mathcal{P}_{A}/{=}\, \cong\, \mathcal{L}_{A}(1,1,1)$. However, this is not satisfied with Type $2$ $\mathcal{P}_{A}$. 

\begin{figure}[h!]
\small
\begin{center}
\begin{tabular}{cc|cc}
\multicolumn{2}{c|}{Before Reduction} & \multicolumn{2}{c}{After Reduction} \\
\hline
\hline
Type $1$ $\mathcal{P}_{A}$ & Type $2$ $\mathcal{P}_{A}$ & Type $1$ $\mathcal{P}_{A}/=$ & Type $2$ $\mathcal{P}_{A}/=$\\
& & &  \\
\includegraphics[scale=0.25]{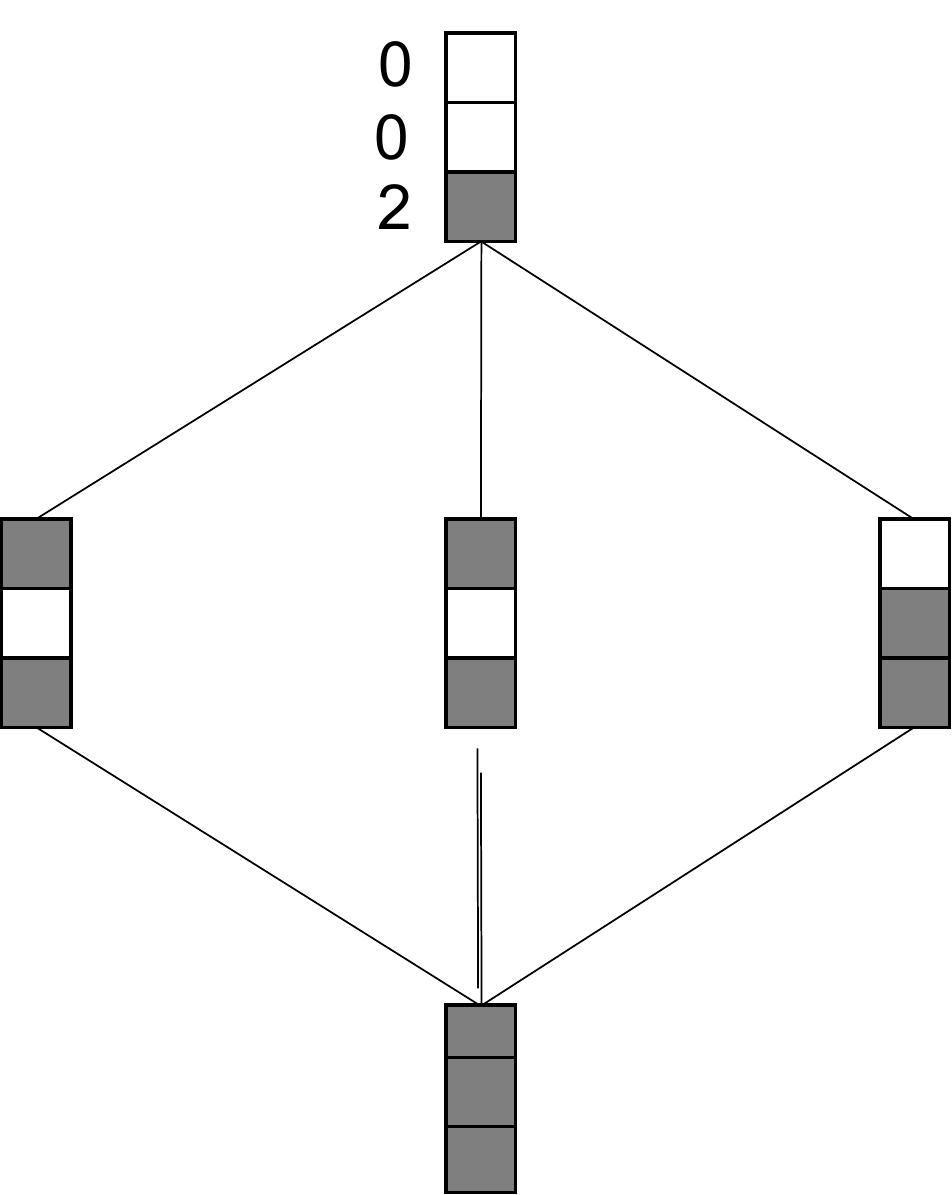} & 
\includegraphics[scale=0.25]{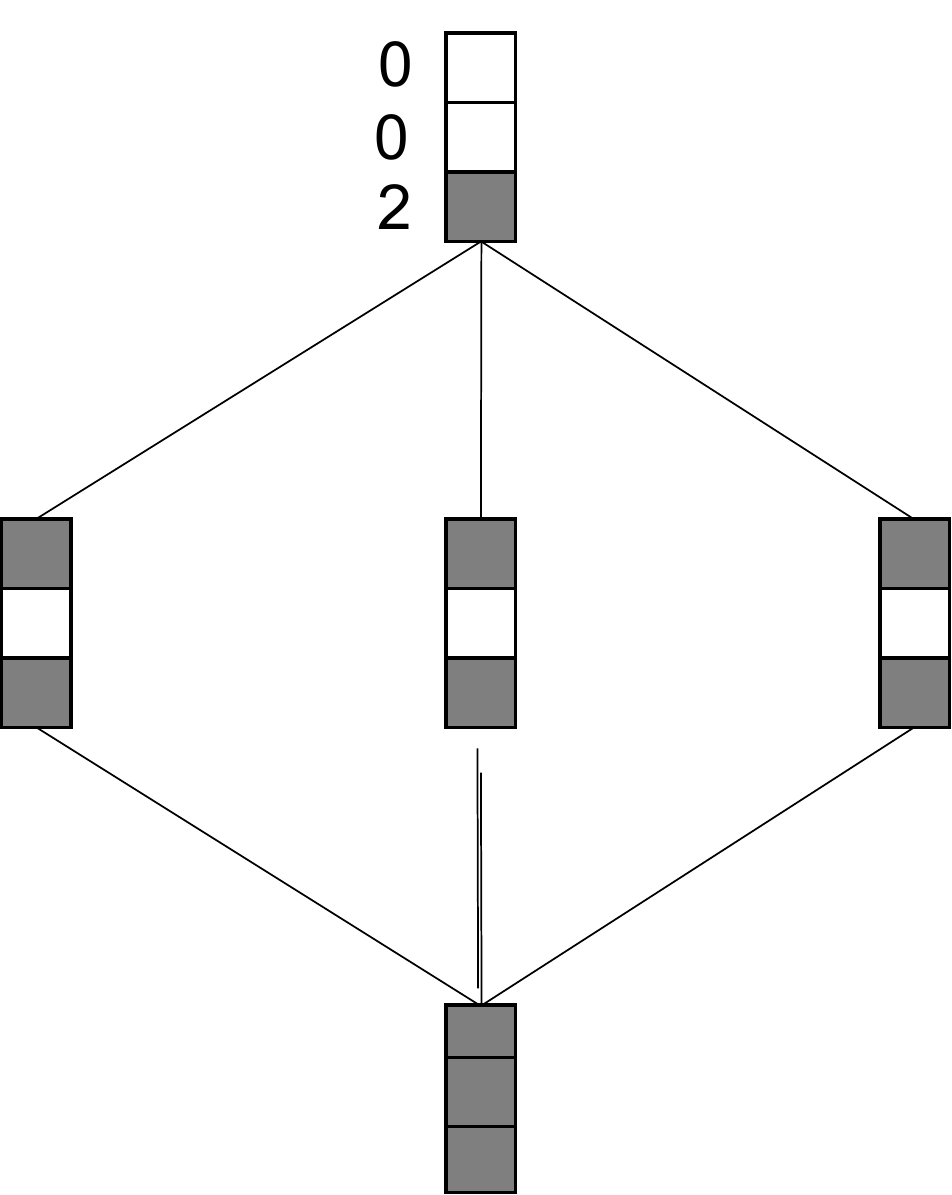} &
\includegraphics[scale=0.25]{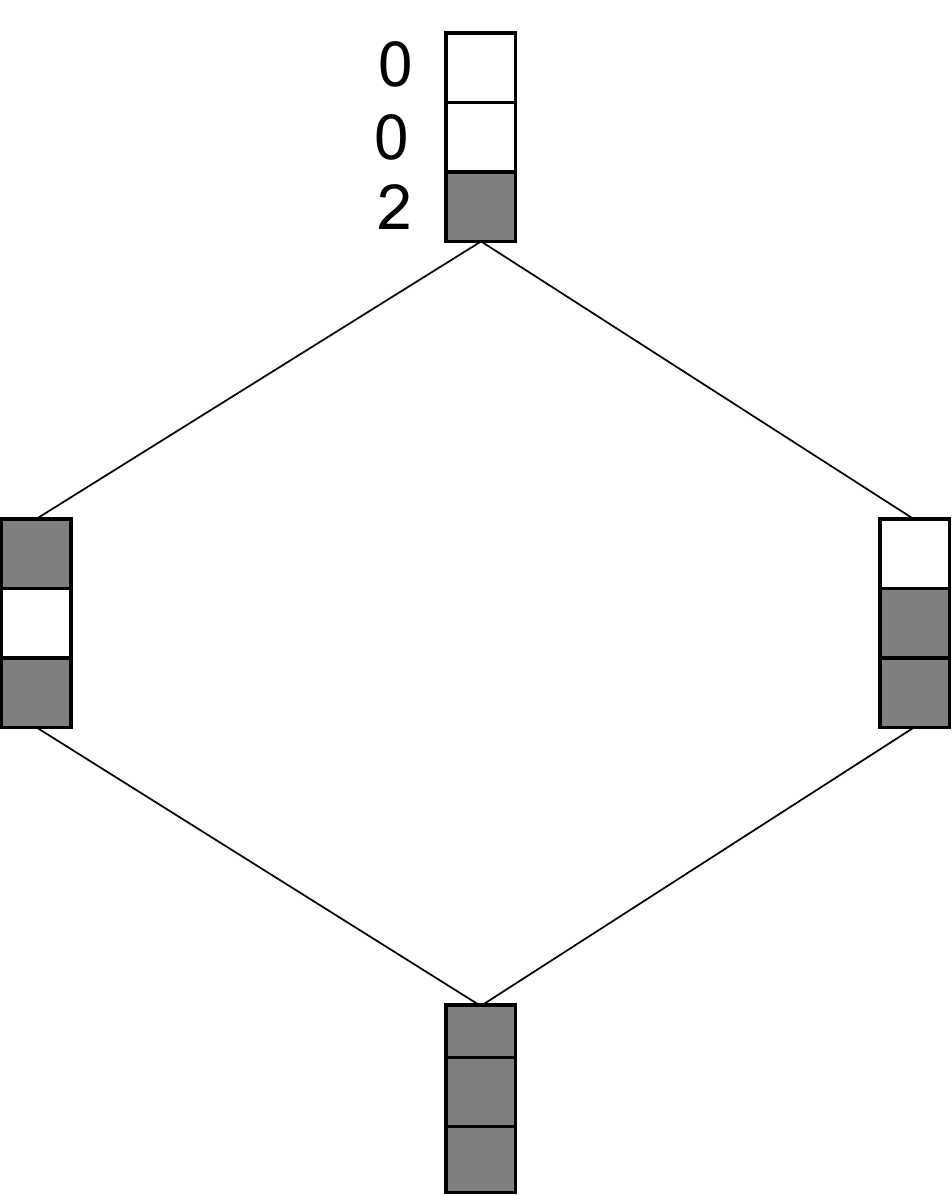} &
\includegraphics[scale=0.25]{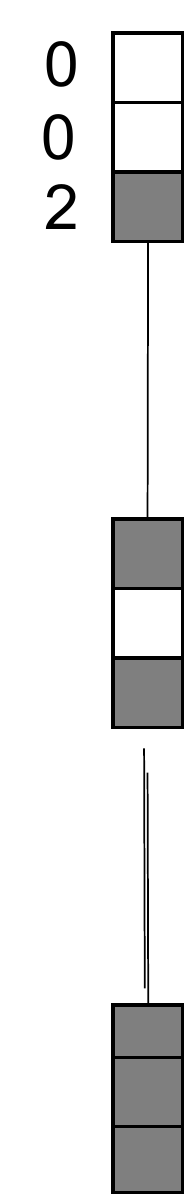}
\end{tabular}
\end{center}
\caption{Before and after reduction on $\mathcal{P}_{A}$ of $3$-cell regular network whose adjacency matrix has the repeated eigenvalues $\lambda_{1}=\lambda_{2}=0$ with the geometric multiplicity $2$ and $\lambda_{3}=2$. Note that only representatives from each type are shown. Type $1$ is the expected reduction since Type $1$ $\mathcal{P}_{A}/{=}\, \cong \,\mathcal{L}_{A}(1,1,1)$.}
\label{fig:3cell_reduction_summary}
\end{figure}

\begin{figure}[h!]
\small
\begin{center}
\begin{tabular}{cc|cc}
\multicolumn{2}{c|}{Before Reduction} & \multicolumn{2}{c}{After Reduction} \\
\hline
\hline
\multicolumn{2}{c|}{$V_{\mathcal{G}}^{P}$} &
\multicolumn{2}{c}{$V_{\mathcal{G}}^{P}/{\sim}$}
\\
& & & \\
\multicolumn{2}{c|}{\includegraphics[scale=0.3]{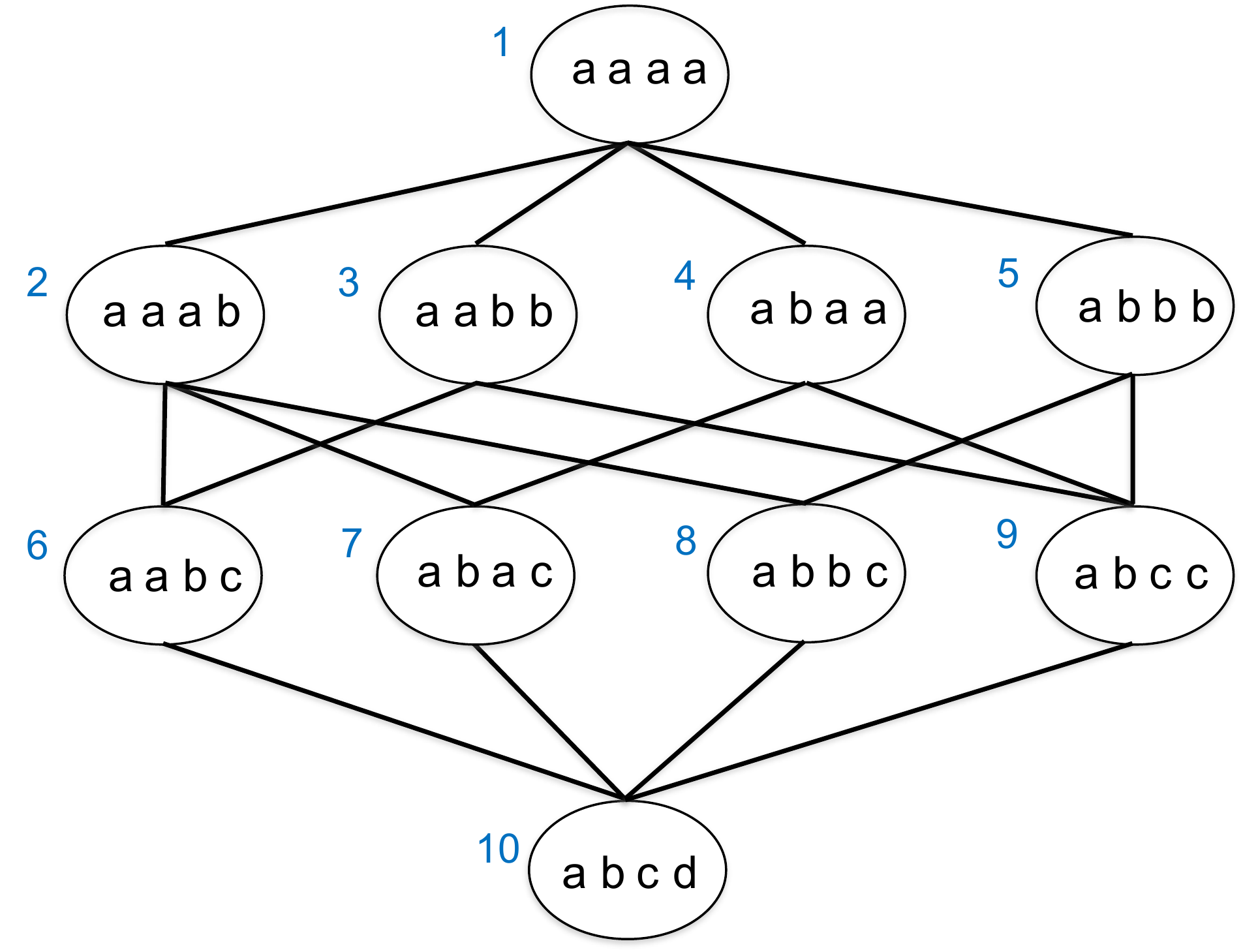}} &
\multicolumn{2}{c}{\includegraphics[scale=0.3]{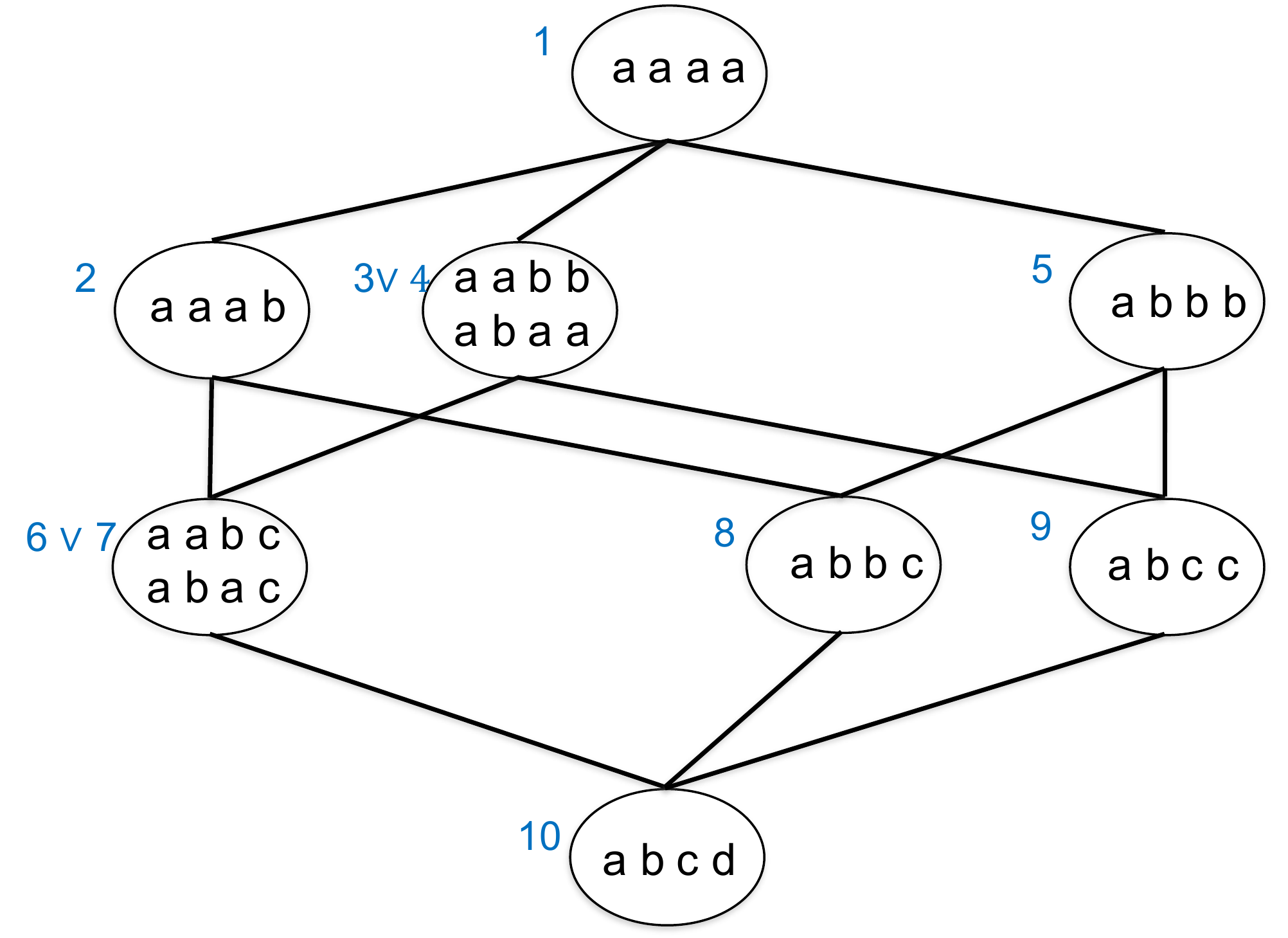}}
\\
& & & \\
& & & \\
\multicolumn{2}{c|}{$\mathcal{E}_{A}$} &
\multicolumn{2}{c}{$\mathcal{E}_{A}/{\sim}$} \\
& & & \\
\multicolumn{2}{c|}{\includegraphics[scale=0.3]{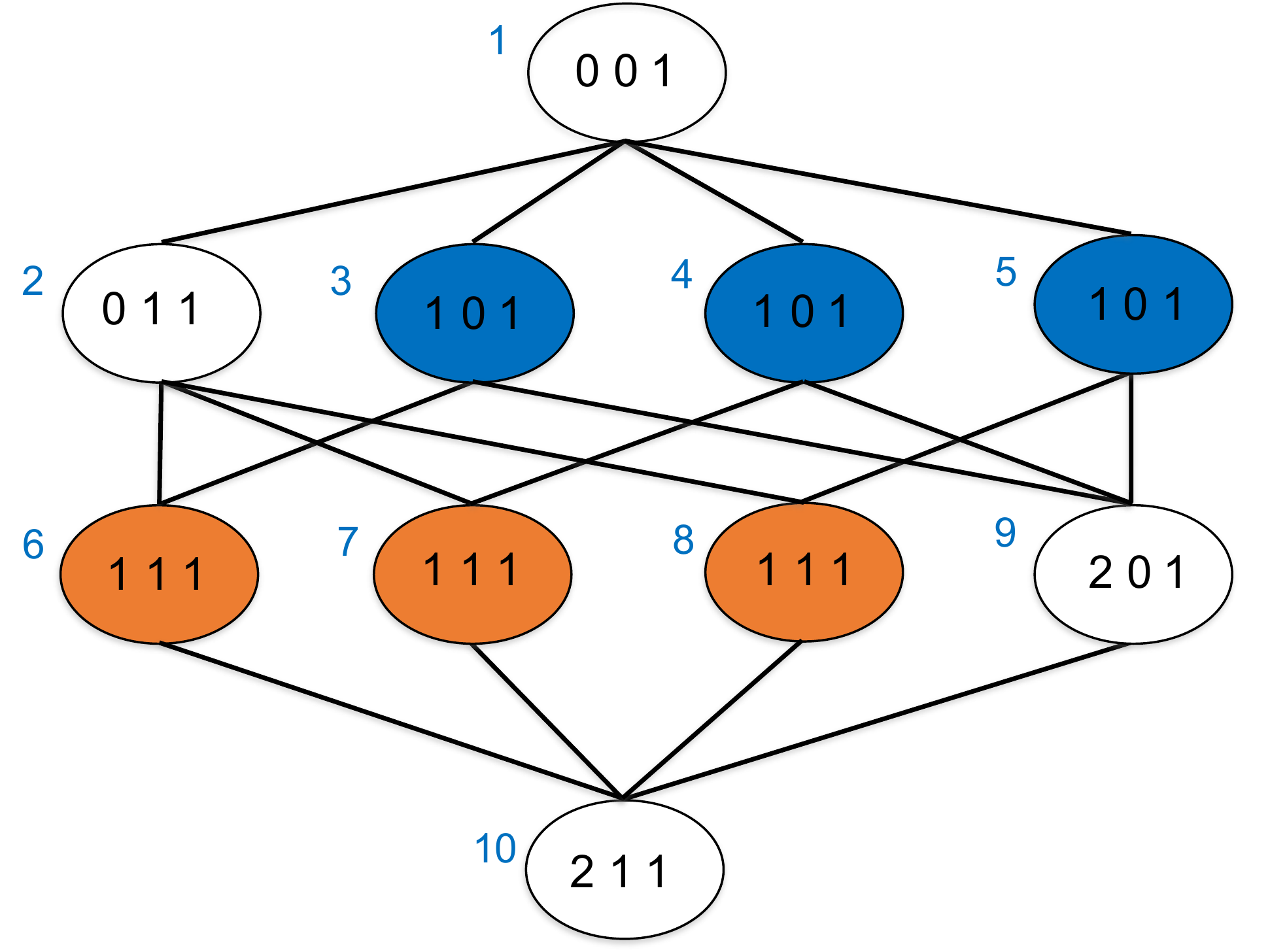}} & 
\multicolumn{2}{c}{\includegraphics[scale=0.3]{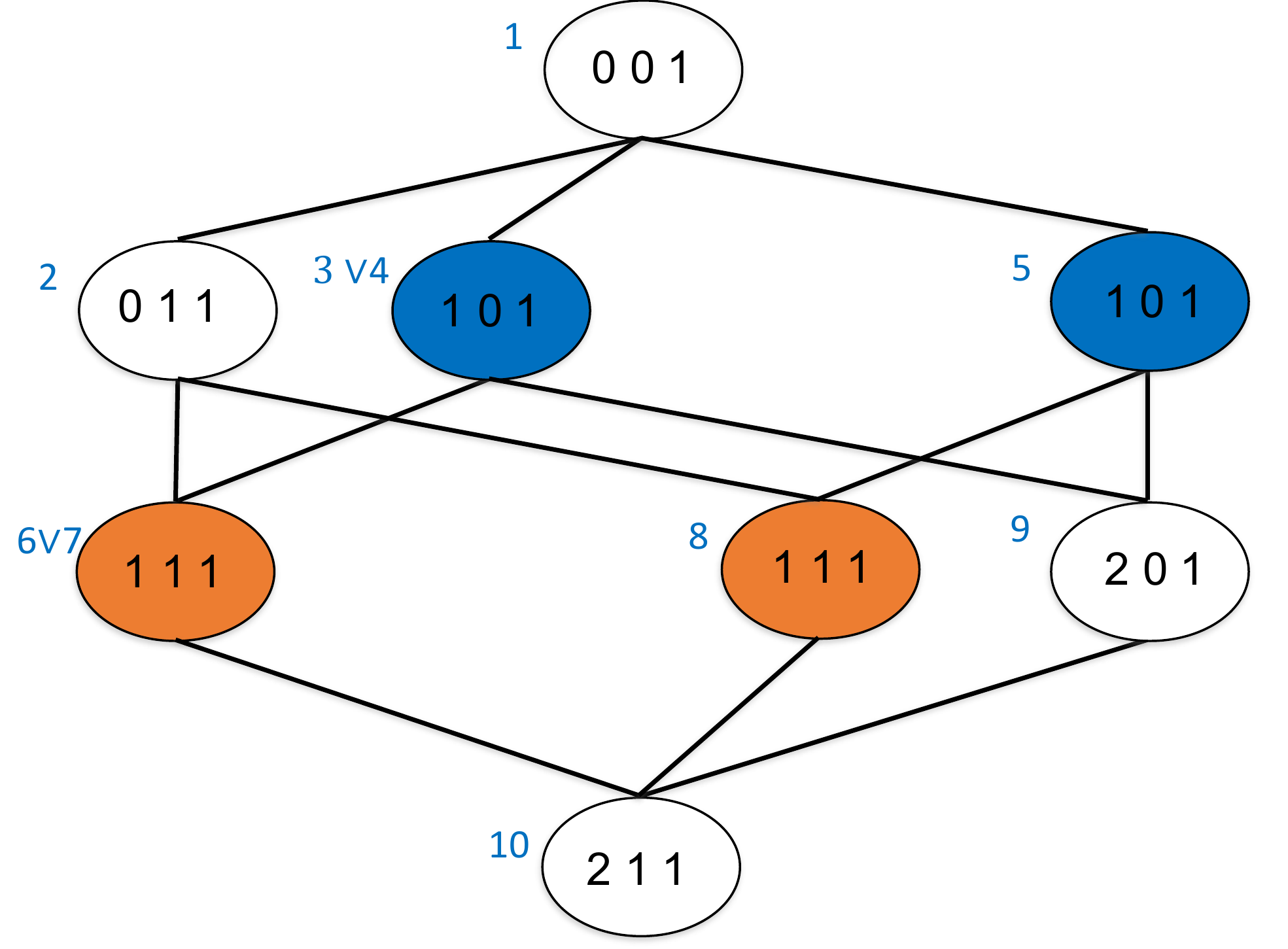}} \\
& & & \\
& & & \\
\multicolumn{2}{c}{{Type 1  $\mathcal{P}_{A}$}} &
\multicolumn{2}{c}{{Type 1  $\mathcal{P}_{A}/{=}$}} \\
& & & \\
\includegraphics[scale=0.25]{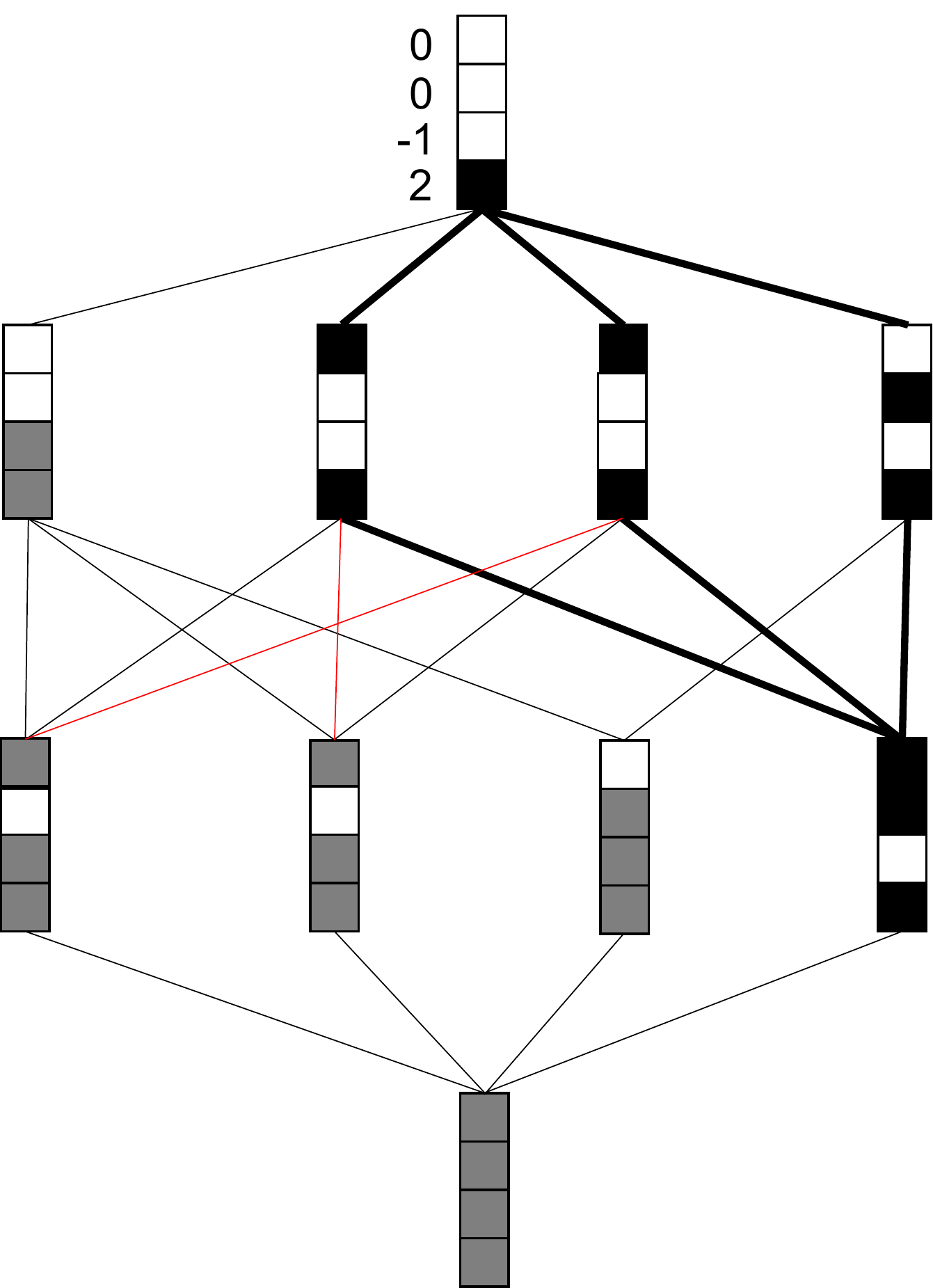} & 
\includegraphics[scale=0.25]{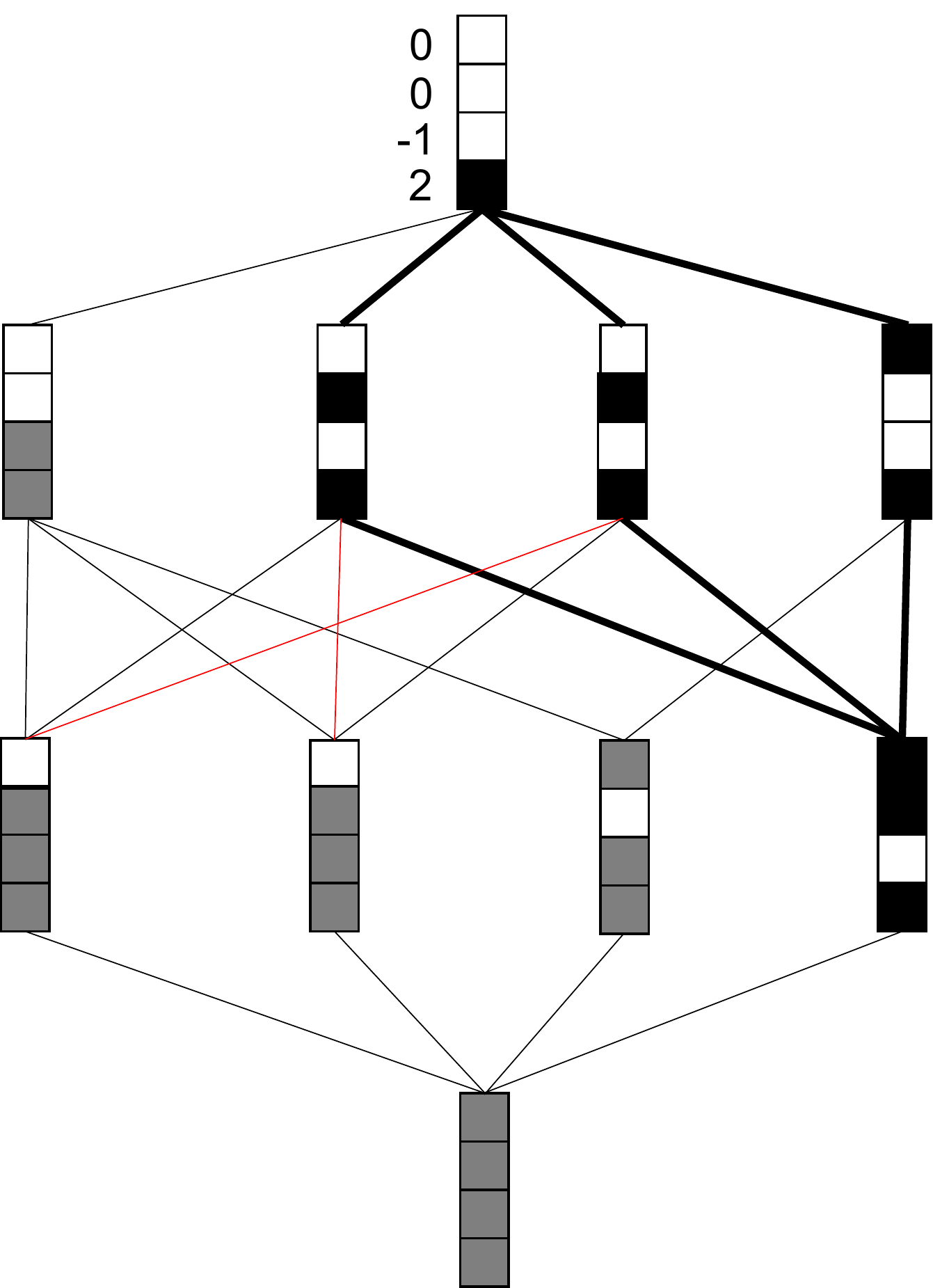} & 
\includegraphics[scale=0.25]{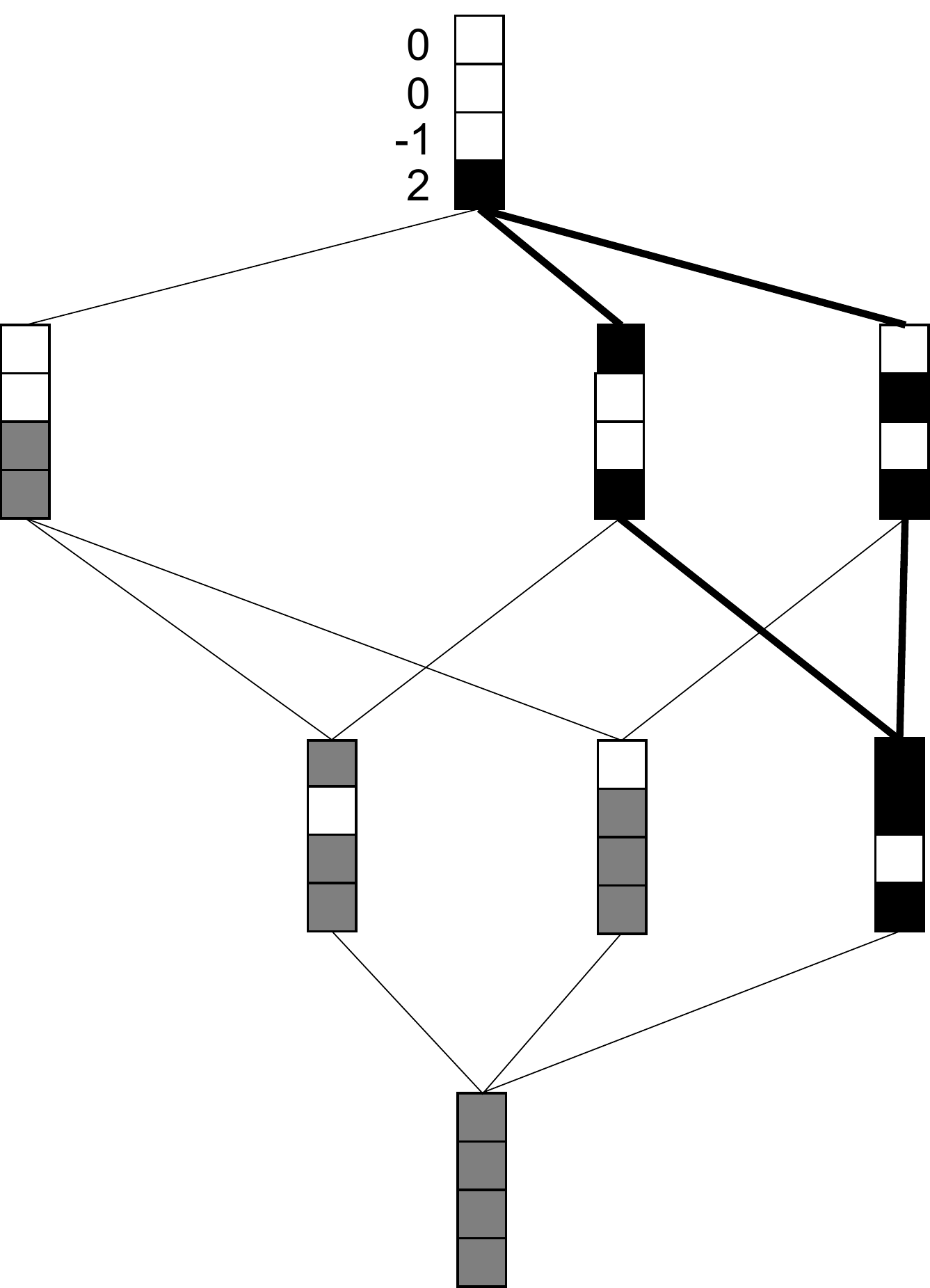} &
\includegraphics[scale=0.25]{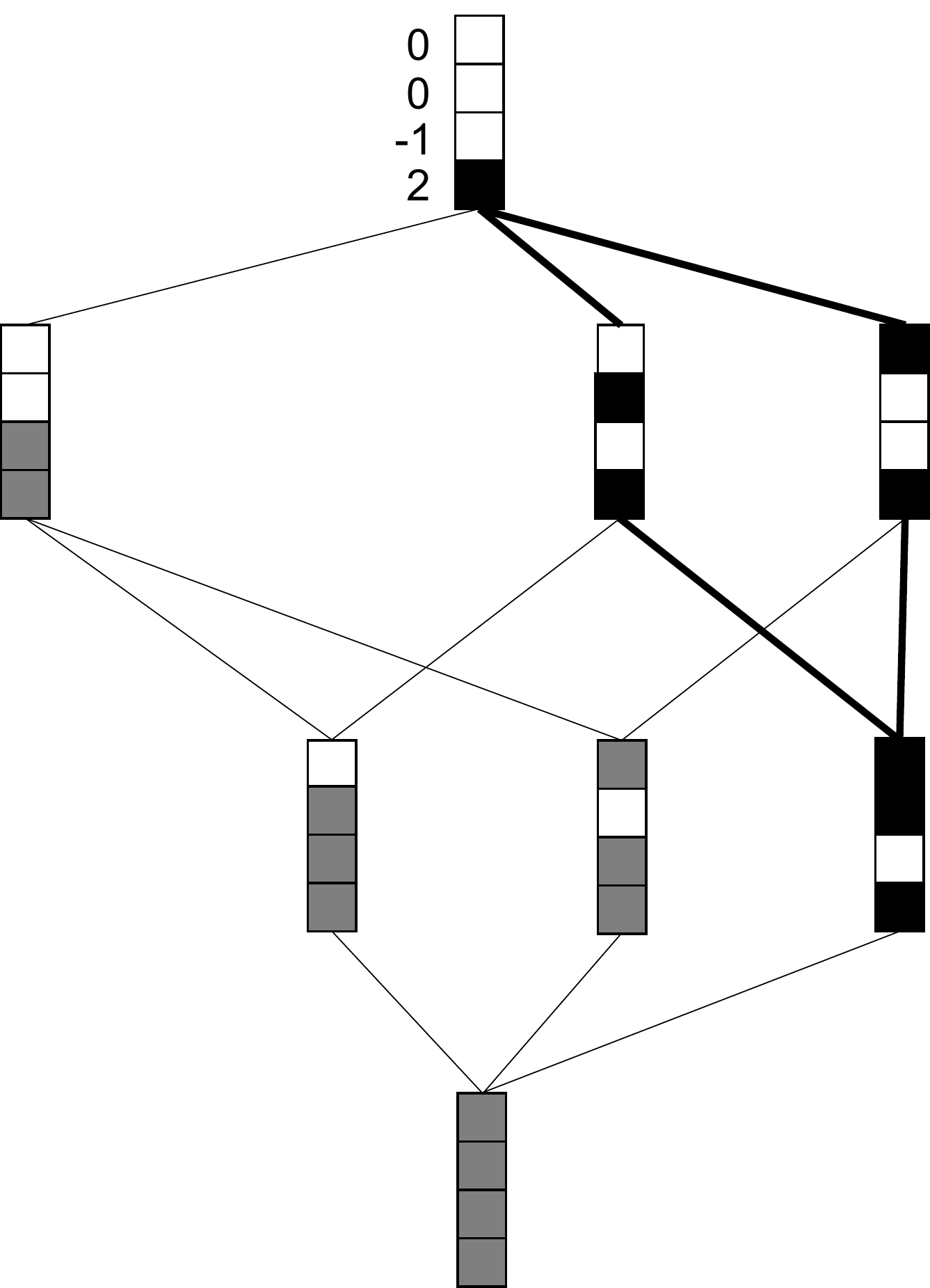}
\end{tabular}
\end{center}
\caption{Before and after reduction on the lattice of synchrony subspaces $V_{\mathcal{G}}^{P}$, $\mathcal{E}_{A}$ and $\mathcal{P}_{A}$ for {Type 1}. Note that there are two possible $\mathcal{P}_{A}$ associated with the unique {$\mathcal{E}_{A}$}. This is due to the equal size of Jordan blocks associated with the eigenvalue $\lambda=0$. The structures of $\mathcal{P}_{A}(1,1,1)$, which are subsets of $\mathcal{P}_{A}(1,1,1,1)$ and correspond to Type $1$ $\mathcal{P}_{A}$ in Figure \ref{fig:3cell_reduction_summary}, are highlighted with darker color.}
\label{fig:4_cell_network2_EA_reduction_Type1_1}
\end{figure}

\begin{figure}[h!]
\small
\begin{center}
\begin{tabular}{cc|cc}
\multicolumn{2}{c|}{Before Reduction} & \multicolumn{2}{c}{After Reduction} \\
\hline
\hline
\multicolumn{2}{c|}{$V_{\mathcal{G}}^{P}$} &
\multicolumn{2}{c}{$V_{\mathcal{G}}^{P}/{\sim}$}
\\
& & & \\
\multicolumn{2}{c|}{\includegraphics[scale=0.3]{4_cell_network2_VG.pdf}} &
\multicolumn{2}{c}{\includegraphics[scale=0.3]{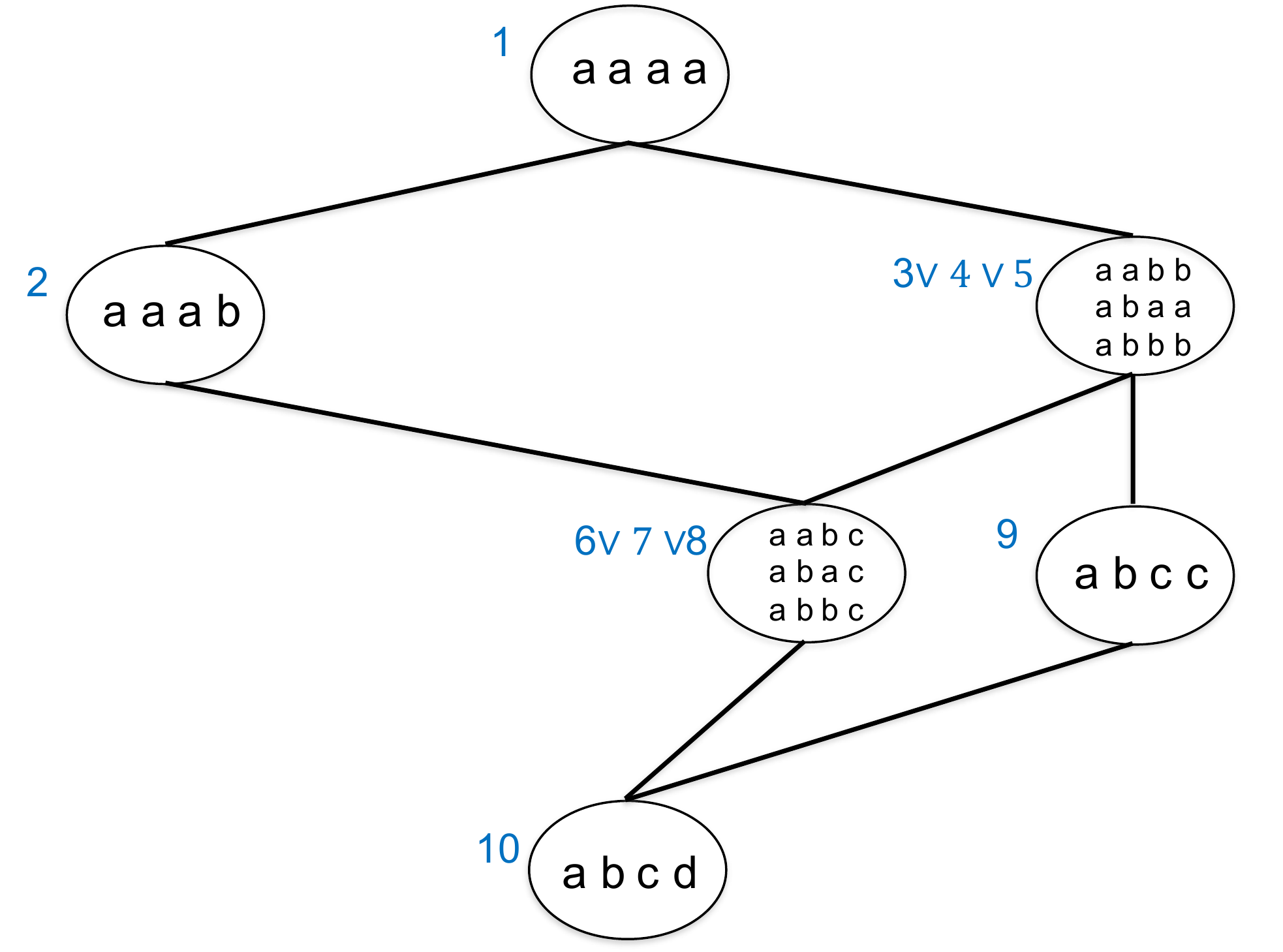}}
\\
& & & \\
& & & \\
\multicolumn{2}{c|}{$\mathcal{E}_{A}$} &
\multicolumn{2}{c}{$\mathcal{E}_{A}/{\sim}$} \\
& & & \\
\multicolumn{2}{c|}{\includegraphics[scale=0.3]{Eigenvalue_Tuple_4_cell_network2.pdf}} & 
\multicolumn{2}{c}{\includegraphics[scale=0.3]{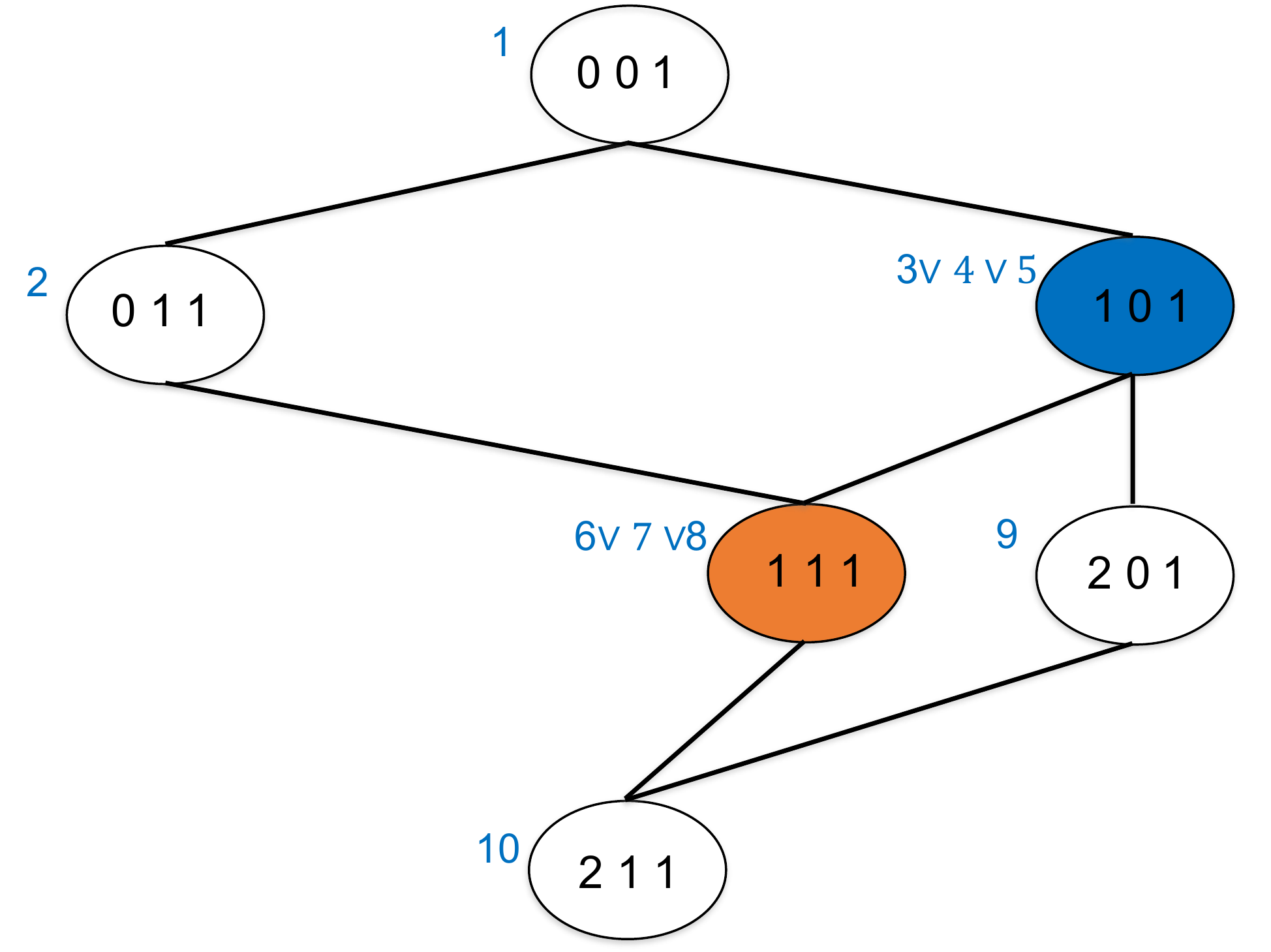}} \\
& & & \\
& & & \\
\multicolumn{2}{c}{Type 2 $\mathcal{P}_{A}$} &
\multicolumn{2}{c}{{Type 2 $\mathcal{P}_{A}/{=}$}} \\
& & & \\
\includegraphics[scale=0.25]{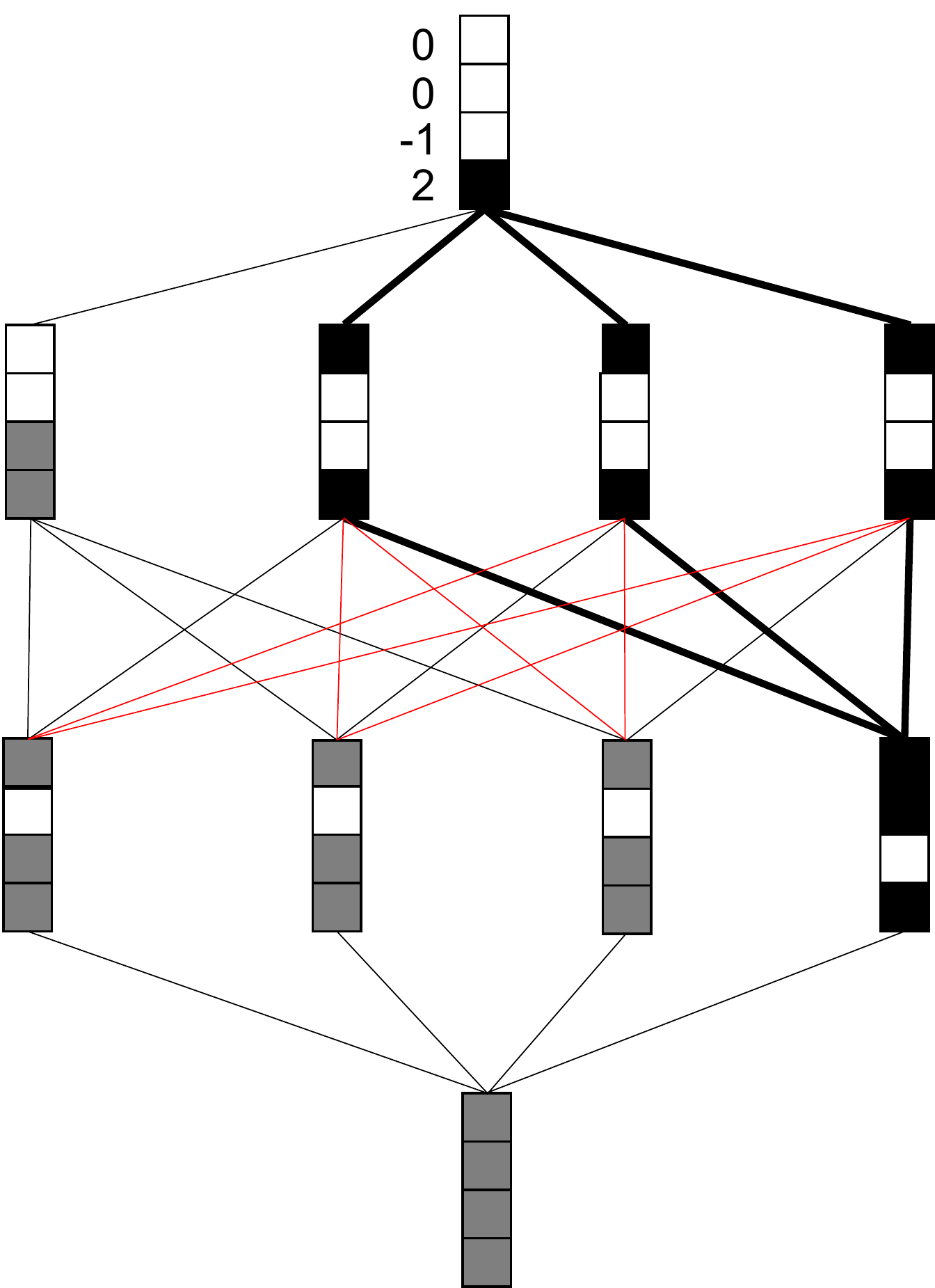} & 
\includegraphics[scale=0.25]{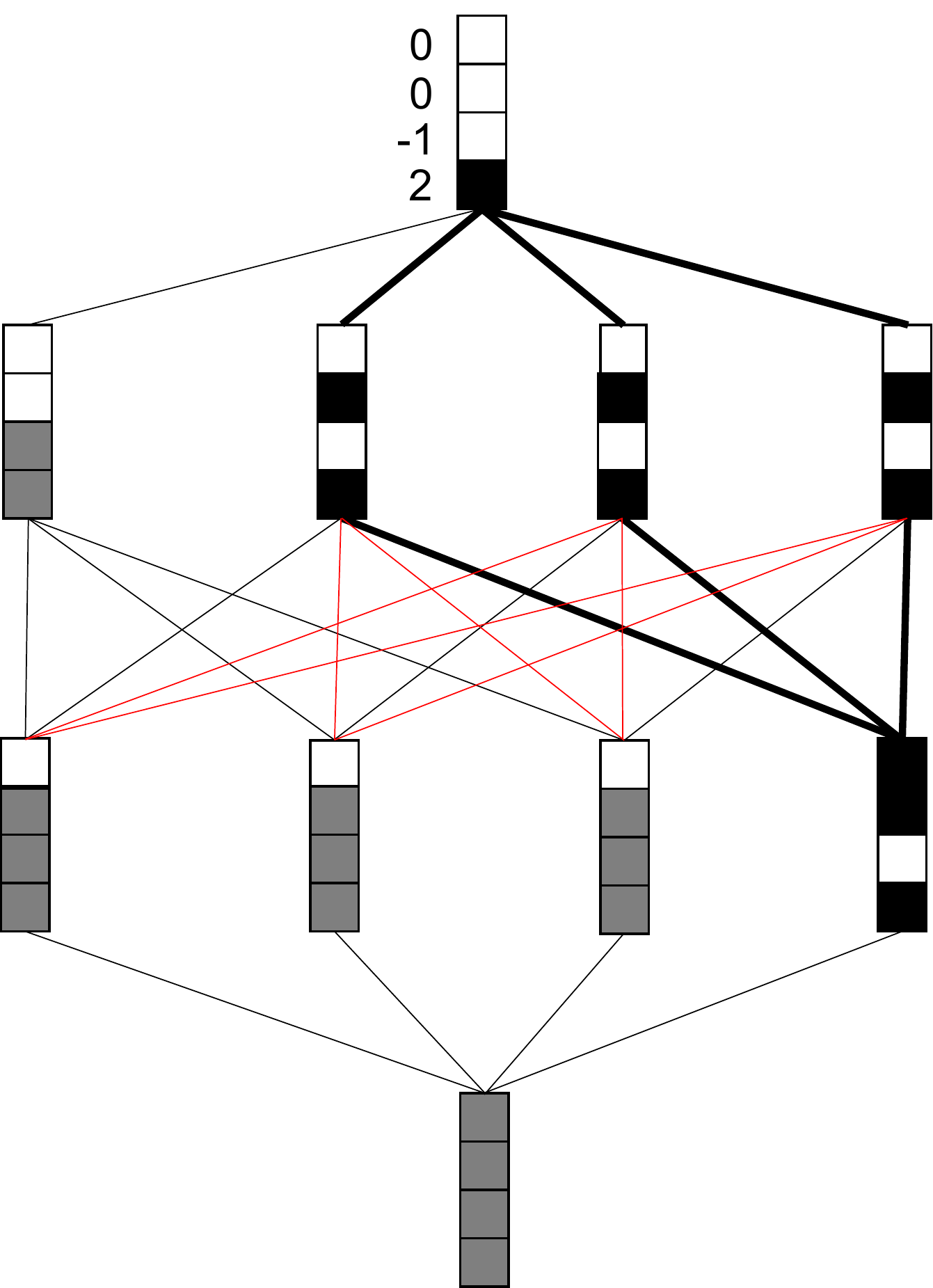} & 
\includegraphics[scale=0.25]{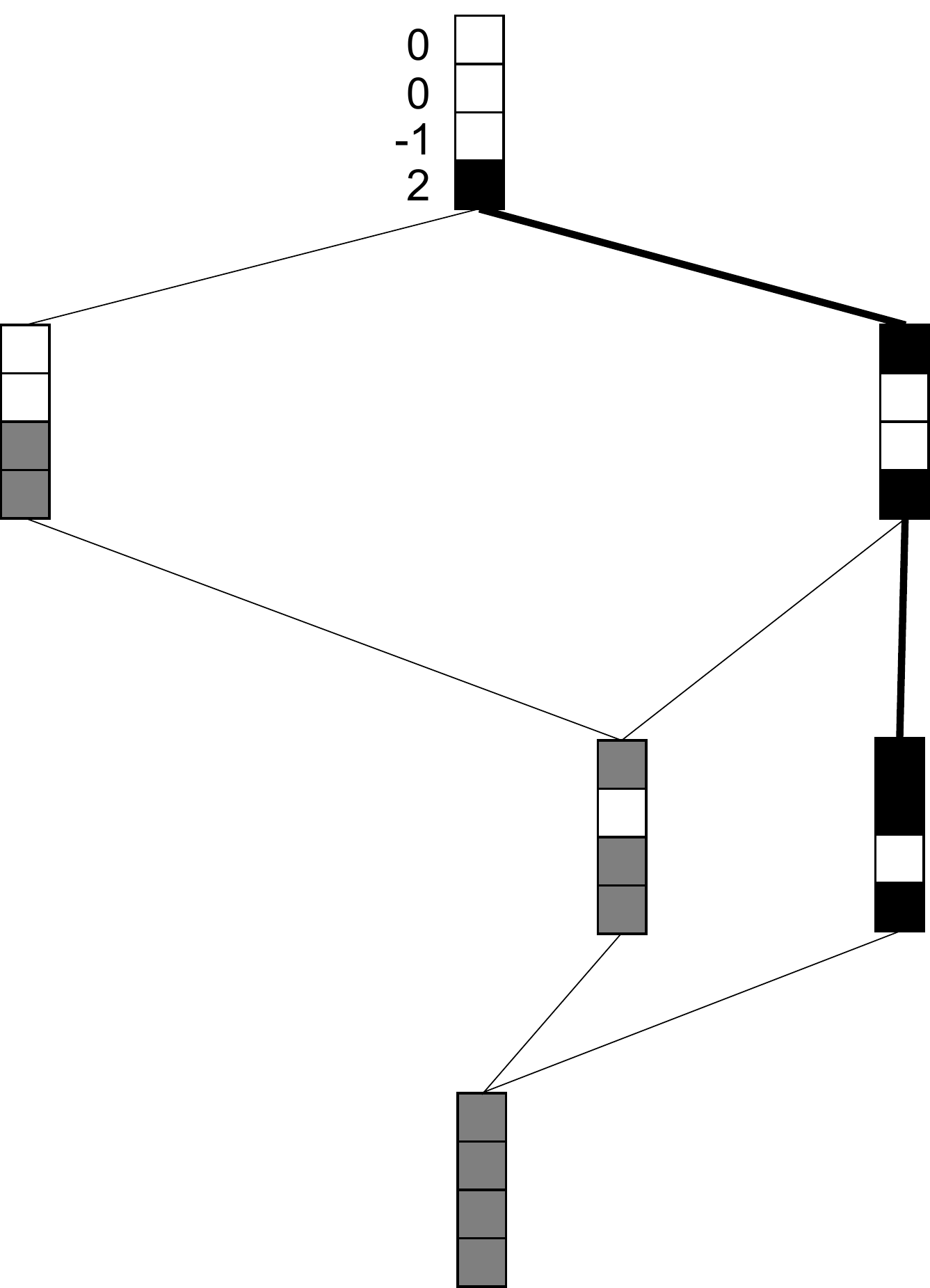} &
\includegraphics[scale=0.25]{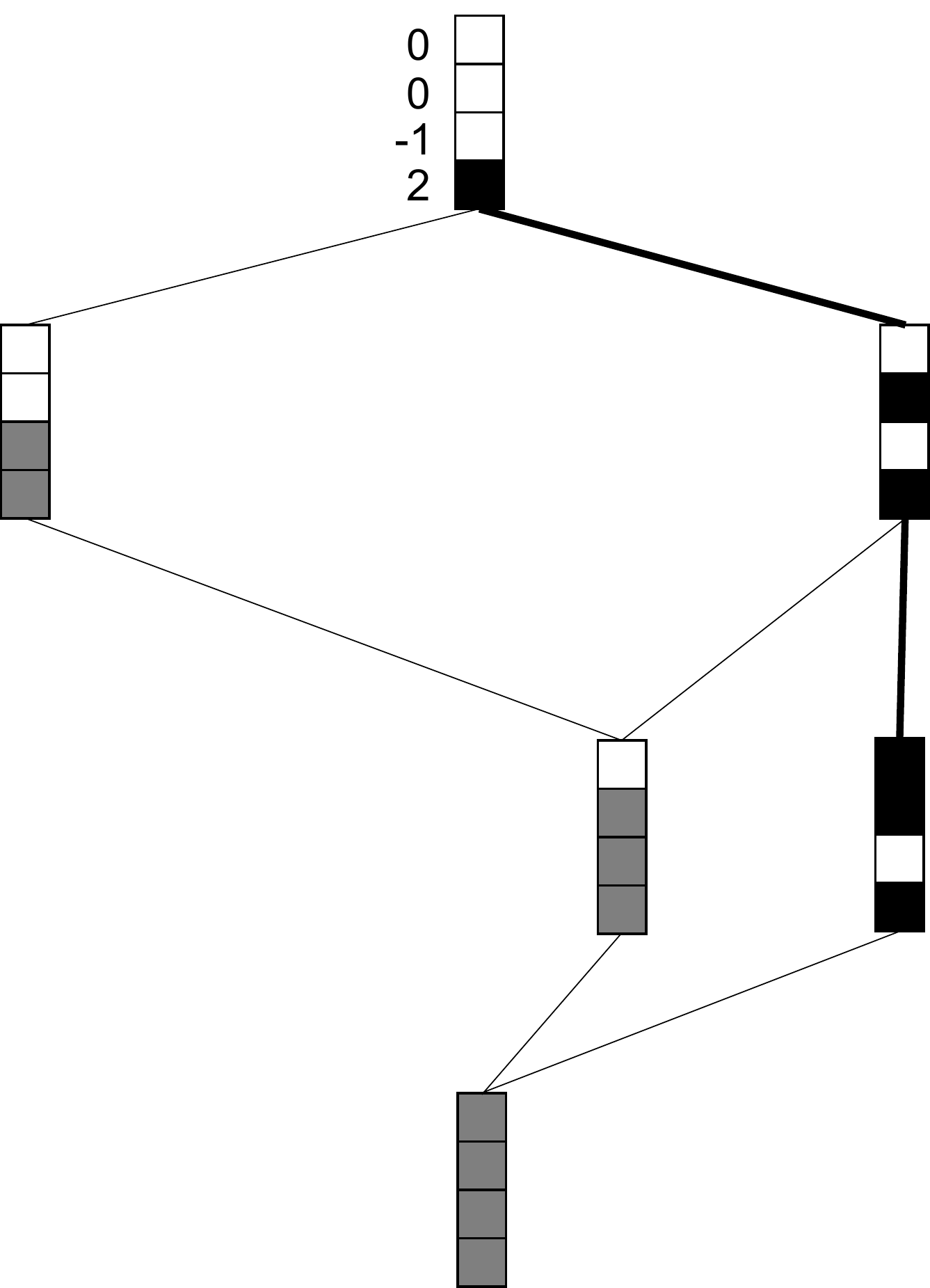}
\end{tabular}
\end{center}
\caption{Before and after reduction on the lattice of synchrony subspaces $V_{\mathcal{G}}^{P}$, $\mathcal{E}_{A}$ and $\mathcal{P}_{A}$ for Type 2. Note that there are two possible $\mathcal{P}_{A}$ associated with the unique $\mathcal{E}_{A}$ due to the equal size of Jordan blocks associated with the eigenvalue $\lambda=0$. The structures of $\mathcal{P}_{A}(1,1,1)$, which are subsets of $\mathcal{P}_{A}(1,1,1,1)$ and correspond to Type $2$ $\mathcal{P}_{A}$ in Figure \ref{fig:3cell_reduction_summary}, are highlighted with darker color.}
\label{fig:4_cell_network2_EA_reduction_Type2}
\end{figure}
\end{ex}


\subsection{Counter example}
We show {a counter example network} where the reduction does not work since the {network does} not satisfy the covering relation defined in Definition \ref{def:covering_relation}. 

\begin{ex}
\rm
\label{ex:5_cell_network4}
Consider the $5$-cell regular network in Figure \ref{fig:5_cell_network4}. The associated adjacency matrix $A$ has a repeated eigenvalue with algebraic multiplicity $3$ and geometric multiplicity $2$. 
\begin{figure}[h!]
\begin{center}
\begin{tabular}{ccl}
Network $\mathcal{G}$ & Adjacency matrix $A$ & eigenvalues \\
\hline
\multirow{5}{*}{
	\includegraphics[scale=0.25]{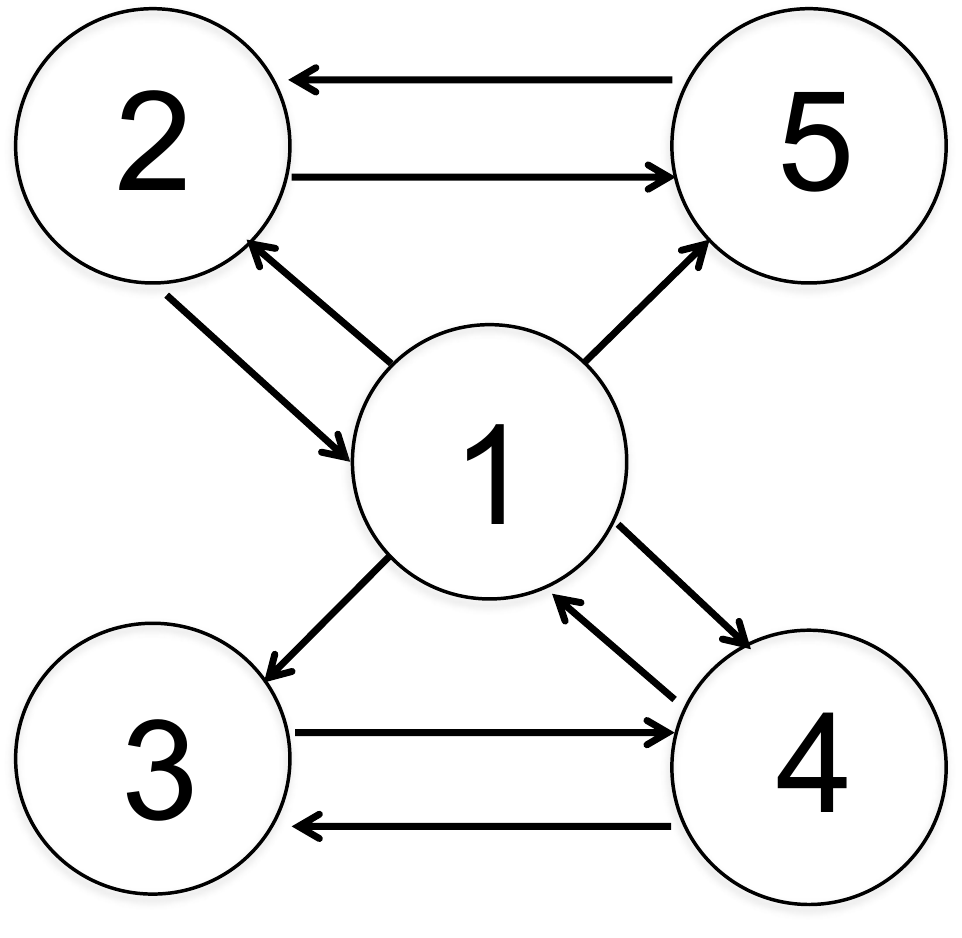}
} &
\multirow{5}{*}{$\left(\begin{array}{ccccc}
0 & 1 & 0 & 1 & 0\\
1 & 0 & 0 & 0 & 1 \\
1 & 0 & 0 & 1  & 0\\
1 & 0 & 1 & 0 & 0 \\
1 & 1 & 0 & 0 & 0  
\end{array}\right)$} &
$\lambda_{1}=-1$ \\
& & $\lambda_{2}=-1$ \\
& & $\lambda_{3}=-1$ \\
& & $\lambda_{4}=1$ \\
& & $\lambda_{5}=2$  
\end{tabular}
\end{center}
\caption{$5$-cell regular network $\mathcal{G}$ with the corresponding adjacency matrix $A$ and its eigenvalues. The repeated eigenvalue $\lambda_{1}=\lambda_{2}=\lambda_{3}=-1$ has algebraic multiplicity $3$ and geometric multiplicity $2$.}
\label{fig:5_cell_network4}
\end{figure}

Figure \ref{fig:5cell_network4_EA_reduction} summarizes lattices of synchrony subspaces $V_{\mathcal{G}}^{P}$, $\mathcal{E}_{A}$, and $\mathcal{P}_{A}$ before and after a reduction attempt. We find the equivalence relation
\[\bowtie=(1)(23)(4)(5)(6)(7)(89)(10)\]
is the only equivalence relation, which gives $\bowtie$-balanced $E$ as shown in Table \ref{tab:Eigenvalue_Tuple_5_cell_network4_reduction}. However, the associated $\mathcal{E}_{A}/{\sim}$ is not a closed subset of $\mathcal{L}_{M}(2,1,1,1)$ since there exists a $s\in\mathcal{E}_{A}/{\sim}$ such that $\Ind_{\mathcal{E}_{A}/{\sim}}(s)<0$. As a result, reduction fails. 

Note that we can determine $\mathcal{P}_{A}$ uniquely by Corollary \ref{cor:map_T_unique} since Jordan blocks have distinct sizes for the repeated eigenvalue $-1$. However, this $\mathcal{P}_{A}$ does not satisfy the covering relation defined in Definition \ref{def:covering_relation}.
\begin{figure}[h!]
\small
\begin{center}
\begin{tabular}{cc|cc}
\multicolumn{2}{c|}{Before Reduction} & \multicolumn{2}{c}{After Reduction} \\
\hline
\hline
\multicolumn{2}{c|}{$V_{\mathcal{G}}^{P}$} &
$V_{\mathcal{G}}^{P}/{\sim}$ & \\
& & & \\
\multicolumn{2}{c|}{\includegraphics[scale=0.21]{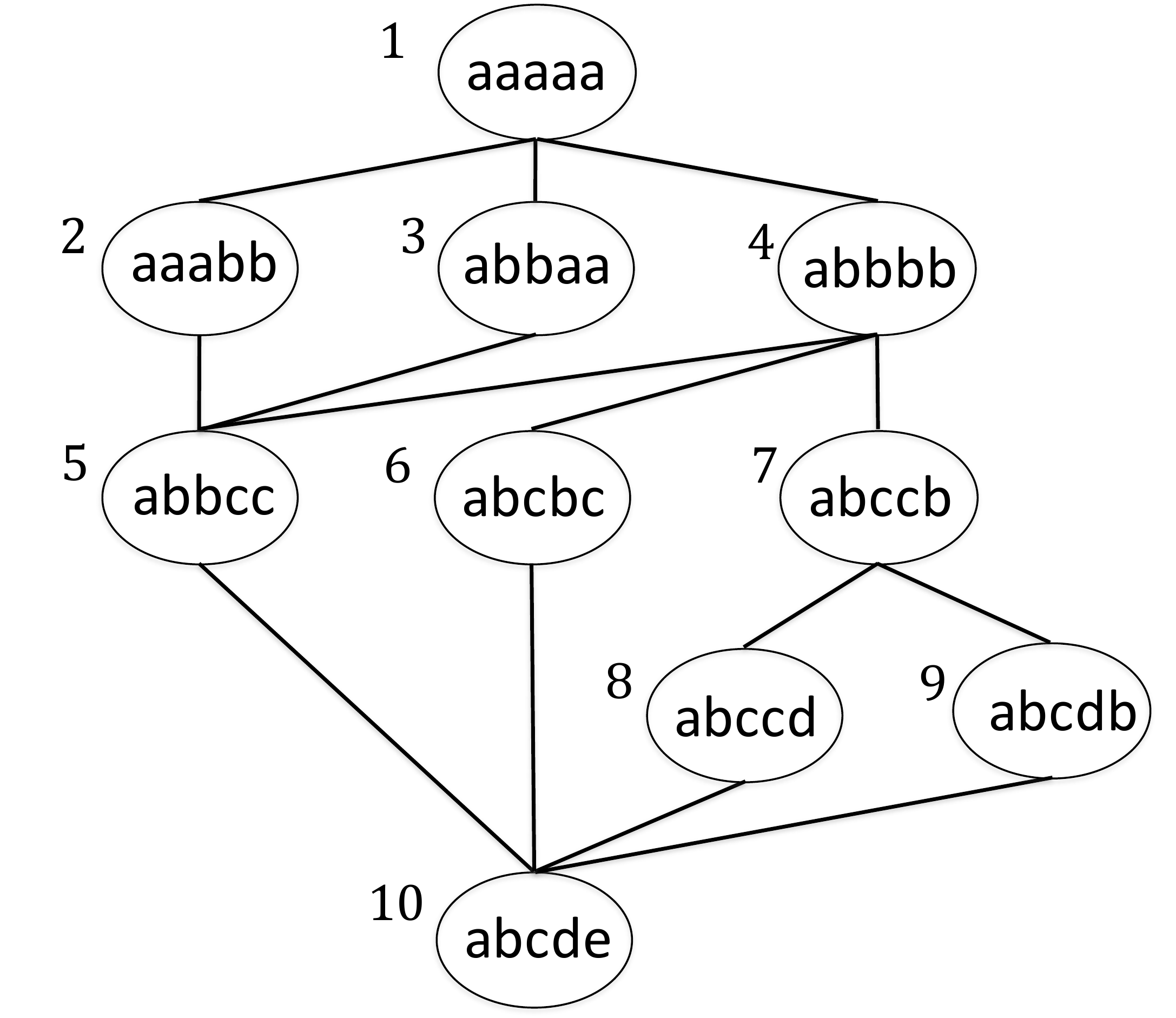}} &
\includegraphics[scale=0.21]{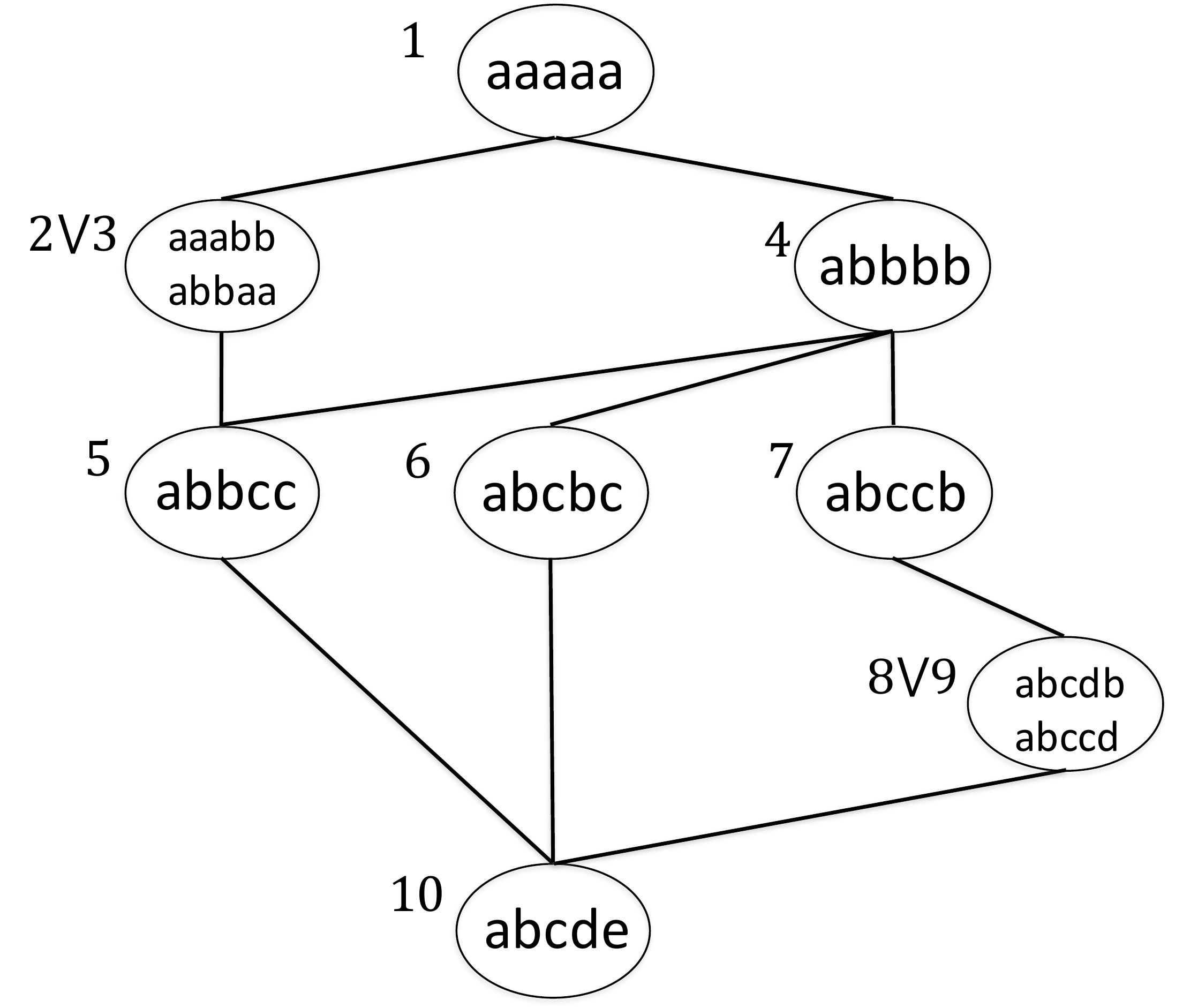} & \\
& & & \\
& & & \\
\multicolumn{2}{c|}{$\mathcal{E}_{A}$} &
$\mathcal{E}_{A}/{\sim}$ &  
Index on $\mathcal{E}_{A}/{\sim}$\\
& & & \\
\multicolumn{2}{c|}{\includegraphics[scale=0.21]{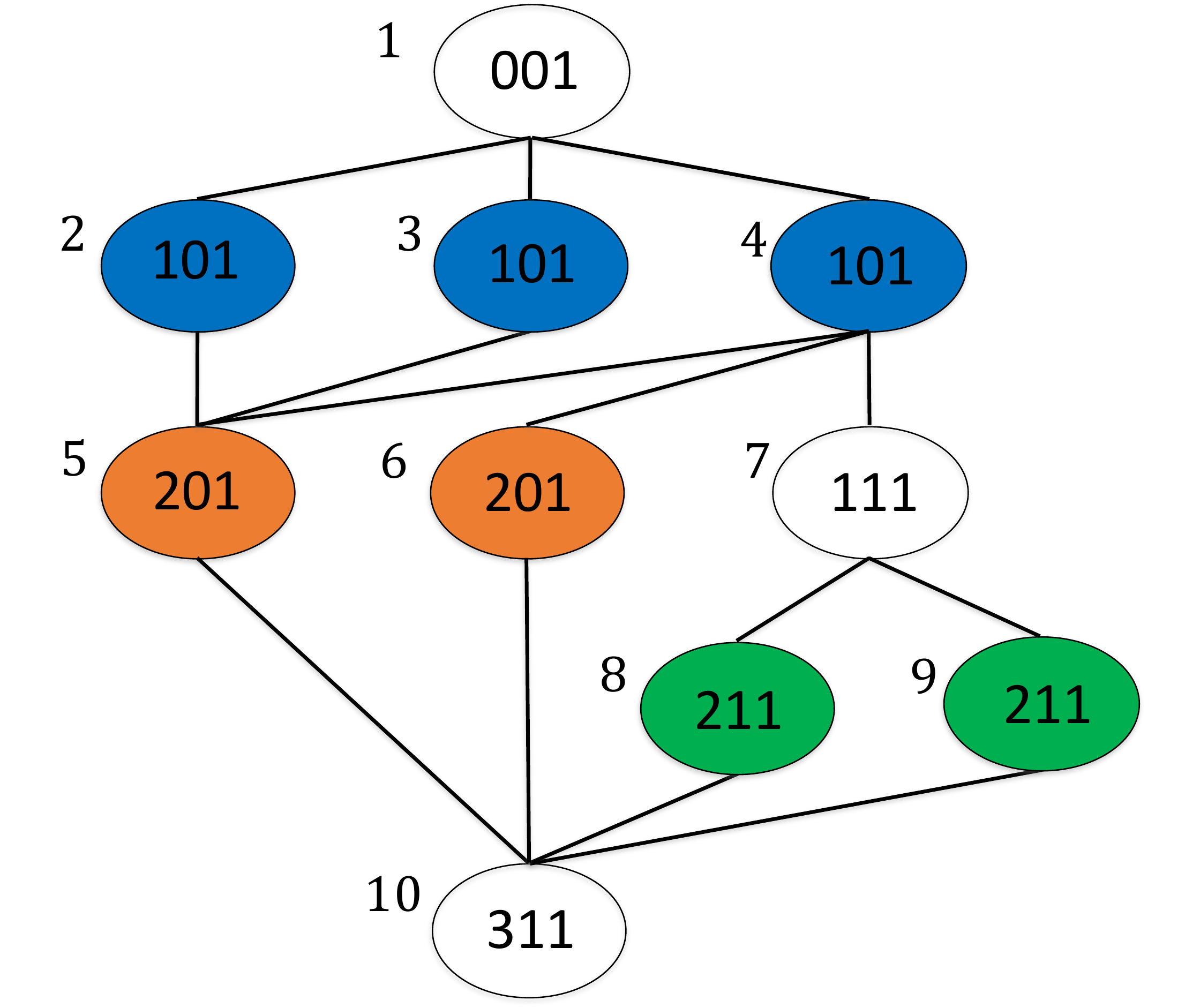}} & 
\includegraphics[scale=0.21]{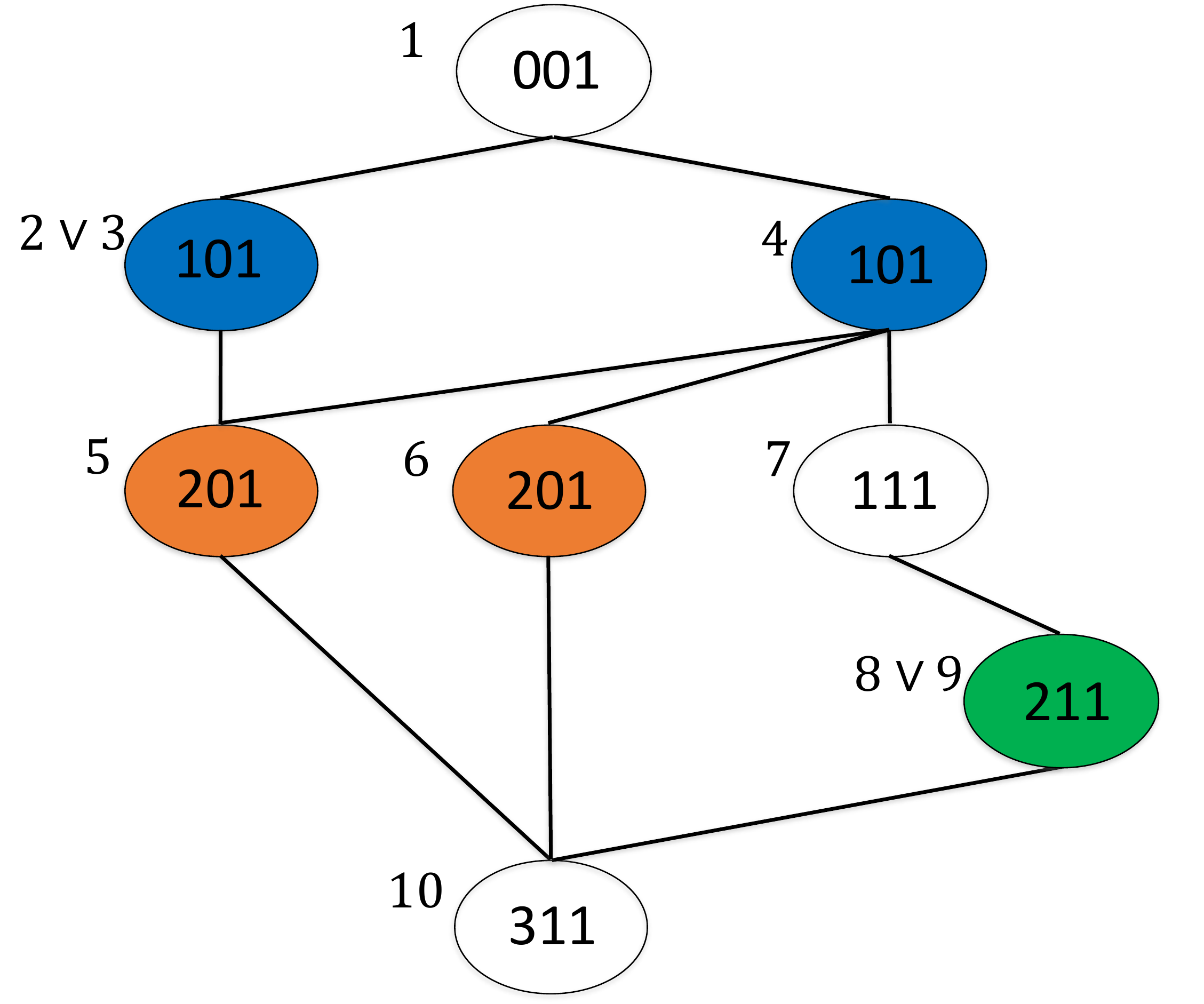} &
\includegraphics[scale=0.21]{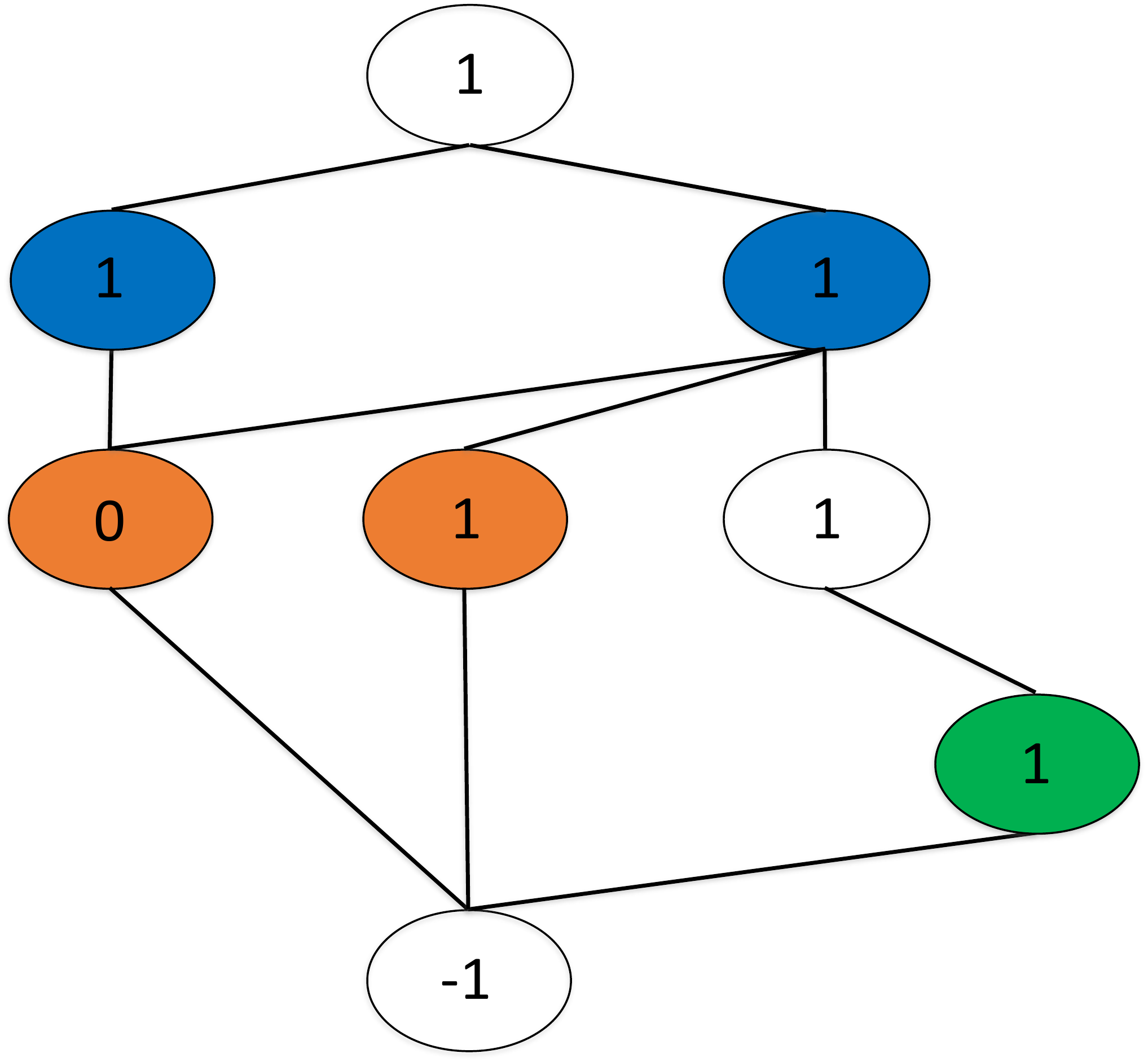}\\
& & & \\
& & & \\
\multicolumn{2}{c|}{$\mathcal{P}_{A}$} &
$\mathcal{P}_{A}/{=}$ & \\
& & & \\
\multicolumn{2}{c|}{\includegraphics[scale=0.25]{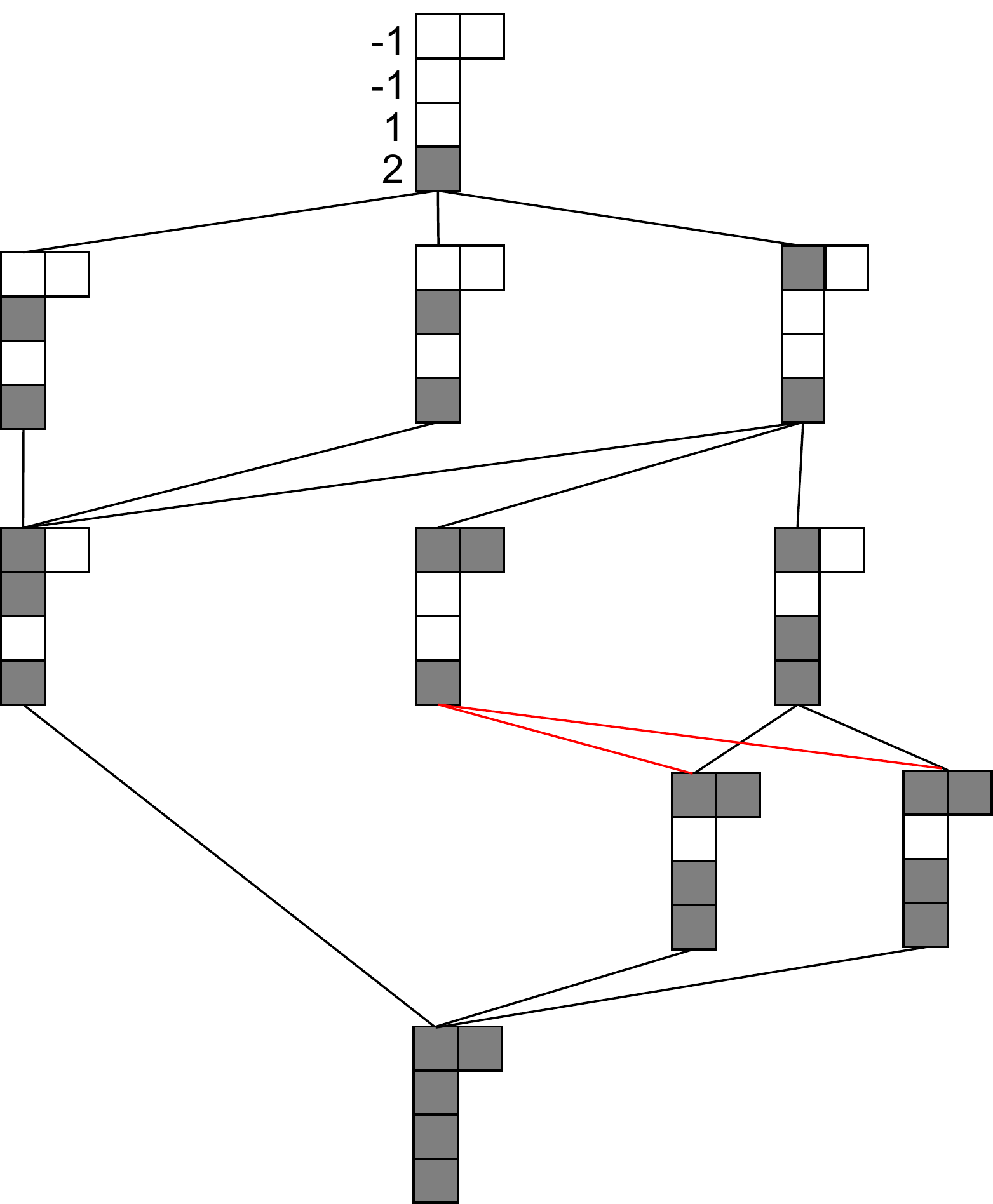}} & 
\includegraphics[scale=0.25]{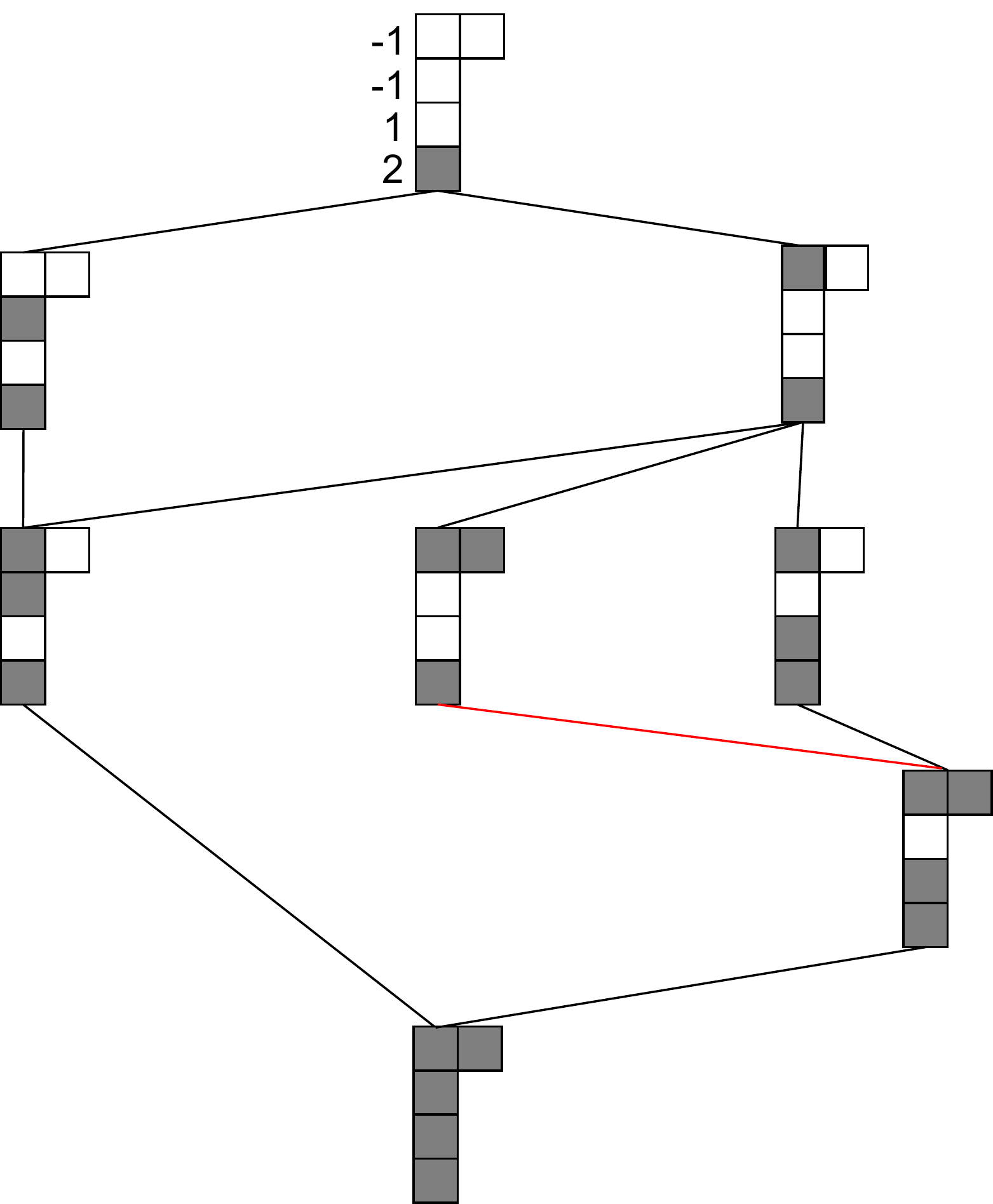} &
\end{tabular}
\end{center}
\caption{Before and after an attempt of the reduction on  $V_{\mathcal{G}}^{P}$, $\mathcal{E}_{A}$ and $\mathcal{P}_{A}$. Note that {there exists a $s\in\mathcal{E}_{A}/{\sim}$ such that $\Ind_{\mathcal{E}_{A}/{\sim}}(s)<0$}, which implies that the reduction does not return a closed subset of $\mathcal{L}_{M}(2,1,1,1)$.}
\label{fig:5cell_network4_EA_reduction}
\end{figure}

\begin{table}[h!]
\begin{tabular}{l}
(a) Matrix representation $E$ of $\mathcal{E}_{A}$ \\
$\begin{array}{c|cccccccccc}
         & 1 & 2 & 3 & 4 & 5 & 6 & 7 & 8 & 9 & 10 \\
\hline 
$1$   & 1 & 1 & 1 & 1 & 0 & 0 & 0 & 0 & 0 & 0  \\
$2$   & 1 & 1 & 0 & 0 & 1 & 0 & 0 & 0 & 0 & 0  \\
$3$   & 1 & 0 & 1 & 0 & 1 & 0 & 0 & 0 & 0 & 0  \\
$4$   & 1 & 0 & 0 & 1 & 1 & 1 & 1 & 0 & 0 & 0  \\
$5$   & 0 & 1 & 1 & 1 & 1 & 0 & 0 & 0 & 0 & 1  \\
$6$   & 0 & 0 & 0 & 1 & 0 & 1 & 0 & 0 & 0 & 1  \\
$7$   & 0 & 0 & 0 & 1 & 0 & 0 & 1 & 1 & 1 & 0  \\
$8$   & 0 & 0 & 0 & 0 & 0 & 0 & 1 & 1 & 0 & 1 \\
$9$   & 0 & 0 & 0 & 0 & 0 & 0 & 1 & 0 & 1 & 1  \\
$10$ & 0 & 0 & 0 & 0 & 1 & 1 & 0 & 1 & 1 & 1  \\
\hline
\textrm{tuple} &  (001) & \color{blue}{(101)} & \color{blue}{(101)} & \color{blue}{(101)} & \color{orange}{(201)} & \color{orange}{(201)} & (111) & \color{green}{(211)} & \color{green}{(211)} & (311)
\end{array}$ \\
\\
(b) $\bowtie$-balanced $E$ \\
$\begin{array}{c|cccccccc}
         & 1 & 2\vee 3     & 4 & 5 & 6 & 7 & 8\vee 9 & 10 \\
\hline 
$1$   & 1 & 1               & 1 & 0 & 0 & 0 & 0          & 0  \\
\color{blue}{2}   & \color{blue}{1} & \color{blue}{1}               & \color{blue}{0} & \color{blue}{1} & \color{blue}{0} & \color{blue}{0} & \color{blue}{0}          & \color{blue}{0}  \\
\color{blue}{3}   & \color{blue}{1} & \color{blue}{1}               & \color{blue}{0} & \color{blue}{1} & \color{blue}{0} & \color{blue}{0} & \color{blue}{0}          & \color{blue}{0}  \\
$4$   & 1 & 0               & 1 & 1 & 1 & 1 & 0          & 0  \\
$5$   & 0 & 1               & 1 & 1 & 0 & 0 & 0          & 1  \\
$6$   & 0 & 0               & 1 & 0 & 1 & 0 & 0          & 1  \\
$7$   & 0 & 0               & 1 & 0 & 0 & 1 & 1          & 0  \\
\color{green}{8}   & \color{green}{0} & \color{green}{0}               & \color{green}{0} & \color{green}{0} & \color{green}{0} & \color{green}{1} & \color{green}{1}          & \color{green}{1} \\
\color{green}{9}   & \color{green}{0} & \color{green}{0}               & \color{green}{0} & \color{green}{0} & \color{green}{0} & \color{green}{1} & \color{green}{1}          & \color{green}{1}  \\
$10$ & 0 & 0               & 0 & 1 & 1 & 0 & 1          & 1  \\
\hline
\textrm{tuple} &  (001) & \color{blue}{(101)} & \color{blue}{(101)} & \color{orange}{(201)} & \color{orange}{(201)} & (111) & \color{green}{(211)} & (311)
\end{array}$
\end{tabular}
\caption{(a) The matrix representation $E$ of a poset $\mathcal{E}_{A}$ by Definition \ref{def:connectivity_matrix_E}. Each node in $\mathcal{E}_{A}$ is given by a tuple representation $(s_{1}, s_{2}, s_{3})$ where $s_{1}$, $s_{2}$ and $s_{3}$ are the number of distinct eigenvalues $-1$, $1$ and $2$, respectively. Identical tuple representations are colored the same. (b) The equivalence relation $\bowtie=(1)(23)(4)(5)(6)(7)(89)(10)$ gives a $\bowtie$-balanced $E$ since, after the column manipulation, rows $2$ and $3$ are the same as well as rows $8$ and $9$. However, the associated $\mathcal{E}_{A}/{\sim}$ is not a a closed subset of $\mathcal{L}_{M}(2,1,1,1)$ due to a negative index for $s=(3,1,1)$ as shown in Figure \ref{fig:5cell_network4_EA_reduction}.}
\label{tab:Eigenvalue_Tuple_5_cell_network4_reduction}
\end{table}
\END
\end{ex}

\clearpage
\section{Conclusion}
\label{sec:conclusion}
We propose an eigenvalue-based algorithm for reducing lattices of synchrony subspaces associated with regular coupled cell networks. The reduced lattice $\mathcal{E}_{A}/{\sim}$, under a covering relation assumption on the lattice, corresponds to a lattice structure for regular networks whose coupling matrices have only simple eigenvalues (Theorem \ref{thm:reduced_EA}). The algorithm generally outputs more than one candidate reduced lattice. We use the tuple representation lattice $\mathcal P_A$ as a filter to identify the reduced lattice (Example \ref{ex:4_cell_network2}). At this moment, we do not know whether such filtering referring to $\mathcal P_A$ always leads to a unique reduced lattice structure, or even if using $\mathcal E_A$ alone is possible. We also give an example for which the algorithm fails to find the reduction (Example \ref{ex:5_cell_network4}). 

We also give an extension of the existing concept of lattice index for general regular networks, by first introducing an index on the tuple representation lattice $\mathcal P_A$ and then extend it to the eigenvalue lattice $\mathcal E_A$. The combination of the two types of indices gives rise to the reduction strategy for our algorithm (Proposition \ref{prop:positivity_Ind_2} and Corollary \ref{cor:positivity_Ind}). As a consequence, the algorithm outputs not only the reduced lattice, but also the lattice index defined on it.  

In relation to the literature \cite{ADS2020,ADGL09,Golubitsky-2009,Soares17}, in future work we aim to employ the algorithm for index-based bifurcation analysis, which can be used for predicting multiple bifurcating branches from a single bifurcation point in regular coupled cell networks. It will be especially interesting to investigate some counter examples for which our algorithm fails, and to examine what our postulated reduction can tell us about generic synchrony-breaking bifurcations.

\section*{Acknowledgements}
The code to produce reduced lattices is provided as a supplementary material for the manuscript. This is based on the code published in \cite{Kamei-2013}, and authors acknowledge Dr. Peter Cock for the implementation of the code. Authors also acknowledge the University of Hamburg for its support and hospitality.

\bibliographystyle{plainnat}
\bibliography{lattice_bibliography}
\end{document}